\def\notdiv{\nmid}
\def\div{\,\vert\,}
\def\too{\relbar\lien\rightarrow}
\def\tooo{\relbar\lien\relbar\lien\too}
\let\ds=\displaystyle
\let\st=\scriptstyle
  \def\N{\mathbb{N}}
  \def\C{\mathbb{C}}
  \def\Q{\mathbb{Q}}
  \def\Z{\mathbb{Z}}
  \def\F{\mathbb{F}}
 \def\zz{\mathbb{Z}}
\def\virg{\raise 2pt \hbox{,}\,\,}
\def\No{{\rm N}}
\def\Sauf{\!\setminus\!}
\def\Cl{{\mathcal C}\hskip-2pt{\ell}}
\def\cl{c\hskip-1pt{\ell}}
\def\Frac#1#2{\hbox{\footnotesize $\displaystyle \frac{#1}{#2}$}}
\def\plus{\ds\mathop{\raise 2.0pt \hbox{$\bigoplus $}}\limits}
\def\mult{\ds\mathop{\raise 2.0pt \hbox{$\bigotimes$}}\limits}
\def\prd{ \ds\mathop{\raise 2.0pt \hbox{$  \prod   $}}\limits}
\def\Cap{ \ds\mathop{\raise 2.0pt \hbox{$\bigcap   $}}\limits}
\def\Cup{ \ds\mathop{\raise 2.0pt \hbox{$\bigcup   $}}\limits}
\def\sm{  \ds\mathop{\raise 2.0pt \hbox{$  \sum    $}}\limits}
\def\ev{\emptyset}
\def\fin{\vbox{\hrule\hbox to 7.2pt{\vrule height 
7pt\hfil\vrule}\hrule}}
\def \tensorZp{\otimes{\raise -0.8pt \hbox{\!\!$_{_{\zz_{\!p}}}$}}}
\def \tensorZ{\otimes{\raise -0.8pt \hbox{\!\!$_{_{\zz}}$}}}
\let\st=\scriptstyle  
\def\fin{\vbox{\hrule\hbox to 7.2pt{\vrule height 
7pt\hfil\vrule}\hrule}}
\def\lien{\mathrel{\mkern-4mu}}
\def\wt{\widetilde}
\def\ov{\overline} 
\def\wh{\widehat} 
\newtheorem{theo}{Theorem}
\newtheorem{prop}{Proposition}
\newtheorem{rema}{Remark}
\newtheorem{conj}{Conjecture}
\newtheorem{lemm}{Lemma}
\newtheorem{cor}{Corollary}
\newtheorem{ex}{Example}
\newtheorem{defi}{Definition}
\begin{document}
 
 \title{ Some works of Furtw\"angler and Vandiver revisited
 and Fermat${}'$s last theorem}

\author[Georges {\sc Gras} {\rm and} Roland {\sc Qu\^eme}]{{\sc 
Georges} Gras {\rm and} {\sc Roland} Qu\^eme}

\address{Georges {\sc Gras} \hfill   Roland {\sc Qu\^eme}\ \ \ \ \ \ \ \ \ \ \ \ \ \ \ \ \ \ \\
Villa la Gardette, chemin Ch\^ateau Gagni\`ere \hfill  13 Avenue du ch\^ateau d'eau \\
F-38520 Le Bourg d'Oisans \hfill  F-31490 Brax \ \ \ \ \ \ \ \ \ \ \ \ \ \ \ \ \ \ \ \\
g.mn.gras@wanadoo.fr   \hfill  roland.queme@wanadoo.fr \ \  \\
http://maths.g.mn.gras.monsite-orange.fr/  \hfill\, http://roland.queme.free.fr/  \\
 ${}$\\ April 11, 2011}

%%%%%%%%%%%%%%%%%%%%%%%%%%%%%%%%%%%%%%%%%%%%%%%%%%%%%%%%%%%%%%%%%%%%%%%

\keywords{Fermat${}'$s last theorem, Furtw\"angler${}'$s theorems,  
cyclotomic fields, cyclotomic units, class field theory, \v Cebotarev 
density theorem} 

\subjclass{11D41, 11R18}

\begin{abstract} From some works of P. Furtw\"angler and H.S. 
Vandiver, we put the basis of a new cyclotomic approach to
Fermat${}'$s last theorem for~$p>3$ and to a stronger version
called SFLT, by introducing governing fields of 
the form $\Q(\mu_{q-1})$ for prime numbers $q$.
 We prove for instance that if there exist infinitely many primes $q$,
 $q \not \equiv 1 \pmod {p}$, $q^{p-1} \not \equiv 1$ $\pmod {p^2}$,
such that for ${\mathfrak q} \div q$ in $\Q(\mu_{q-1})$, we have
${\mathfrak q}^{1-c} = {\mathfrak a}^p\,(\alpha)$
with $\alpha \equiv 1 \pmod {p^2}$ (where $c$ is the complex 
conjugation), then Fermat${}'$s last theorem holds for $p$.

\smallskip\noindent
More generally, the main purpose of the paper is to show
that the existence of nontrivial solutions for SFLT implies 
some strong constraints on the arithmetic of the
fields $\Q(\mu_{q-1})$. From there, we give sufficient conditions
of nonexistence that would require further investigations to lead to
a proof of SFLT, and we formulate various conjectures.
This text must be considered as a 
basic tool for future researches (probably of analytic or geometric nature).

\medskip\noindent
{\sc R\'esum\'e.} Reprenant des travaux de P. Furtw\"angler et H.S. 
Vandiver, nous posons les bases d'une nouvelle approche cyclotomique du
dernier th\'eor\`eme de Fermat pour $p>3$ et d'une version plus forte
appel\'ee SFLT, en introduisant des corps 
gouvernants de la forme $\Q(\mu_{q-1})$ pour $q$ premier.
Nous prouvons par exemple que s'il existe une infinit\'e de nombres premiers $q$,
$q \not \equiv 1 \pmod {p}$, $q^{p-1} \not \equiv 1 \pmod {p^2}$,
tels que   pour ${\mathfrak q} \div  q$  dans $\Q(\mu_{q-1})$,
on ait  ${\mathfrak q}^{1-c} = {\mathfrak a}^p\,(\alpha)$
 avec $\alpha \equiv 1 \pmod {p^2}$
(o\`u $c$ est la conjugaison complexe), alors le th\'eor\`eme de Fermat
est vrai pour $p$.

\smallskip\noindent
Plus g\'en\'eralement, le but principal de l'article est de montrer
que l'existence de solutions non triviales pour SFLT implique 
de fortes contraintes sur l'arith\-m\'etique des corps $\Q(\mu_{q-1})$.
A partir de l\`a, nous donnons des conditions suffisantes de non existence
qui n\'eces\-site\-raient des investigations suppl\'ementaires pour
conduire \`a une preuve de SFLT, et nous formulons diverses conjectures.
Ce texte doit \^etre consid\'er\'e comme un outil de base pour 
de futures  recherches (probablement analytiques ou g\'eom\'etriques).

\smallskip\smallskip
\centerline{*******}

\vspace{-0.05cm}
\noindent
This second version includes some corrections in the English 
language, an in depth study of the case $p=3$ (especially Theorem 8), 
further details on some conjectures, and some minor mathematical 
improvements.

\end{abstract}

\maketitle 

\section{Introduction}

This paper is devoted to the study of the following phenomenon.
Consider the maximal abelian extension $\ov\Q^{\rm nr}$ of $\Q$, 
unramified (= nonramified) at a given prime $p>2$; from class field theory,
we get $\ov\Q^{\rm nr} = \Cup{}_{n,\, p \notdiv n}\,\Q(\mu_n)$.
Then denote by $H_{\ov\Q^{\rm nr}}$ the maximal $p$-ramified (i.e., unramified 
outside $p$)
abelian $p$-extension of $\ov\Q^{\rm nr}$; this extension is given
by $\Cup{}_{n,\, p \notdiv n}\,H_{\Q(\mu_n)}$ where
$H_{\Q(\mu_n)}$ is the maximal $p$-ramified
abelian $p$-extension of $\Q(\mu_n)$.

Then consider
$H_{\ov\Q^{\rm nr}}\,{\st [p]} := \Cup{}_{n,\, p \notdiv n}\,H_{\Q(\mu_n)}{\st [p]}$,
the  maximal $p$-elementary $p$-ramified
extension of $\ov\Q^{\rm nr}$, union of the corresponding
maximal  $p$-elemen\-tary $p$-ramified
extensions  of $\Q(\mu_n)$.

\smallskip
We have found that any nontrivial solution $(u,v)$ of
a classical diophantine equation,
associated to Fermat${}'$s equation, and called the SFLT 
equation\,\footnote{\,Equation
$(u+v \,\zeta)\,\Z[\zeta] ={\mathfrak w}_1^p$ or
${\mathfrak p}\,{\mathfrak w}_1^p$,
in integers $u$, $v$ with g.c.d.\,$(u,v)=1$, equivalent to
$\No_{K/\Q}(u+v\,\zeta) = w_1^p$ or $p\,w_1^p$, where 
$\zeta := e^{2i\pi/p}$, $K:=\Q(\zeta)$, ${\mathfrak p}:=(\zeta-1)\,\Z[\zeta]$
(see Conjecture 1). Remark that the important condition g.c.d.\,$(u,v)=1$
implies ${\mathfrak w}_1$ prime to $p$.

\  Note that if $u\,v = 0$, the condition g.c.d.\,$(u, v) = 1$
implies $(u, v) = (\pm 1, 0)$ or $(0, \pm 1)$.},
implies some constraints on the law of 
decomposition of every prime $q\ne p$ in $H_{\ov\Q^{\rm nr}}\,{\st [p]}/\ov\Q^{\rm nr}$.

\smallskip
These constraints  may be characterized  at some finite steps  via the
law of decomposition of $q$ in a canonical
family ${\mathcal F}_n$ of $p$-cyclic subextensions of
$H_{\Q(\mu_n)}{\st [p]}/\Q(\mu_n)$, where $n \div q-1$ depends on $q$, $u$, $v$
(see Theorem 4).

\smallskip
Some aspects needed to prove this relation can be found in some former technics
of  Furtw\"angler and Vandiver, in a different viewpoint from ours,
to try to give a classical cyclotomic proof of Fermat${}'$s last theorem
(FLT).

\smallskip
Of course the problem is now empty for Fermat${}'$s equation, except if we 
wish to prove  FLT by this way; but we will see that
for the SFLT equation the result is unknown for $p>3$
(but conjecturally similar) and, moreover, leads to infinitely many solutions
for $p=3$. But as we will show, the case $p=3$ is exceptional
and we will explain in Section 9 for what reasons.

\smallskip
Unfortunately, we have no deep results to propose, but only
some material which may be helpful for those interested
in going further.
 
\section {Generalities on the method -- The $\omega$-SFLT equation}

\subsection{Prerequisites on Fermat${}'$s last theorem}

Let $p$ be a prime number, $p > 2$.
Let  $a$, $b$, $c$  be pairwise relatively
prime nonzero integers, such that $\ a^p+b^p+c^p = 0$.

\smallskip
We can find for instance in [Gr1, Ri, Wa] the following obvious 
properties concerning such a specu\-lative counterexample to  FLT,
where $\zeta$ is a primitive $p$th root of unity, $K := \Q(\zeta)$,
${\mathfrak p}:=(\zeta-1) \Z[\zeta]$, and
$\No_{K/\Q}$ is the norm map in $K/\Q$ (for a detailed proof, a more 
complete
bibliography, and an analysis of the classical cyclotomic approach to 
FLT, we refer to [Gr1]):

 \smallskip
(i) We have (where $\nu \geq 0$ is the $p$-adic valuation of $c$)\,\footnote{\,If
$\nu \geq 1$, then  $\alpha:= \frac{a + c\,\zeta}{a + c\,\zeta^{-1}}$ is a pseudo-unit
(i.e., the $p$th power of an ideal), congruent to 1 modulo $p$; 
so, from [Gr1, Theorem 2.2, Remark 2.3, (ii)], $\alpha$ is locally a $p$th power
in $K$ giving easily $\alpha \equiv 1 \pmod {{\mathfrak p}^{p+1}}$, then
$\frac{c\,(\zeta-\zeta^{-1})}{a + c\,\zeta^{-1}} \equiv 0 \pmod {{\mathfrak p}^{p+1}}$,
hence  $c \equiv 0 \pmod {p^2}$. }:

\medskip
\centerline{ $a+b =  c_0^p$\,\  or\,\ 
$p^{\nu p -1} c_0^p$ \  with $\nu \geq 2$,
 \  and \  $\No_{K/\Q}(a+b \,\zeta) =  c_1^p$\,\  or\,\   $p\, c_1^p$,}

\smallskip\smallskip\noindent
with  $-c = c_0\, c_1$ or $p^\nu c_0\,c_1$, and  $p \notdiv c_0\,c_1$.
  By permutation, since $p\notdiv a b$, we have the following 
analogous relations:
 \begin{eqnarray*}
&&b+c =  a_0^p,  \ \ \  \No_{K/\Q}(b+c \,\zeta) =  a_1^p,  \ \,{\rm
with} \ -a = a_0\, a_1, \\
 &&c+a = b_0^p,  \ \ \ \No_{K/\Q}(c+a \,\zeta) =  b_1^p,  \ \,{\rm
 with}\ -b = b_0\, b_1.
 \end{eqnarray*} 

(ii) We have:

\medskip
\centerline{ $(a + b \,\zeta)\,\Z[\zeta] =
{\mathfrak c}_1^p$\  or\  ${\mathfrak p}\,{\mathfrak c}_1^p$,\ \,
with  $\No_{K/\Q}({\mathfrak c}_1) = c_1\Z$,}

\medskip\noindent
where ${\mathfrak c}_1$ is an integer ideal of $K$ prime to 
${\mathfrak p}$, and the analogous relations:
\begin{eqnarray*}
&&(b+c\,\zeta)\,\Z[\zeta]  = {\mathfrak a}_1^p, \ \ \hbox{with $
\No_{K/\Q}({\mathfrak a}_1) = a_1\Z$},   \\
&&(c+a\,\zeta)\,\Z[\zeta] = {\mathfrak b}_1^p, \ \ \hbox{with $
\No_{K/\Q}({\mathfrak b}_1) = b_1\Z$}. \\
 \end{eqnarray*} 

\vspace{-0.5cm}
(iii)  The positive numbers $a_1$, $b_1$, $c_1$ have prime divisors
all congruent to 1 modulo $p$.
 
\begin{lemm} We can choose $x,\,y,\,z \in \{a, b, c  \}$ 
     in the following manner:

     \smallskip
(i) First case of FLT, $p>3$: 
  \begin{eqnarray*}
 y-x  \ \not\equiv \  0 , \ \   y+x &\not\equiv& 0 \pmod {p}, \\
 y-z  \ \not\equiv \  0 , \ \  y+z &\not\equiv& 0 \pmod {p}, \\   
 \ \   x+z &\not\equiv& 0 \pmod {p}.
  \end{eqnarray*}

\vspace{-0.1cm}
(ii) First case of FLT, $p = 3$: 
   \begin{eqnarray*}
 y-x \ \equiv \  0 , \ \   y+x &\not\equiv& 0 \pmod {3}, \\
  y-z  \ \equiv  \ 0 , \ \   y+z &\not\equiv& 0 \pmod {3}, \\
   x-z  \ \equiv  \ 0 , \ \     x+z &\not\equiv& 0 \pmod {3}.
 \end{eqnarray*}
 
 \vspace{-0.1cm}
(iii) Second case of FLT, $p\geq 3$: 
 \begin{eqnarray*}
 y   &\equiv&   0 \pmod {p},         \\
 y-x \  \not\equiv \  0 ,  \ \   y+x &\not\equiv& 0 \pmod {p},  \\
 y-z \  \not\equiv \  0 ,  \ \   y+z &\not\equiv& 0 \pmod {p},  \\
   x-z \  \not\equiv \  0 ,  \ \    x+z &\equiv& 0 \pmod {p}.
 \end{eqnarray*}
 \end{lemm}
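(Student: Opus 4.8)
The plan is to reduce every case to the single congruence $a+b+c \equiv 0 \pmod p$. Indeed, Fermat's little theorem gives $n^p \equiv n \pmod p$ for every integer $n$, so from $a^p+b^p+c^p=0$ we obtain $a+b+c \equiv 0 \pmod p$, and hence $x+y+z \equiv 0 \pmod p$ for any relabeling $x,y,z$ of $a,b,c$. The crucial consequence is that each ``sum'' condition collapses onto the remaining variable: $y+x \equiv -z$, $y+z \equiv -x$, and $x+z \equiv -y \pmod p$. Thus $y+x \not\equiv 0 \Leftrightarrow z \not\equiv 0$, and similarly for the other two sums. In the first case of FLT all of $a,b,c$ are prime to $p$, so all the ``$+$'' conditions hold automatically there; in the second case exactly one of $a,b,c$ is divisible by $p$ (pairwise coprimality forbids two), which will single out the choice of $y$.

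For the first case with $p>3$ (part (i)) it then remains only to arrange $y \not\equiv x$ and $y \not\equiv z \pmod p$. I would first observe that the residues of $a,b,c$ modulo $p$ cannot all coincide: if they did, then $3x \equiv x+y+z \equiv 0$ would force $x \equiv 0$ (since $p>3$), contradicting the first-case hypothesis. Hence at most two of the three residues agree, and I choose $y$ to be an element whose residue differs from both others — the ``odd one out'' when two residues coincide, or any element when all three are distinct; such a $y$ exists precisely because the residues do not all coincide. This secures $y \not\equiv x$ and $y \not\equiv z$, and note there is no $x-z$ condition to check in part (i). The main (indeed essentially the only) subtlety is exactly this step: a valid $y$ fails to exist only when all three residues coincide, which is why $p>3$ is required and why $p=3$ must be handled apart.

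The remaining two cases are then immediate. For the first case with $p=3$ (part (ii)), each of $a,b,c$ is $\equiv \pm 1 \pmod 3$, and $a+b+c \equiv 0 \pmod 3$ forces the three signs to be equal, so $a \equiv b \equiv c \pmod 3$; every difference is then $\equiv 0$ as required, while every sum is $\equiv \pm 2 \equiv \mp 1 \not\equiv 0 \pmod 3$, so any labeling works. For the second case (part (iii)), I take $y$ to be the unique one of $a,b,c$ divisible by $p$; then $y \equiv 0$ yields $y \pm x \equiv \pm x \not\equiv 0$ and $y \pm z \equiv \pm z \not\equiv 0$, while $x+z \equiv -y \equiv 0$ as demanded, and finally $x \not\equiv z$ because $x \equiv -z$ combined with $x \equiv z$ would give $2x \equiv 0$, i.e.\ $x \equiv 0$ (here $p$ is odd), contradicting $x \not\equiv 0$. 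This disposes of all three cases.
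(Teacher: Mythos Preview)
Your proof is correct and follows essentially the same approach as the paper: both use $a+b+c\equiv 0\pmod p$ to reduce the sum conditions to the nonvanishing of the remaining variable, both argue that the three residues cannot all coincide when $p>3$ (else $3a\equiv 0$) to select an appropriate $y$, and both take $y$ to be the variable divisible by $p$ in the second case. Your write-up is simply more explicit in spelling out the collapse $x+z\equiv -y$ and the verification $x\not\equiv z$ in case (iii).
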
 
 
 \begin{proof} Consider the differences $a-b$, $b-c$, $c-a$
     in the first case of FLT.
     If two of them are divisible by $p$, we obtain
     $a \equiv b \equiv c \not \equiv 0 \pmod {p}$, then since
     $a+b+c \equiv 0 \pmod {p}$, we get $3\,a\equiv 0 \pmod {p}$ which
     implies $p=3$. So, if $p>3$, there exist two differences having 
     the first required property, and called $y-x$, $y-z$.
     
     \smallskip
     The second condition is
     satisfied for any sum and any $p\geq 3$.
     
     \smallskip
     The case $p=3$ in the first case of FLT is 
     clear since   $a \equiv b \equiv c \equiv \pm 1$ $\pmod {3}$.
      
     \smallskip
   In the second case of FLT, we take $y=c \equiv 0 \pmod {p}$ so that all
   the conditions in (iii) are satisfied (we put $y=c$
   instead of $z=c$, to get, for $x+y\,\zeta$,  a $p$-primary pseudo-unit instead of
   a number  $x+y\,\zeta \in {\mathfrak p}$).
     \end{proof}
     
 Note that for $p>3$ in the first case, $x-z$ may be divisible by $p$ under some
circumstances (e.g. under the necessary condition
$2^{p-1} \equiv 1 \pmod {p^2}$ since, from 
$x^p+y^p+z^p=0$, we get $2\,z^p+ y^p \equiv 0 \pmod {p^2}$).

 \subsection{Statement of a stronger conjecture than FLT}
 
 We have given in [Gr1] a conjecture which implies FLT and which is 
 not covered by Wiles proof; we recall here
 the  statement, which will be called the 
  strong Fermat last theorem (SFLT).
  
\begin{conj}  Let $p$ be a prime number, $p>2$.
Then for $u,\,v \in \Z$, with g.c.d.\,$(u,v)=1$, the equation:
$$(u+v \,\zeta)\,\Z[\zeta] ={\mathfrak w}_1^p \  {\rm or}\  
{\mathfrak p}\, {\mathfrak w}_1^p $$
(depending on whether $u+v\not \equiv 0 \pmod {p}$ or not),
equivalent to:
$$\No_{K/\Q}(u+v \,\zeta) = w_1^p \  {\rm or}\  p\,w_1^p,
\ \,w_1= \No_{K/\Q}({\mathfrak w}_1) \in 1 + p\,\Z,$$
where ${\mathfrak w}_1$ is an ideal of $K$ (necessarily prime to ${\mathfrak p}$),
has no solution for $p>3$ except the trivial ones:
 $u+v \,\zeta = \pm 1$, $\pm \zeta$, $\pm (1+\zeta)$,
 and $\pm (1-\zeta)$.~\hfill\fin
\end{conj}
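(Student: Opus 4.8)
The plan is to argue by contradiction. Fix $p>3$ and suppose there is a nontrivial pair $(u,v)$ with $\gcd(u,v)=1$ and $(u+v\zeta)\Z[\zeta]=\mathfrak{w}_1^p$ (the case $\mathfrak p\,\mathfrak w_1^p$ being entirely parallel, and equivalent to the norm relation $\No_{K/\Q}(u+v\zeta)=\frac{u^p+v^p}{u+v}=w_1^p$ or $p\,w_1^p$). The first step is to convert this single ideal equation into a statement about \emph{primary pseudo-units}: setting $\alpha:=\frac{u+v\zeta}{u+v\zeta^{-1}}$, the element $\alpha$ generates the $p$-th power of an ideal, has norm $1$ down to the maximal real subfield, and — by the same local argument used in the footnote to (i) of the Prerequisites — is congruent to $1$ modulo a high power of $\mathfrak p$. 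So a nontrivial solution manufactures a nontrivial primary relative pseudo-unit living in a prescribed $\omega$-eigenspace, and the entire problem becomes whether such an object can be forced to be a genuine global $p$-th power (hence trivial).

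Next I would deploy the governing fields. For a prime $q\neq p$ satisfying $q\not\equiv 1\pmod p$ and $q^{p-1}\not\equiv 1\pmod{p^2}$, with $n\mid q-1$ chosen in terms of $q,u,v$, the decomposition of $q$ in the canonical family $\mathcal F_n$ of $p$-cyclic $p$-ramified subextensions of $\Q(\mu_n)$ detects exactly whether the local behaviour of $u+v\zeta$ at $q$ is compatible with the global $p$-th power structure that the solution demands. Via the governing-field decomposition criterion announced in the introduction (Theorem 4), the existence of $(u,v)$ forces $\mathfrak q^{1-c}$ to be a primary pseudo-$p$-th-power in $\Q(\mu_{q-1})$ for an entire infinite set of admissible $q$ — a Frobenius-type condition inside the maximal $p$-elementary $p$-ramified extension $H_{\Q(\mu_{q-1})}{\scriptstyle[p]}/\Q(\mu_{q-1})$. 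The crux is then a density count: by \v Cebotarev the Frobenius of $q$ equidistributes in the relevant Galois group, so the imposed condition can hold across all admissible $q$ only if the governing Frobenius class is systematically trivial, equivalently only if the $\omega$-component of the $p$-rank of the $p$-ramified class group of $\Q(\mu_{q-1})$ vanishes for infinitely many (or all large) $q$. Proving that this systematic vanishing cannot occur would close the contradiction and establish SFLT.

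The main obstacle is precisely this final incompatibility, and it is structural rather than computational. The governing field $\Q(\mu_{q-1})$ \emph{varies with} $q$: its degree grows and its class-field invariants change with $q$, so one cannot fix one extension and invoke \v Cebotarev in the standard way either to forbid or to produce the critical primes. What is really required is \emph{uniform} control of the $p$-part of the $p$-ramified (ray class) groups across the infinite family $\{\Q(\mu_{q-1})\}_q$ — the sort of statement accessible today only under Greenberg-type or Cohen--Lenstra-type heuristics, or through the analytic or geometric input the authors explicitly flag as the likely way forward. I therefore expect the class-field-theoretic reduction of the first two paragraphs to go through cleanly and unconditionally, while the density/rank incompatibility remains the one genuinely open point; this is exactly why the statement must be posed as a conjecture rather than proved for $p>3$.
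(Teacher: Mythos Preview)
The statement is Conjecture~1 in the paper and is \emph{not} proved there; the paper develops the governing-field machinery precisely as an approach toward it and explicitly leaves the conjecture open (see the Conclusion, Section~10). Your final paragraph recognizes this, and your overall outline --- reduce to a primary pseudo-unit in the $\omega$-eigenspace, pass to the governing fields $\Q(\mu_{q-1})$, and seek a density incompatibility via \v Cebotarev --- is exactly the paper's programme.

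One point in your second paragraph is stated backwards. A nontrivial solution $(u,v)$ does \emph{not} force $\mathfrak q^{1-c}$ to be of the form $\mathfrak a^p(\alpha)$ with $\alpha\equiv 1\pmod{p^2}$. The logic runs the other way: Theorem~2 shows that \emph{if} a prime $q$ with $q\not\equiv 1\pmod p$, $q^{p-1}\not\equiv 1\pmod{p^2}$ satisfies that condition on $\mathfrak q^{1-c}$, then $\mathfrak q_\xi$ totally splits in every $F_\xi/L$, whence the symbol $\big(\frac{\eta_1}{\mathfrak Q}\big)_M$ is trivial --- contradicting the value $\zeta^{\frac{1}{2}\frac{v-u}{v+u}\kappa}$ that Theorem~1 imposes under a solution (when $u-v\not\equiv 0\pmod p$). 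What a solution genuinely forces is the rigidity statement of Theorem~4: the normalized symbol $\big[\frac{F_*/L}{\mathfrak q_*}\big]_{\rho,n}$ depends only on $\rho$ and $n$, not on the individual $q\in Q_\rho$. So the hoped-for contradiction is not ``a principality condition holds for all $q$'' but ``a fixed Frobenius value is dictated across infinitely many varying governing fields,'' and one must show this is incompatible with honest equidistribution.

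Your identification of the obstruction --- no uniform control over the $p$-ramified invariants of the moving family $\{\Q(\mu_{q-1})\}_q$, so \v Cebotarev cannot be applied to a single fixed extension --- is correct and matches the paper's own assessment. The paper also supplies an additional structural reason (Theorem~8 and Section~9) why $p=3$ genuinely evades the method while $p>3$ should not: an order-$6$ automorphism $T$ of $\Q(Y)$ exists for $p=3$ that cycles the six solution families and makes the three possible Frobenius values equidistribute, whereas no analogous automorphism exists for $p>3$. This is worth folding into your heuristic, since it is the paper's main piece of evidence that the density argument ought to succeed for $p>3$.
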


The difference between FLT and SFLT is the following. A solution
of Fermat${}'$s equation $u^p+v^p+w^p=0$ comes from a  solution
of $(u+v \,\zeta)\,\Z[\zeta] ={\mathfrak w}_1^p \  {\rm or}\  
{\mathfrak p}\, {\mathfrak w}_1^p$ (with the same $u, v$ as above), if and only if
there exists  $w_0 \in \Z$ such that $u+v = w_0^p$ or $p^{\nu p - 1}\, 
w_0^p$ since $\No_{K/\Q}(u+v \,\zeta) = w_1^p$  or $p\,w_1^p$,
giving $w := -w_0w_1$ or $- p^{\nu} w_0w_1$
for a solution of Fermat${}'$s equation.

\smallskip
As for FLT we can speak of the {\it first case} of the conjecture
(or of the equation) when:
$$u\,v\,(u+v) \not\equiv 0 \pmod {p}$$
and of the {\it second case} when: 
$$u\,v\equiv 0 \pmod {p}$$ (which implies $u$ or
$v\equiv 0 \pmod {p^2}$ as for  Fermat${}'$s equation); then the case:
$$u+v \equiv 0 \pmod {p}$$
will be called  the {\it special case} for SFLT.

\smallskip 
For the first case of SFLT, we have not necessarily 
$u-v \not\equiv 0 \pmod {p}$,%
\,\footnote{\,If $u-v \equiv 0 \pmod {p}$,
then  $\alpha:= \frac{u\zeta+v}{u+ v\zeta}$ is a pseudo-unit
congruent to 1 modulo $p$; so, from [Gr1, Theorem 2.2, Remark 2.3, (ii)], $\alpha$
is locally a $p$th power giving 
$\alpha \equiv 1 \pmod {{\mathfrak p}^{p+1}}$, then
$\frac{(u-v)(\zeta-1)}{u+ v\zeta} \equiv 0 \pmod {{\mathfrak p}^{p+1}}$,
hence $u-v \equiv 0 \pmod {p^2}$. This is valid in the Fermat case if 
$x-z\equiv 0\pmod {p}$, and gives $x-z\equiv 0 \pmod {p^2}$.}
except for $p=3$ since
$u\,v\,(u+v) \not\equiv 0 \pmod {3}$ implies $u\equiv v\equiv \pm 1
 \pmod {3}$, hence $u-v \equiv 0 \pmod {3}$. See the forthcoming
 Remark 1 for  $p=3$.

\medskip
In the sequel, we will assume that $(x,\,y,\,z)$ is a solution
of Fermat${}'$s equation such that the conditions of Lemma~1 are satisfied 
(i.e., $y-x$ and $y-z$ are prime to $p$ when $p>3$, and
if $p \div  xyz$, we suppose that $p \div  y$).

\smallskip
In that case we will have  two similar counterexamples to the 
above conjecture:
 $(x+y \,\zeta)\,\Z[\zeta] = {\mathfrak z}_1^p$,
 $(z+y \,\zeta)\,\Z[\zeta] = {\mathfrak x}_1^p$ (this concerns the first or second case 
 of SFLT).
Then it will exist the third counterexample
$(x+z \,\zeta)\,\Z[\zeta] = {\mathfrak y}_1^p$ (if $p\notdiv y$)
or ${\mathfrak p}\, {\mathfrak y}_1^p$ (if $p \div y$). 

\smallskip
More precisely, the first case of SFLT implies the first case of FLT,
the second or the special case of SFLT implies the second case of FLT,
and FLT holds as soon as first and second cases, or first and special cases
of SFLT, hold.

\begin{rema}{\rm  Conjecture 1 is false
for $p=3$  since for $\zeta = j$ of order 3 we have 
the six kind of parametric formulas giving all solutions:

\medskip \smallskip
\centerline{$u+v\,j = j^h\,(s+t\,j)^3,\ {\rm or}\ j^h\,(1-j)\,(s+t\,j)^3,
\  s,\,t \in \Z,\  s+t \not\equiv 0 \pmod {3},$}

\medskip\noindent
${\rm g.c.d.\,}(s,t)=1$, and $0\leq h< 3$.
These solutions concern all the cases:

\smallskip \smallskip\noindent
-- first case (for which $u-v \equiv 0 \pmod {9}$):

\smallskip\noindent
\ \ {\footnotesize $\bullet$} $(u,v) = (-s^3 - t^3+3s^2 t, -s^3-t^3+3s t^2)$,
from  $u+v\,j = j^2\,(s+t\,j)^3$;

\smallskip\smallskip\noindent
-- second case (for which $u$ or $v \equiv 0 \pmod {9}$):

\smallskip\smallskip\noindent
\ \ {\footnotesize $\bullet$} $(u,v) = (3s t^2 -3s^2 t, s^3+t^3 - 3 s^2 t)$,
from  $u+v\,j = j\,(s+t\,j)^3$;

\smallskip\noindent
\ \ {\footnotesize $\bullet$} $(u,v) = (s^3+t^3-3s t^2, 3 s^2 t -3s t^2)$,
from  $u+v\,j = (s+t\,j)^3$;

\smallskip\smallskip\noindent
-- special cases (for which $u+v \equiv 0 \pmod {3}$):

\smallskip\noindent
\ \ {\footnotesize $\bullet$} $(u,v) = (s^3+t^3+3s^2 t -6s t^2, -s^3-t^3+6s^2 t -3s t^2)$,
from  $u+v\,j =  (1-j)\,(s+t\,j)^3$;

\smallskip\smallskip\noindent
\ \ {\footnotesize $\bullet$} $(u,v) = (s^3+t^3 - 6 s^2 t + 3s t^2, 2s^3 + 2t^3 -3 s^2 t -3s t^2)$,
from  $u+v\,j =  j\,(1-j)\,(s+t\,j)^3$;

\smallskip\smallskip\noindent
\ \ {\footnotesize $\bullet$} $(u,v) = (-2s^3 -2t^3+3s^2 t +3s t^2, -s^3-t^3 -3 s^2 t +6 s t^2)$,
from  $u+v\,j =  j^2\,(1-j)\,(s+t\,j)^3$.

\medskip
The  special cases are not similar since for the first solution $u+v \equiv 
0$ $\pmod {9}$ and for the others, $u+v \equiv \pm 3 (s^3+t^3) \equiv\pm 3 (s+t) 
 \equiv \pm 3  \pmod {9}$.
 
\smallskip
Contrary to the case of Fermat${}'$s equation,
we will not take into account the symmetries of the writing of the 
solutions $(u,v)$, especially for the second case (this will be important 
in  Section 9)
but we will not distinguish $(u,v)$ from $(-u,-v)$.~\hfill\fin }
  \end{rema}

\medskip
 Thus a proof of SFLT must eliminate, in a natural way,
 the case $p=3$ which is an obstruction for the relevance of
 the method developed  here. We will explain
 later (Section 9) for what reasons this case is exceptional
 and finally does not matter, a priori, for the general theory; we are 
 obliged to differ this justification because we need many general
 material. Meanwhile, for a more comprehensive information, we do not
always suppose $p>3$ in the development of the first parts of the study.

\subsection{The cyclotomic field $\Q(\zeta)$ and the character $\omega$ }

We first recall the algebraic context concerning the
cyclotomic field $K = \Q(\zeta)$.

 \begin{defi} {\rm (i) Let $g:={\rm Gal\,}(K/\Q)$ and
let $\omega$ be the character of Teich\-m\"uller of $g$ (i.e.,
the character with values in $\mu_{p-1}(\Q_p)$ such that
for the $s_k \in g$ defined by $s_k(\zeta) = \zeta^k$,
$k \not\equiv 0 \pmod {p}$, $\omega(s_k)$ (also denoted $\omega(k)$)
is the unique  $(p-1)$th root
of unity in $\Q_p$, congruent to $k$ modulo $p$).

\smallskip
(ii) The idempotent corresponding to $\omega$ is:
\vspace{-0.1cm}
$$ e_\omega :=  \hbox{$\frac{1}{p-1}$} \sm_{s \in g} \omega^{-1}(s)\, 
s =
\hbox{$\frac{1}{p-1}$} \sm_{k=1}^{p-1} \omega^{-1}(k)\, s_k\,\in 
\Z_p[g] .$$

(iii) We represent  $e_\omega$ in $\Z[g]$ modulo $p$ and still denote 
it $e_\omega$ (this means that $e_\omega\, s_k \equiv
\omega(k)\, e_\omega \equiv k\, e_\omega \pmod {p\,\Z[g]}$ and that
$e_\omega \,(1-e_\omega) \in p\,\Z[g]$).

\smallskip
Put $e_\omega := \sum_{k=1}^{p-1}u_k\,s_k$, $u_k \in\Z$, $u_k \equiv
\frac{1}{p-1} \omega^{-1}(k)  \equiv \frac{k^{-1}}{p-1} \pmod {p}$.

\smallskip
We have $\omega^{-1}(s_{p-k}) = -\omega^{-1}(s_k)$ since $\omega 
(s_{-1})=-1$; thus we can 
suppose that $u_{p-k} =- u_k$ for $1 \leq k \leq \frac{p-1}{2}$.
Then we have  $e_\omega = (1- s_{-1})\,e'_\omega$ with $e'_\omega = 
\sum_{k=1}^{\frac{p-1}{2}}u_k\,s_k$.

\smallskip
In that case, if an element $A$ of a multiplicative $\Z[g]$-module 
${\mathcal M}$ is fixed  by the complex conjugation $s_{-1}$ of $K$,
we then have $A^{e_\omega} = 1_{\mathcal M}$ (the unit element of ${\mathcal M}$).
 
\smallskip
(iv) We have $\zeta^{e_\omega} = \zeta$ for any representative 
$e_\omega$.~\hfill\fin}
\end{defi}

\begin{ex} {\rm For $p=3$ we have  $e_\omega = \frac{1}{2}(1 - s)$, 
with $s := s_{-1}$. Thus a representative with integer coefficients may be
 $e_\omega = s - 1$.
    
    \smallskip
    For $p=5$, we have for instance $e_\omega = -1+2\,s_2 - 2\,s_3
    + s_4 =  -1+2\,s + s^2 - 2\,s^3 = (1-s^2)\,(2\, s -1)$, with $s 
   := s_2$.~\hfill\fin}
    \end{ex}

Recall that the unit group $E$ of $K$ is equal to
$\langle\,\zeta\,\rangle \oplus E^+$, where $E^+$ is the group of
units of the maximal real subfield $K^+$ of $K$ (see [Wa, Prop.\,1.5]).

\smallskip
Thus if $\varepsilon = \zeta^h\,\varepsilon^+$,
$\varepsilon^+ \in E^+$,  we get
$\varepsilon^{e_\omega} = \zeta^h$.

\subsection{The principles of the method -- The fundamental relation}

The purpose of this text is to exa\-mine some properties of the 
arithmetic of the fields $\Q(\mu_{q-1})$, in relation with a 
nontrivial solution of the SFLT equation:
 $$(u+v\,\zeta)\,\Z[\zeta] = 
 {\mathfrak w}_1^p\,\ {\rm or}\ \,{\mathfrak p}\,{\mathfrak w}_1^p, $$
 with g.c.d.\,$(u,v) = 1$,
 for  prime numbers $q$ such that $q \notdiv u\,v\,$ and 
the order $n$ of $\frac{v}{u}$ modulo $q$ is prime to $p$.

\smallskip
The cases where $n\leq 2$ (i.e., $q \div  u^2-v^2$) are particular, 
especially in the case where $(u,v)$ is a part of a solution
 $(x, y, z)$ of Fermat${}'$s equation, and give  Furtw\"angler${}'$s 
 theorems [Fur] (see Corollaries 2 and 3
 to Lemma 3 for a generalization of  Furtw\"angler${}'$s
 theorems for the SFLT equation, and Remark 3 for the classical case of 
 the FLT equation; see also [Mih] in the context of a
 Nagell--Ljunggren equation, which is the particular case
 of the SFLT equation with $v=1$).

\smallskip
The cases where $n$ is divisible by any nontrivial power of $p$ give technical
complications and are of a different nature. Some complements in this 
direction are developed in [Que] where similar studies are proposed.

\begin{lemm}   Let $u$, $v$ be  relatively prime 
 integers, let $n \geq 1$, and let $q$ be a prime number.
 Then the two following properties are equivalent:

\smallskip
(i) $q \notdiv n$ and $q \div \Phi_n(u,v) :=
 \prd_{\xi'\,{\rm of\, order}\,\,n} (u\,\xi' - v)$;

\smallskip
(ii) $q \notdiv u \,v$ and $\frac{v}{u}$ is of order $n$ modulo $q$.
\end{lemm}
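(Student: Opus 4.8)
The plan is to reduce the equivalence to the factorisation of $X^n-1$ into cyclotomic polynomials over $\F_q$, using that this factorisation is separable precisely when $q\notdiv n$. Extend the notation of the statement to a homogeneous integer polynomial $\Phi_d(U,V):=\prd_{\xi'\,{\rm of\,order}\,d}(U\xi'-V)\in\Z[U,V]$ of degree $\varphi(d)$. Two elementary identities will drive everything. Grouping all $n$-th roots of unity according to their order gives $\prd_{d\div n}\Phi_d(U,V)=\pm\,(U^n-V^n)$; comparing with the one-variable cyclotomic polynomial yields $\Phi_n(U,V)=(-1)^{\varphi(n)}\,U^{\varphi(n)}\,\Phi_n(V/U)$. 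The second identity shows that, when $q\notdiv u$, the congruence $\Phi_n(u,v)\equiv0\pmod q$ is equivalent to $\frac vu$ being a root of the reduction $\ov{\Phi_n}$ in $\F_q[X]$.

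For the implication (ii)$\Rightarrow$(i), assume $q\notdiv uv$ and that $\frac vu$ has order $n$ modulo $q$. Since this order divides $q-1$ we have $n<q$, hence $q\notdiv n$. From $(\frac vu)^n\equiv1$ I get $q\div u^n-v^n=\pm\prd_{d\div n}\Phi_d(u,v)$, so $q\div\Phi_e(u,v)$ for some $e\div n$. As $\Phi_e(u,v)$ divides $u^e-v^e$, this forces $\frac vu$ to have order dividing $e$; since that order is $n$ and $e\div n$, we conclude $e=n$, which is exactly~(i).

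For the implication (i)$\Rightarrow$(ii), assume $q\notdiv n$ and $q\div\Phi_n(u,v)$. If $q\div u$ then reducing the defining product gives $\Phi_n(u,v)\equiv(-v)^{\varphi(n)}\pmod q$, whence $q\div v$, contradicting $\gcd(u,v)=1$; a symmetric argument rules out $q\div v$. Thus $q\notdiv uv$ and $r:=\frac vu\in\F_q^{\times}$ is well defined, with $\ov{\Phi_n}(r)=0$ by the identity above. Since $\Phi_n(u,v)$ divides $u^n-v^n$ we have $r^n=1$, so $\delta:={\rm ord}(r)$ divides $n$. Because $q\notdiv n$ the polynomial $X^n-1$ is separable over $\F_q$ (its derivative $nX^{n-1}$ shares no root with it, as $n\ne0$ in $\F_q$), so the reductions $\ov{\Phi_d}$, $d\div n$, are pairwise coprime in $\F_q[X]$. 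Hence $r$ is a root of exactly one of them, necessarily $\ov{\Phi_n}$; but $r^\delta=1$ makes $r$ a root of $\prd_{d\div\delta}\ov{\Phi_d}$, so the index $n$ must divide $\delta$, and with $\delta\div n$ this gives $\delta=n$, which is~(ii).

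The only delicate point, and the sole place where the hypothesis $q\notdiv n$ is indispensable, is the pairwise coprimality of the reductions $\ov{\Phi_d}$ over $\F_q$: when $q\div n$ these factors share roots and an element of order strictly less than $n$ can satisfy $\ov{\Phi_n}(r)=0$, which is exactly the classical exceptional behaviour that the hypothesis $q\notdiv n$ removes. Everything else is routine bookkeeping of signs and of the homogenisation $U\mapsto V/U$.
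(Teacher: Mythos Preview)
Your proof is correct. The direction (ii)$\Rightarrow$(i) is essentially identical to the paper's argument (factoring $u^n-v^n$ into the $\Phi_d(u,v)$ and using the definition of order; the paper deduces $q\nmid n$ from $(\frac{v}{u})^q\equiv\frac{v}{u}$, you from $n\mid q-1$, which amounts to the same thing).

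For (i)$\Rightarrow$(ii) the two arguments genuinely diverge. You stay entirely in elementary polynomial algebra over $\F_q$: the hypothesis $q\nmid n$ makes $X^n-1$ separable, hence the reductions $\ov{\Phi_d}$ ($d\mid n$) are pairwise coprime, and $r=\frac{v}{u}$ can be a root of only one of them. The paper instead works in the ring of integers of $\Q(\mu_n)$: from $q\mid\Phi_n(u,v)$ it observes that $(q,\,u\xi-v)$ is a prime ideal above $q$ of residue degree~1, unramified since $q\nmid n$; this forces $q\equiv1\pmod n$ and, via the residue map $\xi\mapsto\frac{v}{u}$, gives that $\frac{v}{u}$ has exact order $n$ in $\F_q^\times$. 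Your route is more self-contained and requires no algebraic number theory; the paper's route has the advantage of introducing, already inside the proof, the prime ideal ${\mathfrak q}_\xi=(q,\,u\xi-v)$ which is the central object in everything that follows (Definition~3 and beyond).
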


\begin{proof} Suppose that $q \div \Phi_n(u,v)$ and $q\notdiv 
n$. Then  $q \notdiv u\,v$ since $\Phi_n(u,v)$ is an homogeneous form
 $u^{\phi(n)} \pm\cdots \pm v^{\phi(n)}$
in coprime integers $u$, $v$, where $\phi(n)$
is the Euler indicator.

\smallskip
For $\xi$ of order $n$ fixed, the  ideal $(q, u\,\xi - v)$ of 
 the field $\Q(\mu_n)$ is a prime ideal dividing $q$ because of the
relation $q \,\big\vert\, \Phi_n(u,v) =\!\! \prd_{\xi'\,{\rm of\, order}\,\,n} (u\,\xi'- v)$;
moreover $(q, u\,\xi - v)$ is of degree 1,  unramified in $\Q(\mu_n)/\Q$ 
(since $q \notdiv n$), thus we get $q \equiv 1 \pmod {n}$ and the fact that
$\frac{v}{u}$ is of order $n$ modulo~$q$.

\smallskip
If  $q \notdiv u\,v$ and $\frac{v}{u}$ is of order $n$ modulo $q$, 
then 
$u^n-v^n \equiv 0 \pmod {q}$. From the relation $u^n-v^n =
\prd_{d \div n} \Phi_d(u,v)$ we deduce that there exists $m \div n$
such that $q \div \Phi_m(u,v)$, which implies $q  \div u^m-v^m$,
hence  $m=n$ by definition of the order; since we have 
$(\frac{v}{u})^q \equiv \frac{v}{u} \pmod {q}$, it is clear that
the order $n$ cannot be divisible by $q$, proving the lemma.
\end{proof}

\begin{cor}  Consider the set of numbers of the form $\Phi_n(u,v)$
when $n$ varies in $\N \Sauf \{0\}$.

\smallskip\noindent
Then a prime number $q$ divides
one of the numbers $\Phi_n(u,v)$, $n \not\equiv 0 \!\pmod {q}$, 
if and only if $q\notdiv u\,v$.
When these conditions ($q \notdiv n$, $q \div \Phi_n(u,v)$) are satisfied, then
$n$ is unique.~\hfill\fin
\end{cor}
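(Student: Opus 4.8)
The plan is to derive both assertions directly from Lemma 3, which supplies the exact dictionary between the divisibility $q \div \Phi_n(u,v)$ together with $q \notdiv n$, and the statement that $\frac{v}{u}$ is of order $n$ modulo $q$. In effect the corollary is nothing but the translation of Lemma 3 through the group-theoretic notion of order in $(\Z/q\Z)^\times$.

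First I would prove the equivalence asserted in the first paragraph. One implication is immediate: if $q \div \Phi_n(u,v)$ for some $n$ with $q \notdiv n$, then the implication (i)$\,\Rightarrow\,$(ii) of Lemma 3 gives in particular $q \notdiv u\,v$. For the converse, assume $q \notdiv u\,v$; then $u$ and $v$ are invertible modulo $q$, so $\frac{v}{u}$ is a well-defined element of the cyclic group $(\Z/q\Z)^\times$, and I let $n$ be its order. By Lagrange's theorem $n \div q-1$, so that $1 \leq n < q$ and hence $q \notdiv n$. The implication (ii)$\,\Rightarrow\,$(i) of Lemma 3 then yields $q \div \Phi_n(u,v)$, completing the equivalence.

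For the uniqueness statement, suppose $n_1$ and $n_2$ both satisfy $q \notdiv n_i$ and $q \div \Phi_{n_i}(u,v)$. Applying Lemma 3 to each index shows that $n_1$ and $n_2$ both equal the order of $\frac{v}{u}$ modulo $q$; since the order of an element of a finite group is uniquely determined, $n_1 = n_2$. The one point requiring a moment's attention is the converse direction, where I must check that the order $n$ produced from $q \notdiv u\,v$ automatically satisfies $q \notdiv n$ before Lemma 3 can be invoked; this is immediate from $n \div q-1$, so no genuine obstacle remains.
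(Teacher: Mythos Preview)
Your argument is correct and is exactly the intended one: the paper states this corollary immediately after the lemma and offers no separate proof, treating it as a direct translation of that lemma into the language of orders in $(\Z/q\Z)^\times$. The only correction needed is the reference: throughout your proof you cite ``Lemma~3'', but the equivalence $\big(q\notdiv n$ and $q\div\Phi_n(u,v)\big)\Longleftrightarrow\big(q\notdiv uv$ and $\frac{v}{u}$ has order $n$ mod $q\big)$ is the paper's Lemma~2, while Lemma~3 is the unrelated ``fundamental relation'' about $(u+v\zeta)^{e_\omega}$.
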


\smallskip
If $q$ is an arbitrary given prime number, to have $q  \div  
\Phi_n (u,v)$ with $n>2$ and $q \notdiv n$, we must first verify that 
$q\notdiv u\,v(u^2-v^2)$ and then compute the order $n$ of 
$\frac{v}{u}$ modulo $q$ which is then a divisor of 
$q-1$.\,\footnote{\,It is clear that the trivial solutions
$u+v \,\zeta =  \pm 1$, $\pm \zeta$, $\pm (1\pm\zeta)$ of the SFLT 
equation are precisely such
that $u\,v\,(u^2-v^2) = 0$, in which case such primes $q$ do not exist,
which has perhaps a significant meaning. }

\begin{defi} {\rm  Let $q \ne p$ be a prime number.

\smallskip
(i) Fermat quotients.
 Let $f$ be the residue degree of $q$ in $K/\Q$ and 
let   $\kappa := \Frac{q^f-1}{p}$.
Since $f  \div  p-1$, we have $\kappa \equiv 0 \pmod {p}$ if and 
only if $q^{p-1}\equiv 1$ $\pmod  {p^2}$.

The integer  $\ov \kappa :=  
\Frac{q^{p-1}-1}{p}$ is called the Fermat quotient of $q$.
We have the relation $\ov \kappa  \equiv \Frac {p-1}{f} \,\kappa
\equiv -\frac{1}{p}\,{\rm log}(q) \pmod {p}$, where ${\rm log}$
is the $p$-adic logarithm.

\smallskip
  (ii) Power residue symbols. Let us recall the definition
  and properties of the $p$th power 
residue symbols  $\big(\Frac{\hbox{\footnotesize$\ \bullet\ $} 
}{\hbox{\footnotesize $\ \bullet\ $} }\big)$
in $K$ and $M:= \Q(\mu_{n}) K$, $n \div q-1$,
with values in  $\mu_p$. Let ${\mathfrak q}$ be a prime ideal dividing $q$ in
$\Q(\mu_{n})$.

\smallskip
If $\alpha \in M$ is prime to
${\mathfrak Q} \div  {\mathfrak q}$ in $M$, then let $\ov\alpha$ 
be the image of $\alpha$ in
the residue field $Z_M/{\mathfrak Q}\simeq Z_K/{\mathfrak q}_K
\simeq \F_{q^f}$ for ${\mathfrak q}_K = Z_K \cap {\mathfrak Q}$
(indeed, $q$ totally splits in $M/K$); since
$\zeta \in Z_M$, the image $\ov \zeta$ of $\zeta$ is of order $p$
(since $\zeta \not\equiv 1 \pmod {\mathfrak Q}$) and we can put
$\ov\alpha_{}^{\,\kappa} = \ov \zeta^{\,\mu}$, $\mu \in \Z/p\Z$,
which defines the $p$th power residue symbol
$\Big(\Frac{\alpha}{{\mathfrak Q}}\Big)_{\!\!M} :=
\zeta^\mu$; this symbol is equal to 1 if and only if $\alpha$ is a
local $p$th power at ${\mathfrak Q}$ (see e.g. [Gr2,\,I.3.2.1, Ex.\,1]).

\smallskip
With this definition, for any automorphism $\tau\in {\rm Gal}(M/\Q)$ one
obtains, from $\alpha_{}^{\,\kappa} \equiv \zeta^{\,\mu}
\pmod {\mathfrak Q}$,
$\tau \alpha_{}^{\,\kappa}  \equiv \tau \zeta^\mu
\pmod {\tau {\mathfrak Q}}$,
thus:
$$\Big(\Frac{\tau \alpha}{\tau {\mathfrak Q}}\Big)_{\!\!M}=
\tau \Big(\Frac{\alpha}{{\mathfrak Q}}\Big)_{\!\!M} =
 \Big(\Frac{\alpha}{{\mathfrak Q}}\Big)^{\omega(\tau)}_{\!\!M}
 = \zeta^{\mu\,\omega(\tau)}. $$

If $\alpha \in K$, for any ${\mathfrak q}_K \div q$ in $K$ we get
$\Big(\Frac{\alpha}{{\mathfrak q}_K}\Big)_{\!\!K} =
\Big(\Frac{\alpha}{{\mathfrak Q}}\Big)_{\!\!M}$
for any ${\mathfrak Q} \div {\mathfrak q}_K$ in~$M$.

\smallskip    
In particular this implies $\Big(\Frac{\zeta}{{\mathfrak 
q}_K}\Big)_{\!\!K} = \zeta^{\kappa}$
(the symbol of $\zeta$ does not depend on the choice of ${\mathfrak 
q}_K  \div  q$).~\hfill\fin }
\end{defi}
 
\smallskip
We return to the context of  the SFLT equation
 $(u+v\,\zeta)\Z[\zeta] = {\mathfrak w}_1^p\ \,{\rm or}\ \,
{\mathfrak p}\, {\mathfrak w}_1^p$,  g.c.d.\,$(u,v)=1$ 
 (the second case corresponds to 
$p \div u\,v$ and the special case to $p \div  u+v$).

\smallskip
Put $\gamma_\omega := (u+v\,\zeta)^{e_\omega}$ for a solution
$(u,v)$ of the above SFLT equation.

\medskip 
In the context of a solution $(x,y,z)$ of Fermat${}'$s equation we will have 
analogous computations with $\gamma_\omega := 
(x+y\,\zeta)^{e_\omega}$ and the relation 
$(x+y\,\zeta)\Z[\zeta] = {\mathfrak z}_1^p$,
and also with $\gamma'_\omega := (z+y\,\zeta)^{e_\omega}$
and the relation $(z+y\,\zeta)\Z[\zeta] = {\mathfrak x}_1^p$.
Then in the first case, $\gamma''_\omega := (x+z\,\zeta)^{e_\omega}$
with the relation $(x+z\,\zeta)\Z[\zeta] = {\mathfrak y}_1^p$
can be used knowing that $z-x$ may be divisible by $p$.
In the second case, $\gamma''_\omega$ is of ${\mathfrak p}$-valuation 
1 since  $(x+z\,\zeta)\Z[\zeta] = {\mathfrak p}\, {\mathfrak y}_1^p$
and this gives a special case of the equation associated to SFLT.

\smallskip
We know, from Stickelberger, that the $\omega$-component of the 
$p$-class group of $K$ is 
trivial  (also an application of the reflection theorem, see
 [Gr2, II.5.4.6.3]); so the ideal class  $\cl({\mathfrak w}_1)^{e_\omega}$
 is trivial.\,\footnote{\,Since here the class of ${\mathfrak w}_1$ is 
 of order 1 or $p$, the choice of any representative $e_\omega$ does 
 not affect this property.}

\smallskip
Write:
$${\mathfrak w}_1^{e_\omega} =
 \delta_\omega\,\Z[\zeta],\ \, \delta_\omega \in K^\times. $$
 
 Then we have:
 $$\gamma_\omega := (u+v\,\zeta)^{e_\omega} = \varepsilon_\omega\,
 \delta_\omega^p \ \,{\rm or}\ \, (\zeta-1)^{e_\omega} \varepsilon_\omega\,
 \delta_\omega^p, $$
 where $\varepsilon_\omega\in E$. To simplify, we put $\pi := \zeta-1$.
 
\begin{lemm} {\rm (The fundamental relation).}
Let $(u,v)$ be a solution of the equation  $(u+v\,\zeta)\Z[\zeta] = 
 {\mathfrak w}_1^p$   or ${\mathfrak p}\, {\mathfrak w}_1^p$,
  g.c.d.\,$(u,v)=1$ (since the 
  cases where $u\,v\,(u+v)=0$ are obvious directly, we exclude them). 
   
    (i)  In the nonspecial cases ($u+v \not\equiv 
0 \pmod {p}$) for  $p\geq 3$, we have $\gamma_\omega = 
(\frac{u}{v}+\zeta)^{e_\omega} =
(1+\frac{v}{u}\,\zeta)^{e_\omega} = 
\big(1+\frac{v}{u+v}\,\pi \big)^{e_\omega}
\in \zeta^{\frac{v}{u+v}}\cdot  K^{\times p}$.

\smallskip
(ii) In the special case ($u+v \equiv 
0 \pmod {p}$) for $p>3$, we have 
$\gamma_\omega = (1+\frac{v}{u}\,\zeta)^{e_\omega}
\in \zeta^{\frac{1}{2}} \cdot K^{\times p}$.\,\footnote{\,Where
$\zeta^{\frac{1}{2}}$ is the unique $p$th root of unity such 
that $(\zeta^{\frac{1}{2}})^2 = \zeta$; this convention will be used in a 
systematic way in the paper.}

\smallskip
(iii) In the special case ($u+v \not\equiv 
0 \pmod {3}$) for $p=3$, then
$\gamma_\omega = (1+\frac{v}{u}\,\zeta)^{e_\omega}
\in \zeta^{\frac{1}{2} - \frac{u+v}{3\,v}}\cdot K^{\times 3}$.
\end{lemm}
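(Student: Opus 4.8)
The plan is to identify the class of $\gamma_\omega$ in $K^\times/K^{\times p}$, the three formulas being incarnations of one computation that bifurcates according to whether $p\div u+v$. First I would settle the three equalities of (i). As $u$, $v$, $u+v$ lie in $\Q$ they are fixed by $g$, and for the chosen representative $e_\omega=\sum_k u_k\,s_k$ one has $\sum_k u_k=0$ (since $u_{p-k}=-u_k$), so $r^{e_\omega}=r^{\sum_k u_k}=1$ for every $r\in\Q^\times$. Factoring $u+v\,\zeta=v\,(\frac uv+\zeta)=u\,(1+\frac vu\,\zeta)$ and, through $\zeta=1+\pi$, writing $1+\frac vu\,\zeta=\frac{u+v}{u}\,(1+\frac{v}{u+v}\,\pi)$, the rational factors are annihilated by $e_\omega$ and the three displayed expressions coincide. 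The rewriting already exhibits the dichotomy: $\frac{v}{u+v}$ is a $p$-adic integer exactly in the nonspecial case.

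Next I would prove that $\gamma_\omega$ is a power of $\zeta$ modulo $K^{\times p}$. By triviality of the $\omega$-component of the $p$-class group we have $\gamma_\omega=\varepsilon_\omega\,\delta_\omega^p$ or $\pi^{e_\omega}\varepsilon_\omega\,\delta_\omega^p$, and as $\pi^{e_\omega}$ is a unit, in either case $\gamma_\omega$ is a unit times a $p$th power. Writing that unit as $\zeta^h\varepsilon^+$ with $\varepsilon^+\in E^+$, I would use $(\varepsilon^+)^{e_\omega}=1$ (because $\varepsilon^+$ is fixed by $c:=s_{-1}$), $\zeta^{e_\omega}=\zeta$, and the idempotence $e_\omega^2\equiv e_\omega\pmod{p\,\Z[g]}$, which gives $\gamma_\omega^{e_\omega}\equiv\gamma_\omega\pmod{K^{\times p}}$; together these yield $\gamma_\omega\equiv\gamma_\omega^{e_\omega}\equiv\zeta^h\pmod{K^{\times p}}$ for some $h\in\Z/p\Z$.

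It then remains to identify $h$ by reduction modulo $\pi^2$, via $\zeta^h\equiv1+h\,\pi$ and $\rho^p\equiv1\pmod{\pi^2}$ for $\rho\equiv1\pmod\pi$. In the nonspecial case $\gamma_\omega=\prod_k\big(1+\frac{v}{u+v}(\zeta^k-1)\big)^{u_k}$ is a principal unit; using $\zeta^k-1\equiv k\,\pi\pmod{\pi^2}$ the expansion gives $\gamma_\omega\equiv1+\frac{v}{u+v}\big(\sum_k k\,u_k\big)\pi$, and $\sum_k k\,u_k\equiv1\pmod p$ is precisely the content of $\zeta^{e_\omega}=\zeta$; hence $h\equiv\frac{v}{u+v}$, proving (i). In the special case I write $u+v\,\zeta=r\,\pi\,(1+s)$ with $r=\frac vu$ and $s=\frac{u+v}{v}\,\pi^{-1}\equiv0\pmod\pi$, so $\gamma_\omega=\pi^{e_\omega}(1+s)^{e_\omega}$. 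The cyclotomic unit $\pi^{e_\omega}$ I evaluate from $\pi^{1-c}=-\zeta$ and $e_\omega=(1-c)\,e'_\omega$: since $\zeta=\zeta^{e_\omega}=(\zeta^{1-c})^{e'_\omega}=(\zeta^2)^{e'_\omega}$ forces $\zeta^{e'_\omega}=\zeta^{\frac12}$, one gets $\pi^{e_\omega}=(-1)^{\sum_{k\le(p-1)/2}u_k}\zeta^{\frac12}\equiv\zeta^{\frac12}\pmod{K^{\times p}}$, as $-1=(-1)^p\in K^{\times p}$. The quotient $(1+s)^{e_\omega}=\gamma_\omega\,(\pi^{e_\omega})^{-1}$ is then also a power of $\zeta$ modulo $K^{\times p}$, and it is a principal unit; for $p>3$ one has $v_\pi\big(s_k(s)\big)\ge p-2\ge2$ for all $k$, so $(1+s)^{e_\omega}\equiv1\pmod{\pi^2}$ and hence lies in $K^{\times p}$, giving $h=\frac12$ and (ii).

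The hard part is $p=3$, where $v_\pi(s)=1$ and $(1+s)^{e_\omega}$ is no longer trivial modulo third powers, so its $\zeta$-exponent must be computed exactly. Here $e_\omega=s_{-1}-1$, whence $(1+s)^{e_\omega}=\frac{1+s_{-1}(s)}{1+s}$, and reducing modulo $\pi^2$ with the explicit relations $\pi^2=-3\,\zeta$ and $\pi\,(\zeta^2-1)=3$ produces the $\pi$-coefficient $\frac{2(u+v)}{3v}\equiv-\frac{u+v}{3v}\pmod3$. Adding the contribution $\frac12$ of $\pi^{e_\omega}$ gives $h=\frac12-\frac{u+v}{3v}$, which is (iii). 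This nonconstant correction, absent when $p>3$, is exactly the delicate point of the argument.
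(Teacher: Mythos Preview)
Your proof is correct and follows essentially the same architecture as the paper's: show $\gamma_\omega\in\zeta^h\cdot K^{\times p}$ via the idempotence of $e_\omega$ and the unit decomposition, then pin down $h$ by reduction modulo $\pi^2$, splitting off the factor $\pi^{e_\omega}$ in the special case. The only noteworthy variation is your derivation of $\pi^{e_\omega}\sim\zeta^{1/2}$: you use $\pi^{1-c}=-\zeta$ together with $e_\omega=(1-c)e'_\omega$ and $\zeta^{e'_\omega}=\zeta^{1/2}$, whereas the paper obtains it from the identity $(\zeta-1)(\zeta+1)=\zeta^2-1$, the relation $s_2 e_\omega\equiv 2e_\omega\pmod p$, and the factorization $\zeta+1=\zeta^{1/2}(\zeta^{1/2}+\zeta^{-1/2})$ with the second factor real; both arguments are short and equally effective.
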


\begin{proof}
    (i)  We have $\gamma_\omega = \varepsilon_\omega\, 
\delta_\omega^p$
    with $\varepsilon_\omega = \zeta^h \varepsilon^{+}$, 
$\varepsilon^{+}
    \in E^{+}$, for some $h$; then applying
    again $e_\omega$ we get
    $\gamma_\omega^{e_\omega} = \varepsilon_\omega^{e_\omega}\,
    \delta_\omega^{e_{\omega }p}
    \in \zeta^h \,\cdot\, K^{\times p}$.
    Since $e_\omega^2 \equiv e_\omega \pmod {p\,\Z[g]}$,
    any factor $A^{e_\omega^2}$ may be written $A^{e_\omega}$ up 
to a $p$th power; thus $\gamma_\omega \in\, \zeta^h\,\cdot\,K^{\times p}$.
Since $u+v\,\zeta =
(u+v)\,\big(1+\frac{v}{u+v}\,\pi \big)$,
$(u+v\,\zeta)^{e_\omega}\,\in\, \zeta^h\,\cdot\,K^{\times p}$ is
equivalent to 
$(1+\hbox{$\frac{v}{u+v}$}\,\pi)^{e_\omega}\,\in\,
\zeta^h\,\cdot\,K^{\times p}$;
then using [Gr1, Remark 3.4]:
$$\big(1+\hbox{$\frac{v}{u+v}$}\,\pi \big)^{e_\omega}\equiv
 1+\hbox{$\frac{v}{u+v}$}\,\pi  \pmod {\pi^2},$$
 we get immediately $h \equiv \frac{v}{u+v} \pmod {p}$.
     
\smallskip
Similarly we have $u+v\,\zeta =
v\, (\frac{u}{v}+\zeta) =
u\, (1+\frac{v}{u}\,\zeta)$ for which $(u+v\,\zeta)^{e_\omega} =
(\frac{u}{v}+\zeta)^{e_\omega} =
(1+\frac{v}{u}\,\zeta)^{e_\omega}$,  proving the point (i).

\smallskip
(ii)  Suppose that $u+v \equiv 0$ $\pmod {p}$; put
 $\frac{u}{v} = -1 + \lambda\,p$, then
 $\hbox{$\frac{u}{v}$} + \zeta =
 \pi +\lambda\,p = \pi\,\alpha$,
 where $\alpha := 1+ \frac{\lambda\,p}{\pi}\equiv 1 \pmod {\pi^{p-2}}$.
 
 \smallskip
 Then we get  $\gamma_\omega := (u+v\,\zeta)^{e_\omega} =
  (1+\frac{v}{u} \, \zeta)^{e_\omega}=
 (\frac{u}{v} + \zeta)^{e_\omega} =
 \pi^{e_\omega}\, \alpha^{e_\omega}$. But from 
 the relation $(u+v\,\zeta)\,\Z[\zeta] = (\pi)\,{\mathfrak w}_1^p$, 
 we obtain $(u+v\zeta)^{e_\omega} \in
 \pi^{e_\omega}\,\zeta^{h} \, K^{\times p}$,
 for some $h$, giving
 $\alpha^{e_\omega} \in \,\zeta^{h} \, K^{\times p}$ hence
$h \equiv 0 \pmod {p}$ in that case since $p>3$.
 Then $\big(1+\frac{v}{u}\,\zeta\big)^{e_\omega}  \in
 \pi^{e_\omega}\,K^{\times p}$.
 
Put $\alpha \sim \beta$ in $K^\times$
     if $\alpha\,\beta^{-1}\in K^{\times p}$.
From $(\zeta - 1)\,(\zeta + 1)= \zeta^2-1$, we get:
 $$(\zeta - 1)^{e_\omega}\,(\zeta + 1)^{e_\omega} = 
 (\zeta^2-1)^{e_\omega} = (\zeta - 1)^{s_2 e_\omega}  \sim 
  (\zeta - 1)^{2 e_\omega}, $$
  giving $(\zeta + 1)^{e_\omega}
  \sim (\zeta - 1)^{e_\omega}$.
But $\zeta + 1 = \zeta^{\frac{1}{2}} 
(\zeta^{\frac{1}{2}}+\zeta^{-\frac{1}{2}})$
yields  $(\zeta + 1)^{e_\omega}  \sim \zeta^{\frac{1}{2}}$
since $\zeta^{\frac{1}{2}}+\zeta^{-\frac{1}{2}} \in K^+$.
Then we have the relation $(\zeta - 1)^{e_\omega} \sim 
 (\zeta + 1)^{e_\omega} \sim  \zeta^{\frac{1}{2}}$, hence the 
point (ii) of the lemma.

\medskip
(iii) If $p=3$ in the special case, we get from the computations in the proof of (ii),
$\gamma_\omega =  \pi^{e_\omega}\, \alpha^{e_\omega}
 \in \pi^{e_\omega}\,\zeta^{h}\, K^{\times 3}$,
 for some $h$, with
 $\alpha = 1 + \frac{3\,\lambda}{\pi}$ and 
 $\lambda = \frac{u+v}{3\,v}$. Thus $\alpha = 1+(\zeta^2-1)
 \,\frac{u+v}{3\,v} \equiv 1 - \pi\,\frac{u+v}{3\,v} \pmod {\pi^2}$,
 giving the congruence $h \equiv -\frac{u+v}{3\,v}
 \pmod {3}$ and $\gamma_\omega \in 
 \zeta^{\frac{1}{2} -\frac{u+v}{3\,v}}.\, K^{\times 3}$.
\end{proof}
 
In the second case of SFLT we have
$\gamma_\omega \in  K^{\times p}$ (resp. $\zeta\,.\, K^{\times p}$)
if  $p \div v$ (resp. $p \div u$)
since  in this case  $\frac{v}{u+v} \equiv 0$ 
(resp. $\frac{v}{u+v} \equiv 1$) $\pmod {p}$.
Note that the condition  $u+v \equiv 0 \pmod {9}$,
when $p=3$ in the special case, is not necessarily satisfied for the SFLT
equation (use Remark~1) 
but is true when $(u,v)$ is a part of a solution $(u,y,v)$ or
$(v,y,u)$ of Fermat${}'$s equation when $3 \div y$ and more 
generally when $p>3$,  $p \div y$ (see Subsection 2.1, (i)).

\begin{cor} {\rm (Generalization of the first theorem of Furtw\"angler).}
Let $q \ne p$ be a prime number such that
 $q \div  u\,v$  for a nontrivial solution of the equation 
 $(u+v\,\zeta)\Z[\zeta] = 
 {\mathfrak w}_1^p\ \,{\rm or}\ \,{\mathfrak p}\, {\mathfrak w}_1^p$,
 g.c.d.\,$(u,v)=1$.

\smallskip\noindent
Then, in the nonspecial cases, $u\,\kappa \equiv 0$ 
(resp. $v \,\kappa \equiv 0$) $\pmod {p}$
if $q \div  u$ (resp. $q \div  v$), for $p\geq 3$;
 in the first  case we get
 $\kappa \equiv 0 \pmod {p}$.
 
\smallskip \noindent
 For $p>3$ in the special case, then $\kappa \equiv 0 \pmod {p}$.
 For $p=3$ in the special case, we get
$\frac{u - 2v}{3v}\, \kappa\equiv 0$
(resp. $\frac{v - 2u}{3v}  \,\kappa\equiv 0$)  $\pmod {3}$
if $q \div  u$ (resp. $q \div  v$); thus if $u+v \equiv 0 \pmod {9}$,
then $\kappa \equiv 0 \pmod {3}$.
If $u+v = 3\,e$, $e \not\equiv 0 \pmod {3}$, then 
$\kappa \equiv 0 \pmod {3}$ if $q \div u$ (resp. $q \div v$)
when $u \equiv e$ (resp. $v \equiv e$) $\pmod {3}$.
\end {cor}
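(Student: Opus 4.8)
The plan is to evaluate the $p$th power residue symbol $\Big(\Frac{\gamma_\omega}{{\mathfrak q}_K}\Big)_{\!\!K}$ at a prime ${\mathfrak q}_K \div q$ of $K$ in two different ways and to compare the outcomes. Since g.c.d.\,$(u,v)=1$ and the prime $q$ divides $u\,v$, it divides exactly one of $u$, $v$. Reducing modulo ${\mathfrak q}_K$, for every $k$ one has $u+v\,\zeta^k \equiv v\,\zeta^k$ (if $q \div u$) or $u+v\,\zeta^k \equiv u$ (if $q \div v$), and in either case the right-hand side is a unit of the residue field $\F_{q^f}$. Hence $\gamma_\omega$, together with each of its conjugate factors, is prime to ${\mathfrak q}_K$ and the symbol is defined.

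For the first evaluation I invoke Lemma 3: writing $\gamma_\omega \in \zeta^h\cdot K^{\times p}$ with the exponent $h$ read off there, the multiplicativity of the symbol and its vanishing on $p$th powers give $\Big(\Frac{\gamma_\omega}{{\mathfrak q}_K}\Big)_{\!\!K} = \Big(\Frac{\zeta}{{\mathfrak q}_K}\Big)_{\!\!K}^{\,h} = \zeta^{h\,\kappa}$, using $\Big(\Frac{\zeta}{{\mathfrak q}_K}\Big)_{\!\!K} = \zeta^{\kappa}$ from Definition 3. For the second evaluation I reduce the \emph{explicit} element $\gamma_\omega = \prod_{k=1}^{p-1}(u+v\,\zeta^k)^{u_k}$, where $e_\omega = \sum_k u_k\, s_k$, modulo ${\mathfrak q}_K$; it is essential to reduce this product rather than to commute $e_\omega$ past the reduction, since the $s_k$ permute the primes above $q$. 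Using the augmentation $\sum_k u_k = 0$ and the identity $\zeta^{e_\omega}=\zeta$ (that is, $\sum_k k\,u_k \equiv 1 \pmod p$) of Definition 3, the reduction collapses to $\ov{\gamma_\omega} = \ov\zeta$ when $q \div u$ and to $\ov{\gamma_\omega} = 1$ when $q \div v$. Raising to the power $\kappa$ then gives $\Big(\Frac{\gamma_\omega}{{\mathfrak q}_K}\Big)_{\!\!K} = \zeta^{\kappa}$ in the first subcase and $=1$ in the second.

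Comparing the two evaluations yields the single congruence $(h-1)\,\kappa \equiv 0 \pmod p$ when $q \div u$, and $h\,\kappa \equiv 0 \pmod p$ when $q \div v$; it then remains only to substitute the value of $h$. In the nonspecial cases $h \equiv \Frac{v}{u+v} \pmod p$, so $(h-1)\,\kappa \equiv -\Frac{u}{u+v}\,\kappa$ and $h\,\kappa \equiv \Frac{v}{u+v}\,\kappa$; as $u+v$ is invertible modulo $p$ here, these read $u\,\kappa \equiv 0$ (resp. $v\,\kappa \equiv 0$), whence $\kappa \equiv 0$ in the first case, where $u$ and $v$ are units mod $p$. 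For $p>3$ in the special case $h=\Frac{1}{2}$ turns both congruences into $\kappa \equiv 0$. For $p=3$ in the special case $h \equiv \Frac{1}{2} - \Frac{u+v}{3\,v} \pmod 3$; writing $u+v=3\,e$ and simplifying identifies $h-1$ with $-\Frac{u-2v}{3\,v}$ and $h$ with $-\Frac{v-2u}{3\,v}$ modulo $3$, which reproduces the stated congruences, and the relevant coefficient is then a unit mod $3$ — in particular when $u+v\equiv 0 \pmod 9$, or when $u\equiv e$ for $q\div u$, or $v\equiv e$ for $q\div v$ — so that $\kappa\equiv 0 \pmod 3$.

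The main obstacle is the second evaluation, namely establishing $\ov{\gamma_\omega}=\ov\zeta$ (resp. $1$): one must argue on the honest product $\prod_k(u+v\,\zeta^k)^{u_k}$, after which the combinatorial identities $\sum_k u_k=0$ and $\sum_k k\,u_k\equiv 1 \pmod p$ do all the work. Everything after the comparison is routine rewriting, the only mildly intricate part being the bookkeeping of the exponent $h$ in the $p=3$ special case.
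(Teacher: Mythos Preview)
Your proof is correct and follows essentially the same route as the paper: compute the residue symbol of $\gamma_\omega$ at ${\mathfrak q}_K$ two ways, once via Lemma~3 as $\zeta^{h\kappa}$, once by reducing $u+v\,\zeta$ modulo $q$ to get $\zeta^{\kappa}$ or $1$, and compare. Your write-up is slightly more explicit than the paper's (you spell out the product $\prod_k(u+v\,\zeta^k)^{u_k}$ and invoke $\sum_k u_k=0$, $\sum_k k\,u_k\equiv 1$, whereas the paper simply uses $(v\,\zeta)^{e_\omega}=\zeta$ via the remark in Definition~1 that $A^{e_\omega}=1$ for $A$ real); note only that your cross-references point to Definition~3 where the relevant facts actually sit in Definitions~1 and~2, and that your caution about $s_k$ permuting primes is harmless here since the reduction is modulo the rational prime $q$, which is Galois-stable.
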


\begin{proof}
 We have $(u+v\,\zeta)^{e_\omega} \in\, \zeta^h.\,K^{\times p}$
 with $h \equiv \frac{v}{u+v} \pmod {p}$ in the nonspecial cases,
 $p \geq 3$, $h  \equiv \frac{1}{2} \pmod {p}$ 
 in the special case, $p>3$,
 and  $h  \equiv \frac{1}{2} - \frac{u+v}{3\,v}\pmod {3}$ in the 
 special case, $p=3$.
 
 \smallskip
 Let ${\mathfrak q}_K$ be any prime ideal of $K$ dividing $q$.
 We use the $p$th power residue symbol in $K$ (see  Definition 2,
 (ii)).
 
 \smallskip
 Since $u+v\,\zeta  \equiv v\,\zeta$ (resp. $u+v\,\zeta  
 \equiv u$) $ \pmod {q}$ if $q \div u$ (resp. $q \div v$),
 we get $\Big(\frac{(u+v\,\zeta)^{e_\omega}}{{\mathfrak q}_K}\Big)_{\!\!K}
 = \zeta^\kappa$ (resp. $1$) if $q \div u$ (resp. $q \div v$); but
 we have $\Big(\frac{\zeta^h}{{\mathfrak q}_K}\Big)_{\!\!K}
 = \zeta^{\frac{v}{u+v}\,\kappa}$
(resp.  $\zeta^{\frac{1}{2}\,\kappa}$,
$\zeta^{(\frac{1}{2}-\frac{v+u}{3\,v})\,\kappa}$)
in the nonspecial cases (resp. in the special case, $p>3$, $p=3$).
 
 \smallskip
 This gives in the nonspecial cases for $q \div u$,
 $\frac{v}{u+v}\,\kappa\equiv \kappa \pmod {p}$ equivalent
 to  $\frac{u}{u+v}\,\kappa \equiv 0 \pmod {p}$,
 hence $u \,\kappa \equiv 0 \pmod {p}$.
 If $q \div v$, we get $\frac{v}{u+v}\,\kappa\equiv 0 \pmod {p}$
 giving $v\,\kappa \equiv 0 \pmod {p}$.
    
   \smallskip
The special case for $p>3$ yields to $\frac{1}{2}\,\kappa
 \equiv \kappa $ (resp. $\frac{1}{2}\,\kappa
 \equiv 0)$ $\pmod {p}$ if  $q \div u$ (resp. $q \div v$), giving 
 $\kappa \equiv 0 \pmod {p}$ in any case.
 
 \smallskip
For $p=3$ in the special case we get
 $(\frac{1}{2} - \frac{u+v}{3\,v}) \, \kappa 
 \equiv \kappa$ (resp. $(\frac{1}{2} - 
 \frac{u+v}{3\,v}) \,\kappa  \equiv 0$) $\pmod {3}$
 if $q \div u$ (resp. $q \div v$), giving  
 $\frac{u - 2v}{3}\,\kappa  \equiv 0$
(resp. $\frac{v - 2u}{3} \,\kappa \equiv 0$) $\pmod {3}$.
The case $u+v  \equiv 0 \pmod {9}$ is clear as well as the case
$u+v  \equiv \pm 3 \pmod {9}$.
\end{proof}

\begin{cor} {\rm (Generalization of the second theorem of Furtw\"angler).}
  Let $q \ne p$ be a prime number such that  $q \div  u^2 - v^2$ 
  for a nontrivial solution of the equation $(u+v\,\zeta)\Z[\zeta] = 
 {\mathfrak w}_1^p\ \,{\rm or}\ \,{\mathfrak p}\, {\mathfrak w}_1^p$,
g.c.d.\,$(u,v)=1$.

\smallskip\noindent
Then, in the nonspecial cases, $(v-u)\,\kappa \equiv 0 \pmod {p}$
for  $p \geq 3$; in parti\-cular, in the second 
case, $\kappa \equiv 0 \pmod {p}$. In the first 
case for $p=3$, the information is empty
since $u \equiv v \equiv \pm 1 \pmod {3}$.

\smallskip\noindent
 For $p>3$ in the special case,
 the information is empty. For $p=3$ in the special case we get
$\frac{u+v}{3\,v}\,\kappa \equiv 0  \pmod {3}$.
Thus  $\kappa \equiv 0 \pmod {3}$ as soon as $v+u \not\equiv 0 \pmod {9}$.
\end {cor}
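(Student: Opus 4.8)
The plan is to imitate the proof of Corollary 2: I would compute the $p$th power residue symbol $\Big(\Frac{\gamma_\omega}{{\mathfrak q}_K}\Big)_{\!\!K}$ at a fixed prime ${\mathfrak q}_K \div q$ of $K$ in two independent ways and then compare. Since $q \div u^2-v^2 = (u-v)(u+v)$, at least one of $q \div u-v$ or $q \div u+v$ holds, and I would treat these two cases in parallel (if both occur, either computation serves).

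For the direct side I would reduce $\gamma_\omega = (u+v\,\zeta)^{e_\omega} = \prd_k (u+v\,\zeta^k)^{u_k}$ modulo ${\mathfrak q}_K$. When $q \div u-v$ each factor satisfies $u+v\,\zeta^k \equiv v\,(1+\zeta^k)$, and when $q \div u+v$ it satisfies $u+v\,\zeta^k \equiv v\,(\zeta^k-1)$; in both cases the rational part contributes $v^{\sum_k u_k} = v^0 = 1$, because $\sum_k u_k = 0$ (from $u_{p-k}=-u_k$). Hence $\gamma_\omega \equiv (1+\zeta)^{e_\omega}$ or $(\zeta-1)^{e_\omega}$ modulo ${\mathfrak q}_K$, all quantities being units at ${\mathfrak q}_K$ since $q \notdiv v$ and $q \ne p$. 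The crucial algebraic input, already isolated inside the proof of Lemma 3(ii), is that $(\zeta-1)^{e_\omega} \sim (\zeta+1)^{e_\omega} \sim \zeta^{\frac{1}{2}}$ in $K^\times/K^{\times p}$ (for $p=3$ this persists because $-1 \in K^{\times 3}$). As the symbol is trivial on $p$th powers of units and sends $\zeta$ to $\zeta^{\kappa}$, the direct computation gives $\Big(\Frac{\gamma_\omega}{{\mathfrak q}_K}\Big)_{\!\!K} = \zeta^{\frac{1}{2}\kappa}$ in either case.

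For the indirect side, Lemma 3 yields $\gamma_\omega \in \zeta^{h}\cdot K^{\times p}$, and the same symbol therefore equals $\zeta^{h\kappa}$. Equating exponents gives the single master congruence $h\,\kappa \equiv \frac{1}{2}\kappa \pmod p$. It then remains to substitute the three values of $h$ from Lemma 3. In the nonspecial cases $h \equiv \frac{v}{u+v}$, so $\big(\frac{v}{u+v}-\frac12\big)\kappa \equiv 0$; clearing the invertible factor $2(u+v)$ gives $(v-u)\,\kappa \equiv 0 \pmod p$, from which the second-case refinement $\kappa \equiv 0$ follows because $v-u$ is then prime to $p$, while for $p=3$ in the first case $u\equiv v \pmod 3$ makes $v-u\equiv 0$ and the statement empty. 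In the special case with $p>3$ one has $h \equiv \frac12$, so the master congruence collapses to $0\equiv 0$, accounting for the empty information; for $p=3$ special, $h \equiv \frac12-\frac{u+v}{3v}$ gives at once $\frac{u+v}{3v}\,\kappa \equiv 0 \pmod 3$, and since $3 \notdiv v$ this forces $\kappa \equiv 0$ as soon as $\frac{u+v}{3}$ is prime to $3$, i.e. $u+v \not\equiv 0 \pmod 9$.

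The only genuinely delicate point I anticipate is the bookkeeping that keeps every element reduced modulo ${\mathfrak q}_K$ a unit there, so that the residue symbol stays well defined and multiplicative, together with the sign subtlety that makes $(\zeta\pm 1)^{e_\omega}\sim\zeta^{1/2}$ hold even at $p=3$. Since that algebraic identity is already supplied by the proof of Lemma 3(ii), no new structural obstacle arises and the argument reduces to the three case-by-case substitutions above.
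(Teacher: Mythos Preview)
Your proof is correct and follows essentially the same approach as the paper's own proof. The only cosmetic difference is that the paper works with the shifted element $(1+\frac{v}{u}\zeta)^{e_\omega}\zeta^{-\frac12}=\gamma_\omega\zeta^{-\frac12}$ and shows its symbol at $\mathfrak q_K$ is trivial (obtaining $\ov h\kappa\equiv 0$ with $\ov h=h-\tfrac12$), whereas you keep $\gamma_\omega$ itself and compute its symbol as $\zeta^{\frac12\kappa}$; the two formulations are trivially equivalent and the ensuing case split is identical.
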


\begin{proof}
 We have $(u+v\,\zeta)^{e_\omega} \in\, \zeta^h.\,K^{\times p}$
 with $h  \equiv \frac{v}{u+v} \pmod {p}$ in the nonspecial cases,
 $p\geq 3$, $h \equiv \frac{1}{2} \pmod {p}$ in the special case, $p>3$,
 and  $h  \equiv \frac{1}{2} - \frac{u+v}{3\,v}$ in the 
 special case, $p=3$.
 
 Then we have $(1+\frac{v}{u} \,\zeta)^{e_\omega}\,
 \zeta^{-\frac{1}{2}}
 \in\, \zeta^{\ov h}.\,K^{\times p}$
  with ${\ov h}  \equiv \frac{1}{2}\frac{v-u}{u+v} \pmod {p}$ in the nonspecial cases, 
 ${\ov h}  \equiv 0 \pmod {p}$ in the special case,  $p>3$,
 and  ${\ov h} \equiv -\frac{u+v}{3\,v} \pmod {3}$ in the 
 special case, $p=3$.
 
 \smallskip
 Let ${\mathfrak q}_K$ be any prime ideal of $K$ dividing $q$.
 If $q \div  u^2 - v^2$, then $\frac{v}{u} \equiv \pm 1$ $\pmod {q}$
 and we get 
 $(1+ \frac{v}{u}\,\zeta)^{e_\omega} \,\zeta^{-\frac{1}{2}}
 \equiv (1 \pm \zeta)^{e_\omega}\,\zeta^{-\frac{1}{2}} \equiv 
 1 \pmod  {{\mathfrak q}_K}$ since $(1 \pm \zeta)^{e_\omega} \sim \zeta^{\frac{1}{2}}$
 (see proof of Lemma 3).  Thus we obtain 
${\ov h} \,\kappa \equiv 0 \pmod {p}$ in every case.
 
 \smallskip
 The nonspecial cases yield to 
$\frac{v-u}{u+v} \,\kappa \equiv 0 \pmod {p}$, hence 
$\kappa \equiv 0 \pmod {p}$ if $u-v \not \equiv 0 \pmod {p}$.
Thus the case $p=3$ is empty since $u\equiv v \equiv \pm 1 \pmod {3}$.
   
\smallskip
The special case for $p>3$ is empty.
The special case for $p=3$ gives 
$\frac{v+u}{3\,v} \,\kappa \equiv 0 \pmod {3}$.
 \end{proof}
 
\subsection{Consequences of Lemma 3}

We  make the following comments on the fundamental Lemma 3 and 
its corollaries to introduce the $\omega$-SFLT equation and
suitable cyclotomic units.

\smallskip
 For {\it arbitrary} relatively prime integers $u$, $v$,
we still have (excluding the obvious cases where $u\,v\,\,(u+v)=0$):
$$\gamma_\omega := (u+v \,\zeta)^{e_\omega} = 
\big(\Frac{u}{v}+\zeta\big)^{e_\omega} =
\big(1+\Frac{v}{u}\,\zeta\big)^{e_\omega} =
\big(1+\Frac{v}{v+u}\,\pi\big)^{e_\omega}$$
and the various congruences
of Lemma~3, $\gamma_\omega \equiv \zeta^h \pmod {\pi^2}$,
with $h = \frac{v}{u+v}$ (nonspecial cases, $p\geq 3$),
$h = \frac{1}{2}$ (special case, $p>3$),
and $h = \frac{1}{2} - \frac{u+v}{3\,v}$ (special case, $p=3$).
Then we obtain  $\gamma_\omega\ \zeta^{-h}\equiv
1 \pmod {\pi^2}$, which implies easily that
 $\gamma_\omega\ \zeta^{-h}$ is a $p$-primary number
(use [Gr1, Lemma 3.15]);
but since this number is not in general the $p$th power of an
ideal it may not be a global $p$th power.\,\footnote{\,In the case where
$(u + v\, \zeta)\, \Z[\zeta]= {\mathfrak w}_1^p$, 
Lemma 3 shows that $(1+\hbox{$\frac{v}{u}$}\, 
\zeta)^{e_\omega}\,\zeta^{-h} \in K^{\times p}$; so in this
particular case, where $(1+\hbox{$\frac{v}{u}$}\, \zeta)^{e_\omega}\,
\zeta^{-h}$ is a pseudo-unit, local $p$th 
power at $p$,  we get a necessary and sufficient condition 
to get a global $p$th power.}

So, from class field theory, there exist infinitely many prime ideals
${\mathfrak q}_K$ of $K$, prime to $u\,v$,
such that $(1+\frac{v}{u}\,\zeta)^{e_\omega}\,
\zeta^{-h}$ is not a local $p$th power
at ${\mathfrak q}_K$, except if we have a counterexample $(u,v)$
to SFLT in which case such primes do not exist.

\smallskip
The $p$th power residue symbol of this number is invariant by conjugation of 
${\mathfrak q}_K$ since:
$$\big ((1+\Frac{v}{u}\,\zeta)\,\zeta^{-h}\big )^{e_\omega\,\kappa} \equiv
\zeta' \pmod {{\mathfrak q}_K}$$
implies, by conjugation by $s_k \in g$:
$$\big ((1+\Frac{v}{u}\,\zeta^k)\,\zeta^{-k\,h}\big)^{e_\omega\,\kappa}\sim
\big ((1+\Frac{v}{u}\,\zeta)\,\zeta^{-h}\big )^{k\,e_\omega\,\kappa}
 \equiv\zeta'{}^k  \pmod {s_k({\mathfrak q}_K)}$$
 equivalent (up to  $p$th powers) to:
$$\big ((1+\Frac{v}{u}\,\zeta)\,\zeta^{-h}\big )^{e_\omega\,\kappa}\equiv
\zeta'{} \pmod {s_k({\mathfrak q}_K)}; $$
so the symbol only depends on $q$,
the prime number under ${\mathfrak q}_K$ which does not divide $u\,v$.

\smallskip
We suppose  $\frac{v}{u}$  of order $n$
modulo $q$ (which is equivalent to
$q \div  \Phi_n(u,v)$ and $q \notdiv n$),
and we suppose $n$ prime to $p$.

\smallskip
Let  ${\mathfrak q}$ be a prime ideal above $q$ in $\Q(\mu_{n})$.
We have equality of the $p$th power residue symbols of
$(1+\frac{v}{u}\,\zeta)^{e_\omega}\, \zeta^{-h}$
at any ${\mathfrak q}_K$ in $K$, and
of the cyclotomic unit $(1+\xi\,\zeta)^{e_\omega}\, \zeta^{-h}$
at ${\mathfrak Q}$ in $\Q(\mu_{n})K$, where $\xi$ is a
suitable $n$th root of unity and ${\mathfrak Q}$ is any
prime ideal above ${\mathfrak q}$ in $\Q(\mu_{n})K$
($\xi$ is characterized by the congruence 
$\xi \equiv \frac{v}{u} \pmod {\mathfrak q}$
in $\Q(\mu_{n})$, and ${\mathfrak q}_K$ must be ${\mathfrak Q}\cap \Z[\zeta]$).

\smallskip
Of course $h$ is a priori unknown (but constant with respect to $q$) 
and the local study of 
$(1+\xi\,\zeta)^{e_\omega}\,\zeta^{-h}$ is uneffective 
in general, but we may use some partial 
informations, as the following ones in the context of FLT.

\smallskip
Let $(x,y,z)$ be a solution of Fermat${}'$s equation (first or second cases).

\medskip
{\bf a)} Take for instance $u:=x$, $v:=y$ (so we are in the nonspecial 
cases of SFLT) which gives $h = \frac{y}{y+x}$.

\smallskip
$ \ \ \bullet\ $ If  $\zeta$ is not a local $p$th power at 
${\mathfrak q}_K$
(which is  equivalent to  $\kappa \not\equiv 0$ $\pmod {p}$),
we will consider the $p$th power residue symbol at ${\mathfrak Q}$ of
the real cyclotomic unit
$\eta_1 := (1+\xi\,\zeta)^{e_\omega}\,\zeta^{-\frac{1}{2}}$
 (see Definition 4 in Subsection 3.1)  which must be  that of $\zeta^{\,h-\frac{1}{2}} = 
\zeta^{\frac{1}{2}\frac{y-x}{y+x}}$.
For FLT we have some informations on the differences 
like $y-x$, $y-z$, which are  prime to $p$ for $p>3$
or $p=3$ in the second case; in these cases a contradiction to
the existence of such a solution of Fermat${}'$s equation
is that the unit
$(1+\xi\,\zeta)^{e_\omega} \,\zeta^{-\frac{1}{2}}$
be a local $p$th power at ${\mathfrak Q}$ or does not give the 
``good'' symbol.

\smallskip
 For $p=3$ in the first case, we know that $x \equiv
y \equiv z \equiv \pm 1 \pmod {3}$; so a contradiction is that this 
unit be not a local $3$th power at ${\mathfrak Q}$.

\smallskip
$ \ \ \bullet\ $ If  $\zeta$ is a local $p$th power at ${\mathfrak 
q}_K$ (which is  equivalent to  $\kappa \equiv 0 \pmod {p}$),
a contradiction is that the unit $\eta_1$ be not a local $p$th 
power at ${\mathfrak Q}$. 

\medskip
{\bf b)} 
In the second case of FLT
($p\div y$) with $u=x$, $v=z$ (special case of SFLT)
we have different but similar reasonings using the value of 
$h-\frac{1}{2}$ given by Lemma~3 for  $p\geq 3$ since
$x+z \equiv 0 \pmod {9}$ when $p=3$.

\medskip
The hope in this attempt is that, 
the arithmetical properties of the fields 
$\Q(\mu_n) \subseteq \Q(\mu_{q-1})$ being
a priori independent of the SFLT problem, they may
give valuable indications on the local properties of
$\eta_1 = (1+\xi\,\zeta)^{e_\omega}\,\zeta^{-\frac{1}{2}}$,
especially in an analytic point of view. In some sense the fields
$\Q(\mu_{q-1})$ will play the role of governing fields for this 
problem. Indeed, under a solution of the SFLT equation, the residue symbol 
above ${\mathfrak q}$ of this unit is, {\it independently of the choice of $q$},
equal to the symbol of a  {\it constant} power of $\zeta$, which may be absurd.

\smallskip
These cyclotomic fields have been
introduced  by Vandiver in some papers as [Van1, Van2, Van3] to 
generalize classical results of Kummer and some
congruences giving Furtw\"angler${}'$s theorems and Wie\-ferich${}'$s
criteria; these papers essentially depend on the Stickelberger 
element $S := \frac{1}{p} \sum_{k=1}^{p-1} k\,s_k^{-1}$, related to the
generalized Bernoulli numbers and the annihilation
of the $p$-class group of $K$.

Meanwhile some of the relations of Lemma 3 are
considered by Vandiver for other purposes than our${}'$s.
In Vandiver  papers, analytic results (like
\v Cebotarev${}'$s theorem) or class field theory  
are not used, and it seems that no method of contradiction
can be deduced from these computations which are essentially {\it 
local at $p$}, and it has been explained in [Gr1] the probable 
inefficiency of  such local studies.
Our present work is mainly global and does not concern the arithmetic 
of $K$ as in the historical researches.

\begin{lemm} The equation $(u+v\,\zeta)\,\Z[\zeta] = 
{\mathfrak w}_1^p \ \, {\rm or}\ \, {\mathfrak p}\,{\mathfrak w}_1^p$,
in integers $u$, $v$, with g.c.d.\,$(u,v)=1$, is  equivalent to the  equation
$(u+v\,\zeta)^{e_\omega} \in  \zeta^{h}\,K^{\times p}$
with $h \equiv \frac{v}{u+v} \pmod {p}$ in the nonspecial cases,
$p \geq 3$, $h  \equiv \frac{1}{2} \pmod {p}$ 
in the special case, $p>3$, and  $h \equiv \frac{1}{2}- 
\frac{u+v}{3\,v}\pmod {3}$ in the  special case, $p=3$.
\end{lemm}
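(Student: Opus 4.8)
The statement is an equivalence, and one direction is already in hand: the implication from the ideal equation to the membership $(u+v\,\zeta)^{e_\omega}\in\zeta^{h}\,K^{\times p}$ with the stated value of $h$ is precisely the fundamental Lemma~3, parts (i)--(iii). So the plan is to prove the converse: assuming $(u+v\,\zeta)^{e_\omega}\in\zeta^{h}\,K^{\times p}$ for some $h$ (the precise value of $h$ plays no role here), I will derive that $(u+v\,\zeta)\Z[\zeta]=\mathfrak w_1^{\,p}$ or $\mathfrak p\,\mathfrak w_1^{\,p}$. As usual the trivial cases $u\,v\,(u+v)=0$ are discarded.

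First I would pass from numbers to ideals. Writing $e_\omega=\sum_{k=1}^{p-1}u_k\,s_k$ with $u_k\in\Z$ as in Definition~1(iii), the hypothesis reads $(u+v\,\zeta)^{e_\omega}=\zeta^{h}\,\beta^{p}$ for some $\beta\in K^\times$; taking the principal ideal of both sides and using that $\zeta^{h}$ is a unit gives $\mathfrak I^{e_\omega}=(\beta)^{p}$, where $\mathfrak I:=(u+v\,\zeta)\Z[\zeta]$. Thus the $\omega$-projection of $\mathfrak I$ is a $p$th power of a fractional ideal. (Changing the integer representative of $e_\omega$ only multiplies by a $p$th power, so this is harmless.)

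Next comes the arithmetic heart, a coprimality-of-conjugates argument. If $\mathfrak l\ne\mathfrak p$ is a prime divisor of $\mathfrak I$ lying over a rational prime $\ell$, then from $u+v\,\zeta\equiv0\pmod{\mathfrak l}$ and $\mathrm{g.c.d.}(u,v)=1$ one checks $\mathfrak l\nmid v$, hence $\zeta\equiv -u/v\pmod{\mathfrak l}$ is an element of $\F_\ell$ of exact order $p$; therefore $\ell\equiv1\pmod p$, $\ell$ splits completely in $K$, and $\mathfrak l$ has residue degree $1$. Moreover, for $k\not\equiv1$ the simultaneous divisibilities $\mathfrak l\mid(u+v\,\zeta)$ and $\mathfrak l\mid(u+v\,\zeta^{k})$ would force $\mathfrak l\mid v\,(\zeta-\zeta^{k})$, i.e. $\mathfrak l\mid v$ since $\mathfrak l\ne\mathfrak p$, which is impossible; so among the $p-1$ primes over $\ell$ only $\mathfrak l$ divides $\mathfrak I$, and the decomposition group of $\mathfrak l$ is trivial. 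Consequently $v_{\mathfrak l}(\mathfrak I^{e_\omega})=\sum_k u_k\,v_{s_k^{-1}\mathfrak l}(\mathfrak I)=u_1\,v_{\mathfrak l}(\mathfrak I)$, because only $k=1$ yields $s_k^{-1}\mathfrak l=\mathfrak l$. Since the left side is divisible by $p$ (as $\mathfrak I^{e_\omega}$ is a $p$th power) while $u_1\equiv\frac{1}{p-1}\equiv-1\pmod p$ is a unit, I conclude $v_{\mathfrak l}(\mathfrak I)\equiv0\pmod p$ for every $\mathfrak l\ne\mathfrak p$.

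Finally I would pin down the exponent at $\mathfrak p$ by a direct computation, writing $u+v\,\zeta=(u+v)+v\,\pi$ and using $v_{\mathfrak p}(m)=(p-1)\,v_p(m)$ for $m\in\Z$. In the nonspecial cases ($p\nmid u+v$) this gives $v_{\mathfrak p}(\mathfrak I)=0$, while in the special case ($p\mid u+v$, whence $p\nmid v$) it gives $v_{\mathfrak p}(\mathfrak I)=1$ since $v_{\mathfrak p}(u+v)\ge p-1\ge2>1=v_{\mathfrak p}(v\,\pi)$. Combining with the previous step, $\mathfrak I=\mathfrak p^{\,e}\,\mathfrak w_1^{\,p}$ with $e\in\{0,1\}$ exactly and $\mathfrak w_1$ integral, prime to $\mathfrak p$, and supported on primes over rationals $\ell\equiv1\pmod p$; this is the SFLT ideal equation. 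The main obstacle is the middle step: one must handle the Galois action on ideals carefully so that the projector $e_\omega$ acts diagonally at $\mathfrak l$ (whence the clean scalar factor $u_1$), and this rests entirely on the coprimality of the conjugates $u+v\,\zeta^{k}$ away from $\mathfrak p$, which is exactly where $\mathrm{g.c.d.}(u,v)=1$ is indispensable.
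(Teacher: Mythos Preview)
Your proof is correct and follows essentially the same route as the paper's: one direction is Lemma~3, and for the converse you use the coprimality of the conjugates $u+v\,\zeta^{k}$ away from $\mathfrak p$ (forced by $\mathrm{g.c.d.}(u,v)=1$) to see that $e_\omega$ acts by the scalar $u_1$ on the $\mathfrak l$-valuation, exactly as the paper exploits the transitivity of $g$ on the conjugates of $\mathfrak l$. Your explicit computation of $v_{\mathfrak p}(\mathfrak I)\in\{0,1\}$ supplies the detail the paper leaves implicit (the paper instead notes $\mathfrak p^{e_\omega}=\Z[\zeta]$ and takes $\delta\in\{0,1\}$ as known), so if anything your write-up is slightly more complete.
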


\begin{proof} A direction being proved (Lemma 3),
let ${\mathfrak l}\ne {\mathfrak p}$ be any prime ideal dividing
the ideal $(u+v\,\zeta)\,\Z[\zeta]$; the use of
the congruences $u+v\,\zeta\equiv 0 \pmod {\mathfrak l}$ 
implies that  $(u+v\,\zeta)\,\Z[\zeta] =  {\mathfrak p}^\delta\,
\prod_{\ell} {\mathfrak l}^{\alpha_\ell}$, $\delta = 0$ or 1,
$\alpha_\ell \geq 1$,  for distinct prime 
numbers $\ell$ with a single  ${\mathfrak l} \div \ell$ (otherwise,
using appropriate conjugations, we get 
$u\equiv v\equiv 0 \pmod {\mathfrak l}$). Moreover, it shows
that ${\mathfrak l}$ is of degree 1 and that
$g$ operates transitively on the set of conjugates of
${\mathfrak l}$; hence, since $ {\mathfrak p}^{e_\omega}=\Z[\zeta]$ 
and since $(u+v\,\zeta)^{e_\omega}\,\Z[\zeta] = 
\prod_{\ell} {\mathfrak l}^{\alpha_\ell \,e_\omega}$ is a $p$th power
by assumption, we get $\alpha_\ell \equiv 0 \pmod {p}$ for all $\ell$.
\end{proof}

From any relation $(u'+v'\,\zeta)^{e_\omega} \in 
\langle\,\zeta\,\rangle\,.\,K^{\times p}$, $u', v' \in \Z$, we deduce the 
solution $(u,v) := \frac{1}{{\rm g.c.d.}\,(u',v')}\,(u',v')$ of the 
SFLT equation with a unique $h$.

\medskip
 We call the second equation the $\omega$-SFLT  equation;
the corres\-ponding form of the SFLT conjecture for $p>3$ seems reasonable
as soon as $p$ is sufficiently large  since
it enunciates (for $u\,v\,(u^2-v^2)\ne 0$) that there exists a sum $\sum_{k=1}^{p-1} 
\lambda_k\,\zeta^k$, $\lambda_k\in\Q$, whose $p$th power is of the 
form:
$$(u\,\zeta^{-\frac{v}{u+v}} + 
v\,\,\zeta^{\frac{u}{u+v}})^{e_\omega}\ \,
{\rm (resp.\ of\ the\ form}\ \, (u\,\zeta^{-\frac{1}{2}} + 
v\,\zeta^{\frac{1}{2}})^{e_\omega}), $$
depending on two coefficients  $u$, $v$ instead of $p-1$ in general.
It will be interesting to have the response at least for $p=5$.

\smallskip
So for $p>3$ the truth of SFLT would imply  FLT; 
in this paper we concentrate our attention mainly on SFLT, using 
the simpler $\omega$-SFLT context  which does not concern mainly the 
arithmetic of $K$, the nerve center of the unsuccessful classical theory.

\smallskip\smallskip
For a recent critical history on FLT see [Co]. For some complements
on these cyclotomic technics, see [He1, He2, Ter, Ri].

\section {Introduction of the governing fields $\Q(\mu_{q-1})$}

\smallskip
\subsection {Furtw\"angler and Vandiver revisited}

Consider the SFLT equation:
$$(u+v\,\zeta)\,\Z[\zeta] = 
{\mathfrak w}_1^p \ \, {\rm or}\ \, {\mathfrak p}\,{\mathfrak w}_1^p, $$
in  integers $u$, $v$, with g.c.d.\,$(u,v)=1$, independently of FLT.

\smallskip
Let $q$ be a prime number such that  $q \notdiv u\,v$ and such that  
$\frac{v}{u}$
is of order $n$ modulo $q$ (which is equivalent from Lemma 2 to $q\notdiv n$ and
$q \div \Phi_n(u,v)$), with  $n$ prime to $p$.

 \smallskip
In another point of view, for a given $n$ prime to $p$,
the primes  $q \div  \Phi_n(u,v)$ are solutions (i.e., $\frac{v}{u}$
is of order $n$ modulo $q$), if and only if $q \notdiv n$. In 
practice the condition
$q \notdiv n$ is always satisfied because we are only concerned with 
large primes $q$, so that $q\equiv 1 \pmod {n}$.
 
 \smallskip
 Consider the 
following diagram, where $L :=\Q(\mu_n)$, $M:= LK$, and  $G = {\rm 
Gal}(M/L) \simeq g$ (we have $L\cap K = \Q$):

\medskip
\unitlength=0.6cm
$$\vbox{\hbox{\begin{picture}(12.1,3.2)
% horizontales
\put(4.7,3.0){\line(1,0){2.3}}
\put(4.5,0.0){\line(1,0){2.3}}
% vertical
\put(4.00,1.9){\line(0,1){0.75}}
\put(4.00,0.4){\line(0,1){2.}}
\put(7.5,1.9){\line(0,1){0.75}}
\put(7.5,0.4){\line(0,1){2.}}
% points
\put(7.2,2.9){$M$}%
\put(1.8,2.9){$L\!=\!\Q(\mu_n)$}%
\put(3.8,-0.1){$\Q$}%
\put(7.2,-0.1){$K\!=\!\Q(\zeta)$}%
\put(5.6,3.25){$G$}%
\put(5.6,0.25){$g$}%
\end{picture}   }} $$

\smallskip
\begin{defi}{\rm
The prime $q \equiv 1 \pmod {n}$ being totally split in $L/\Q$, if 
${\mathfrak q}$ is a prime ideal of $L$ over $q$,
there exists a {\it unique} primitive $n$th root of unity $\xi$ 
such that $\xi \equiv \hbox{$\frac{v}{u}$} \pmod {\mathfrak q}$.
Reciprocally, if $\xi$ is a primitive $n$th root of unity, there 
exists a {\it unique} prime ideal ${\mathfrak q}$  of $L$ over $q$ such that
$\xi \equiv \frac{v}{u} \pmod {\mathfrak q}$.
%
%\smallskip
This ideal is $(q, u\,\xi-v)$ and will also be denoted ${\mathfrak q}_\xi$
(it depends on $u$, $v$).

\smallskip
We associate with $q$ (for $u$, $v$ fixed)
a pair $(\xi, {\mathfrak q})$ where the prime ideal 
${\mathfrak q} := {\mathfrak q}_\xi$ above $q$
and the primitive $n$th root of unity  $\xi$
are characterized by the congruence $\xi \equiv
\frac{v}{u} \pmod {\mathfrak q}$ in $L$.

\smallskip
This pair is defined up to $\Q$-conjugation since $\xi \equiv
\frac{v}{u} \pmod {{\mathfrak q}_\xi}$ is equivalent to $\xi^t \equiv
\frac{v}{u} \pmod {{\mathfrak q}_\xi^t  = {\mathfrak q}_{\xi^t}}$,
for all $t \in {\rm Gal}(L/\Q)$. 
We obtain an equivalence relation.
The class only depends on $q$ for $u$, $v$ given.~\hfill\fin}
\end{defi}

\smallskip
Taking a representative pair, we will fix $\xi$ 
(for instance  $\xi = {\rm exp}(2i\pi/n)$) which defines
${\mathfrak q}_\xi$.

\smallskip
Since $\frac{v}{u}$ modulo $q$ is unknown 
but well-defined, we must note  that, in what follows, the class is 
uneffective
among $\phi(n)$ possible classes, and for each $n$ prime to $p$ 
dividing $q-1$.
This explains that, in some circumstances, we will have to take $q\not\equiv 1 
\pmod {p}$ since, if not, it is not possible to assert that $\frac{v}{u}$ is of order 
modulo $q$ prime to $p$.

\begin{defi}{\rm For the given $n$th root of unity $\xi$, $n\not\equiv 
0 \pmod {p}$, we consider the cyclotomic number of $M$ associated to 
$\xi$:\,\footnote{\,We know that $1+\xi\,\zeta$ is a 
(cyclotomic) unit except if $-\xi\zeta$ is of prime power order, 
which is the case if and only if $\xi = -1$ (i.e., $n=2$) in which
case $1 + \xi\, \zeta$ is a $p$-unit.}
$$\eta := \eta(\xi) := (1+\xi\,\zeta) \,\zeta^{-\frac{1}{2}} \in M, $$
where $\zeta^{\frac{1}{2}}$ is the unique $p$th root of unity such 
that $(\zeta^{\frac{1}{2}})^2 = \zeta$ (so, the exponent
$\frac{1}{2}$ is seen as a $p$-adic integer or as an element of
$(\Z/p\Z)^\times$). This is coherent with the context of $p$-Kummer 
theory above~$M$. 

Then we put:
$$\eta_1 := \eta^{e_\omega} = (1+\xi\,\zeta)^{e_\omega} \,\zeta^{-\frac{1}{2}}
\in M; $$
we have  $\eta_1 \in M^+$,
where $M^+$ is the maximal real subfield of $M$:
indeed, if $c$ is the complex conjugation, then:
$$\eta_1^c = (1+\xi^{-1}\,\zeta^{-1})^{e_\omega} \,\zeta^{\frac{1}{2}}
=\big( (1+\xi\,\zeta)\xi^{-1}\,\zeta^{-1} \,\zeta^{\frac{1}{2}}\big)^{e_\omega}
= (1+\xi\,\zeta)^{e_\omega} \,\zeta^{-\frac{1}{2}} = \eta_1, $$
since  $\xi^{e_\omega} = 1$ and
$\zeta'{}^{e_\omega} = \zeta'$ for any $\zeta' \in \mu_p$.~\hfill\fin}
\end{defi}

\smallskip
We note that $\eta_1$ is  a cyclotomic unit and 
that  $\eta_1 \equiv 1 \pmod {\pi\,Z_M}$.

\smallskip
Starting from $\xi \equiv \frac{v}{u} \pmod {\mathfrak q}$ and
extending ${\mathfrak q}$ to $M$ we obtain:\,\footnote{\,Warning:
if for instance $\frac{v}{u}$ is of order $d p$ modulo $q$,
with $p\notdiv d$, $q$ is totally split in $M/\Q$ and we have the 
congruence
$\frac{v}{u} \equiv \xi=: \psi\,\zeta_1 \pmod {\mathfrak Q}$,
for some ${\mathfrak Q} \div q$ in $M$,
where $\psi$ is of order $d$ and $\zeta_1$ of order $p$; but in the relation
$1 + \frac{v}{u} \,\zeta \equiv 1 + \xi\,\zeta \pmod {\mathfrak Q}$,
the root $\xi=\psi\,\zeta_1$ is not invariant by $G$
so that the congruence $\big(1 + \frac{v}{u} \,\zeta\big)^{e_\omega} \equiv 
(1 + \xi\,\zeta)^{e_\omega} \pmod {\mathfrak Q}$ does not exist.
From $\gamma := 1 + \frac{v}{u} \,\zeta$ we get $s_k(\gamma) := 1 + 
\frac{v}{u} \,\zeta^k$ and if $e_\omega = \sum_k u_k s_k$ we obtain 
instead
$\gamma^{e_\omega} = \big(1 + \frac{v}{u} \,\zeta\big)^{e_\omega} \equiv 
\prod_k (1 + \psi\,\zeta_1\,\zeta^k)^{u_k} \pmod {\mathfrak Q}$,
in which  the term $1 + \psi$ is not always a 
cyclotomic unit (see [Que]).}
$$\eta_1 \equiv \big(1+\hbox{$\frac{v}{u}$}\,\zeta \big)^{e_\omega}\,\zeta^{-\frac{1}{2}}
\pmod {\prd_{{\mathfrak Q}  \div  {\mathfrak q}}{\mathfrak Q}} .$$

We note that these prime ideals ${\mathfrak Q}$ of $M$ may be written
${\mathfrak Q}_\xi$ since they are above ${\mathfrak q}_\xi$; for 
$\xi$ fixed, they are conjugated by the elements of $G$.

\smallskip
From Lemma 3, we get
$\big(1+\frac{v}{u}\,\zeta\big)^{e_\omega} = 
\zeta^{\frac{v}{v+u}}\,\cdot\,\delta_\omega^p$ (nonspecial cases, 
$p\geq 3$) \,or\,
$\zeta^{\frac{1}{2}}\,\cdot\,\delta_\omega^p$  (special case, $p>3$)
 \, or\, $\zeta_{}^{\frac{1}{2}-\frac{v+u}{3\,v}}\,\cdot\, \delta_\omega^3$ 
 (special case, $p=3$), with
$\delta_\omega \in K^\times$,  giving
$\eta_1 \equiv \zeta_{}^{\frac{1}{2}\frac{v-u}{v+u}}\,\cdot\,\delta_\omega^p
\  \ {\rm or}\ \ \delta_\omega^p\  \ {\rm or}\ \ 
\zeta_{}^{\frac{1}{2}\,\frac{v+u}{3\,v}}\,\cdot\,
\delta_\omega^3 \pmod {\prod_{{\mathfrak Q}\div {\mathfrak q}}{\mathfrak Q}}$.

\smallskip
From the congruences on $\eta_1$, Definition 2,  Corollaries 2 and 3,
we then have obtained in the only context of SFLT
the following result which includes the case $p=3$:

\smallskip
\begin{theo} \!\! Let $p$ be a prime number, $p\geq 3$. Suppose given the
relation $(u+v \,\zeta)\,\Z[\zeta] = {\mathfrak w}_1^p \ {\rm or}\ 
{\mathfrak p}\,{\mathfrak w}_1^p$ in coprime integers $u,\,v$,
where ${\mathfrak w}_1$ is an ideal  of $K:=\Q(\zeta)$,
$\zeta^p=1$, $\zeta \ne 1$, and ${\mathfrak p}:= (\zeta-1)\,\Z[\zeta]$.
    
\smallskip\noindent
Let $q \ne p$, $q \notdiv u\,v$, be a prime number such that 
$\frac{v}{u}$ is of order
$n$ modulo~$q$, with $n$ prime to $p$;
 put $\eta := (1+\xi\,\zeta) \,\zeta^{-\frac{1}{2}}$, 
$\eta_1 := \eta^{e_\omega}$, where $\xi$ is a primitive $n${\rm th} root of 
unity (see Definition 4). Put ${\mathfrak q} := (q, u\,\xi-v)$ in $L:=\Q(\mu_n)$.

\smallskip\noindent
We get in  $M:=LK$:

\smallskip
$\Big(\Frac{\eta_1}{{\mathfrak Q}}\Big)_{\!\!M} =
\zeta^{\frac{1}{2}\,\frac{v-u}{v+u}\,\kappa}$,
 $\forall {\mathfrak Q}  \div  {\mathfrak q}$, in the
 nonspecial cases ($p \notdiv u+v$),  $p\geq 3$,\,\footnote{\,In the
 first case of SFLT for $p>3$ we may have
 $u-v \equiv0 \pmod {p}$ (then $u-v \equiv 0 \pmod {p^2}$) contrary to FLT with
 $v:=y$ and $u:= x$ or $z$. For $p=3$, $u-v \equiv 0 \pmod {9}$
 in the first case.}
 
 \smallskip
$\Big(\Frac{\eta_1}{{\mathfrak Q}}\Big)_{\!\!M} = 1$,
  $\forall {\mathfrak Q}  \div  {\mathfrak q}$, in the 
special case ($p  \div  u+v$),  $p>3$,
 
\smallskip
$\Big(\Frac{\eta_1}{{\mathfrak Q}}\Big)_{\!\!M} = 
\zeta^{\frac{1}{2}\,\frac{v+u}{3\,v}\,\kappa}$,
 $\forall {\mathfrak Q}  \div  {\mathfrak q}$, in the 
special case, $p=3$.~\hfill\fin
\end{theo}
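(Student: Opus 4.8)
The plan is to reduce the whole statement to the explicit description of $\eta_1$ modulo the primes above $\mathfrak{q}$ that was assembled just before the theorem, and then feed this into the definition of the power residue symbol. Concretely, I would start from the congruence $\eta_1 \equiv \big(1+\frac{v}{u}\,\zeta\big)^{e_\omega}\,\zeta^{-\frac{1}{2}} \pmod{\prd_{\mathfrak{Q}\div\mathfrak{q}}\mathfrak{Q}}$ obtained by extending $\mathfrak{q}$ to $M$, and combine it with the exact factorizations furnished by Lemma 3. In each regime this yields a congruence of the shape $\eta_1 \equiv \zeta^{h}\,\delta_\omega^p \pmod{\prd_{\mathfrak{Q}\div\mathfrak{q}}\mathfrak{Q}}$ with $\delta_\omega\in K^\times$, where, after absorbing the factor $\zeta^{-1/2}$, one reads off $h \equiv \frac{1}{2}\frac{v-u}{v+u}$ in the nonspecial cases, $h \equiv 0$ in the special case for $p>3$, and $h \equiv \frac{1}{2}\frac{v+u}{3v}$ in the special case for $p=3$.

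Next I would exploit the two formal properties of the symbol recorded in Definition 2: it is multiplicative in the numerator, and it annihilates $p$th powers. The second holds because if $\beta$ is a $\mathfrak{Q}$-adic unit then $\ov{\beta^p}^{\,\kappa} = (\ov\beta^{\,\kappa})^p$, and $\ov\zeta$ has order $p$ in the residue field, so $\big(\Frac{\delta_\omega^p}{\mathfrak{Q}}\big)_{\!\!M} = 1$; here $\delta_\omega$ is indeed prime to $\mathfrak{Q}$ because $\eta_1$ is a cyclotomic unit, whence $\ov{\delta_\omega^p} = \ov\eta_1\,\ov\zeta^{-h}$ is a nonzero residue. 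Multiplicativity then gives $\big(\Frac{\eta_1}{\mathfrak{Q}}\big)_{\!\!M} = \big(\Frac{\zeta}{\mathfrak{Q}}\big)_{\!\!M}^{\,h}$. Since $\zeta\in K$, the last clause of Definition 2 yields $\big(\Frac{\zeta}{\mathfrak{Q}}\big)_{\!\!M} = \big(\Frac{\zeta}{\mathfrak{q}_K}\big)_{\!\!K} = \zeta^{\kappa}$, so that $\big(\Frac{\eta_1}{\mathfrak{Q}}\big)_{\!\!M} = \zeta^{h\kappa}$, which is exactly the claimed value in each of the three cases.

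The computation is essentially bookkeeping, so the only genuinely delicate points are two. First, the input congruence is stated modulo $\prd_{\mathfrak{Q}\div\mathfrak{q}}\mathfrak{Q}$, whereas the symbol is evaluated at a single $\mathfrak{Q}$; this is harmless, since that product-congruence implies congruence modulo each individual $\mathfrak{Q}$, and the resulting value is independent of the chosen $\mathfrak{Q}\div\mathfrak{q}$, matching the quantifier $\forall\,\mathfrak{Q}\div\mathfrak{q}$. Second, and this is the step I would verify most carefully, the $p=3$ normalization: the naive combination of Lemma 3(iii) with $\zeta^{-\frac12}$ produces the exponent $-\frac{v+u}{3v}$, and one must use $\frac{1}{2}\equiv -1 \pmod{3}$ to rewrite it as $\frac{1}{2}\frac{v+u}{3v}$, so as to keep the convention $(\zeta^{\frac12})^2=\zeta$ uniform with the other two cases. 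Everything else is direct substitution, so I expect no serious obstacle beyond keeping these sign and normalization conventions consistent across the three regimes.
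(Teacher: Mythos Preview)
Your proposal is correct and follows essentially the same line as the paper: the paper derives the congruence $\eta_1 \equiv \zeta^{\frac{1}{2}\frac{v-u}{v+u}}\cdot\delta_\omega^p$ (resp.\ $\delta_\omega^p$, resp.\ $\zeta^{\frac{1}{2}\frac{v+u}{3v}}\cdot\delta_\omega^3$) modulo $\prod_{\mathfrak{Q}\mid\mathfrak{q}}\mathfrak{Q}$ from Lemma~3 and the congruence $\xi\equiv\frac{v}{u}$, and then simply invokes Definition~2 to read off the symbol. Your write-up spells out the Definition~2 step (multiplicativity, triviality on $p$th powers, and $\big(\frac{\zeta}{\mathfrak{Q}}\big)_{M}=\zeta^{\kappa}$) more explicitly than the paper does, and your observation that $\frac{1}{2}\equiv -1\pmod{3}$ reconciles the exponent in the $p=3$ special case is exactly the normalization the paper uses tacitly.
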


These relations show that $\Big(\Frac{\eta_1}{{\mathfrak Q}}\Big)_{\!\!M}$
only depends on the Fermat quotient of $q$ once $u$, $v$ are given.
Note that the class of
the pairs $(\eta_1^t, {\mathfrak Q}^t)$, $t \in {\rm Gal}(M/K)$,
for any choice of ${\mathfrak Q} \div  {\mathfrak q}$ in $M$, 
corresponds
canonically to the class of the $(\xi^t, {\mathfrak q}^t)$, since we 
have the relation $\Big(\Frac{\eta_1}{{\mathfrak Q}}\Big)_{\!\!M}^t =
\Big(\Frac{\eta_1}{{\mathfrak Q}}\Big)_{\!\!M} =
\Big(\Frac{\eta_1^t}{{\mathfrak Q}^t}\Big)_{\!\!M}$,
where ${\mathfrak Q}^t \div  {\mathfrak q}^t$,
and $\eta_1^t = (1+ \xi^t\,\zeta)^{e_\omega} \,\zeta^{-\frac{1}{2}}$
(see Definitions 2, 3, and 4).

The symbol $\Big(\Frac{\eta_1}{{\mathfrak Q}}\Big)_{\!\!M}$
may be different from
$\Big(\Frac{\eta_1}{{\mathfrak Q}^t}\Big)_{\!\!M}$, $t \ne 1$,
since there is no local information on
$\Frac{1+\xi^t\,\zeta}{1+\xi\,\zeta}$. But as we have seen,
$\Big(\Frac{\eta_1}{{\mathfrak Q}}\Big)_{\!\!M} =
\Big(\Frac{\eta_1}{s\, {\mathfrak Q}}\Big)_{\!\!M}$
for any $s \in G$.

\begin{rema} {\rm Since for g.c.d.\,$(u,v) = 1$ the
equation $(u+v\,\zeta)\,\Z[\zeta] ={\mathfrak 
w}_1^p$ or ${\mathfrak p} \,{\mathfrak w}_1^p$ is equivalent to 
the equation
$\No_{K/\Q}(u+v\,\zeta) = w_1^p$ or $p\, w_1^p$, we deduce from 
$u+v\,\zeta \equiv u\,(1 + \xi\,\zeta) \pmod {\mathfrak Q}$
for all ${\mathfrak Q} \div {\mathfrak q}$, that (the case $n= 2$ 
giving   $\Big(\Frac{u}{{\mathfrak q}_K}\Big)_{\!\!K} = 
\Big(\Frac{v}{{\mathfrak q}_K}\Big)_{\!\!K} = 
\Big(\Frac{p}{{\mathfrak q}_K}\Big)_{\!\!K}$ in the nonspecial cases,
 $=1$ otherwise):
$$\No_{M/L}(u+v\,\zeta) \equiv \No_{M/L}(u\, (1+\xi\,\zeta)) \equiv 
u^{p-1}\,\Frac{1+\xi^p}{1+\xi} = u^{p-1}\,(1+\xi)^{t_p-1}
\!\!\!\!\pmod {\!\mathfrak Q}, $$ for all ${\mathfrak Q} \div {\mathfrak q}$,
where $t_p$ is the Frobenius automorphism of $p$ in $L$.
This gives:
$$\Big(\Frac{(1+\xi)^{t_p-1}}{{\mathfrak Q}}\Big)_{\!\!M} =
\Big(\Frac{u}{{\mathfrak Q}}\Big)_{\!\!M} = 
\Big(\Frac{v}{{\mathfrak Q}}\Big)_{\!\!M}
\ \,\Big({\rm resp.}\,\ = \Big(\Frac{pu}{{\mathfrak Q}}\Big)_{\!\!M} = 
\Big(\Frac{pv}{{\mathfrak Q}}\Big)_{\!\!M} \Big),$$
in the nonspecial cases (resp. the special case),
 for all ${\mathfrak Q} \div {\mathfrak q}$, 
with ${\mathfrak q} = {\mathfrak q}_\xi$.}~\hfill\fin
\end{rema}
    
From a solution $(x, y, z)$ of Fermat${}'$s equation, 
we get the three relations (see Subsection 2.1):
$$(x+y\,\zeta)\,\Z[\zeta] = {\mathfrak z}_1^p,\ \ 
(z+y\,\zeta)\,\Z[\zeta] =
{\mathfrak x}_1^p,\ \ (x+z\,\zeta)\,\Z[\zeta] ={\mathfrak y}_1^p\ \,
{\rm or}\ \,{\mathfrak p} \,{\mathfrak y}_1^p. $$

For $p>3$, the conditions:
$$p\notdiv x^2-y^2,\  p\notdiv z^2-y^2,
\  p\notdiv x+z\ ({\rm resp.}\  p\notdiv x-z), $$
in  the first (resp. second)  case, and the conditions:
$$p\notdiv x^2-y^2, \ \ p\notdiv z^2-y^2,$$
in the  second\ case, are satisfied by choice of the notations (see Lemma 1).
 
 \smallskip
  If $n\leq 2$ about the relation 
  $(x+y\,\zeta)\,\Z[\zeta] = {\mathfrak z}_1^p$ 
or $(z+y\,\zeta)\,\Z[\zeta] = {\mathfrak x}_1^p$ 
  (i.e.,  $q \div  x^2-y^2$ or  $q \div  z^2-y^2$), then
$M=K$, ${\mathfrak Q}={\mathfrak q}_K \div q$ in $K$.
Since $\eta_1 = (1 \pm \zeta)^{e_\omega}\,\zeta^{-\frac{1}{2}}\in 
K^{\times p}$, we get:
$$\zeta^{\frac{1}{2}\,\frac{y-x}{y+x}\,\kappa}=1 \ \,{\rm or}\ \, 
\zeta^{\frac{1}{2}\,\frac{y-z}{y+z}\,\kappa}=1.$$

Then these two values of $n$ give again the second theorem of Furtw\"angler 
[Fur] in the context of FLT for $p>3$,
that is the fact that  when $q \div  x^2-y^2$  or $q \div  z^2-y^2$,
then $\zeta^{\kappa} =1$, which means  $\kappa \equiv 0 \pmod {p}$ (see 
Corollaries 2 and 3  generalizing the FLT situation to SFLT).

 \smallskip
 We have the same conclusion
in the first case of FLT, with the supplementary
condition $p\notdiv x-z$, if $q \div  x^2-z^2$
(in the second case of FLT, this does not work for $(x, z)$ since 
$p \div  x+z$).

\begin{rema}{\rm (Furtw\"angler${}'$s theorems and FLT;
    see e.g. [Gr1, Appendix]  or [Ri,\,IX,\,3]).}
    {\rm  Let $(x,y,z)$ be a solution of Fermat${}'$s equation for $p>3$,
 under the conditions of Lemma 1.

\smallskip
(i) Recall that  the first theorem of Furtw\"angler giving Wieferich
criteria is that for any prime number $q \ne p$,
if $q \div  x$ or $z$ (or $y$ in the first case),
then $\kappa \equiv 0 \pmod {p}$.

\smallskip
Of course, if  $q \div  x+y$ or $z+y$ (or $x+z$ in the first case), 
then  from Subsection 2.1, (i) with obvious notations,
 $q  \div  z_0$ or $x_0$ (or $y_0$ in the first case), giving 
$\kappa \equiv 0 \pmod {p}$ (from  the first theorem of Furtw\"angler)
what we can call the first part of the second theorem of 
Furtw\"angler, the second part being that if  
$q \div  x-y$ or $z-y$ (or $x-z$ in the second case), 
then $\kappa \equiv 0 \pmod {p}$.

\smallskip
 (ii)   If  $q \div  x$ or $z$ (or $y$ in the first case)
 when $q\not\equiv 1 \pmod {p}$, then 
 from Subsection 2.1, (iii),
    $q  \div  x_0$ or $z_0$ (or $y_0$ in the first case).
    Then we deduce that $q^p \div  y+z =x_0^p$
     or $x+y = z_0^p$ (or $x+z = y_0^p$ in the first case).
    This means, since $q\notdiv y\,z$ or $x\,y$ (or $x\,z$ in the 
    first  case), that $\frac{y}{z}$
    or $\frac{y}{x}$ (or $\frac{x}{z}$ in the first case) is of order 
    2 modulo $q$, giving again the first part of
    the second theorem of Furtw\"angler and
   $\kappa \equiv 0 \pmod {p}$.
    
 \smallskip
 The two results are not independent in the case $q\not\equiv 1 \pmod {p}$.
 For some other remarks on  Furtw\"angler${}'$s theorems, see [Que].

\smallskip
(iii) So, if we choose $q\not\equiv 1 \pmod {p}$ such that $\kappa \not\equiv 
0 \pmod {p}$, this implies that $q \notdiv xyz$ in the first case 
of FLT, and $q\notdiv xz$ in the second case of FLT. 
Thus, under these assumptions on $q$, the hypothesis $q \notdiv xyz$ (in the 
first case) or $q\notdiv xz$ (in the second case) are useless for the 
development of our method and give effective tests in practice.

\smallskip
It remains the case $q  \div  y$ in the second case ($p  \div y$). When
$q\not\equiv 1 \pmod {p}$, $q  \div  y_0$, then $q  \div  x+z$; 
we obtain that $q \notdiv x\,z$ and $q  \div  x+z$ but we cannot 
conclude, except that the root $\xi''$ associated to $\frac{x}{z}$ is
$-1$.
To eliminate the case $q \div y$ in the second case we must 
suppose $q$ large enough, which is not effective.

\smallskip
(iv) In any case of FLT we have the following result (see [Ri, IV.3]). 
If $q \ne p$ divides $y$ and does not divide $x+z$ then
$q \equiv 1 \pmod {p^2}$. Indeed, since $q \notdiv x+z = y_0^p$ or
$p^{\nu\,p-1} y_0^p$, we have $q \div  y_1$ and $q = 1+d\,p$. 

\smallskip
Suppose that $p \notdiv d$; since $y+x = z_0^p$ and $y+z = x_0^p$,
we see that $x$ and $z$ are $p$th powers modulo $q$ and that
$x^d \equiv z^d \equiv 1 \pmod {q}$ giving
$x^d - z^d \equiv 0$ with $x^p + z^p \equiv 0$ $\pmod {q}$.
Since $d$ is even this may be written 
$x^d  \equiv (- z)^d$ and $x^p \equiv  (-z)^p$ $\pmod {q}$
with g.c.d.\,$(d,p) = 1$ which yields to $x \equiv -z \pmod {q}$ 
(absurd). So $q \equiv 1 \pmod {p^2}$.

\smallskip
This result is valid by cyclic permutation of $x$, $y$, $z$, only 
in the first case of FLT since $p$ (in $p^{\nu\,p-1} y_0^p$) may not 
be a $p$th power modulo $q$.}~\hfill\fin
\end{rema}

If we suppose that $(x, y, z)$ (with the choices of Lemma 1)
is a solution of Fermat${}'$s equation, we obtain, from Theorem 1
and the fact that in the second case for $p=3$, $x+z
\equiv 0 \pmod {9}$ (special case $(u,v) =
(x,z)$ with $u+v \equiv 0 \pmod {9}$):

\begin{cor}  Suppose that the prime $q \ne p$ is given such that
 $q \notdiv x y z$ and  such that $\frac{y}{x}$, $\frac{y}{z}$,
$\frac{x}{z}$ are of orders $n$, $n'$, $n''$ (modulo $q$) prime to $p$.

\smallskip\noindent
Let $\xi$, $\xi'$, $\xi''$ in $\Q(\mu_{q-1})$,
of orders $n$, $n'$, $n''$, and let
 ${\mathfrak q}$, ${\mathfrak q}'$,
${\mathfrak q}''$ dividing $q$ in $L=\Q(\mu_{n})$,
$L'=\Q(\mu_{n'})$, $L''=\Q(\mu_{n''})$,
built from $\frac{y}{x}$, $\frac{y}{z}$, $\frac{x}{z}$; then
consider the corresponding cyclotomic units
$\eta_1$, $\eta'_1$, $\eta''_1$. We then have:

\smallskip
  (i) First case of FLT for $p>3$:
$$\Big(\frac{\eta_1}{{\mathfrak
Q}}\Big)_{\!\!M}=\zeta^{\frac{1}{2}\,\frac{y-x}{y+x}\,\kappa},\ \
\Big(\frac{\eta'_1}{{\mathfrak
Q'}}\Big)_{\!\!M'}=\zeta^{\frac{1}{2}\,\frac{y-z}{y+z}\,\kappa},\ \
\Big(\frac{\eta''_1}{{\mathfrak
Q''}}\Big)_{\!\!M''}=\zeta^{\frac{1}{2}\,\frac{x-z}{x+z}\,\kappa}, $$
with $y-x \not\equiv 0$ and $y-z \not\equiv0 \pmod {p}$.%
\,\footnote{\,Recall that for $p>3$ we have no information
on $x-z$ modulo $p$ in the first case, so that we cannot consider 
the third symbol in some reasonings using the above property.}

\smallskip
 (ii) First case of FLT for $p=3$:
 $$\Big(\frac{\eta_1}{{\mathfrak Q}}\Big)_{\!\!M}=
 \Big(\frac{\eta'_1}{{\mathfrak Q'}}\Big)_{\!\!M'}=
 \Big(\frac{\eta''_1}{{\mathfrak Q''}}\Big)_{\!\!M''}=1. $$
 
 (iii) Second case of FLT for $p \geq 3$ ($y \equiv x+z \equiv 0 
 \pmod {p}$):
 $$\Big(\frac{\eta_1}{{\mathfrak Q}}\Big)_{\!\!M}=
 \zeta^{-\frac{1}{2}\,\kappa},\ \
 \Big(\frac{\eta'_1}{{\mathfrak Q'}}\Big)_{\!\!M'}=
 \zeta^{-\frac{1}{2}\,\kappa},\ \
 \Big(\frac{\eta''_1}{{\mathfrak Q''}}\Big)_{\!\!M''}= 1. $$
 $$\vspace{-1.1cm}$$\hfill\fin$$\vspace{-0.5cm}$$
 \end{cor}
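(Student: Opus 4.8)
The plan is to deduce the three symbol evaluations by applying Theorem~1 once to each of the three SFLT relations attached to the Fermat solution $(x,y,z)$. From Subsection~2.1 we have $(x+y\,\zeta)\,\Z[\zeta]={\mathfrak z}_1^p$, $(z+y\,\zeta)\,\Z[\zeta]={\mathfrak x}_1^p$, and $(x+z\,\zeta)\,\Z[\zeta]={\mathfrak y}_1^p$ or ${\mathfrak p}\,{\mathfrak y}_1^p$. To match the ratios named in the statement I would invoke Theorem~1 with the pairs $(u,v)=(x,y)$ for $\frac{y}{x}$, $(u,v)=(z,y)$ for $\frac{y}{z}$, and $(u,v)=(z,x)$ for $\frac{x}{z}$; in each case $\frac{v}{u}$ is exactly the ratio whose order is $n$, $n'$, $n''$. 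The hypothesis $q\notdiv xyz$ guarantees $q\notdiv u\,v$ for all three pairs, and the orders are prime to $p$ by assumption, so all hypotheses of Theorem~1 are met.

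The one point needing care is the third pair. Theorem~1 applied to $(u,v)=(z,x)$ requires the relation $(z+x\,\zeta)\,\Z[\zeta]$ to be of SFLT type, whereas Subsection~2.1 furnishes $(x+z\,\zeta)\,\Z[\zeta]$. I would settle this by conjugation: since $z+x\,\zeta=\zeta\,\ov{(x+z\,\zeta)}$ and ${\mathfrak p}$ is fixed by $c$, the ideal $(z+x\,\zeta)\,\Z[\zeta]$ equals $\ov{\mathfrak y}_1^{\,p}$ or ${\mathfrak p}\,\ov{\mathfrak y}_1^{\,p}$, which is again an SFLT relation. With this identification $\eta_1''=(1+\xi''\,\zeta)^{e_\omega}\,\zeta^{-\frac{1}{2}}$ is precisely the unit produced by Theorem~1 for $(u,v)=(z,x)$, and the exponent $\frac{v-u}{v+u}=\frac{x-z}{x+z}$ appears, as stated, rather than its negative.

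Next I would record, case by case, into which branch of Theorem~1 each pair falls, reading the residues modulo $p$ off Lemma~1. In the first case for $p>3$ all of $x+y,\;z+y,\;x+z$ are prime to $p$, so the three relations are nonspecial and the formula gives the exponents $\frac{1}{2}\,\frac{y-x}{y+x}\,\kappa$, $\frac{1}{2}\,\frac{y-z}{y+z}\,\kappa$, $\frac{1}{2}\,\frac{x-z}{x+z}\,\kappa$, proving~(i). For $p=3$ in the first case, Lemma~1 gives $y-x\equiv y-z\equiv x-z\equiv0\pmod{3}$ while the corresponding sums are units; each nonspecial exponent $\frac{v-u}{v+u}$ is then $\equiv0\pmod{3}$, so all three symbols collapse to $1$, proving~(ii). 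In the second case ($p\div y$, $p\div x+z$) the first two pairs stay nonspecial with $\frac{y-x}{y+x}\equiv\frac{y-z}{y+z}\equiv-1\pmod{p}$, giving $\zeta^{-\frac{1}{2}\,\kappa}$; the third pair $(z,x)$ is now in the special case $p\div u+v$, so Theorem~1 yields $1$ for $p>3$, and for $p=3$ the special-case exponent is $\frac{1}{2}\,\frac{x+z}{3x}\,\kappa$, which vanishes because $x+z\equiv0\pmod{9}$ (noted just before the statement), again giving $1$ and proving~(iii).

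The substance is entirely contained in Theorem~1; the only genuine obstacle is the bookkeeping for the third symbol --- correctly passing to the conjugate relation so as to obtain the ratio $\frac{x}{z}$ and not $\frac{z}{x}$, and, in the second case, verifying that the special-case branch (with the refinement $x+z\equiv0\pmod{9}$ when $p=3$) indeed forces the symbol to be trivial. Everything else is direct substitution of the congruences of Lemma~1 into the three instances of Theorem~1.
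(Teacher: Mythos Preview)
Your proposal is correct and follows essentially the same route as the paper: the corollary is obtained by applying Theorem~1 to each of the three SFLT relations coming from the Fermat triple, then reading off the residues of $u\pm v$ modulo~$p$ from Lemma~1 (together with $x+z\equiv0\pmod{9}$ in the second case for $p=3$). Your extra care with the third symbol---passing to the conjugate relation so that $(u,v)=(z,x)$ matches the statement's $\xi''\equiv\frac{x}{z}$---is a clean way to handle the bookkeeping; the paper implicitly allows either ordering (cf.\ the footnote in Remark~10) and the symbol is insensitive to this choice since $\eta_1(\xi^{-1})=\eta_1(\xi)^{-1}$ while the ideal $\mathfrak q$ is the same.
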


\begin{rema}{\rm  (a)
 Suppose that we are in the first case of FLT for $p>3$; let
$q \ne p$ be a prime number such that
 $\kappa \not\equiv 0 \pmod {p}$,
 and let $n$ and $n'$ be the orders of $\frac{y}{x}$
 and $\frac{y}{z}$ modulo $q$; we suppose that $p \notdiv n\,n'$
 (we have $n, n' > 2$ from the second theorem 
 of Furw\"angler, and  from Remark 3, (i), on the first theorem of 
 Furtw\"angler, we know that $q \notdiv xyz$).
 
 \smallskip
  If we find, with independent reasons, that at least one of the 
symbols $\Big(\Frac{\eta_1}{{\mathfrak Q}}\Big)_{\!\!M}$ or
  $\Big(\Frac{\eta'_1}{{\mathfrak Q'}}\Big)_{\!\!M'}$ is trivial,
 this is absurd since by definition
  $x - y \not\equiv 0$ and $z - y \not\equiv 0 \pmod {p}$
  under a solution of Fermat${}'$s equation (cf. (i)).

 \smallskip
  The reasoning on the
  third symbol does not work since $x-z$ can be divisible by $p$.

   \smallskip
 (b) For $p=3$ in the first case, all the right members are trivial 
under a solution of the first case of FLT and the above reasoning is 
   different but a contradiction arises as soon as an independent 
fact  implies the nontriviality of one of these symbols  (cf. (ii)).
   
   \smallskip
 (c)  In the second case for  $p\geq 3$, when  $\kappa \not\equiv 0 \pmod {p}$,
 we  know that $q \notdiv xz$.
 Since $p \notdiv n\,n'$, we deduce that $p \notdiv n''$.
 The symbol $\Big(\Frac{\eta''_1}{{\mathfrak Q''}}\Big)_{\!\!M''}$ is trivial under a 
solution  of Fermat${}'$s equation (cf. (iii)) and a contradiction arises if not.
    
  \smallskip
To have a similar reasoning as for the first case
with the two other nontri\-vial symbols associated to $\xi$ and $\xi'$, we need 
the condition $q \notdiv y$, so that we must 
suppose $q$ large enough (in practice, to get a contradiction,
we need the existence of infinitely 
many $q$ such that at least one of the symbols
$\Big(\Frac{\eta_1}{{\mathfrak Q}}\Big)_{\!\!M}$,
  $\Big(\Frac{\eta'_1}{{\mathfrak Q'}}\Big)_{\!\!M'}$ is trivial).

\medskip 
(d) If  $\kappa \equiv 0 \pmod {p}$,
in any case all the symbols are trivial under a solution of Fermat${}'$s 
equation.
So a contradiction supposes that for infinitely many such $q$ we get,
independently, nontrivial symbols.

\smallskip 
(e)  We can use the above remarks to give the following reciprocal 
statements, for $p>3$ to simplify; we suppose that any solution
$(x,y,z)$ of Fermat${}'$s equation satisfies the conventions of
Lemma 1.

 \smallskip
Let $\xi$ be a primitive $n$th root of unity with $n \not\equiv 0 \pmod {p}$,
let $\eta_1:=  (1+\xi\,\zeta)^{e_\omega}\,\zeta^{-\frac{1}{2}}$
(Definition 4), 
and let $q \equiv 1 \pmod {n}$. Consider an {\it arbitrary} ideal
${\mathfrak q}  \div  q$ in $L:=\Q(\mu_n)$ and any prime ideal
${\mathfrak Q} \div  {\mathfrak q}$ in $M:=LK$.
  
\smallskip 
We suppose 
given integers $u$, $v$ with g.c.d.\,$(u,v) = 1$, such that $q \notdiv u\,v$ and
$\frac{v}{u} \equiv \xi \pmod {\mathfrak q}$.

\smallskip 
$ \ \ \bullet\ $  If  $\kappa \not\equiv 0 \pmod {p}$
and $\Big(\Frac{\eta_1}{{\mathfrak Q}}\Big)_{\!\!M} = 1$, we then have:

\noindent
{\it If $u+v \not\equiv 0 \pmod {p}$, $(u,v)$ cannot be
a part of a solution $(x,y,z) = (u,v,z)$, $(v,u,z)$,
$(x,v,u)$, or $(x,u,v)$ of Fermat\,${}'$\!s equation.}

\smallskip 
$ \ \ \bullet\ $  If $\kappa \not\equiv 0 \pmod {p}$
and $\Big(\Frac{\eta_1}{{\mathfrak Q}}\Big)_{\!\!M} \ne 1$, we then 
have:

\noindent
{\it If $u + v \equiv 0 \pmod {p}$,
$(u,v)$ cannot be a part of a solution $(x,y,z) = (u,y,v)$
or $(v,y,u)$ of the second case of Fermat\,${}'$\!s equation.}

\smallskip 
$ \ \ \bullet\ $  If  $\kappa  \equiv 0 \pmod {p}$
and $\Big(\Frac{\eta_1}{{\mathfrak Q}}\Big)_{\!\!M} \ne 1$, we then 
have:

\noindent
{\it The pair $(u,v)$ cannot be a part of a solution $(x,y,z) $
of any case of Fermat\,${}'$\!s equation.}~\hfill\fin} 
 \end{rema}
 
\begin{prop} Let $(x, y, z)$ be a solution of Fermat\,${}'$\!s equation; 
then let $q \notdiv xyz$ be such that 
$\frac{y}{x}$, $\frac{y}{z}$, $\frac{x}{z}$ are of orders
$n$, $n'$, $n''$ (modulo $q$)
prime to $p$, and let $\wt L:= \Q(\mu_{d})$ where $q =: 1+ d\,p^r$,
$r \geq 0$, $p \notdiv d$.

\smallskip \noindent
Let  $\xi$, $\xi'$, $\xi''$, of orders $n$, $n'$, $n''$, 
be such that 
 $\xi \equiv \frac{y}{x} \pmod {{\mathfrak q}_{\xi}}$, 
 $\xi' \equiv \frac{y}{z} \pmod {{\mathfrak q}_{\xi'}}$, 
 $\xi'' \equiv \frac{x}{z} \pmod {{\mathfrak q}_{\xi''}}$
 in $L$, $L'$, $L''$, respectively.
 
\noindent
 Then there exist a prime ideal $\wt {\mathfrak q} \div q$ of $\wt L$
 and $t',\,t'' \in {\rm Gal}(\wt L/\Q)$ such that the following  
congruences hold:
 
 \smallskip
 (i) $\ \xi'{}^{t'} \equiv \frac{-\xi}{\xi+1} \pmod {\wt {\mathfrak q}}$,
 
\smallskip
 (ii) $\xi''{}^{t''} \equiv \frac{-1}{\xi+1} \pmod {\wt {\mathfrak q}}$.
\end{prop}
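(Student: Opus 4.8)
The plan is to carry the three pairs $(\xi,\mathfrak{q}_\xi)$, $(\xi',\mathfrak{q}_{\xi'})$, $(\xi'',\mathfrak{q}_{\xi''})$ to a single field and a single prime, and then to read off (i) and (ii) as congruences in the residue field $\F_q$ produced by the relations of Subsection 2.1. Since $n$, $n'$, $n''$ are prime to $p$ and divide $q-1=d\,p^r$, they all divide $d$; hence $L$, $L'$, $L''$ are subfields of $\widetilde L=\Q(\mu_d)$, and $q\equiv 1\pmod d$ splits completely in $\widetilde L$. First I would fix a prime $\widetilde{\mathfrak{q}}\div q$ of $\widetilde L$ lying above $\mathfrak{q}_\xi$; then reduction modulo $\widetilde{\mathfrak{q}}$ identifies $\mu_d$ with the group of $d$-th roots of unity of $\F_q$ and gives $\xi\equiv\frac{y}{x}\pmod{\widetilde{\mathfrak{q}}}$, whence the M\"obius expressions reduce to $\frac{-\xi}{\xi+1}\equiv\frac{-y}{x+y}$ and $\frac{-1}{\xi+1}\equiv\frac{-x}{x+y}\pmod{\widetilde{\mathfrak{q}}}$.

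As $t'$ runs through $\mathrm{Gal}(\widetilde L/\Q)$ the reductions of $\xi'^{\,t'}$ run through all elements of $\F_q^\times$ of order exactly $n'$ (similarly for $\xi''$ and $n''$); so (i) and (ii) are equivalent to the assertion that $\frac{-y}{x+y}$ and $\frac{-x}{x+y}$ reduce to elements of order exactly $n'$ and $n''$. Using $x+y=z_0^{\,p}$ and $-z=z_0z_1$ from Subsection 2.1, (i)--(ii), I would factor
$$\frac{-y}{x+y}=\frac{y}{z}\,\rho,\qquad \frac{-x}{x+y}=\frac{x}{z}\,\rho,\qquad \rho:=\frac{-z}{x+y}=\frac{z_1}{z_0^{\,p-1}},$$
so that, since $\frac{y}{z}$ and $\frac{x}{z}$ already have the required orders $n'$, $n''$ (prime to $p$), everything comes down to understanding the single rational number $\rho$ modulo $\widetilde{\mathfrak{q}}$: one has $\big(\frac{-y}{x+y}\big)^{n'}\equiv\rho^{\,n'}$ and $\big(\frac{-x}{x+y}\big)^{n''}\equiv\rho^{\,n''}$, and one needs $\rho$ to reduce into the cyclic groups generated by $\frac{y}{z}$ and by $\frac{x}{z}$ in such a way that the two products remain of full order.

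The hard part will be exactly this control of $\rho=z_1/z_0^{\,p-1}$. The bare congruence $x^p+y^p+z^p\equiv0\pmod q$ is insufficient --- on small degenerate configurations (for instance when $q\div x-z$) the orders genuinely fail to match --- so the proof must invoke the full strength of Subsection 2.1: that each of $x+y$, $y+z$, $z+x$ is a $p$-th power, that $\No_{K/\Q}(x+y\,\zeta)=z_1^{\,p}$, and property (iii) that every prime divisor of $z_1$ is $\equiv1\pmod p$. Granting that these force $\frac{-y}{x+y}$ and $\frac{-x}{x+y}$ to be primitive $n'$-th and $n''$-th roots of unity modulo $\widetilde{\mathfrak{q}}$, the proof concludes: by the transitivity of $\mathrm{Gal}(\widetilde L/\Q)$ on the primitive $n'$-th (resp. $n''$-th) roots of unity one chooses $t'$ (resp. $t''$) so that $\xi'^{\,t'}$ (resp. $\xi''^{\,t''}$) reduces to $\frac{-y}{x+y}$ (resp. $\frac{-x}{x+y}$) at the one prime $\widetilde{\mathfrak{q}}$. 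Reading both congruences at this common $\widetilde{\mathfrak{q}}$, chosen above $\mathfrak{q}_\xi$, is precisely what turns the statement into a genuine compatibility among the three a priori unrelated pairs.
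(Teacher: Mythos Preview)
Your reduction in the first paragraph is correct: with $\wt{\mathfrak q}$ chosen above ${\mathfrak q}_\xi$, the congruences (i) and (ii) become exactly the statement that $\frac{-y}{x+y}$ and $\frac{-x}{x+y}$ have orders $n'$ and $n''$ modulo $q$. But this is where the argument breaks down, and you have correctly sensed it: the ``hard part'' you isolate is not just hard, it is not provable from the ingredients you invoke. The facts from Subsection~2.1 that $x+y=z_0^{\,p}$, that $\No_{K/\Q}(x+y\,\zeta)=z_1^{\,p}$, and that the prime divisors of $z_1$ are $\equiv 1\pmod p$, are constraints at the primes dividing $z$; they say nothing about the multiplicative order of $z_0$ or $z_1$ modulo the auxiliary prime $q$. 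Concretely, $\big(\frac{-y}{x+y}\big)^{n'}\equiv\rho^{\,n'}$ with $\rho=z_1/z_0^{\,p-1}$, and there is no mechanism forcing $\rho^{\,n'}\equiv 1\pmod q$. So the step you flag with ``granting that these force\ldots'' is a genuine gap, not a routine verification.

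The paper's proof avoids this entirely by \emph{not} insisting that $\wt{\mathfrak q}$ lie above ${\mathfrak q}_\xi$. One first lifts each of ${\mathfrak q}_\xi,{\mathfrak q}_{\xi'},{\mathfrak q}_{\xi''}$ to a prime of $\wt L$ and uses Galois transitivity to bring all three to a single prime $\wt{\mathfrak q}_0$, producing $t',t''$ with $\xi\equiv\frac{y}{x}$, $\xi'^{\,t'}\equiv\frac{y}{z}$, $\xi''^{\,t''}\equiv\frac{x}{z}\pmod{\wt{\mathfrak q}_0}$. Fermat's equation then gives directly $\xi^{-p}+(\xi'^{\,t'})^{-p}\equiv -1\pmod{\wt{\mathfrak q}_0}$. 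The key move is now to apply the inverse of the Frobenius $t_p$ of $p$ in $\wt L/\Q$ (legitimate since $p\nmid d$): because $t_p$ raises $d$-th roots of unity to the $p$-th power, $t_p^{-1}$ strips the exponent $p$, yielding $\xi^{-1}+(\xi'^{\,t'})^{-1}\equiv -1\pmod{\wt{\mathfrak q}}$ with $\wt{\mathfrak q}:=t_p^{-1}(\wt{\mathfrak q}_0)$, which rearranges to (i). Relation (ii) then follows from the exact identity $\xi''^{\,t''}\,\xi'^{-t'}\,\xi=1$. The point you missed is that the prime $\wt{\mathfrak q}$ in the statement is allowed to move: it is the $t_p^{-1}$-translate of your natural choice, and this Frobenius shift is precisely what replaces the congruence $x^p+y^p+z^p\equiv 0$ by a linear relation among the roots of unity themselves.
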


\begin{proof} Since $L$, $L'$, $L''$ are subfields of  $\wt L$,
    there exist prime ideals $\wt {\mathfrak q}_0$, 
    $\wt {\mathfrak q}'_0$, $\wt {\mathfrak q}''_0$ of $\wt L$
    dividing ${\mathfrak q}_{\xi}$,  ${\mathfrak q}_{\xi'}$, 
${\mathfrak q}_{\xi''}$, respectively, such that:
$$ \xi \equiv \hbox{$\frac{y}{x}$} \pmod {\wt {\mathfrak q}_{0}}, 
\ \ \xi' \equiv \hbox{$\frac{y}{z}$} \pmod {\wt {\mathfrak q}'_{0}}, 
\ \  \xi'' \equiv \hbox{$\frac{x}{z}$} \pmod {\wt {\mathfrak q}''_{0}}. $$
The ideals 
$\wt {\mathfrak q}'_{0}$ and $\wt {\mathfrak q}''_{0}$ are some conjugates
of $\wt {\mathfrak q}_{0}$ and there exist $t',\,t'' \in {\rm Gal}(\wt L/\Q)$
such that $\xi \equiv \frac{y}{x}$, $\xi'{}^{t'} \equiv \frac{y}{z}$,
$\xi''{}^{t''} \equiv \frac{x}{z} \pmod {\wt {\mathfrak q}_{0}}$.

\smallskip
From $x^p+y^p+z^p = 0$ we get 
$\big (\frac{x}{y}\big )^p + \big (\frac{z}{y}\big )^p = -1$
giving:
$$\xi^{-p}+(\xi'{}^{t'})^{-p} \equiv -1 \pmod {\wt {\mathfrak q}_{0}}. $$
Since $p\notdiv d$, we can use the inverse of the Frobenius 
automorphism 
$t_p$ of $p$ in $\wt L/\Q$, which gives easily the relation (i) (for 
$\wt {\mathfrak q}:=t_p^{-1}
(\wt {\mathfrak q}_{0})$).

\smallskip
From the obvious relation $\xi''{}^{t''} \xi'{}^{-t'} \xi \equiv 1
 \pmod {\wt {\mathfrak q}_0}$, which implies the equality $\xi''{}^{t''} 
\xi'{}^{-t'} \xi = 1$,
 we obtain the point (ii)  since $\xi \ne -1$.\,\footnote{\,The case
 $\xi = -1$ means $y+x = z_0^p \equiv 0 \pmod {q}$, i.e., $q\div z$, 
 which is excluded; in the same way, $\xi' \ne -1$, $\xi'' \ne -1$.}
\end{proof}

\begin{cor} Let $m := {\rm l.c.m.\,}(n', n'')$; then we have
    $\phi(m) > \Frac{{\rm log}(q)}{{\rm log}(3)}$.
\end{cor}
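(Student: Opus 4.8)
The plan is to distill the two congruences of Proposition 1 into a single additive relation among roots of unity and then play a norm off against an archimedean bound. First I would set $a := \xi'^{\,t'}$ and $b := \xi''^{\,t''}$; being conjugates of $\xi'$ and $\xi''$, these are primitive $n'$th and $n''$th roots of unity, so both lie in $\Q(\mu_m)$ with $m = {\rm l.c.m.}(n',n'')$. Adding relations (i) and (ii) modulo $\wt{\mathfrak q}$ gives
$$a+b \equiv \Frac{-\xi}{\xi+1}+\Frac{-1}{\xi+1} = \Frac{-(\xi+1)}{\xi+1} = -1 \pmod{\wt{\mathfrak q}},$$
so the algebraic integer $\beta := a+b+1$ is divisible by $\wt{\mathfrak q}$, hence by $\mathfrak q_m := \wt{\mathfrak q}\cap\Q(\mu_m)$. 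Since $m \div d \div q-1$, the prime $q$ splits completely in $\Q(\mu_m)$, so $\mathfrak q_m$ has residue degree $1$ and absolute norm exactly $q$.

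Next I would compare the rational integer $N := \No_{\Q(\mu_m)/\Q}(\beta)$ with $q$ and with $3^{\phi(m)}$. On the one hand, provided $\beta \ne 0$, divisibility $\mathfrak q_m \div \beta$ forces $q \div N$, so $|N| \geq q$. On the other hand, for each embedding $\sigma$ the conjugate $\sigma(\beta) = \sigma(a)+\sigma(b)+1$ is a sum of two numbers of modulus $1$ and the number $1$, whence $|\sigma(\beta)| \leq 3$; equality in the triangle inequality would require $\sigma(a) = \sigma(b) = 1$. Because $n' > 2$ (the second theorem of Furtw\"angler, cf. Remark 3) we have $a \ne 1$, hence $\sigma(a) \ne 1$ for every $\sigma$, so every factor is strictly below $3$ and $|N| = \prod_\sigma |\sigma(\beta)| < 3^{\phi(m)}$. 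Chaining the two estimates yields $q \leq |N| < 3^{\phi(m)}$, that is $\phi(m) > \Frac{{\rm log}(q)}{{\rm log}(3)}$.

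The step I expect to be the genuine obstacle is the non-vanishing $\beta \ne 0$, on which the whole norm argument rests. A sum of two roots of unity and $1$ can vanish only when $\{a,b\}$ is the pair of primitive cube roots of unity, which would force $n' = n'' = 3$ and $m = 3$; I would therefore have to exclude this coincidence in the Fermat setting (or dispatch it separately, where the claimed bound degenerates to $q < 9$). Away from this single configuration $\beta \ne 0$ is automatic and the estimate above closes the argument, so the crux is really the verification that the two relevant roots of unity are not conjugate primitive cube roots.
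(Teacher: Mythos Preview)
Your argument is essentially identical to the paper's own proof: both add relations (i) and (ii) of Proposition~1 to obtain $\xi'^{\,t'}+\xi''^{\,t''}+1\equiv 0\pmod{\wt{\mathfrak q}}$, observe that this element lies in $\Q(\mu_m)$, and then bound its absolute norm from below by $q$ and from above by $3^{\phi(m)}$. The paper does not address the vanishing case $\beta=0$ that you flag --- it simply writes the norm as $qN$ with $N\geq 1$ --- so your caution about the configuration $n'=n''=3$ is a genuine refinement rather than a gap on your side.
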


\begin{proof} We have
$\xi''{}^{t''} + \xi'{}^{t'} + 1 \equiv 0 \pmod {\wt {\mathfrak q}}$;
then $\xi''{}^{t''} + \xi'{}^{t'} + 1 \in \Q(\mu_m)$ by definition of $m$, and
$\No_{\Q(\mu_m)/\Q}(\xi''{}^{t''} + \xi'{}^{t'} + 1) = q \,N$,
$N\geq 1$. Since 
$\No_{\Q(\mu_m)/\Q}(\xi''{}^{t''} + \xi'{}^{t'} + 1)
 < 3^{\phi(m)}$, we get $N < \frac{1}{q}\,3^{\phi(m)}$, 
 giving the result.
\end{proof}
   
Same results for $m' := {\rm l.c.m.\,}(n, n'')$ and
$m'' := {\rm l.c.m.\,}(n, n')$.
  
\begin{cor} We can choose the representative pairs $(\xi,{\mathfrak 
q})$, $(\xi',{\mathfrak q}')$, $(\xi'',{\mathfrak q}'')$ such that
 $\ \xi' \equiv \frac{-\xi}{\xi+1}$ and
 $\xi'' \equiv \frac{-1}{\xi+1} \pmod {\wt {\mathfrak q}}$
 for a suitable $\wt {\mathfrak q} \div  q$ in $\wt L$.
 
\smallskip \noindent
 In such a way, we have $\xi'' =\xi^{-1}\, \xi'$.~\hfill\fin
\end{cor}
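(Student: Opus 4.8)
The plan is to absorb the two Galois automorphisms $t'$, $t''$ produced by Proposition 2 into the choice of the representative pairs, using the fact recorded in Definition 3 that each of $(\xi,{\mathfrak q})$, $(\xi',{\mathfrak q}')$, $(\xi'',{\mathfrak q}'')$ is only defined up to $\Q$-conjugation. Indeed Proposition 2 already furnishes a prime $\wt{\mathfrak q} \div q$ of $\wt L = \Q(\mu_d)$ and elements $t',\,t'' \in {\rm Gal}(\wt L/\Q)$ with
$$\xi'{}^{t'} \equiv \frac{-\xi}{\xi+1}, \qquad \xi''{}^{t''} \equiv \frac{-1}{\xi+1} \pmod {\wt{\mathfrak q}}, $$
so the only task is to check that the twists by $t'$ and $t''$ cost nothing.

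First I would observe that $L' = \Q(\mu_{n'})$ and $L'' = \Q(\mu_{n''})$ are subfields of $\wt L$: the integers $n'$ and $n''$ are prime to $p$ and divide $q-1 = d\,p^r$, hence divide $d$, so that $\xi',\,\xi'' \in \wt L$ and the restrictions $t'|_{L'}$, $t''|_{L''}$ lie in ${\rm Gal}(L'/\Q)$, ${\rm Gal}(L''/\Q)$. Thus $\xi'{}^{t'}$ and $\xi''{}^{t''}$ are again primitive $n'$th and $n''$th roots of unity, and conjugating the defining congruences $\xi' \equiv \frac{y}{z} \pmod {{\mathfrak q}'}$, $\xi'' \equiv \frac{x}{z} \pmod {{\mathfrak q}''}$ (both right-hand sides being $\Q$-rational, hence Galois invariant) shows that $(\xi'{}^{t'},\,{\mathfrak q}'{}^{t'})$ and $(\xi''{}^{t''},\,{\mathfrak q}''{}^{t''})$ are legitimate representatives of the same two classes. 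I would then replace $(\xi',{\mathfrak q}')$ and $(\xi'',{\mathfrak q}'')$ by these conjugate pairs and relabel them $\xi'$, $\xi''$; the displayed congruences become exactly
$$\xi' \equiv \frac{-\xi}{\xi+1}, \qquad \xi'' \equiv \frac{-1}{\xi+1} \pmod {\wt{\mathfrak q}}, $$
modulo the very same $\wt{\mathfrak q}$ as before.

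It then remains to upgrade a congruence to an equality. Multiplying the two relations gives
$$\xi^{-1}\,\xi' \equiv \xi^{-1}\,\frac{-\xi}{\xi+1} = \frac{-1}{\xi+1} \equiv \xi'' \pmod {\wt{\mathfrak q}}. $$
Here $\xi^{-1}\xi'$ and $\xi''$ are roots of unity of order dividing $d$, while $\wt{\mathfrak q}$ is a degree one prime above $q$ with $q \notdiv d$, so the reduction $\mu_d \hookrightarrow (Z_{\wt L}/\wt{\mathfrak q})^\times \simeq \F_q^\times$ is injective on $d$th roots of unity. Hence the congruence forces the equality $\xi'' = \xi^{-1}\,\xi'$, which is the asserted relation. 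This is the same mechanism already used at the end of the proof of Proposition 2, where $\xi''{}^{t''}\,\xi'{}^{-t'}\,\xi = 1$ was deduced from a congruence modulo $\wt{\mathfrak q}_0$.

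The step I expect to be the only real subtlety is the bookkeeping of the conjugations: one must be sure that absorbing $t'$ and $t''$ into $\xi'$ and $\xi''$ leaves the single prime $\wt{\mathfrak q}$ untouched and keeps both congruences valid at once. This is harmless here precisely because $t'$ and $t''$ act on the roots of unity and not on $\wt{\mathfrak q}$, and because each congruence of Proposition 2 already holds modulo that one prime; nothing beyond the inclusion $L',\,L'' \subseteq \wt L$ is needed.
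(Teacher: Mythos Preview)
Your proof is correct and is exactly the argument the paper has in mind: the corollary is stated without proof (it is marked as immediate), and your write-up simply unpacks the two evident steps---absorbing the conjugations $t',t''$ of Proposition~2 into the choice of representatives (allowed by Definition~3), and then turning the congruence $\xi''\equiv\xi^{-1}\xi'\pmod{\wt{\mathfrak q}}$ into an equality via the injectivity of $\mu_d$ in the residue field, just as at the end of the proof of Proposition~2.
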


\subsection{Case of an odd character $\chi \ne \omega$}

We suppose that $\chi$ is an odd character of $g$ distinct from 
$\omega$; then $\chi = \omega^k$, $k$ odd, $k\not\equiv 1 \pmod {p-1}$,
which excludes the case $p=3$.

\smallskip
As for the case $k=1$, we can represent modulo $p$ the corresponding 
idempotent
by an element in $\Z[g]$ of the form $e_\chi = (1 - s_{-1})\, 
e'_\chi$, $e'_\chi \in \Z[g]$ (see Subsection 2.3).

\smallskip
We suppose that the $\chi$-component of the $p$-class group of $K$ is 
trivial;
for this, a necessary and sufficient condition is that the Bernoulli 
number
$B_{p-k}$ be prime to $p$ (see e.g. [Gr1, Section 2] for more details).

\smallskip
So, for any relation of the form $(u + v\,\zeta)\,\Z[\zeta] = 
{\mathfrak w}_1^p\ {\rm or}\ {\mathfrak p}\,{\mathfrak w}_1^p$
where  g.c.d.\,$(u,v)=1$, we get immediately:
$$(u + v\,\zeta)^{e_\chi}  = \delta_\chi^p, \ \delta_\chi\in K^{\times},$$
 since any $\chi$-unit of $K$ is a $p$th power for $\chi$ odd distinct 
from $\omega$ (moreover in the special case $(1-\zeta)^{e_\chi}$ is a 
$\chi$-unit).

 \smallskip
It is clear that  Lemma 4 is valid for the 
character $\chi$ and that the two equations are equivalent.

 \smallskip
The  relation
$(u + v\,\zeta)^{e_\chi}  = \delta_\chi^p$ may be considered as the 
$\chi$-SFLT equation associated to SFLT under the triviality of the 
$\chi$-class group.

\smallskip
As in the previous subsection, let $q \ne p$ be a prime number such 
that $q \notdiv u\,v$ and
$\frac{v}{u}$ is of order $n$ modulo $q$, with $n$ prime
to $p$ (see Lemma~2).
 
\smallskip
Then let $\xi$ of order $n$ and ${\mathfrak q} := {\mathfrak 
q}_\xi \div q$
in $L = \Q(\mu_n)$, characterized by the relation
$\xi \equiv \frac{v}{u} \pmod {\mathfrak q}$.
From $\eta = (1 + \xi\,\zeta) \,\zeta^{-\frac{1}{2}}$,
 put $\eta_k := \eta^{e_\chi} \in M$, where $M:=LK$; then
$\eta_k =  (1+\xi\,\zeta)^{e_\chi}$,
since $\zeta^{e_\chi} = 1$. Thus $\eta_k \in M^+$
and $\eta_k \in K^{\times p}$ if $n \leq 2$.

\smallskip
We deduce the congruence in $M$:
$$\eta_k \equiv \big (1 + \hbox{$\frac{v}{u}$}\,\zeta \big)^{e_\chi} = 
\delta_\chi^p
\pmod {\prd_{{\mathfrak Q} \div {\mathfrak q}}\,{\mathfrak Q}}. $$

We then have the relation
 $\,\Big(\Frac{\eta_k}{{\mathfrak Q}}\Big)_{\!\!M} = 1$,
 for  all  ${\mathfrak Q}  \div  {\mathfrak q}$,
so that, in this situation, a contradiction to the existence of a 
nontrivial solution of the SFLT equation is that this symbol be nontrivial
for some $q$.

\smallskip
Here the value of $\kappa$ does not enter.

\smallskip
From  Kummer duality, the extension $M(\sqrt[p]{\eta_k})/M$ is 
splitted, by means of a $p$-cyclic extension,
over the extension $L K_{\chi^*}$, where $\chi^* = \omega^{1-k}$
and $K_{\chi^*}$ is the subfield of $K$ fixed by the kernel of $\chi^*$;
this field $K_{\chi^*}$ is real. Of course, $L K_{\chi^*} = L$ if and 
only if $K_{\chi^*}=\Q$, i.e., $\chi=\omega$.

\medskip
This criterion may be used for any odd character $\chi \ne \omega$ such that 
the $\chi$-component of the $p$-class group of $K$ is trivial, which 
may have some interest. In some sense, it is similar to the case 
$\kappa \equiv 0 \pmod {p}$ of the preceding case $\chi=\omega$,
the symbols being trivial independently of $q$.

\smallskip
But unfortunately, the corresponding extensions
$M(\sqrt[p]{\eta_k})/L$ are meta\-belian (nonabelian) extensions and
do not define intrinsic arithmetic properties of the field $L$ as 
with the use of the single character $\omega$ to which we will return  to study 
its properties. 

\subsection{Computation of the $\F_p$-dimension of a group of units}

Since $\eta_1$ is considered in $(E_M/E_M^p)^{e_\omega}$,
it is necessary to precise the $\F_p$-dimension of this group. The 
computation is the same for any odd character $\chi$ (this may be useful
for  Subsection 3.2).

\begin{prop} Let  $M = LK$, where   $L=\Q(\mu_n)$, $n>2$, $p\notdiv n$.
   Let $E_M$ be the group of units of $M$ and let $\chi = \omega^k$
be   an odd character of $g$.
   
\smallskip\noindent
Then the $\F_p$-dimension of $(E_M/E_M^p\,.\,\mu_p)^{e_\chi}$ is equal to 
$\frac{1}{2}\,[L:\Q] = \frac{1}{2}\,\phi(n)$.
\end{prop}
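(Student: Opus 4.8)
The plan is to compare the desired $\F_p$-dimension with a rational (equivalently $\Q_p$-rational) character multiplicity coming from Dirichlet's unit theorem, the comparison being harmless because $|g| = p-1$ is prime to $p$. First I would dispose of the roots of unity. Writing $\mu_M$ for the group of roots of unity of $M$ and $\ov E_M := E_M/\mu_M$ (a free $\Z$-module), the exact sequence $1 \to \mu_M \to E_M \to \ov E_M \to 1$ reduced modulo $p$ gives $E_M/E_M^p\,\mu_p \cong \ov E_M/p\,\ov E_M$: indeed $\ov E_M$ being free kills the relevant $\mathrm{Tor}$ term, and the image of $\mu_p$ in $E_M/E_M^p$ coincides with that of all of $\mu_M$ since $v_p(|\mu_M|)=1$ (as $p\notdiv n$), so quotienting by $\mu_p$ removes exactly the torsion contribution $\mu_M/\mu_M^p \cong \Z/p$. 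Thus it suffices to compute $\dim_{\F_p}(\ov E_M \otimes \F_p)^{e_\chi}$.

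Next I would pass from $\F_p$ to characteristic zero. Since $p-1$ is invertible in $\Z_p$, the idempotent $e_\chi$ lies in $\Z_p[g]$ and $\F_p[g]$ is semisimple; hence on the $\Z_p[g]$-lattice $\Lambda := \ov E_M \otimes \Z_p$ the component $e_\chi\Lambda$ is a $\Z_p$-direct summand, so it is $\Z_p$-free of rank equal to the multiplicity $m_\chi$ of $\chi$ in the $g$-module $E_M \otimes \Q_p$. Because $e_\chi$ commutes with reduction modulo $p$, this yields $\dim_{\F_p}(\ov E_M\otimes\F_p)^{e_\chi} = \dim_{\F_p}(e_\chi\Lambda/p\,e_\chi\Lambda) = m_\chi$. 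The problem is therefore reduced to showing $m_\chi = \tfrac{1}{2}\phi(n)$.

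Finally I would compute $m_\chi$. The field $M = \Q(\mu_{np})$ is abelian and CM, so $\mathrm{Gal}(M/\Q)$ acts transitively on its (all complex) archimedean places with stabilizer $\langle c\rangle$, where $c$ is complex conjugation; Dirichlet's theorem then gives an isomorphism of $\mathrm{Gal}(M/\Q)$-modules $E_M \otimes \Q_p \oplus \mathbf{1} \cong \mathrm{Ind}_{\langle c\rangle}^{\mathrm{Gal}(M/\Q)}\mathbf{1} = \bigoplus_{\psi(c)=1}\psi$, the sum running over all characters $\psi$ of $\mathrm{Gal}(M/\Q)$ trivial on $c$. Using $L\cap K = \Q$ I write $\mathrm{Gal}(M/\Q) = \Gamma \times g$ with $\Gamma := \mathrm{Gal}(L/\Q)$ and $c = (c_L, s_{-1})$, where $c_L$ is the (order $2$, since $n>2$) complex conjugation of $L$. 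The $\chi$-isotypic part selects $\psi = \psi_\Gamma \otimes \chi$, and the condition $\psi(c)=1$ becomes $\psi_\Gamma(c_L) = \chi(s_{-1})^{-1} = -1$, using that $\chi$ is odd. Exactly half of the $\phi(n)$ characters of the abelian group $\Gamma$ satisfy $\psi_\Gamma(c_L)=-1$, and the trivial summand $\mathbf 1$ contributes nothing after applying $e_\chi$ because $\chi\ne\mathbf 1$. Hence $m_\chi = \tfrac{1}{2}\phi(n) = \tfrac{1}{2}[L:\Q]$, as claimed.

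I expect the only genuinely delicate point to be the integral comparison of the second paragraph: one must be certain that no discrepancy between the mod-$p$ and the characteristic-zero $\chi$-multiplicities creeps in through torsion or through non-integrality of $e_\chi$. Both dangers vanish precisely because $p\notdiv(p-1)$, so that $e_\chi$ is $p$-integral and $\F_p[g]$ is semisimple, and because the $\mu_p$-quotient has already stripped off the $p$-torsion of $E_M$; everything else is the routine Dirichlet and induced-character bookkeeping, for which one may take the character count over any field containing the $(p-1)$st roots of unity (e.g. $\Q_p$ or $\C$).
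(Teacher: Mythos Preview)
Your proof is correct and follows essentially the same route as the paper: both compute the $\chi$-multiplicity in the Dirichlet--Herbrand representation $E_M\otimes\Q_p\oplus\mathbf 1\cong\mathrm{Ind}_{\langle c\rangle}^{\mathrm{Gal}(M/\Q)}\mathbf 1$ by writing $\mathrm{Gal}(M/\Q)=\mathrm{Gal}(L/\Q)\times g$ and counting the odd characters of $\mathrm{Gal}(L/\Q)$. Your treatment is actually more explicit than the paper's on two points the paper leaves implicit: the removal of $\mu_p$ (the paper simply tensors with $\C_p$, killing all torsion) and the passage from the $\C_p$-multiplicity to the $\F_p$-dimension (the paper just invokes $\varepsilon_\chi\equiv e_\chi\pmod{p\,\Z_p[g]}$, whereas you spell out the semisimplicity of $\F_p[g]$ and the freeness of $e_\chi\Lambda$).
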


\begin{proof}
 Put $\Gamma := {\rm Gal}(M/\Q) = G \oplus H$ where
$G := {\rm Gal}(M/L)$ and where $H := {\rm Gal}(M/K)$.
Let $\wh \Gamma = \wh G\oplus\wh H$ be the group
of  irreducible characters of
$\Gamma$; for any $\psi \in \wh \Gamma$, let $\varepsilon_\psi$ 
be the idempotent:
$$\varepsilon_\psi := \Frac{1}{\vert\,\Gamma\,\vert} \sm_{\sigma 
\in \Gamma} \psi^{-1}(\sigma)\,\sigma \in \C_p[\Gamma]. $$
If $\psi = \omega^i .\,\theta$, $\ \omega^i\in \wh G$,
$1\leq i\leq p-1$, $\theta \in \wh H$,
then $\varepsilon_\psi = \varepsilon_{\omega^i}\,\cdot\,
\varepsilon_\theta$.

\smallskip
From the Dirichlet--Herbrand theorem on units (see e.g. [Gr2, I.3.7]) we know
that  the representation $\C_p \oplus (\C_p \tensorZ E_M)$ is given by
the representation of permutation:
$$\C_p[\Gamma]\,\Frac{1}{2}(1+c) =
\plus_{\psi \ \rm even} \C_p[\Gamma]\,\varepsilon_\psi. $$

\smallskip
Then, since the character $\chi$ is odd,
 $(\C_p \oplus (\C_p \tensorZ E_M))^{\varepsilon_\chi} =
\big( \C_p\tensorZ E_M\big)^{\varepsilon_\chi}$ is the representation
 $\plus_{\psi\ \rm even}
\C_p[\Gamma]\,\varepsilon_\psi\,\cdot\,\varepsilon_\chi$.

\smallskip
 Put $\psi = \omega^i.\,\theta$; then $\varepsilon_\psi =
\varepsilon_{\omega^i}.\, \varepsilon_\theta$ and 
   $\varepsilon_\psi .\,\varepsilon_\chi = 0$ except
   if $i=k$. The sum is  over $\psi = \chi\,\theta$
   with $\theta$ odd since $\psi$ must be even. 
   Then:
   $$\big( \C_p\tensorZ E_M\big)^{\varepsilon_\chi} \simeq 
   \plus_{\theta\in \wh H,\, \rm odd} 
   \C_p[\Gamma]\,\varepsilon_{\chi.\,\theta}\,. $$
   
   We deduce that the $\C_p$-dimension of 
   $\big( \C_p\tensorZ E_M\big)^{\varepsilon_\chi}$
   is   $\frac{1}{2}[L:\Q]$. Hence the proposition follows
   since $\varepsilon_\chi \equiv e_\chi \pmod {p\,\Z_p[g]}$.
   \end{proof}
    
    In particular, the $\F_p$-dimension of 
$(E_M/E_M^p\,\cdot\,\mu_{p})^{e_\omega}$
    is equal to $\frac{1}{2}[L:\Q]$. Thus the subgroup of
    $(E_M/E_M^p\,\cdot\,\mu_{p})^{e_\omega}$ generated by the images 
    of the units $t\,\eta_1$, $t \in {\rm Gal}(M/K) / 
    \langle\,t_{-1}\,\rangle$, is of
     $\F_p$-dimension less or equal to $\frac{1}{2}[L:\Q] = \frac{1}{2}\,\phi(n)$.

\section {Study of the cyclotomic units $\eta_1$ and the extensions 
$F_\xi$}

In this Section we use some classical elements of Kummer theory and 
of decomposition of a Kummer extension over a subfield.

\subsection{The cyclotomic unit $\eta_1$}

We consider, independently of any relation of the form $(u + v
\,\zeta)\,\Z[\zeta] = {\mathfrak w}_1^p \ {\rm or}\ 
{\mathfrak p}\,{\mathfrak w}_1^p$, the  cyclotomic number:
 $$\eta := (1+\xi\,\zeta) \,\zeta^{-\frac{1}{2}} ,$$
 where $\xi$ is a primitive $n$th root of unity with $p \notdiv n$, and
 the real cyclotomic unit $\eta_1 := \eta^{e_\omega}$ defined in Definition 4.
We exclude the cases $n=1$ and $n=2$ seen above
for which $\eta_1 \in K^{\times p}$.

\smallskip
For $n>2$, $L:= \Q(\xi)$  is an imaginary cyclotomic field, hence we 
can consider the biquadratic extension $M/L^+ K^+$,
where $M := L\,K$; then $M^+$ is the subfield of $M$ of relative 
degree 2, distinct from
$LK^+$ and from $KL^+$.

\smallskip
 Let $f$ be the residue degree of $q$ in $K/\Q$.
We note that the residue degree of $q$ in $M^+/\Q$ is equal to $f$.
     
\smallskip
Since $\eta_1$ is a unit, the extension
$M(\sqrt[p]{\eta_1})/M$ is $p$-ramified (i.e., unramified outside $p$).
Put $\pi = \zeta-1$; $\pi$ is still an uniformizing parameter at $p$ 
in $M$ (indeed, ${\mathfrak p}$ is not ramified in $M/K$). 
We have:
$$\eta \equiv 1 + \xi + \Frac{1}{2}\,(\xi - 1)\,\pi\pmod {\pi^2}, $$
giving by the usual computation modulo $\pi^2$:
$$\eta_1 := \eta^{e_\omega} \equiv 1+
\Frac{1}{2}\,\Frac{\xi-1}{\xi+1} \,\pi \, \pmod {\pi^2}; $$
since $n>2$, $\Frac{\xi-1}{\xi+1}$ is a local unit at
$p$, showing that $\eta_1$ is not $p$-primary; thus
in particular, the extension $M(\sqrt[p]{\eta_1})/M$ 
is cyclic of degree $p$.

\smallskip
 Kummer theory shows that the conductor of $M(\sqrt[p]{\eta_1})/M$
is ${\mathfrak p}^p$ extended to $M$ (see [Gr2, II.1.6.3]).
In some sense, $M(\sqrt[p]{\eta_1})/M$ is maximally wildly $p$-ramified
and has the same conductor as $M(\sqrt[p]{\zeta})/M$.

\smallskip
Moreover, this extension does not depend on the choice of $\zeta$
since we have: 
$$\big ((1+\xi\,\zeta^k) \,(\zeta^k)^{-\frac{1}{2}}\big )^{e_\omega}
= \big ((1+\xi\,\zeta)\,\zeta^{-\frac{1}{2}}\big )^{s_k\,e_\omega}, $$
with $s_k\,e_\omega \equiv k\,e_\omega \pmod {p\,\Z[g]}$
for any  $k$ prime to $p$, giving the same radical.
  
\subsection{The abelian extension $F_\xi/L$}
By definition of the character $\omega$, whose reflect
is $\omega^* = \chi_0$ (the unit character),
the extension $M(\sqrt[p]{\eta_1})/M$ is splitted over
$L$ by means of a cyclic $p$-ramified extension $F_\xi$, of degree $p$
over $L=\Q(\mu_n)$ (i.e., $F_\xi M = M(\sqrt[p]{\eta_1})$).

This extension, as extension of $L$, only depends on $\xi$ of 
order $n$. The family $(F_{\xi'})_{\xi'\,{\rm of\,order}\ n}$ is canonical.

\smallskip
Since $\eta_1$ is real, $\eta_1 = (1+\xi^{-1} \,\zeta^{-1})^{e_\omega}\, 
\zeta^\frac{1}{2}$ which defines the same extension as 
$(1+\xi^{-1}\zeta)^{e_\omega}\,\zeta^{-\frac{1}{2}}$
as we have seen at the end of Subsection 4.1.
Then we get $F_\xi = F_{\xi^{-1}}$.
In the cases $n \leq 2$, we have $L=\Q$,
$\eta_1 \in K^{\times p}$, and $F_{\pm 1}  = \Q$.

\smallskip
It is easy to see that for any $t \in {\rm Gal}(L/\Q)$ we have the 
relation  $F_{\xi^t} = t F_{\xi}$, where by abuse of notation $t 
F_{\xi}$
means $t'F_{\xi}$ for any $\Q$-automorphism $t'$ of $F_{\xi}$ such 
that $t'{{\big \vert}_L} = t$. Indeed, we have in the same way, 
$t'(\sqrt[p]{\eta_1}) = \sqrt[p]{t\,(\eta_1)}$ (up to a $p$th root of 
unity) where $t\,(\eta_1) = (1+\xi^t\,\zeta)^{e_\omega}\,
\zeta^{-\frac{1}{2}}$.\,\footnote{\,We use the same notations 
for the elements of the Galois groups ${\rm Gal}(M/K)$ and ${\rm 
Gal}(L/\Q)$, then $G={\rm Gal}(M/L)$ and $g={\rm Gal}(K/\Q)$ and similarly for
${\rm Gal}(M(\sqrt[p]{\eta_1})/M)$ and ${\rm Gal}(F_\xi/L)$. }
  
\smallskip
Suppose now that we have chosen a prime number $q$ such that
$q \equiv 1$ $\pmod {n}$, $p\notdiv n$, and let ${\mathfrak q}$ 
be a fixed prime ideal above $q$ in $L$; later, we will have
${\mathfrak q}={\mathfrak q}_\xi$ when $\xi$ is associated to the 
usual integers $u$, $v$, but in this subsection ${\mathfrak q}$ is arbitrary.

Consider the symbol
$\Big(\Frac{\eta_1}{{\mathfrak Q}}\Big)_{\!\!M}$ (which is independent of the
choice of ${\mathfrak Q}  \div  {\mathfrak q}$ in $M$); this symbol
is equal to 1 if and only if the image of $\eta_1$ in the residue 
field $Z_M/{\mathfrak Q}$ is a $p$th power, thus if and only if 
${\mathfrak Q}$
splits in $M(\sqrt[p]{\eta_1})/M$ (Hensel${\,}'s$\,Lemma)
which is equivalent to the splitting
of ${\mathfrak q}$ in $F_\xi/L$.

\medskip
Let $H_L$ be the  maximal abelian $p$-ramified
$p$-extension of $L$;
it contains all the extensions $F_{\xi'}$, $\xi'$ of order $n$, the cyclotomic
$\Z_p$-extension $L_\infty = L \Q_\infty$ of $L$ which is abelian
over $\Q$, and $\frac{1}{2}[L:\Q]$ other independent $\Z_p$-extensions
of $L$. This extension $H_L$ will be studied in more details in 
Section 5.

\medskip
Since $q$ totally splits in $L/\Q$, the
decomposition field of $q$ in $L_\infty/\Q$ is  $L_e= 
L\,\Q_e$, where $\Q_e \subset \Q_\infty$ is
the stage of degree $p^e$ over $\Q$
where $q^f = 1+ p^{e+1} d$, $e \geq 0$,
$p\notdiv d$; note that $e=0$ is equivalent to $\kappa \not\equiv 0 \pmod {p}$.

\section {Study of the extensions  $H_L/L$ and $F_n/L$}

In this section we recall some class field theory results concerning 
the abelian $p$-ramification over $L$.

\subsection{Class field theory and $p$-ramification}

Let $H_L$ be the  maximal abelian  $p$-ramified $p$-extension of $L:= 
\Q(\mu_n)$ in the case $n>2$, $p\notdiv n$ (so that $L$ is an imaginary
cyclotomic field of even degree) and let
$H_L{\st [p]} \subseteq H_L$ be the  maximal $p$-elementary $p$-ramified
extension of~$L$. 

\smallskip
We consider its Galois group as a vector
space over $\F_p$. 

\smallskip
Its dimension is given by the following \v Safarevi\v c formula 
(see e.g. [Gr2, II.5.4.1, (ii)]):
$${\rm dim}_{\F_p}({\rm Gal}(H_L{\st [p]}/L)) =
{\rm dim}_{\F_p}(V_L/L^{\times p}) + \hbox{$\frac{1}{2}$}[L:\Q] + 1, $$
where $V_L$ is the group of pseudo-units of $L$ which are local $p$th powers
at each place dividing $p$ in $L$.

\begin{lemm} The conductor of $H_L{\st [p]}/L$ divides $(p^2)$
 as ideal of $L$.
 \end{lemm}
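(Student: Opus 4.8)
The plan is to reduce the statement to a purely local computation at each prime above $p$ and to exploit that ${\rm Gal}(H_L{\st [p]}/L)$ has exponent $p$. Since $H_L{\st [p]}/L$ is abelian and unramified outside $p$, its conductor is supported on the primes ${\mathfrak p} \div p$ of $L$; moreover $L = \Q(\mu_n)$ with $n>2$ is totally imaginary, so all archimedean places are complex and contribute nothing (the degree being a power of the odd prime $p$). Thus it suffices to bound, for each ${\mathfrak p} \div p$, the local conductor exponent $c_{\mathfrak p}$ and to show $c_{\mathfrak p}\leq 2$. The global conductor then divides $\prd_{{\mathfrak p} \div p}{\mathfrak p}^2 = (p^2)$, where the last equality holds because $p$ is unramified in $L/\Q$ (as $p\notdiv n$), so that $(p)=\prd_{{\mathfrak p} \div p}{\mathfrak p}$.

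Next I would fix ${\mathfrak p} \div p$ and pass to the completion $L_{\mathfrak p}$. Because $p\notdiv n$, the extension $L_{\mathfrak p}/\Q_p$ is unramified, hence $L_{\mathfrak p}$ has absolute ramification index $1$ and $p$ is a uniformizing parameter. Write $U^{(m)} := 1+{\mathfrak p}^m$ for the higher unit groups of $L_{\mathfrak p}$. By local class field theory the local component of $H_L{\st [p]}$ at ${\mathfrak p}$ corresponds to a norm subgroup $N_{\mathfrak p}\subseteq L_{\mathfrak p}^\times$, and $c_{\mathfrak p}$ is the least $m$ with $U^{(m)}\subseteq N_{\mathfrak p}$. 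Since $L_{\mathfrak p}^\times/N_{\mathfrak p} \simeq {\rm Gal}(\,\cdot\,/L_{\mathfrak p})$ is of exponent dividing $p$, we have $(L_{\mathfrak p}^\times)^p\subseteq N_{\mathfrak p}$.

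The key local computation is then $(U^{(1)})^p = U^{(2)}$. For $a$ of valuation $m\geq 1$, expanding $(1+a)^p = 1 + p\,a + \sum_{k=2}^{p-1}\binom{p}{k}a^k + a^p$ and using that the absolute ramification index is $1$ and $p\geq 3$, the term $p\,a$ has valuation $m+1$, the terms $\binom{p}{k}a^k$ ($2\le k\le p-1$) have valuation $\geq 1+2m$, and $a^p$ has valuation $pm\geq 3m$; all of these are $>m+1$. Hence the $p$-th power map induces an isomorphism $U^{(m)}/U^{(m+1)} \xrightarrow{\ \sim\ } U^{(m+1)}/U^{(m+2)}$, and by successive approximation $(U^{(m)})^p = U^{(m+1)}$ for every $m\geq 1$. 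In particular $U^{(2)} = (U^{(1)})^p \subseteq (L_{\mathfrak p}^\times)^p\subseteq N_{\mathfrak p}$, so $c_{\mathfrak p}\leq 2$, which finishes the argument.

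I expect the only delicate point to be this local exponent computation, and the reason it succeeds is that $e/(p-1) = 1/(p-1)<1$: every $U^{(m)}$ with $m\geq 1$ lies strictly above the ramification break where the $p$-th power map is well behaved. By contrast, over $M\supseteq K=\Q(\zeta)$, where the absolute ramification index equals $p-1$ and the conductor of $M(\sqrt[p]{\eta_1})/M$ can reach ${\mathfrak p}^p$ (Subsection 4.1), this clean bound fails; it is precisely the hypothesis $p\notdiv n$, keeping $p$ unramified in $L$, that makes the ${\mathfrak p}^2$ bound available here.
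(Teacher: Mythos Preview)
Your proof is correct and follows essentially the same approach as the paper. The paper's own proof is a single sentence invoking Hensel's Lemma to assert that, since $p$ is unramified in $L/\Q$, any $\alpha\equiv 1\pmod{p^2}$ is locally a $p$th power at every place above $p$; your argument unpacks exactly this local fact by computing $(U^{(1)})^p=U^{(2)}$ directly on the filtration, which is the content behind that Hensel citation.
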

 
\begin{proof} From Hensel${}'$s Lemma, since $p>2$ is not ramified in $L/\Q$
($p \notdiv n$), the modulus $(p^2)$ is sufficient for any $\alpha \in L^\times$,
$\alpha \equiv 1 \pmod {p^2}$, to be locally a $p$th power at each place
dividing $p$ in $L$.
 \end{proof}

Thus $H_L{\st [p]}$ is contained in the ray class field $L{\st
(p^2)}$ and this yields:
$${\rm Gal}(H_L{\st [p]}/L) \simeq I/I^p \,R,$$
where $I$ is the group of fractional 
ideals of $L$ prime to $p$ and $R$  is the ray group
modulo $p^2$, i.e., $\big\{(\alpha) \in I$, $\alpha \equiv 1 \pmod {p^2} \big\}$.

\subsection{The subextension $F_n$}

 Let $t_{-1}$ be the element
of order 2 of the group ${\rm Gal}(M/L^+ K)$ and $s_{-1} \in G$ be the element
of order 2 of  ${\rm Gal}(M/K^+ L)$ (the complex conjugation is $c = s_{-1}\,t_{-1}$
as generator of ${\rm Gal}(M/M^+)$).

\smallskip
Since we have the relations $\eta_1^c = \eta_1$,
$\eta_1^{s_{-1}} =  \eta^{e_\omega \,\cdot\, s_{-1}}= \eta_1^{-1}$,
giving the relation $\eta_1^{t_{-1}} = \eta_1^{-1}$, we deduce that:
$${\rm Gal}(M(\sqrt[p]{\eta_1})/L^+ K) \simeq {\rm Gal}(F_\xi/L^+)
\simeq D_{2p}, $$
the diedral group of order $2p$.\,\footnote{\,Let $A:={\rm Gal}(M/L^+) =
G \oplus \langle \,t_{-1}\,\rangle$. Let $\chi_1$ be the character 
of $A$ defined by $\chi_1(s)=1$ for all $s \in G$ and 
$\chi_1(t_{-1})=-1$. Put $\chi = \omega\,\chi_1$; is is easy to see
that $\chi$ is the character of the radical
$\langle \,\eta_1\,\rangle \,M^{\times p}/M^{\times p}$
as $A$-module, since $\eta_1 =
\eta^{e_\omega}$ and  $\eta_1^{t_{-1}} = \eta_1^{-1}$.
From  Kummer duality, the character of 
${\rm Gal}(M(\sqrt[p]{\eta_1})/M)$ is $\chi^* := \omega\,\chi^{-1} = 
\chi_1$ proving that 
${\rm Gal}(M(\sqrt[p]{\eta_1})/L^+) \simeq G\times
{\rm Gal}(F_\xi/L^+)$, with ${\rm Gal}(F_\xi/L^+) \simeq D_{2p}$.
We also have ${\rm Gal}(M(\sqrt[p]{\eta_1})/M^+) \simeq D_{2p}$. }

\smallskip
In other words, ${\rm Gal}(L/L^+)$ acts on ${\rm Gal}(F_\xi/L)$ by
$\sigma^{t_{-1}} := t'_{-1}\,\cdot\,\sigma \,\cdot\,t'_{-1} = \sigma^{-1}$ for all $\sigma
\in {\rm Gal}(F_\xi/L)$ and any extension $t'_{-1}$ of $t_{-1}$ in
${\rm Gal}(F_\xi/L^+)$.

\smallskip
It will be necessary to consider the compositum of all the
extensions $M(\sqrt[p]{\eta_1})$ when $\xi$ (of order $n$) varies.
Indeed, in the situation of a nontrivial solution $(u,v)$ of the SFLT
equation, for any $n>2$, $p \notdiv n$, the root $\xi$ such that $\xi \equiv
\frac{v}{u} \pmod {\mathfrak q}$, for ${\mathfrak q}={\mathfrak q}_\xi$,
is uneffective and the properties of the symbols 
$\Big(\Frac{\eta_1}{{\mathfrak Q}}\Big)_{\!\!M}$,
${\mathfrak Q} \div {\mathfrak q}$ in $M$, for the pairs
$(\eta_1,{\mathfrak Q})$, can be studied in this extension.

\smallskip
Let $F_n$ be the compositum of the corresponding extensions
$F_{\xi'}$, $\xi'$ of order $n$, so that 
 $F_n$ is also the compositum of the  $F_{\xi^t}$, $t\in {\rm 
 Gal}(M/K)$, $\xi$ fixed;
since $t_{-1}\eta_1 = \eta_1^{-1}$, we can consider
the  $t\eta_1$  with
$t$ modulo $\langle\,t_{-1}\,\rangle$ (this is coherent with the 
relation $F_{\xi} = F_{\xi^{-1}}$).

\smallskip
We have the equality $F_n M = M \big(\sqrt[p]
{\langle\, t\, \eta_1\, \rangle_{{t \bmod < t_{-1}>}} } \,\big)$.

\smallskip
Then as above
${\rm Gal}(L/L^+)$ acts on ${\rm Gal}(F_n/L)$ by
$\sigma^{t_{-1}} = \sigma^{-1}$ for all $\sigma
\in {\rm Gal}(F_n/L)$.

\begin{lemm} The Galois closure of $F_\xi$ over $\Q$ is
    $F_n$ which  is linearly disjoint from $L_\infty/L$.
\end{lemm}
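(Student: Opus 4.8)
The plan is to prove the two assertions of Lemma~10 separately, using the Galois-theoretic structure already established in Subsection~5.2 together with the explicit description of $F_n$ as a Kummer radical.

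First I would show that $F_n$ is the Galois closure of $F_\xi$ over $\Q$. The extension $F_\xi/L$ is cyclic of degree $p$ (Subsection~4.2), so $F_\xi/\Q$ is generally not normal; its Galois closure is the compositum of all conjugates $t'F_\xi$ for $t'\in\mathrm{Gal}(\wt L/\Q)$ (for a suitable $\wt L\supseteq L$ containing the normal closure). By the relation $F_{\xi^t}=tF_\xi$ established at the end of Subsection~4.2, the $\Q$-conjugates of $F_\xi$ are precisely the $F_{\xi^t}$ with $t\in\mathrm{Gal}(L/\Q)$. Now $t$ permutes the primitive $n$th roots of unity, so as $t$ ranges over $\mathrm{Gal}(L/\Q)$ the roots $\xi^t$ range over all primitive $n$th roots of unity. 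Hence the compositum of the conjugates of $F_\xi$ is exactly the compositum of the $F_{\xi'}$ for all $\xi'$ of order $n$, which is by definition $F_n$. The key points to verify are that $F_n$ is indeed normal over $\Q$ (being a compositum of a full $\mathrm{Gal}(L/\Q)$-orbit of extensions, together with $L/\Q$ being normal) and that it is the \emph{smallest} normal extension of $\Q$ containing $F_\xi$ (any normal extension containing $F_\xi$ must contain all its conjugates, hence all $F_{\xi'}$, hence $F_n$).

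Second I would prove that $F_n$ is linearly disjoint from $L_\infty/L$ over $L$. Since $L_\infty=L\,\Q_\infty$ is the cyclotomic $\Z_p$-extension, $\mathrm{Gal}(L_\infty/L)\simeq\Z_p$, and the complex conjugation $t_{-1}$ acts trivially on it because $L_\infty/\Q$ is abelian (indeed $\Q_\infty/\Q$ is the real cyclotomic $\Z_p$-extension, on which complex conjugation is trivial). By contrast, the action of $t_{-1}$ on $\mathrm{Gal}(F_n/L)$ is by inversion, $\sigma^{t_{-1}}=\sigma^{-1}$, as recorded just before the statement. The intersection $F_n\cap L_\infty$ is an abelian $p$-extension of $L$ on which $t_{-1}$ acts simultaneously trivially (as a subextension of $L_\infty$) and by inversion (as a subextension of $F_n$); since $p$ is odd, $\sigma=\sigma^{-1}$ forces $\sigma^2=1$, hence $\sigma=1$ in a pro-$p$ group, so $F_n\cap L_\infty=L$. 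Because both $F_n/L$ and $L_\infty/L$ are Galois, equality of the intersection with $L$ is exactly linear disjointness, giving the result.

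The main obstacle I anticipate is the bookkeeping in the closure argument: one must be careful that the conjugates $tF_\xi$ are taken by automorphisms of a single large normal field (not merely abstract $\Q$-embeddings), and that the abuse of notation $tF_\xi=t'F_\xi$ for extensions $t'$ of $t$ is consistent, so that the orbit genuinely fills out all primitive $n$th roots and the compositum is normal. The disjointness step is comparatively clean once the opposing $t_{-1}$-actions are in hand; the only subtlety there is to note explicitly that $p$ odd is what kills elements fixed by inversion, which is automatic here since $p>2$.
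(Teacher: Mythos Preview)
Your argument is correct. The first part (Galois closure) is essentially the paper's own argument, phrased at the level of the fields $F_\xi$ rather than the Kummer extensions $M(\sqrt[p]{\eta_1})$; the identification of the $\Q$-conjugates of $F_\xi$ with the $F_{\xi^t}$ via $tF_\xi=F_{\xi^t}$ is exactly what the paper uses.

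For the linear disjointness, your route differs from the paper's. The paper works on the Kummer-radical side: it reduces to $L_1\not\subseteq F_n$, translates this (after base change to $M$) into a relation $\prod_t (t\,\eta_1)^{\lambda_t}=\zeta\,\delta^p$, and obtains a contradiction by applying complex conjugation (the left side is real, forcing $\zeta^2\in M^{\times p}$). You work on the Galois-group side, exploiting that $t_{-1}$ acts trivially on $\mathrm{Gal}(L_\infty/L)$ (because $L_\infty/\Q$ is abelian) but by inversion on $\mathrm{Gal}(F_n/L)$ (recorded just before the lemma), so the intersection has Galois group killed by $2$, hence trivial since $p$ is odd. The two arguments are Kummer-dual to one another; yours has the minor advantage of yielding $F_n\cap L_\infty=L$ directly, without the implicit reduction from $L_\infty$ to $L_1$ that the paper's phrasing requires.
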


\begin{proof} Over the field $K$, the Galois closure of 
$M(\sqrt[p]{\eta_1})$ is given by the radical
$\langle\,t\,\eta_1 \,\rangle_{t \bmod <t_{-1}>}$ with
 $t\,\eta_1 = (1+\xi^t\,\zeta)^{e_\omega}\,\zeta^{-\frac{1}{2}}$, $t\in {\rm 
 Gal}(M/K)$, giving the first part of the lemma.
The relation $L_1 \subseteq F_n$ is equivalent to:
$$M(\sqrt[p]{\zeta\,})  \subseteq
M\big(\hbox{$\sqrt[p]{\langle\, t\, \eta_1\, \rangle_{t \bmod <t_{-1}>}}\,$} \big), $$
then to the existence of a relation of the form
$\prd_{t \bmod <t_{-1}>} \! (t\, \eta_1)^{\lambda_t} = \zeta\,\delta^p$,
$\lambda_t \in \Z$, $\delta \in M^\times$; but since the left member 
is real, the use of complex conjugation implies $\zeta^2 \in  M^{\times p}$,
which is absurd.
\end{proof}

\begin{rema} {\rm The $\F_p$-dimension of the above radical depends 
on the study of the relation $\prd_{t \bmod  <t_{-1}>} ( t\, \eta_1)^{\lambda_t} 
\in M^{\times p}$; this yields to (see Subsection 4.1): 
$$\prd_{t \bmod  <t_{-1}>} \Big(1+\Frac{1}{2}\Frac{\xi^t-1}{\xi^t+1}\,\pi 
\Big)^{\lambda_t e_\omega}\equiv 1+ \Big(\sm_{t \bmod  <t_{-1}>} \lambda_t 
\,\Frac{1}{2}\Frac{\xi^t-1}{\xi^t+1}\Big)\,\pi \! \pmod {\pi^2}. $$
Thus if the  numbers 
$\Frac{\xi^t-1}{\xi^t+1}$, $t \bmod \langle\,t_{-1}\,\rangle$, are 
linearly independent
modulo $p$, we get the dimension $\frac{1}{2}[L: \Q]$ and 
${\rm dim}_{\F_p}({\rm Gal}(F_n/L)) = \frac{1}{2} [L:\Q]= 
\frac{1}{2}\phi(n)$.

\medskip
Since $\eta_1$ is a cyclotomic unit of $M$, the classical study 
of the whole group of cyclotomic units of $M$ (of finite index in 
$E_M$) may give the exact $\F_p$-dimension of the radical
(see Washington  book, Chap.\,8); but this study depends, in a complicate 
manner, on the Galois group of $M/\Q$ and the law of decomposition 
of the prime divisors of $n$ in this extension.~\hfill\fin }
\end{rema}

\subsection{Canonical decomposition of ${\rm Gal}(H_L{\st [p]}/L)$}

Consider the Galois group $C_L :={\rm Gal}(H_L{\st [p]}/L)$
as a module over $\F_p[{\rm Gal}(L/L^+)]$. Write:
$$C_L = C_L^+ \oplus C_L^-, \ \,{\rm with}\ \, C_L^+ :=
C_L^{\frac{1}{2}(1+t_{-1})} , \ \,C_L^- := C_L^{\frac{1}{2}(1-t_{-1})}. $$

We denote by $H_L^-{\st [p]}$ the subfield of $H_L{\st [p]}$ fixed
by $ C_L^+$ and by  $H_L^+{\st [p]}$ the subfield of $H_L{\st [p]}$
fixed by $ C_L^-$. We then have 
$F_n \subseteq H_L^-{\st [p]}$ and the diagram: 
\unitlength=0.6cm
$$\vbox{\hbox{\begin{picture}(11.1,3.8)
% horizontales
\put(4.6,3.0){\line(1,0){2.}}
\put(4.6,0.0){\line(1,0){2.}}
% vertical
\put(4.0,1.9){\line(0,1){0.6}}
\put(4.0,0.6){\line(0,1){2.}}
\put(7.5,1.9){\line(0,1){0.6}}
\put(7.5,0.6){\line(0,1){2.}}
% points
\put(6.9,2.9){$H_L{\st [p]}$}%
\put(2.5,2.9){$H_L^+{\st [p]}$}%
\put(3.8,-0.1){$L$}%
\put(6.94,-0.1){$H_L^-{\st [p]}$}%
\put(5.,3.6){$C_L^-$}%
\put(8.6,1.4){$C_L^+$}%
\end{picture}   }} $$

\begin{lemm} Put $\ov V_{\!L} := V_L/L^{\times p}$
 (see Subsection 5.1)   and
  $\ov V_{\!L} =\ov V_{\!L}^+ \oplus  \ov V_{\!L}^-$ as above.
 Then $\ov V_{\!L}^+ \simeq V_{\!L^+}/(L^+)^{\times p}$ giving:
 $${\rm dim}_{\F_p}(C_L^+ ) =
{\rm dim}_{\F_p}(\ov V_{\!L}^+) + 1;\ \
{\rm dim}_{\F_p}( C_L^-) = {\rm dim}_{\F_p}( \ov V_{\!L}^-)
    + \hbox{$\frac{1}{2}$} [L:\Q]. $$
\end{lemm}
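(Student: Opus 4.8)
The plan is to compute the $+$-part by descent to the totally real field $L^+$ together with \v Safarevi\v c's formula applied there, and to obtain the $-$-part by subtraction from the formula over $L$. First I would establish the isomorphism $\ov V_{\!L}^+ \simeq V_{\!L^+}/(L^+)^{\times p}$. Since $p \notdiv n$, the extension $L/L^+$ is unramified at the primes above $p$, so a pseudo-unit of $L^+$ that is a local $p$th power at $p$ remains a pseudo-unit and a local $p$th power in $L$; hence $V_{\!L^+} \subseteq V_{\!L}$, and its image in $\ov V_{\!L}$ lies in $\ov V_{\!L}^+$ because $V_{\!L^+}\subset L^+$ is fixed by $t_{-1}$. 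This map is injective: if $\alpha \in V_{\!L^+}$ becomes a $p$th power $\alpha=\beta^p$ in $L$, then $\beta^{1-t_{-1}} \in \mu_p(L) = \{1\}$ (as $p \notdiv n$), so $\beta \in L^+$ and $\alpha \in (L^+)^{\times p}$. For surjectivity, given $x \in \ov V_{\!L}^+$ represented by $\alpha \in V_{\!L}$, the relative norm $N_{L/L^+}(\alpha)=\alpha^{1+t_{-1}}$ lies in $V_{\!L^+}$ (its ideal is a $p$th power and it is a local $p$th power at $p$) and represents $x^2$ in $\ov V_{\!L}$; since $2$ is invertible modulo $p$, $x$ is the class of a suitable power of $N_{L/L^+}(\alpha)$, which comes from $V_{\!L^+}$.

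Next I would identify $C_L^+$ with ${\rm Gal}(H_{L^+}{\st [p]}/L^+)$, where $H_{L^+}{\st [p]}$ is the maximal $p$-elementary $p$-ramified extension of $L^+$. The field $L\,H_{L^+}{\st [p]}$ is abelian, $p$-elementary and $p$-ramified over $L$ (using that $L/L^+$ is unramified at $p$), and $t_{-1}$ acts trivially on its Galois group since it is induced from $L^+$; thus $L\,H_{L^+}{\st [p]} \subseteq H_L^+{\st [p]}$. Conversely, let $N/L$ be any subextension of $H_L{\st [p]}/L$ on which $t_{-1}$ acts trivially. Then ${\rm Gal}(N/L^+)$ is an extension of $\langle t_{-1}\rangle$ by the elementary abelian $p$-group ${\rm Gal}(N/L)$ with trivial action, hence abelian, and it splits as ${\rm Gal}(N/L)\times\langle t_{-1}\rangle$ because $2$ is prime to $p$. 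The fixed field $N^+$ of the order-$2$ factor satisfies $L\,N^+ = N$, $L \cap N^+ = L^+$, and $N^+/L^+$ is abelian $p$-elementary; a short inertia argument (at a place $v \notdiv p$ the inertia subgroup injects into $\langle t_{-1}\rangle$, whose order is prime to $p$) shows $N^+/L^+$ is $p$-ramified, so $N^+ \subseteq H_{L^+}{\st [p]}$ and $N \subseteq L\,H_{L^+}{\st [p]}$. Hence $H_L^+{\st [p]} = L\,H_{L^+}{\st [p]}$, and as $L$ and $H_{L^+}{\st [p]}$ are linearly disjoint over $L^+$ ($p$ odd), $C_L^+ \simeq {\rm Gal}(H_{L^+}{\st [p]}/L^+)$.

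Then applying \v Safarevi\v c's formula of Subsection 5.1 to the totally real field $L^+$ (whose number of complex places is $0$) gives ${\rm dim}_{\F_p}{\rm Gal}(H_{L^+}{\st [p]}/L^+) = {\rm dim}_{\F_p}(\ov V_{\!L^+}) + 1$; combined with the two previous steps this yields ${\rm dim}_{\F_p}(C_L^+) = {\rm dim}_{\F_p}(\ov V_{\!L}^+) + 1$. The $-$-formula then follows formally: \v Safarevi\v c's formula over $L$ gives ${\rm dim}_{\F_p}(C_L) = {\rm dim}_{\F_p}(\ov V_{\!L}) + \frac{1}{2}[L:\Q] + 1$, and subtracting the $+$-part, using $C_L = C_L^+ \oplus C_L^-$ and $\ov V_{\!L} = \ov V_{\!L}^+ \oplus \ov V_{\!L}^-$, leaves ${\rm dim}_{\F_p}(C_L^-) = {\rm dim}_{\F_p}(\ov V_{\!L}^-) + \frac{1}{2}[L:\Q]$.

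The main obstacle is the descent in the second step: one must check that the maximal subextension of $H_L{\st [p]}/L$ fixed under $t_{-1}$ genuinely descends to $L^+$, which requires both the splitting of ${\rm Gal}(N/L^+)$ over its order-$2$ quotient and the verification that the descended extension $N^+/L^+$ remains $p$-ramified. Everything else — the eigenspace computation for $\ov V_{\!L}$ and the final subtraction — is routine once $\mu_p(L)=1$ and the invertibility of $2$ modulo $p$ are used.
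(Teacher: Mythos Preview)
Your proof is correct and follows essentially the same approach as the paper: identify $C_L^+$ with ${\rm Gal}(H_{L^+}{\st [p]}/L^+)$, apply the \v Safarevi\v c formula over the totally real field $L^+$, and obtain the minus part by subtraction from the formula over $L$. The paper compresses all of this into a single sentence (invoking ``$p\ne 2$'' for the descent $C_L^+\simeq{\rm Gal}(H_{L^+}{\st [p]}/L^+)$ and taking the isomorphism $\ov V_{\!L}^+\simeq \ov V_{\!L^+}$ as evident), whereas you have carefully unpacked the injectivity/surjectivity for $\ov V$ and the descent of $H_L^+{\st [p]}$ to $L^+$; your inertia argument is slightly more elaborate than necessary (since $L/L^+$ is unramified at every finite place the inertia group at $v\nmid p$ in $N/L^+$ is in fact trivial), but it is valid as written.
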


\begin{proof} Since $p\ne 2$, we have $C_L^+ \simeq
{\rm Gal}(H_{L^+}{\st [p]}/L^+)$ for which the \v Safarevi\v c formula is 
${\rm dim}_{\F_p}( C_L^+) = {\rm dim}_{\F_p}( \ov V_{\!L}^+) + 1$,
proving the lemma.
\end{proof}

By this way,  the case where  ${\rm dim}_{\F_p}({\rm Gal}(F_n/L))=
\frac{1}{2} [L:\Q]$ is compatible  with the $\F_p$-dimension
of $C_L^-$ since when the invariant $ C_L^-$ is minimal
(which is equivalent to ${\rm dim}_{\F_p}(\ov V_L^-)=0$) then 
$F_n = H_L^-{\st [p]}$ as soon as the $t\eta_1$, $t \,\bmod <t_{-1}>$,
are independent in $M^\times/M^{\times p}$.

\smallskip
Note that the group of pseudo-units
$Y_L := \big\{ \alpha \in L^\times,\, (\alpha) = 
{\mathfrak a}^p \big\}$ is eluci\-dated by the following obvious
exact sequence:
$$1 \too E_L/E_L^p \tooo \ov Y_{\!L} \tooo {}_p\Cl_L \too 1, $$
where $\Cl_L$ is the $p$-class group of $L$, Ê${}_p\Cl_L$Ê
the subgroup of $\Cl_L$ of classes killed by $p$,
$E_L$ the group of units of $L$,
and $\ov Y_{\!L} := Y_L/L^{\times p}$.

\smallskip
For $L^+$ we get the analogous exact sequence:
$$1 \too E_{L^+}/E_{L^+}^p \tooo \ov Y_{\!L^+} \tooo {}_p\Cl_{L^+} 
\too 1. $$

We have, with usual notations $\pm$, the relations $(E_L/E_L^p)^+ \simeq 
E_{L^+}/E_{L^+}^p$ and $(E_L/E_L^p)^- = 1$, so that
$\ov Y_{\!L}^- \simeq {}_p\Cl_L^-$ 
and $\ov V_{\!L}^- \subseteq \ov Y_{\!L}^-$ only depends on the
minus part of the $p$-class group of $L$ and is often trivial. 

\smallskip
The group $\ov V_{\!L}^+ \simeq \ov V_{\!L^+} \subseteq \ov Y_{\!L^+}$ depends on 
the $p$-class group of $L^+$ (in general trivial) and more essentially
on the units locally $p$th power at $p$ in the
group of units $E_{L^+}$ of $L^+$ which is of $\Z$-rank $\frac{1}{2} 
[L:\Q] - 1$; but $\varepsilon \in E_{L^+}$ is a local $p$th power at 
each place dividing $p$ if and only if $\varepsilon^{p^\delta-1} 
\equiv 1 \pmod {p^2}$, where $\delta \div  \frac{1}{2} \phi(n)$ is the 
residue degree of $p$ in $L^+$, which is also very rare, giving often a 
trivial $\ov V_{\!L}^+$.

\begin{rema} {\rm Suppose that the group $\ov V_{\!L}$ is 
trivial.\,\footnote{\,This situation is by definition equivalent to 
the $p$-rationality of the field $L$ (see [Gr2, IV.3.5] for some 
equivalent conditions).}%
Then ${\rm dim}_{\F_p}( C_L^+) = 1$ and  ${\rm dim}_{\F_p}( C_L^-)
= \hbox{$\frac{1}{2}$} [L:\Q]$. In this case $H_L$ is the compositum 
of the
$\Z_p$-extensions of $L$ which is of the form $H_L^+ H_L^-$
where $H_L^+ = L_\infty$ is the cyclotomic $\Z_p$-extension of $L$
and $ H_L^-$ the compositum of $\frac{1}{2} [L:\Q]$
independent relative $\Z_p$-extensions of $L$ (i.e., which are 
pro-diedral   over  $L^+$).

\smallskip
Then $H_L{\st [p]}$ is the compositum of the first stages of these
$\Z_p$-extensions, the extension $H_L^+{\st [p]}$ is $L_1$,
and $H^-_L{\st [p]}\,M$ may be the Kummer extension
 defined by the radical generated by the $t \eta_1$
 as soon as its $\F_p$-dimension is $\frac{1}{2} [L:\Q]$.
 See Subsection 3.3 about these questions of dimensions.~\hfill\fin}
\end{rema}

\subsection {Conclusion} We have established, from Corollary 4 and 
Remark 4 (Subsection 3.1), that, under a solution of
Fermat${}'$s equation ($p>3$), for infinitely  many particular 
prime numbers $q$ in the case $\kappa \not \equiv 0 \pmod {p}$,
there exist privilegiate pairs $(F_\xi,{\mathfrak q}_\xi)$, 
$(F_{\xi'},{\mathfrak q}_{\xi'})$ for the first case  (resp. 
$(F_\xi,{\mathfrak q}_\xi)$, 
$(F_{\xi'},{\mathfrak q}_{\xi'})$, $(F_{\xi''},{\mathfrak q}_{\xi''})$
for the second case), defined up to conjugation, with
$p$-cyclic $p$-ramified extensions $F_\xi/L$, $F_{\xi'}/L'$,  
$F_{\xi''}/L''$ and
prime ideals ${\mathfrak q}_{\xi}$, ${\mathfrak q}_{\xi'}$,
 ${\mathfrak q}_{\xi''}$, for which
${\mathfrak q}_\xi$, ${\mathfrak q}_{\xi'}$ are inert for the first 
case
(resp. ${\mathfrak q}_\xi$, ${\mathfrak q}_{\xi'}$   are inert and
${\mathfrak q}_{\xi''}$ splits, for the second case)
in the corresponding extensions $F_\xi/L$, $F_{\xi'}/L'$, 
$F_{\xi''}/L''$.

\smallskip
In the case $\kappa \equiv 0 \pmod {p}$,  for all the above pairs,
 the ideals split in the corresponding extensions.

\smallskip
This intricacy may be in contradiction, for mostly  primes 
$q$,  since the arithmetical properties of
the governing fields $\Q(\mu_{q-1})$ are independent
of the Fermat problem; more precisely, a general philosophy is that
the decomposition groups of prime ideals in Galois extensions do not 
fulfill any other laws than standard ones, and may be analyzed in a
statistical point of view (see Section 7 for a direct study of these aspects).

\smallskip
About this, we will explain in  Section 9 that the case $p=3$ is precisely
an exceptional counterexample to the above claim, since some constraints 
do exist; but we will show that these constraints are not 
in contradiction with statistical considerations
because of the structure of the {\it infinite} set of solutions.

\smallskip
One may object that $F_\xi$ comes from the radical:
$$\big\langle\,(1+\xi\,\zeta)^{ 
e_\omega}\,\zeta^{-\frac{1}{2}}\,\big\rangle\,M^{\times p}$$ over $M$,
which is directly   associated to a problem
of SFLT type, and in a standard algebraic point of view the
above circumstances on the laws of decomposition may be
{\it equivalent} to a contradiction to SFLT.
 Thus it will be necessary to obtain some analytic or geometrical informations
on the splitting of $q$ in $H_L{\st [p]}/L$ (especially in the 
canonical family $(F_{\xi'}/L)_{\xi'\, {\rm of\,order}\ n}$) so as to prove that 
the above particularities do not exist.

\section {A sufficient condition proving Fermat${}'$s last theorem} 

In this section we study a sufficient condition for FLT, which
only involves  congruential properties of  prime ideals over
 $q$ in $\Q(\mu_{q-1})$.

\subsection{Main result}

We suppose that
$p>3$ and that the primes $q$ consi\-dered are such that $f>1$ and
$\kappa := \frac{q^f -1}{p} \not \equiv 0 \pmod {p}$;
we will then use  Remark 3 using Furtw\"angler${}'$s theorems.
Thus any divisor $n$ of $q-1$ is prime to $p$.

\smallskip
From a nontrivial solution $(u,v)$ of the SFLT equation, for which 
$\frac{v}{u}$ is of order $n>2$ modulo $q \notdiv u\,v$, we consider the
pair $(\xi, {\mathfrak q}_\xi)$, defined up to $\Q$-conjugation
 in $L:=\Q(\mu_n)$ (see Definition~3).

\smallskip
The integer $n$  and the pair are uneffective since if we fix an ideal
${\mathfrak q} \div q$ in $L$, the root $\xi$ such that 
${\mathfrak q}={\mathfrak q}_\xi$ is  unknown, or if we fix a 
primitive $n$th
root $\xi$, then the ideal ${\mathfrak q} \div q$ such that
${\mathfrak q}={\mathfrak q}_\xi$ is  unknown.

\smallskip
Let ${\mathfrak Q}_\xi$ be any prime ideal of $M:= LK$
above ${\mathfrak q}_\xi$.
Then the pair $(\eta_1,{\mathfrak Q}_\xi)$, where
$\eta_1 = (1+\xi\,\zeta)^{e_\omega}\,\zeta^{-\frac{1}{2}} \in M^+$, is also 
unknown in the same manner.

\smallskip
So if we ensure that, for instance for ${\mathfrak q}_0$ fixed 
arbitrarily in $L$,
for  ${\mathfrak Q}_0 \div {\mathfrak q}_0$ in $M$,
$\Big(\Frac{\eta_1}{t {\mathfrak Q}_0}\Big)_{\!\!M} = 1$
for all $t  \in {\rm Gal}(M/K)/\langle\,t_{-1}\,\rangle$, then in 
particular for the ``good''  value of the pair $(\eta_1,{\mathfrak Q})$ (i.e., 
such that ${\mathfrak Q} \div {\mathfrak q}_\xi$), we get:
$$\Big(\frac{\eta_1}{{\mathfrak Q}}\Big)_{\!\!M} =
\zeta^{\frac{1}{2}\,\frac{v-u}{v+u}\,\kappa}=1, $$
 giving $v-u \equiv 0 \pmod {p}$ which is absurd in the case of a 
 solution $(x,y,z)$ of Fermat${}'$s equation
 by choice of the difference $v - u = \pm(y-x)$ or $\pm(y-z)$ (see Corollary 4, (i)).

\smallskip
The problem is to know if there exist such prime numbers $q$
with $F_n$ in the splitting field of ${\mathfrak q}$ in $H_L{\st [p]}/L$,
for all $n \div q-1$, $n>2$.
If so, this will prove FLT (in the first case we know that $q\notdiv 
xyz$ and a single $q$ is sufficient;
for the second case where we know that $q\notdiv x z$, it is necessary 
to have infinitely many such primes $q$ to be certain that $q \notdiv y$).

\smallskip
For the proof of SFLT, we must suppose $u-v \not\equiv 0 \pmod {p}$
in the first case.

\smallskip
If $F_n$ is in the splitting field of ${\mathfrak q}$,
then this does not depend on the choice of ${\mathfrak q} \div q$
in $L$, which is a convenient simplification. In other words, $q$ 
totally splits in $F_n/\Q$.

\smallskip
Since $F_n \subseteq H_L^-{\st [p]}$, a
sufficient condition to have the total splitting of ${\mathfrak q}$ in
$F_n$ is that the Frobenius $\varphi$ of ${\mathfrak q}$ in
$H_L{\st [p]}/L$ be an element of $C_L^+$, which is equivalent to
$\varphi^{t_{-1}} = \varphi$, hence to  $\varphi^{t_{-1}-1} = 1$.
Note  that $\varphi$ is of order $p$ since its restriction to 
$L_1$ is of order $p$ by assumption.

\smallskip
 The image of 
$\varphi\in C_L$  by   the isomorphism ${\rm Gal}(H_L{\st [p]}/L) 
\simeq I/I^p \,R$ of class field theory, is given by the class of ${\mathfrak q}$ in
$I/I^p R$; thus the condition $\varphi^{t_{-1}-1} = 1$ is equivalent to
${\mathfrak q}^{t_{-1}-1} \in I^p R$, i.e.,
$${\mathfrak q}^{t_{-1}-1} = {\mathfrak a}^p\, (\alpha),\ \ \alpha 
\equiv 1 \pmod {p^2}, $$
for an ideal ${\mathfrak a}$ of $L$.
We must realize this for any divisor $n>2$ of $q-1$.

\smallskip
For $\wt n := q-1$, $\wt L := \Q(\mu_{q-1})$, we suppose that
the above condition
$\wt {\mathfrak q}^{\,\wt t_{-1}-1} = \wt {\mathfrak a}^p\, (\wt 
\alpha)$,
$\wt \alpha \equiv 1 \pmod {p^2}$,
is satisfied (for $\wt {\mathfrak q} \div q$ in $\wt L/\Q$).

\smallskip
Then let $n \div q-1$, $n>2$; since $L=\Q(\mu_n)$ is imaginary,
$L^+$ is fixed by the restriction $t_{-1}$ of $\wt t_{-1}$ to $L$,
and taking the norm $\No_{\wt L/L}$ we get:
$$\No_{\wt L/L}(\wt{\mathfrak q}^{\, \wt t_{-1}-1 }) =
\No_{\wt L/L}( \wt {\mathfrak a})^p\,\No_{\wt L/L}(\wt \alpha). $$

Since $q$ is totally split in $\wt L$, we have by definition
$\No_{\wt L/L}(\wt{\mathfrak q}) = {\mathfrak q}$
for some ${\mathfrak q} \div q$ in $L$,
and the above relation is of the form
${\mathfrak q}^{t_{-1}-1} = {\mathfrak a}^p\, (\alpha)$, with $\alpha 
\equiv 1 \pmod {p^2}$,
as expected (this coherent choice of the ideals ${\mathfrak q}$
is possible since the required condition of splitting at each stage 
is independent of the choice of the ideal).
So the whole condition for our purpose is given by the single
condition for $n=q-1$, $L=\Q(\mu_{q-1})$.

\medskip
We have obtained the following criterion,  where $c$
is the complex conjugation:

\begin{theo} Let $p$ be a prime number, $p>3$. If there exists at least 
a prime number $q$, $q \not \equiv 1 \pmod {p}$, $q^{p-1} \not \equiv 
1 \pmod {p^2}$, such that for a prime ideal ${\mathfrak q} \div  q$
in $\Q(\mu_{q-1})$, we have ${\mathfrak q}^{1-c} = {\mathfrak a}^p\,
(\alpha)$ for an ideal ${\mathfrak a}$ and an element $\alpha$
of $\Q(\mu_{q-1})$ with $\alpha \equiv 1 \pmod {p^2}$,%
\,\footnote{\, Since the multiplicative groups of the residue fields of $L$
at $p$ are of order prime to $p$, in any writing ${\mathfrak a}^p\,
(\alpha)$ we can suppose $\alpha = 1 + p\,\beta$, $\beta$ $p$-integer
of $L$.
The condition
 ${\mathfrak q}^{1-c} = {\mathfrak a}^p\,(\alpha)$, $\alpha \equiv 1
 \pmod {p^2}$, is equivalent to ${\mathfrak q}^{1-c}
 = {\mathfrak a}^p\,(1 + p\,\beta)$, where $\beta \equiv \beta^+ \pmod {p}$,
 for a  $p$-integer  $\beta^+$ of $L^+$;
 indeed, this last condition implies  ${\mathfrak q}^{2(1-c)}
 = {\mathfrak a}^{(1-c)p}\,(1 + p\,\beta)^{1-c}$ where
 $(1 + p\,\beta)^{1-c} \equiv 1 + p\,(1-c)\,\beta \equiv 1 \pmod {p^2}$,
 which gives the result thanks to a B\'ezout relation between 2 and $p$.
 
\smallskip
 The condition
 ${\mathfrak q}^{1-c} = {\mathfrak a}^p\,(\alpha)$, $\alpha \equiv 1
 \pmod {p^2}$, is also equivalent to ${\mathfrak q}
 = {\mathfrak b}^{1+c}{\mathfrak a'}^p\,(\alpha')$,  $\alpha' \equiv 1
 \pmod {p^2}$; indeed, a direction being trivial, from 
 ${\mathfrak q}^{1-c} = {\mathfrak a}^p\,(\alpha)$ we get 
 ${\mathfrak q}^2 = {\mathfrak q}^{1+c} {\mathfrak a}^p\,(\alpha)$.
  
\smallskip
 The condition ${\mathfrak q}^{1-c} = {\mathfrak a}^p\,(\alpha)$,
 $\alpha = 1 + p\,\beta$,
 is satisfied as soon as the class of ${\mathfrak q}^{1-c}$
is of order prime to $p$, which is a weak condition; it remains to get 
the stronger condition  $\beta \equiv \beta^+ \pmod {p}$.}
then the first case of FLT (or the first case of SFLT under the 
 supplementary condition $u-v \not\equiv 0\pmod {p}$) holds for~$p$.

\smallskip \noindent
The second case of FLT (or of SFLT) holds as soon as there exist infinitely many 
such primes $q$.~\hfill\fin
\end{theo}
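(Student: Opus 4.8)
The plan is to argue by contradiction, assuming Fermat's (resp.\ the SFLT) equation has a nontrivial solution for $p$, and to show that the hypothesis on $q$ forces one of the power residue symbols of Theorem 1 to be trivial, contradicting the choices made in Lemma 1. So suppose $(x,y,z)$ is arranged as in Lemma 1 (resp.\ $(u,v)$ is a nontrivial SFLT pair with $u-v\not\equiv 0\pmod p$). Since $q^{p-1}\not\equiv 1\pmod{p^2}$ we have $\kappa\not\equiv 0\pmod p$, so by the first theorem of Furtw\"angler (Remark 3, (i)) $q\notdiv xyz$ in the first case and $q\notdiv xz$ in the second, and by the second theorem (Corollary 3) the orders $n$ of $\frac{y}{x}$ and $n'$ of $\frac{y}{z}$ modulo $q$ are $>2$; as $q\not\equiv 1\pmod p$, every divisor $n$ of $q-1$ is prime to $p$. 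Taking $(u,v)=(x,y)$ or $(z,y)$ (nonspecial cases, with $u+v\not\equiv 0\pmod p$), it then suffices by Theorem 1 to prove that for the relevant $n>2$ the symbol $\Big(\Frac{\eta_1}{{\mathfrak Q}}\Big)_{\!\!M}$ is trivial: for then $\zeta^{\frac{1}{2}\frac{v-u}{v+u}\,\kappa}=1$, and since $\kappa\not\equiv 0$ and $u+v\not\equiv 0\pmod p$ this gives $v-u\equiv 0\pmod p$, absurd because $v-u=\pm(y-x)$ or $\pm(y-z)$ is prime to $p$ (resp.\ absurd by the SFLT hypothesis).

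Next I would translate ``the symbol is trivial'' into the ideal-theoretic condition of the statement. By Subsection 4.2, triviality of $\Big(\Frac{\eta_1}{{\mathfrak Q}}\Big)_{\!\!M}$ for all ${\mathfrak Q}\div{\mathfrak q}$ is equivalent to total splitting of ${\mathfrak q}$ in $F_\xi/L$, and it suffices that ${\mathfrak q}$ split in the compositum $F_n$; since $F_n\subseteq H_L^-{\st [p]}$, this holds as soon as the Frobenius $\varphi$ of ${\mathfrak q}$ in $H_L{\st [p]}/L$ lies in $C_L^+$, i.e.\ $\varphi^{t_{-1}-1}=1$. By the class field theory isomorphism ${\rm Gal}(H_L{\st [p]}/L)\simeq I/I^pR$ of Subsection 5.1 (with $R$ the ray group modulo $p^2$), this is precisely ${\mathfrak q}^{t_{-1}-1}={\mathfrak a}^p(\alpha)$, $\alpha\equiv 1\pmod{p^2}$. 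The task is then to produce such a relation \emph{simultaneously} for every $n>2$ dividing $q-1$ out of the single hypothesis over $\wt L=\Q(\mu_{q-1})$. I do this by descent: writing the hypothesis as $\wt{\mathfrak q}^{\,1-c}=\wt{\mathfrak a}^p(\wt\alpha)$ (here $c$ is $\wt t_{-1}$, and the sign is immaterial since $\wt\alpha^{-1}\equiv 1\pmod{p^2}$ as well) and applying $\No_{\wt L/L}$, I use that $q$ is totally split in $\wt L$, so $\No_{\wt L/L}(\wt{\mathfrak q})={\mathfrak q}$ for a coherent choice of ${\mathfrak q}\div q$ in $L$, and that $t_{-1}$ is the restriction of $c$; this gives ${\mathfrak q}^{t_{-1}-1}=\No_{\wt L/L}(\wt{\mathfrak a})^p\,\No_{\wt L/L}(\wt\alpha)$ with $\No_{\wt L/L}(\wt\alpha)\equiv 1\pmod{p^2}$, as required and uniformly in $n$.

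Finally I would separate the two cases. In the first case $q\notdiv xyz$, so a single admissible $q$ already yields the contradiction of the first paragraph, proving the first case of FLT (and of SFLT under $u-v\not\equiv 0\pmod p$). In the second case ($p\div y$) the pairs $(x,y)$, $(z,y)$ remain nonspecial and the same argument applies, but only provided $q\notdiv y$; Remark 3, (iii) yields $q\notdiv xz$ from $\kappa\not\equiv 0$, yet $q$ could divide $y$, and since $y$ has finitely many prime divisors one needs infinitely many admissible $q$ to secure one with $q\notdiv y$ --- whence the stronger hypothesis for the second case. The step I expect to be most delicate is the coordination in the middle paragraph: verifying that the one relation over $\wt L$ really forces the Frobenius condition $\varphi\in C_L^+$ in \emph{every} layer $H_L{\st [p]}/L$ at once, with a coherent choice of primes, so that the uneffective root $\xi$ attached to the solution is caught no matter which of the $\phi(n)$ classes it occupies.
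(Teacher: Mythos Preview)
Your proof is correct and follows essentially the same approach as the paper: contradiction via Theorem~1, reduction of the triviality of the symbol to the Frobenius condition $\varphi\in C_L^+$ through the class field theory isomorphism ${\rm Gal}(H_L{\st [p]}/L)\simeq I/I^pR$, and norm descent from $\wt L=\Q(\mu_{q-1})$ to every $L=\Q(\mu_n)$, with the first/second case split handled exactly as the paper does (one $q$ versus infinitely many). The only cosmetic difference is that for the SFLT statements you should point to Corollaries~2 and~3 rather than Remark~3, which is written for the Fermat triple $(x,y,z)$.
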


From the \v Cebotarev${}'$s theorem, there exist infinitely many prime 
ideals ${\mathfrak l}$
of $\Q(\mu_{q-1})$ such that their Frobenii $\varphi_{{\mathfrak l}}$
lie in $C_{\Q(\mu_{q-1})}^+$
(which is at least of dimension 1); the problem is to be sure that 
there is no obstruction to the fact that it is
sometimes possible for ${\mathfrak l} = {\mathfrak q} \div q$. 

\smallskip
It is clear that such a set of prime numbers $q$
would be of Dirichlet density~$0$,
as for the set of prime numbers $q$, such that
the ring $\Z[\mu_{q-1}]$ contains a principal ideal of norm $q$, 
a result proved by Lenstra in [Len, Cor.\,7.6].

\smallskip
Theorem 2 may be of empty use due to an  excessive condition on 
the primes $q$. So we intend, in the forthcoming subsection,
to try to give a weaker form of this result (see  Conjecture 2).

 \smallskip
  Proposition 2 shows that the extension $F_{q-1} \subseteq  H_L^-{\st [p]}$,
  for $L = \Q(\mu_{q-1})$,
is of degree less or equal to $\frac{1}{2}[L:\Q] = \frac{1}{2} 
\phi(q-1)$. So, if the torsion group $\ov V_{\!L}$ is trivial,
the equality $F_{q-1} = H_L^-{\st [p]}$ is possible and the
sufficient condition of  Theorem 2 is also necessary; thus if 
there is any hope of success of the method, this condition cannot be improved in 
practice.

\subsection{Some related viewpoints}

We will examine if some effective (or numerical) aspects allow us to 
justify the method of proof of FLT based on Theorem 2 for $p>3$.
    
\smallskip
{\bf a)} In this first approach, we fix $q$ and 
${\mathfrak q}  \div q$ in $L = \Q(\mu_{q-1})$, and we try to 
find some suitable values of $p$ for which $\varphi_{\mathfrak q}
\in C_{\Q(\mu_{q-1})}^+$.
    
 Suppose that  ${\mathfrak q}^k = (\alpha)$ in $L = \Q(\mu_{q-1})$ for 
some $k>0$ and suppose that we find  $d>0$ such that: 
    $\alpha^d \equiv \alpha^+ \pmod {p^2}$,
for some prime $p$ such that $p \notdiv k\,d$, and   some $\alpha^+ 
\in L^+$; then $\alpha^{d(1-c)} \equiv 1$
$\pmod {p^2}$ giving a solution of the problem for the prime $p$
(then a posteriori $k$ may be chosen as the order of the ideal class
of ${\mathfrak q}$ and $d$  as a suitable divisor of the 
order of the multiplicative group of the residue field of $L$ at~$p$).
    
\smallskip
    Of course this relation looks like the general problem of the 
    Fermat quotients of algebraic numbers as studied by Hatada in [Hat].
Considering the work of Hatada and others, a serious conjecture would 
be that there exist 
infinitely many solutions $p$ for $q$ fixed.
   
\smallskip
    Since the numerical values of  $p$ are out of range of any 
    computer, this conjectural property is not of a practical use,
    but connect FLT to deep properties of algebraic numbers.

\medskip
Meanwhile, we have found the following example which gives a very 
partial illustration but shows that there is, a priori, no
systematic obstruction for this question.

\begin{ex}{\rm  Let $q=5$ and $p=463$. We then have 
$L=\Q(\mu_4)=\Q(i)$, where $i:=\sqrt{-1}$, and
${\mathfrak q} = (2+i)$. We see that $q$ is totally inert in $K$ 
(i.e., $f=462$) and that $p$ is also inert in $L$.
  
\smallskip
We obtain the following numerical informations:
    
  \smallskip \smallskip
$\ \bullet \ \ $ $(5^{463-1}-1)/463 \not\equiv 0 \pmod {463}$
(i.e., $\kappa \not\equiv 0 \pmod {p}$),
    
  \smallskip \smallskip
$\ \bullet \ \ $ $(2+i)^{463+1} \equiv 43990 \pmod {463^2}$.
    
  \smallskip
This implies immediately:
$${\mathfrak q}^{1-c} =\big( \Frac{2+i}{2-i}\big) \ \ 
{\rm and}\ \  {\mathfrak q}^{(p+1)(1-c)} = 
\big(\Frac{2+i}{2-i}\big)^{p+1}
\equiv 1 \pmod {p^2}, $$
giving the relation
${\mathfrak q}^{1-c} = {\mathfrak a}^p\,(\alpha)$ 
with ${\mathfrak a} = {\mathfrak q}^{c-1}$ and 
$\alpha \equiv 1 \pmod {p^2}$.~\hfill\fin}
\end{ex}

{\bf b)}  In a slightly different point of view,
we must consider that in general,
for a solution $(u,v)$ of the SFLT equation,
the order $n$ of $\frac{v}{u}$
modulo $q$ may be a strict divisor of $q-1$, even if
it is clear directly that $n$ tends to infinity with $q$
(Corollary 5).

\smallskip
Thus we have the following comments.

\smallskip
Let $m$ be a fixed integer, $m>2$, $p\notdiv m$.
Put $K':= \Q(\mu_{p^2}) \supset K$, $L:=\Q(\mu_m)$,
$H:=  H_L^-{\st [p]}$ (see Section 5), and $H':=HK'$.
Then $H/\Q$ and $K'/\Q$ are linearly disjoint.

\medskip
Let $\varphi\in {\rm Gal}(H'/H)$ of order $pf$, $f \div p-1$.
From the \v Cebotarev${}'$s theorem, there exist infinitely many
prime numbers $q$ such that, for a suitable ${\mathfrak Q}' \div q$
in $H'$, the Frobenius automorphism satisfies the equality
$\Big(\frac{H'/\Q}{{\mathfrak Q}'}\Big) = \varphi$.

\smallskip
This implies the following properties:

\smallskip
 $\ \bullet$  $\ \ q \equiv 1 \pmod {m}$ (since $q$ splits in $L/\Q$),

\smallskip
 $\ \bullet$  $\ \, q^f \not\equiv 1 \pmod {p^2}$
 (since $q$ is  inert in $K'/K$),

\smallskip
 $\ \bullet$  $\ \ q$ is totally split in $H/L$ (since $\varphi$ 
fixes $H$).

\medskip
The condition ${\mathfrak q}^{1-c} = {\mathfrak a}^p (\alpha)$,
$\alpha \equiv 1 \pmod {p^2}$, is satisfied for any prime ideal
${\mathfrak q} \div q$ in $L=\Q(\mu_m)$ but not necessarily 
for $\wt {\mathfrak q}$ in 
$\wt L = \Q(\mu_{q-1})$ (i.e., the Frobenius of ${\mathfrak q}$ in 
$H_L{\st [p]}$ fixes $H_L^-{\st [p]}$, but this is not necessarily 
true for the Frobenius of $\wt {\mathfrak q}$ in $H_{\wt L}{\st [p]}$
giving possible inertia in $H_{\wt L}^-{\st [p]}/\wt L\,H_L^-{\st [p]}$).

\smallskip
The order of $\frac{v}{u}$ modulo $q$ is $n \div q-1$
and not necessarily $m$, and the obvious analogue of Theorem 2 applies only if
$n \div m$. In other words, we try to replace the order
$q-1$ \big (probably too big under the condition that the Frobenius 
of ${\mathfrak q}$
lies in $C_{\Q(\mu_{q-1})}^+$\big) by a strict divisor $m$ (depending 
on $q$), for infinitely many
$q$ for which we hope that the Frobenius of ${\mathfrak q}$ lies in 
$C_{\Q(\mu_{m})}^+$.

\smallskip
Then, under a nontrivial solution $(u,v)$ of the SFLT equation
($u-v \not\equiv 0 \pmod {p}$), there is an obstruction
to the fact that there exists at least a pair $(m,q)$
($m$ and $q$ defined as above with a Frobenius
in $C_{\Q(\mu_{m})}^+$) such that a divisor $n$
of $m$ is the order of $\frac{v}{u}$ modulo $q$. 

\smallskip
This remark may constitute a way of access to a proof of FLT
by means of analytic investigations and we can propose the following 
independent conjecture.

\begin{conj}  Let $p$ be a prime number, $p>3$, and let 
$\rho = \frac{v}{u}$, with g.c.d.\,$(u,v) = 1$, be a 
rational distinct from $0$ and $\pm 1$.
 
\smallskip \noindent
 There exists a divisor function $m \,:\, \N \Sauf\{0\} \too \N \Sauf\{0\}$
 (i.e., such that $m{\st (e)} \div  e$ for all $e \in  \N \Sauf\{0\}$)
 such that  there exist infinitely many prime numbers $q$, with
 $\kappa \not\equiv 0 \pmod {p}$, totally split in 
 $F_{m{\st (q-1)}}$ (see Subsection 5.2), for which the order $n$
 of $\rho$ modulo $q$ divides  $m{\st (q-1)}$.~\hfill\fin
\end{conj}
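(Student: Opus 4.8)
The plan is to choose the divisor function $m$ as small as the divisibility constraint permits, and thereby reduce everything to a single splitting condition attached to each prime. By Lemma 2 the order $n$ of $\rho$ modulo $q$ always divides $q-1$, so the smallest value of $m{\st (q-1)}$ compatible with the requirement $n \div m{\st (q-1)}$ is $n$ itself; since enlarging $m{\st (q-1)}$ only enlarges $F_{m{\st (q-1)}}$ and makes total splitting harder, I would simply \emph{define} $m{\st (q-1)} := {\rm ord}_q(\rho)$ on the shifted primes (and, say, $m(e):=e$ elsewhere), which is a legitimate divisor function. With this choice it suffices to exhibit infinitely many primes $q$ with $\kappa \not\equiv 0 \pmod p$ such that, writing $n := {\rm ord}_q(\rho)$ and $L := \Q(\mu_n)$, the prime $q$ splits completely in $F_n/\Q$.

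Next I would rephrase total splitting in $F_n$ in the class-field-theoretic language of Section 5. As $n \div q-1$, the prime $q$ already splits in $L/\Q$, so — since $F_n \subseteq H_L^-{\st [p]}$ — total splitting of $q$ in $F_n$ holds as soon as the Frobenius $\varphi_{\mathfrak q}$ of a prime ${\mathfrak q} \div q$ of $L$ in $H_L{\st [p]}/L$ lies in $C_L^+$, i.e. $\varphi_{\mathfrak q}^{\,t_{-1}-1}=1$. Through the isomorphism ${\rm Gal}(H_L{\st [p]}/L)\simeq I/I^pR$ this is exactly the congruence ${\mathfrak q}^{1-c}={\mathfrak a}^p(\alpha)$, $\alpha\equiv 1 \pmod{p^2}$, of Theorem 2, but now over the field $\Q(\mu_n)$, which is typically far smaller than $\Q(\mu_{q-1})$. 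The target thus becomes: for infinitely many values of $n$ there is a prime $q$ with ${\rm ord}_q(\rho)=n$, with $\kappa\not\equiv 0$, and with ${\mathfrak q}^{1-c}$ of the above shape.

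For each fixed $n$ I would then run the \v Cebotarev construction of Subsection 6.2(b). Putting $K':=\Q(\mu_{p^2})$ and $H':=F_nK'$ — with $F_n$ and $K'$ linearly disjoint over $\Q$ — the primes that are inert of residue degree $f$ in $K'/K$ (which forces $\kappa\not\equiv 0$), split in $L$, and split in $F_n$ are precisely those with a prescribed Frobenius class in $H'/\Q$, and \v Cebotarev yields infinitely many of them. The genuinely new constraint is the simultaneous condition ${\rm ord}_q(\rho)=n$, equivalently $q\div\Phi_n(u,v)$ with $q\notdiv n$ (Lemma 2); I would encode it by enlarging $H'$ with the Kummer data that detect when $\rho$ reduces to a \emph{primitive} $n$th root of unity modulo ${\mathfrak q}$, and then require the two Frobenius conditions to be compatible.

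The hard part will be uniformity in $n$. For a \emph{fixed} $n$ the primes with ${\rm ord}_q(\rho)=n$ are confined to the finitely many prime divisors of the integer $\Phi_n(u,v)$, so one cannot succeed with $n$ bounded and is forced to let $n\to\infty$ (in accordance with Corollary 5). But as $n$ grows the degree $[F_n:L]=p^{d_n}$, with $d_n:={\rm dim}_{\F_p}{\rm Gal}(F_n/L)$ of size up to $\frac{1}{2}\phi(n)$ by Proposition 2 and Lemma 8, so the proportion of primes of $L$ splitting in $F_n$ decays like $p^{-d_n}$, whereas $\Phi_n(u,v)$ supplies only $O(\phi(n))$ prime factors; the naive expected number of admissible $q$ per $n$ therefore tends to $0$. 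A proof must accordingly either isolate a thin family of $n$ for which $\ov V_L^-$ keeps $d_n$ small, or — more realistically — supply an effective \v Cebotarev statement with error terms uniform in $n$, or an equidistribution result for the primitive prime divisors of $\Phi_n(u,v)$ inside ${\rm Gal}(H_L{\st [p]}/L)$, guaranteeing that for infinitely many $n$ at least one such divisor falls into $C_L^+$. This uniform control over the specific primes $q\div\Phi_n(u,v)$ in the governing fields $\Q(\mu_n)$ is exactly where the elementary tools of the present paper run out, which is why the statement is offered as a conjecture to be settled by the analytic or geometric methods announced in the introduction.
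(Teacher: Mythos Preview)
The statement is \emph{Conjecture 2}; the paper does not prove it and does not claim to. After stating it, the paper adds only two brief remarks: that the choice $m(e)=e$ recovers the hypothesis of Theorem~2, and that the existence of such a divisor function hinges on (i) the size of the primes $q$ produced by the \v Cebotarev construction of Subsection~6.2(b) and (ii) lower bounds for the order of $\rho$ modulo $q$. There is no ``paper's own proof'' to compare against.

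Your write-up is therefore not in error so much as mislabeled: it is a strategy outline that correctly terminates in the acknowledgement that the decisive step is missing. Your diagnosis of the obstruction --- the competition between the density $p^{-d_n}$ of primes of $L=\Q(\mu_n)$ whose Frobenius lies in $C_L^+$ and the bounded supply of prime divisors of $\Phi_n(u,v)$ as $n\to\infty$ --- is sharper than what the paper itself records, and is the right heuristic. One minor point: your minimal choice $m(q-1):={\rm ord}_q(\rho)$ is dual to the paper's viewpoint in~6.2(b), where one \emph{fixes} $m$, produces infinitely many $q\equiv 1\pmod m$ totally split in $F_m$ by \v Cebotarev, and then asks whether some such $q$ satisfies ${\rm ord}_q(\rho)\mid m$; neither formulation is currently tractable, but the paper's phrasing makes clearer why one might expect analytic input (effective \v Cebotarev, distribution of orders) to be the missing ingredient.
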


Since $n$ tends to infinity with $q$, this means that $m{\st (q-1)}$ is 
unbounded with $q$. The existence of infinitely many primes $q$ 
satisfying the conditions of  Theorem 2 is equivalent to the 
conjecture with $m{\st (e)} = e$ for all $e$.

\medskip
The existence of such a function 
depends on two phenomena:

\medskip
(i) The order of magnitude of the primes $q$ discussed above from the 
\v Cebotarev${}'$s theorem.

\smallskip
(ii) The minimal value of the order modulo $q$ of a given rational 
$\rho$.

\begin{ex}{\rm  For $p=5$, $m=4$, we have $L=\Q(i)$,
and an obvious family of ideals 
${\mathfrak q}$ of $L$ such that ${\mathfrak q}^{1-c} = (\alpha)$,
$\alpha \equiv 1 \pmod {25}$, is given by the following expression:
$${\mathfrak q} = (e + 5a + 25 b\,i)\,\Z[i],\ \, e \in \{1,2,3,4\},\ 
\, a,\, b\in \Z, $$
$e$, $a$, $b$ being such that $(e + 5a)^2+(25 b)^2$ is a prime number 
$q$.

\smallskip
The prime numbers $q<10000$,
 $q \not\equiv 1$ $\pmod {5}$ and  $q^{4} \not\equiv 1 \pmod {25}$,
 of the above form, are the following:
$769$,  $1109$,  $1409$, $2069$, $2389$, $2789$, $3229$, $3329$, 
$3989$, $5309$, $5689$, $6469$, $6709$, $7069$, $7829$, $8329$, $8369$, 
$8429$.~\hfill\fin }
\end{ex}
 
It is clear that such a construction does exist for any $p$ and any 
$m>2$, and the question 
is the following: $p$, $u$, and $v$ being given, is it possible to 
find in such infinite
lists of prime numbers (corresponding to arbitrary values of $m$),
a prime  $q$ for which the order of $\frac{v}{u}$
modulo $q$ is a divisor of $m$
(which is equivalent to $q \div  u^m-v^m$)?
Note that for each $m$, only a finite number of $q$
in the list can be solution.

\smallskip 
 The existence of one solution $(m,q)$
gives the proof of  the first case of FLT for $p$ and
 the existence of infinitely many solutions $(m,q)$
gives a complete proof of FLT for $p$. 

\subsection {Explicit formula for the $p$th power residue symbol
\footnotesize{$\Big(\ds \frac{\eta_1}{{\mathfrak Q}}\Big)_{\!\!M}$} }
We suppose that $q \ne p$ is a given prime, and that $n \div  q-1$ is such 
that $p\notdiv n$. Let $\xi$ of order $n$ and let ${\mathfrak q}$
be a prime ideal of $L = \Q(\mu_n)$ dividing $q$.

\smallskip
We consider  the real cyclotomic unit
$\eta_1 := (1 + \xi\,\zeta)^{e_\omega}\,\zeta^{-\frac{1}{2}}$
(see Definition~4). Recall that for $n \leq 2$, $\eta_1 \in K^{\times p}$, so we 
suppose $n>2$.

\smallskip
Let $c$ be the complex conjugation.
We suppose in this subsection that the ideal class of ${\mathfrak q}^{1-c}$
is in the $p$th power of the class group of $L$, which is equivalent 
to  ${\mathfrak q}^{1-c} = {\mathfrak a}^p\,(\alpha)$ for an ideal 
${\mathfrak a}$ of $L$ and an $\alpha \in L^\times$ prime to $p$.
This condition is also equivalent to ${\mathfrak q} = {\mathfrak b}^{1+c}
{\mathfrak a'}^p\,(\alpha')$ for ideals ${\mathfrak a'}$,
${\mathfrak b}$ of $L$ and an $\alpha' \in L^\times$.

\smallskip
We can suppose $\alpha \equiv 1 \pmod {p}$ (see the footnote in Theorem~2),
so that we get ${\mathfrak q}^{1-c} = {\mathfrak a}^p\,(1 + p\,\beta)$, $\beta$
$p$-integer in $L$.
Taking the absolute norm gives
$\No_{L/\Q} (1 + p\,\beta) = \No_{L/\Q} ({\mathfrak a})^{-p}$
which is a rational congruent to 1 modulo $p^2$. Thus since
$\No_{L/\Q} (1 + p\,\beta) \equiv 1 + p\,{\rm Tr}_{L/\Q}(\beta)
\pmod {p^2}$, where ${\rm Tr}_{L/\Q}$ is the absolute trace,
we obtain  ${\rm Tr}_{L/\Q}(\beta)  \equiv 0 \pmod {p}$.
This remark will be used later.

\smallskip
We note that, as for the context of  Theorem 2, if 
$q-1 =: d\,p^r$, $p\notdiv d$, and if the condition
$\wt {\mathfrak q}^{1-c} = \wt {\mathfrak a}^p\,(1 + p\,\wt \beta)$ is 
satisfied for $\wt n = d$ and 
$\wt {\mathfrak q} \div q$ in $\wt L = \Q(\mu_{\wt n})$,
then it is satisfied for any divisor $n>2$ of $\wt n$ and the 
corresponding ideal ${\mathfrak q} = \No_{\wt L/L} (\wt {\mathfrak q})$;
we then have $\beta \equiv {\rm Tr}_{\wt L/L}(\wt \beta) \pmod {p}$.

\smallskip
In $M= LK$ we have:
$$\Big(\frac{\eta_1}{({\mathfrak q})^{1-c}}\Big)_{\!\!M} =
\Big(\frac{\eta_1}{\prod_{{\mathfrak Q} \div {\mathfrak q}}
{\mathfrak Q}^{1-c}}\Big)_{\!\!M} = 
\prd_{{\mathfrak Q} \div {\mathfrak q}}
\Big(\frac{\eta_1}{{\mathfrak Q}^{1-c}}\Big)_{\!\!M} =
\Big(\frac{\eta_1}{{\mathfrak Q}}\Big)_{\!\!M}^{2\,\frac{p-1}{f}}, $$
where $f$ is the residue degree of $q$ in $K/\Q$; indeed, we have:
$$\Big(\frac{\eta_1}{{\mathfrak Q}^{1-c}}\Big)_{\!\!M} =
\Big(\frac{\eta_1}{\mathfrak Q}\Big)_{\!\!M} .
\Big(\frac{\eta_1}{{\mathfrak Q}^{c}}\Big)^{-1}_{\!\!M} =
\Big(\frac{\eta_1}{\mathfrak Q}\Big)_{\!\!M} .\
c\,\Big(\frac{\eta_1}{{\mathfrak Q}}\Big)^{-1}_{\!\!M}=
\Big(\frac{\eta_1}{\mathfrak Q}\Big)^2_{\!\!M}, $$
since $\eta_1$ is real, hence the result since the symbol of $\eta_1$ 
does not depend on the choice of
${\mathfrak Q}$ above ${\mathfrak q}$.
But $\Big(\Frac{\eta_1}{({\mathfrak q})^{1-c}}\Big)_{\!\!M} =
\Big(\Frac{\eta_1}{({\mathfrak a}^p)\, (\alpha)}\Big)_{\!\!M} =
\Big(\Frac{\eta_1}{ (\alpha)}\Big)_{\!\!M}$.
Then using the general 
reciprocity law (see e.g. [Gr2, II.7.4.4]) we obtain,
since $\eta_1$ is a unit:
$$\Big(\frac{\eta_1}{\alpha}\Big)_{\!\!M} = 
\Big(\frac{\eta_1}{\alpha}\Big)_{\!\!M}\,
\Big(\frac{\alpha}{\eta_1}\Big)_{\!\!M}^{-1} =
\prd_{{\mathfrak P} \div p} 
\big(\eta_1,\alpha \big)_{\mathfrak P}^{-1}, $$
product over the prime ideals ${\mathfrak P}$
of $M$ above $p$; since
$M/L$ is totally ramified at $p$, we will write by abuse
$\big(\eta_1,\alpha \big)_{\mathfrak p}$ for these 
Hilbert symbols, where ${\mathfrak p} \div  p$ in $L$,
knowing that they are defined on $M^\times\times M^\times$
with values in $\mu_p$.\,\footnote{\,Warning: in the literature, two 
definitions are possible, which give the Hilbert symbol or 
its inverse; this is the case with the reference [Ko] used below,
by comparison with our${}'$s (see  e.g. [Gr2, II.7.3.1]).}

\medskip
Thus we have obtained:
$$\Big(\frac{\eta_1}{{\mathfrak Q}}\Big)_{\!\!M} = 
\prd_{{\mathfrak p} \div p} \big(\eta_1,\alpha \big)_{\mathfrak 
p}^{\frac{f}{2}}. $$

We refer now to the Br\"uckner--Vostokov 
explicit formula proved in [Ko, 6.2, Th.\,2.99]
by giving some details for the convenience of the reader,
using similar notations.

\smallskip
Consider the uniformizing parameter $\pi := \zeta-1$
of the completions $M_{\mathfrak P}$ of $M$ at ${\mathfrak P} \div \mathfrak 
p \div p$.
The inertia field is $L_{\mathfrak p}$.
We need the formal series
$t(x) := 1-(1+x)^p$ since $1-\zeta=-\pi$ here, for which
$t(x)^{-1}$ is the Laurent series:
$$ -\Frac{1}{x^p}\big( 1 - p\,\big(\Frac{c_1}{x} + \cdots + 
\Frac{c_{p-1}}{x^{p-1}}\big) + p^2\,\big(\Frac{c_1}{x} + \cdots + 
\Frac{c_{p-1}}{x^{p-1}}\big)^2 - \cdots \big),$$
where the $c_i$ are integers.

\smallskip
We associates with $\eta_1 \equiv 1 + \theta\,\pi \pmod {\pi^2}$,
where $\theta := \frac{1}{2}\frac{\xi-1}{\xi+1}$
(see Subsection 4.1), and with $\alpha = 1 + p\,\beta$,
the series:
\begin{eqnarray*}
    &&F(x) \equiv 1 + \theta\,x \pmod {(x^2)}, \\
    &&G(x) := 1 + p\,\beta \ \rm (a\ constant\ series),
\end{eqnarray*}
such that $F(\pi) \equiv \eta_1 \pmod {\pi^2}$ and
$G(\pi) = \alpha$. Recall that 
log is the $p$-adic logarithm
and  dlog the loga\-rithmic derivative;
so ${\rm dlog}(G) = 0$ giving:
$$(F,G) = - \Frac{1}{p^2} \,\cdot\,{\rm log} \big(\Frac{G^p}{\sigma_p(G)}\big)
\,\cdot\,{\rm dlog}(\sigma_p(F)), $$
where $\sigma_p$ is the Frobenius automorphism on $L_{\mathfrak p}$
extended to series by putting $\sigma_p(x) := x^p$.

\smallskip
Thus $\sigma_p(G) =  1 + p\,\sigma_p(\beta)$,  \ 
$\sigma_p(F) \equiv  1 + \sigma_p(\theta)\,x^p \pmod {(x^{2p})}$, 
giving:
\begin{eqnarray*}
     {\rm log} \big(\Frac{G^p}{\sigma_p(G)}\big) \!\!\! &\equiv& \!\!\!
    -p\,\sigma_p(\beta) \pmod {p^2} \\
     {\rm dlog}(\sigma_p(F)) \!\!\!&\equiv& \!\!\! p\,\sigma_p(\theta)\,x^{p-1}
    \pmod {(x^{2p}, p\,x^{2p-1})},
\end{eqnarray*}
and finally:
$$(F,G) \equiv \sigma_p(\theta\,\beta) \,x^{p-1} \pmod {\big (p\,x^{p-1} , 
\,x^{2p-1}, \Frac{x^{2p}}{p}\big)}. $$

Then the residue of $t(x)^{-1}\,(F,G)$ is that of:
$$-\Frac{1}{x^p}\,\sigma_p(\theta\,\beta) \,x^{p-1} =
-\Frac{1}{x}\,\sigma_p(\theta\,\beta) 
\pmod {\big(\Frac{p}{x} ,x^{p-1}, \Frac{x^{p}}{p}\big)}, $$
hence it is $-\sigma_p(\theta\,\beta) \pmod {p}$ since the generator
$\Frac{x^{p}}{p}$ of the above ideal gives rise to a residue only with a term 
of the form $\Frac{c}{x^{p+1}}$ of $t(x)^{-1}$
(to give $\Frac{c}{p\,x}$) in which case $c$ is a 
multiple of $p^2$ (see the expression of $t(x)^{-1}$).

\smallskip
To conclude we have to take the 
absolute local trace (which eliminates the action of the Frobenius):
$${\rm Tr}_{M_{\mathfrak P}/\Q_p} (-\theta\,\beta) =
(p-1)\,{\rm Tr}_{L_{\mathfrak p}/\Q_p} (-\theta\,\beta)
\equiv {\rm Tr}_{L_{\mathfrak p}/\Q_p} (\theta\,\beta) \pmod {p}. $$

Then $\big(\eta_1,\alpha \big)_{\mathfrak p} =
\zeta_{}^{-{\rm Tr}_{L_{\mathfrak p}/\Q_p}
\big(\frac{1}{2}\frac{\xi-1}{\xi+1}\, \beta\big)}$ because of our definition of
the Hilbert symbol, and
$\prd_{\mathfrak p} \big(\eta_1,\alpha \big)_{\mathfrak p}
= \zeta_{}^{-\sum_{\mathfrak p}{\rm Tr}_{L_{\mathfrak p}/\Q_p}
\big(\frac{1}{2}\frac{\xi-1}{\xi+1}\, \beta\big)} =
\zeta_{}^{-{\rm Tr}_{L/\Q}
\big(\frac{1}{2}\frac{\xi-1}{\xi+1}\, \beta \big)}$,
the global trace being the sum of the local ones.

\smallskip
We have $\Frac{1}{2}\Frac{\xi-1}{\xi+1}\, \beta =
\big(\Frac{1}{2} - \Frac{1}{\xi+1}\big)\, \beta$, so
the final expression of the trace is 
$-\,{\rm Tr}_{L/\Q}\big(\Frac{\beta}{\xi+1}\big)$
since that of $\beta$ is zero modulo $p$.

\smallskip
This yields to
$\Big(\Frac{\eta_1}{{\mathfrak Q}}\Big)_{\!\!M} =
\prd_{\mathfrak p} \big(\eta_1,\alpha \big)_{\mathfrak p}^{\frac{f}{2}} =
\zeta_{}^{\frac{1}{2}f\,{\rm Tr}_{L/\Q}\big(\frac{\beta}{\xi+1}\big)}$.

\smallskip
We have obtained the following explicit formula.

\begin{theo} Let $q\ne p$ be a  prime number, let $n \div  q-1$ be such 
that $p\notdiv n$ and $n>2$. Let $\xi$ of order $n$ and let ${\mathfrak q}$
be any prime ideal of $L = \Q(\mu_n)$ divi\-ding~$q$.
We suppose that the ideal class of ${\mathfrak q}^{1-c}$
is the $p$th power of a class of $L$, which is equivalent 
to  ${\mathfrak q}^{1-c} = {\mathfrak a}^p\,(1 + p\,\beta)$ for an ideal 
${\mathfrak a}$ of $L$ and $\beta$ $p$-integer in $L$.\,\footnote{\,As 
we know, this condition is also equivalent to 
 ${\mathfrak q} = {\mathfrak b}^{1+c}
{\mathfrak a'}^p\,(1 + p\,\beta')$ for ideals ${\mathfrak a'}$,
${\mathfrak b}$ of $L$ and  $\beta'$ $p$-integer in $L$.
It is satisfied as soon as the class of ${\mathfrak q}^{1-c}$ is of 
order prime to $p$. }
Put $\eta_1 := (1 + \xi\,\zeta)^{e_\omega}\,\zeta^{-\frac{1}{2}}$
(see Definition~4).

\smallskip\noindent
Then for any ${\mathfrak Q}\div {\mathfrak q}$ in $M:= LK$,
$\Big(\Frac{\eta_1}{{\mathfrak Q}}\Big)_{\!\!M} =
\zeta_{}^{\frac{1}{2}f\,{\rm Tr}_{L/\Q}\big(\frac{\beta}{\xi+1}\big)}$,
where $f$ is the residue degree of $q$ in $K/\Q$ and ${\rm Tr}_{L/\Q}$
the absolute trace in $L/\Q$.~\hfill\fin
\end{theo}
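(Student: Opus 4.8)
The plan is to follow the chain of reductions set up above, turning the $p$th power residue symbol into a finite product of local Hilbert symbols at $p$ and then evaluating these by an explicit reciprocity formula. First I would use that $\eta_1$ is real and a unit. Since $\eta_1^c=\eta_1$, the value of $\Big(\frac{\eta_1}{{\mathfrak Q}}\Big)_{\!\!M}$ is unchanged under $c$, and as it is independent of the choice of ${\mathfrak Q}\div{\mathfrak q}$ in $M$ one gets $\Big(\frac{\eta_1}{{\mathfrak Q}^{1-c}}\Big)_{\!\!M}=\Big(\frac{\eta_1}{{\mathfrak Q}}\Big)_{\!\!M}^2$; taking the product over the $\frac{p-1}{f}$ primes ${\mathfrak Q}$ lying above ${\mathfrak q}$ yields $\Big(\frac{\eta_1}{({\mathfrak q})^{1-c}}\Big)_{\!\!M}=\Big(\frac{\eta_1}{{\mathfrak Q}}\Big)_{\!\!M}^{2\,\frac{p-1}{f}}$, so the sought symbol is controlled by the symbol attached to the ambient ideal $({\mathfrak q})^{1-c}$ of $L$.

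Next I would insert the hypothesis ${\mathfrak q}^{1-c}={\mathfrak a}^p\,(1+p\,\beta)$. Because $\eta_1$ is a unit the factor ${\mathfrak a}^p$ contributes a $p$th power, hence a trivial symbol, and we are left with $\Big(\frac{\eta_1}{({\mathfrak q})^{1-c}}\Big)_{\!\!M}=\Big(\frac{\eta_1}{(\alpha)}\Big)_{\!\!M}$ for $\alpha=1+p\,\beta$. Applying the general reciprocity law, and using once more that $\eta_1$ is a unit (so that the complementary symbol $\Big(\frac{\alpha}{\eta_1}\Big)_{\!\!M}$ is trivial), collapses this to a product of local Hilbert symbols at the primes above $p$, namely $\Big(\frac{\eta_1}{\alpha}\Big)_{\!\!M}=\prod_{{\mathfrak p}\div p}(\eta_1,\alpha)_{\mathfrak p}^{-1}$, one factor for each ${\mathfrak p}\div p$ of $L$, each totally ramified in $M/L$. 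Combining with the first step gives $\Big(\frac{\eta_1}{{\mathfrak Q}}\Big)_{\!\!M}=\prod_{{\mathfrak p}\div p}(\eta_1,\alpha)_{\mathfrak p}^{f/2}$.

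The heart of the argument, which I expect to be the main obstacle, is the evaluation of each $(\eta_1,\alpha)_{\mathfrak p}$ by the Br\"uckner--Vostokov explicit formula. I would take the uniformizer $\pi=\zeta-1$, represent $\eta_1$ by $F(x)\equiv 1+\theta\,x\pmod{(x^2)}$ with $\theta=\frac{1}{2}\frac{\xi-1}{\xi+1}$ and $\alpha$ by the constant series $G(x)=1+p\,\beta$, and feed them into $(F,G)=-\frac{1}{p^2}\,{\rm log}\big(\frac{G^p}{\sigma_p(G)}\big)\,{\rm dlog}(\sigma_p(F))$. The delicate bookkeeping lies in extracting the residue of $t(x)^{-1}(F,G)$ with $t(x)=1-(1+x)^p$: one must expand $t(x)^{-1}$ as the displayed Laurent series and check that, modulo $p$, only its leading term $-x^{-p}$ produces a residue, the terms carrying a factor $p$ being annihilated after the division by the appropriate power of $p$. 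This gives residue $-\sigma_p(\theta\,\beta)\pmod p$; taking the absolute local trace absorbs the Frobenius $\sigma_p$ and contributes the factor $p-1\equiv-1$, so that $(\eta_1,\alpha)_{\mathfrak p}=\zeta^{-{\rm Tr}_{L_{\mathfrak p}/\Q_p}(\theta\,\beta)}$.

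Finally I would pass from local to global and simplify. Summing the local traces over ${\mathfrak p}\div p$ turns $\prod_{\mathfrak p}(\eta_1,\alpha)_{\mathfrak p}^{f/2}$ into $\zeta^{-\frac{f}{2}\,{\rm Tr}_{L/\Q}(\theta\,\beta)}$. Writing $\theta=\frac{1}{2}-\frac{1}{\xi+1}$ and invoking the congruence ${\rm Tr}_{L/\Q}(\beta)\equiv 0\pmod p$ (obtained by comparing the absolute norm $\No_{L/\Q}(1+p\,\beta)\equiv 1+p\,{\rm Tr}_{L/\Q}(\beta)$ with $\No_{L/\Q}({\mathfrak a})^{-p}\equiv 1\pmod{p^2}$), the $\frac{1}{2}\beta$ part drops out and ${\rm Tr}_{L/\Q}(\theta\,\beta)\equiv-{\rm Tr}_{L/\Q}\big(\frac{\beta}{\xi+1}\big)\pmod p$. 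Substituting back yields $\Big(\frac{\eta_1}{{\mathfrak Q}}\Big)_{\!\!M}=\zeta^{\frac{1}{2}f\,{\rm Tr}_{L/\Q}(\frac{\beta}{\xi+1})}$, which is the asserted formula.
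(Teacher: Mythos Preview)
Your proposal is correct and follows essentially the same route as the paper: the reduction via $\eta_1^c=\eta_1$ to the symbol over $({\mathfrak q})^{1-c}$, the passage to $(\alpha)$ using the hypothesis, the reciprocity law collapsing to Hilbert symbols at $p$, the Br\"uckner--Vostokov computation with $F(x)\equiv 1+\theta x$ and constant $G$, and the final simplification using $\theta=\tfrac{1}{2}-\tfrac{1}{\xi+1}$ together with ${\rm Tr}_{L/\Q}(\beta)\equiv 0\pmod p$ all coincide with the paper's argument in Subsection~6.3.
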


This gives again the situation of  Theorem 2 when 
$\beta  \equiv \beta^+ \pmod {p}$,  $\beta^+ \in L^+$, since we 
then have:
$${\rm Tr}_{L/\Q}\big(\Frac{\beta}{\xi+1}\big) \equiv
{\rm Tr}_{L^+/\Q}\big(\Frac{\beta^+}{\xi+1} + \Frac{\beta^+}{\xi^c+1}\big)
\equiv {\rm Tr}_{L^+/\Q}(\beta^+)\equiv 0 \pmod {p}, $$
since ${\rm Tr}_{L/\Q}(\beta)\equiv 0 \pmod {p}$.

\smallskip
This theorem confirms the independence, with the SFLT problem,
of the  class field theory properties of the fields $\Q(\mu_n)$.
Meanwhile,  under a nontrivial solution of the SFLT equation, 
for suitable values of $q$, $n \div q-1$ and $\xi$
of order $n$, the quantity ${\rm Tr}_{L/\Q}(\frac{\beta_\xi}{\xi+1})$,
where $\beta_\xi$ corresponds to ${\mathfrak q}_\xi$,
is imposed, which yields to infinitely many conditions.
But as usual we need to explain how the 
case $p=3$ interferes appropriately with the arithmetic of the fields $\Q(\mu_n)$
(see Section 9).

\begin{rema} {\rm
Suppose, as in  Theorem 3, that ${\mathfrak q}^{1-c} = {\mathfrak a}^p\,(1 + p\,\beta)$
for an ideal ${\mathfrak a}$ of $L$ and $\beta$ $p$-integer in $L = \Q(\mu_n)$,
with $n \div  q-1$ such that $p\notdiv n$ and $n>2$.
To obtain that ${\mathfrak q}$ is totally split in $F_n/L$,
we study the equivalent condition $\Big(\Frac{\eta_1^t}{{\mathfrak Q}}\Big)_{\!\!M} = 1$
for all $t \in {\rm Gal}(M/K)/\langle\,t_{-1}\,\rangle$; from the 
theorem this is equivalent to
${\rm Tr}_{L/\Q}\big(\Frac{\beta}{\xi^t + 1}\big)  \equiv 0
\pmod {p}$ for all $t \in {\rm Gal}(L/\Q)/\langle\,t_{-1}\,\rangle$. 

\smallskip
This can be written in the following two forms:
$$\sm_{\tau \in {\rm Gal}(L/\Q)}\Frac{\beta^\tau}{\xi^{t \tau} + 1} \equiv 0
\pmod {p},\ \ {\rm for\ all\ } t \in {\rm 
Gal}(L/\Q)/\langle\,t_{-1}\,\rangle. $$
$$\sm_{\tau \in {\rm Gal}(L/\Q)}\Frac{\beta^{t \tau}}{\xi^{\tau} + 1} \equiv 0
\pmod {p},\ \ {\rm for\ all\ } t \in {\rm Gal}(L/\Q)/\langle\,t_{-1}\,\rangle. $$

So we obtain  linear systems (with ``variables'' $\beta^\tau$ and
$\frac{1}{\xi^{\tau} + 1}$, respectively),
whose matrices have $\phi(n)$ columns and
$\frac{1}{2}\,\phi(n)$ lines, and the rank over $\F_p$ of the first matrix
(less than or equal to $\frac{1}{2}\,\phi(n)$) gives a more precise 
approach of the required conditions on $\beta$; the condition $\beta
\equiv \beta^+ \pmod {p}$ is sufficient (use the second system)
but not necessary as soon as 
the rank of the matrix is less than $\frac{1}{2}\,\phi(n)$.

\smallskip
Let $Z'_L$ be the ring of $p$-integers of $L$. Then the
knowledge of the image of $\beta$ in $Z'_L/p\,Z'_L$ summarizes all the 
needed local properties of $\eta_1$ at the prime~$q$. Since
$Z'_L/p\,Z'_L$ is the product of the residue fields of $L$ at the 
primes ${\mathfrak p} \div p$ in $L$, any analytic approach is 
available.~\hfill\fin}
\end{rema}

\begin{ex}{\rm  Take $p=5$, $n=4$, and $q \ne 5$ prime congruent to 1 modulo~4. 
Put $q = a^2 + b^2$ as usual; then ${\mathfrak q} = (a+i\,b)$ and
 ${\mathfrak q}^4 = (A+i\,B)$, with $A = a^4+b^4 - 6\,a^2b^2$,
 $B = 4\,ab(a^2-b^2)$. We then have:
 $${\mathfrak q}^{1-c} = {\mathfrak q}^{5(1-c)} 
 \Big(\Frac{A-i\,B}{A+i\,B}\Big)=:
  {\mathfrak q}^{5(1-c)} \big( 1 + 5\,\beta \big). $$
  
 Since $A+i\,B \equiv 1 \pmod {5}$, we get $A \equiv 1$
and $B \equiv 0$ $\pmod {5}$, and a straightforward computation gives:
$$\beta \equiv - \Frac{8\,i\, ab(a^2-b^2)}{5}\ \, {\rm and}\ \,
\Frac{\beta}{i+1} \equiv -  \Frac{4\,(i+1)\,ab(a^2-b^2)}{5} \pmod {5}, $$
which yields to $\frac{1}{2}{\rm Tr}_{L/\Q}\big(\Frac{\beta}{i+1}\big)
\equiv - \frac{1}{2}\Frac{8\,ab(a^2-b^2)}{5}  \pmod {5}$, hence:
$$\Big(\Frac{\eta_1}{{\mathfrak Q}}\Big)_{\!\!M} =
\zeta_{}^{f\,\frac{ab(a^2-b^2)}{5}}. $$

So the symbol is trivial if and only if $ab(a^2-b^2)
\equiv 0 \pmod {25}$. We find the values
$q = 313$ ($a=13$, $b=12$), $q = 317$ ($a=14$, $b=11$),\,\ldots\,

\smallskip
 For $q = 457$ ($a=21$, $b=4$), we
have $\kappa  \equiv 0 \pmod {5}$. A case with $25 \div  ab$
is given by $q = 641$ ($a=25$, $b=4$). 

\smallskip
The symbol is nontrivial for the values 
$q = 13$ ($a=3$, $b=2$) where $\Big(\Frac{\eta_1}{{\mathfrak Q}}\Big)_{\!\!M} =
\zeta^{4}$, $q = 17$ ($a=4$, $b=1$) where $\Big(\Frac{\eta_1}{{\mathfrak Q}}\Big)_{\!\!M} =
\zeta^3$,\,\ldots~\hfill\fin }
\end{ex}

\section {Decomposition law of $q$ in $H_{\Q(\mu_{q-1})}/\Q(\mu_{q-1})$ and conjectures}

In this section we study in full generality
the situation that we have encountered in the previous sections.

\subsection{Law of $\rho$-decomposition relative to the family  
${\mathcal F}_n$}

Let $p>2$ be a fixed prime number and let $\rho = \frac{v}{u}$,
with ${\rm g.c.d.}\,(u,v) = 1$, be a 
fixed rational distinct from $0$ and $\pm 1$. We do not suppose
any relation of SFLT type between $u$ and $v$.

\smallskip
For any prime number $q \ne p$ 
let $f$ be the residue degree of $q$ in
$K$ and put $\kappa := \frac{q^f-1}{p}$.
Note that we have the relation (see Definition 2, (i)):
$$\ov \kappa :=\Frac{q^{p-1}-1}{p} \equiv \Frac{p-1}{f}\,\kappa
\equiv -\Frac{1}{p}\,{\rm log}(q)  \pmod {p}. $$

We consider the {\it infinite} set of prime numbers:
$$Q_\rho := \big\{ q , \hbox{ $q \notdiv u\,v\, (u^2-v^2)$}
\hbox{ and the order of $\rho$ modulo $q$ is  prime to $p$} \big\}. $$ 

For  $q \in Q_\rho$, let $n$ be the order of $\rho$ 
modulo $q$ (by definition we have $p\notdiv n$, $n>2$); 
from Lemma 2, $q \in Q_\rho$ is equivalent to $q\notdiv n$,
$q \div  \Phi_n(u,v)$, for $n>2$, $p\notdiv n$).

\smallskip
We consider the fields $K:=\Q(\mu_p)$, $L:=\Q(\mu_n)$,
and $M:=LK$ which only depend on $q$ (for $\rho$ fixed).

\smallskip
We associate with 
$q$ a pair $(\xi, {\mathfrak q})$ where the primitive $n$th root of 
unity  $\xi \in L$ and the prime ideal ${\mathfrak q} \div q$ of $L$
are characterized by the congruence $\xi \equiv \rho$ $\pmod {\mathfrak q}$;
thus, ${\mathfrak q} = (q, u\,\xi-v)$ is also denoted ${\mathfrak q}_\xi$
as in the previous sections (see Definition 3).
As we know, this pair is defined up to $\Q$-conjugation and
we obtain an equivalence relation. The class associated to $q$ is 
well-defined.
Of course, the classes of $(\xi_1, {\mathfrak q}_1)$ and $(\xi_2, 
{\mathfrak q}_2)$,
corresponding to  different primes $q_1$ and $q_2$, are relative to 
the fields 
$L_1=\Q(\mu_{n_1})$, $n_1 \div  q_1-1$, and $L_2=\Q(\mu_{n_2})$,
$n_2 \div  q_2-1$, and one of the main problem would be to try to
connect the two situations.

\smallskip
From the construction of the extensions
$F_\xi$ and  $F_n \subseteq H_L^-{\st [p]}$
given in Subsections 4.2 and 5.2 via the real cyclotomic unit:
$$\eta_1 := (1 + \xi\,\zeta)^{e_\omega}\,\zeta^{-\frac{1}{2}}, $$
the pair $(F_\xi,{\mathfrak q}_\xi)$ is  defined up to $\Q$-conjugation since
$(t F_\xi,{\mathfrak q}_\xi^t) = (F_{\xi^t},{\mathfrak q}_{\xi^t})$
corresponds to $(\xi^t, {\mathfrak q}_{\xi^t})$;
thus the class of the pair $(F_\xi,{\mathfrak q}_\xi)$ 
(or similarly of the pair $(\eta_1,{\mathfrak Q}_{\xi}\div {\mathfrak q}_{\xi})$)
characterizes  the class of $(\xi, {\mathfrak q}_\xi)$
and reciprocally.
Recall that $F_\xi = F_{\xi^{-1}}$ is diedral over $L^+$.

\smallskip
The following lemma is elementary but  gives details on the action of
${\rm Gal}(L/\Q)$ on the family of Frobenii:
 
\begin{lemm} Let $\varphi_\xi^{} :=
\Big(\Frac{F_\xi/L}{{\mathfrak q}_\xi}\Big)$ be the Frobenius
automorphism of the prime ideal 
${\mathfrak q}_\xi = (q, u\,\xi-v)$ in $F_\xi/L$.

\noindent
Then $\varphi_{\xi^t}^{} := \Big(\Frac{F_{\xi^t}/L}{{\mathfrak q}_{\xi^t}}\Big)
= \varphi_{\xi}^{t} := t\,\varphi_{\xi}\,t^{-1}$ for all $t \in {\rm Gal}(L/\Q)$.

\noindent
If $t=t_{-1}$, then $\varphi_{\xi^{-1}} = \varphi_{\xi}^{t_{-1}} = \varphi_{\xi}^{-1}$
in $F_{\xi^{\pm 1}}/L$.
\end{lemm}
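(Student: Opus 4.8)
The plan is to derive both assertions from the standard transport-of-structure (functoriality) of the Frobenius symbol, using only two facts already established. First, the ideal attached to $\xi^t$ is $\mathfrak{q}_{\xi^t} = t(\mathfrak{q}_\xi) = \mathfrak{q}_\xi^t$: this is exactly the equivalence recorded in Definition 3, because $\frac{v}{u} \in \Q$ is fixed by $t$, so applying $t$ to $\xi \equiv \frac{v}{u} \pmod{\mathfrak{q}_\xi}$ yields $\xi^t \equiv \frac{v}{u} \pmod{t(\mathfrak{q}_\xi)}$. Second, $F_{\xi^t} = tF_\xi$, meaning $t'(F_\xi) = F_{\xi^t}$ for any $\Q$-automorphism $t'$ of the Galois closure $F_n/\Q$ of $F_\xi$ (Subsection 5.2) extending $t \in {\rm Gal}(L/\Q)$ (Subsection 4.2); in particular $F_{\xi^t}/L$ is again cyclic of degree $p$ and unramified outside $p$. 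Since $\mathfrak{q}_\xi$ lies over $q \ne p$, the Frobenius $\varphi_\xi$ is well-defined and independent of the prime $\mathfrak{Q}_\xi \div \mathfrak{q}_\xi$ chosen in $F_\xi$.

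First I would fix $\mathfrak{Q}_\xi \div \mathfrak{q}_\xi$ in $F_\xi$. As $q$ splits completely in $L/\Q$, the prime $\mathfrak{q}_\xi$ has residue degree $1$, so $\varphi_\xi$ is characterized by $\varphi_\xi(x) \equiv x^q \pmod{\mathfrak{Q}_\xi}$ for every $x \in Z_{F_\xi}$. Applying a lift $t'$ of $t$ and writing an arbitrary $y \in Z_{F_{\xi^t}}$ as $y = t'(x)$, $x = t'^{-1}(y) \in Z_{F_\xi}$, I obtain
$$ t'\,\varphi_\xi\, t'^{-1}(y) = t'\big(\varphi_\xi(x)\big) \equiv t'\big(x^q\big) = y^q \pmod {t'(\mathfrak{Q}_\xi)}, $$
where $t'(\mathfrak{Q}_\xi)$ is a prime of $F_{\xi^t} = t'(F_\xi)$ above $t'(\mathfrak{q}_\xi) = t(\mathfrak{q}_\xi) = \mathfrak{q}_{\xi^t}$. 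Hence the automorphism $t'\varphi_\xi t'^{-1} \in {\rm Gal}(F_{\xi^t}/L)$ satisfies the congruence defining the Frobenius of $\mathfrak{q}_{\xi^t}$ in the cyclic extension $F_{\xi^t}/L$; by uniqueness of that Frobenius this forces $\varphi_{\xi^t} = t'\varphi_\xi t'^{-1} =: \varphi_\xi^t$, which is the first assertion. As a by-product, $t'\varphi_\xi t'^{-1}$ is automatically independent of the chosen lift $t'$.

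For the last assertion I would specialize to $t = t_{-1}$. Complex conjugation sends $\xi$ to $\xi^{-1}$, and we already know $F_{\xi^{-1}} = F_\xi$ (Subsection 4.2, since $\eta_1$ is real), so $\varphi_{\xi^{-1}}$ and $\varphi_\xi$ both lie in ${\rm Gal}(F_\xi/L)$. The first part gives $\varphi_{\xi^{-1}} = \varphi_\xi^{t_{-1}}$, and it remains only to invoke the dihedral structure recorded in Subsection 5.2: the group ${\rm Gal}(L/L^+)$ acts on ${\rm Gal}(F_\xi/L)$ by inversion, $\sigma^{t_{-1}} = \sigma^{-1}$. Applied to $\sigma = \varphi_\xi$ this yields $\varphi_{\xi^{-1}} = \varphi_\xi^{-1}$.

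The one genuinely delicate point is the meaning of the conjugation $t\varphi_\xi t^{-1}$: since $F_\xi/\Q$ is not Galois and $\varphi_\xi$ is a priori only an automorphism of $F_\xi$, the symbol $t$ must be read as a lift $t'$ to $F_n$, under which $t'$ carries $F_\xi$ isomorphically onto $F_{\xi^t}$. The congruence computation is precisely what legitimizes this reading, since it identifies $t'\varphi_\xi t'^{-1}$ with the intrinsically defined Frobenius $\varphi_{\xi^t}$ and thereby removes any dependence on the choice of $t'$. Beyond this, the argument is nothing more than the usual functoriality of the Artin symbol, and requires no further input.
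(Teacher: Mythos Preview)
Your proof is correct and follows essentially the same approach as the paper: both arguments apply a lift $t'$ of $t$ to the defining congruence $\varphi_\xi(\alpha)\equiv\alpha^q$, substitute $\beta=t'(\alpha)$, and conclude by uniqueness of the Frobenius. Your treatment is in fact slightly more careful (working modulo a prime $\mathfrak{Q}_\xi$ of $F_\xi$ rather than $\mathfrak{q}_\xi$, and spelling out the $t=t_{-1}$ case via the dihedral action of Subsection~5.2), but the substance is identical.
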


\begin{proof} From the defining congruence $\varphi_\xi^{}\,(\alpha) 
\equiv \alpha^q \pmod {{\mathfrak q}_\xi}$ for all integers $\alpha$ of 
$F_\xi$, we get easily $t'\,\varphi_\xi^{}\, (\alpha) \equiv 
t' (\alpha)^q \pmod {{\mathfrak q}_{\xi^t}}$,
for any $\Q$-isomorphism $t'$ of $F_{\xi}$ such that $t'{\vert_L} = t$.
Put $t' (\alpha) =: 
 \beta \in F_{\xi^t}$; this yields $t'\,\varphi_\xi^{}\,t'{}^{-1} 
(\beta) \equiv \beta^q \pmod {{\mathfrak q}_{\xi^t}}$ for all integers $\beta$ of 
$F_{\xi^t}$, proving the lemma by uniqueness of the Frobenius.
\end{proof}

The Frobenius of ${\mathfrak q}_\xi$ in $F_\xi/L$
is characteristic of the class of $(\xi, F_\xi)$
since we still have $(\xi^t, \varphi_\xi^t) =
(\xi^t, \varphi_{\xi^t})$ by conjugation. This leads to give
the following definition.

\begin{defi} {\rm Let $\rho := \frac{v}{u}$, with g.c.d.\,$(u,v) = 1$,
 be a fixed rational,  distinct from 0 and $\pm 1$.    
 For $n>2$ prime to $p$,
let $K:=\Q(\mu_p)$, $L=\Q(\mu_n)$,  $M=LK$, and for $\xi$ 
of order $n$, let $F_\xi$ be such that $F_\xi M = M\big( \sqrt[p]
{(1+\xi\,\zeta)^{e_\omega}\,\zeta^{-\frac{1}{2}}}\,\big)$.

\smallskip
 (i) For any prime $q$, $q \notdiv n$, $q \div \Phi_n(u,v)$
 (i.e., $q \notdiv u\,v$ and $\rho$ is of order $n$ modulo $q$),
and for ${\mathfrak q}_\xi = (q, u\,\xi-v) \div q$, we consider
the class of Frobenii:
$$\Big(\Frac{F_{\xi^t}/L}{{\mathfrak q}_{\xi^t}}\Big) = 
\Big(\Frac{F_\xi/L}{{\mathfrak q}_\xi}\Big)^t, \ \,
t \in {\rm Gal}\,(L/\Q), $$
that we normalize in the following way:

-- if $\kappa \not\equiv 0 \pmod {p}$, we put
$\Big[\Frac{F_*/L}{{\mathfrak q}_*}\Big]_{\rho,n} := 
\Big( \Big(\Frac{F_{\xi^t}/L}{{\mathfrak q}_{\xi^t}}
\Big)^{\frac{p}{{\rm log}(q)}}\Big)_{t \in {\rm Gal}\,(L/\Q)}$;

-- if $\kappa \equiv 0 \pmod {p}$, we put
$\Big[\Frac{F_*/L}{{\mathfrak q}_*}\Big]_{\rho,n} := 1$.

\smallskip
(ii) Call ${\mathcal F}_n$ the canonical family 
$(F_{\xi^t})_t = (F_{\xi'})_{\xi'\,{\rm of \,order\ } n}$ defining $F_n/L$,
where $F_n \subseteq H_L^-{\st [p]}$
is the compositum of the $F_{\xi^t}$,
$t \in {\rm Gal}(L/\Q)/<t_{-1}>$.\,\footnote{\,Remark that the 
only knowledge of $n$ determines the field $L = \Q(\mu_n)$ then the 
family ${\mathcal F}_n$.}

(iii) The symbol $\Big[\Frac{F_*/L}{{\mathfrak q}_*}\Big]_{\rho,n}\ $
is called, by abuse of language, the {\it law of $\rho$-decom\-position of $q$ for
the family ${\mathcal F}_n$}.~\hfill\fin }
\end{defi}

This object depending on $\rho$ and $n$ is, for each $q$,
relative to a universal family  ${\mathcal F}_n$ which is independent of any hypothetic
nontrivial solution of the SFLT equation.

\smallskip
Let $\sigma$ be a generator of ${\rm Gal}(F_\xi/L)$;
since the Frobenius $\varphi_\xi$ in $F_\xi/L$
is well defined, it is of the form $\sigma^r$, $r \in \Z/p\Z$,
so that (when $\kappa \not \equiv 0 \pmod {p}$)
the  symbol $\Big[\Frac{F_*/L}{{\mathfrak q}_*}\Big]_{\rho,n}$ represents
the family (or class):
$$\Big( \sigma^t \Big)^{r\,\frac{p}{{\rm log}(q)}}_{t\in {\rm Gal}(L/\Q)}
= \Big(t\,.\,\sigma\, .\, t^{-1} \Big)^{r\,\frac{p}{{\rm log}(q)}}_{t\in {\rm Gal}(L/\Q)}\,. $$

Thus the symbol $\Big[\Frac{F_*/L}{{\mathfrak q}_*}\Big]_{\rho,n}$ can take
$p-1$ nontrivial ``values'' (called the cases of $\rho$-inertia  of 
$q$  for ${\mathcal F}_n$,
when $r\not\equiv 0 \pmod {p}$) and
a trivial one (the $\rho$-splitting of $q$ for ${\mathcal F}_n$).
The case $\kappa \equiv 0 \pmod {p}$ gives the $\rho$-splitting of $q$
 for ${\mathcal F}_n$.

\smallskip
Note that the Frobenii  $\wt\varphi_{\xi^t} :=
\Big(\Frac{F_n/L}{{\mathfrak q}_{\xi^t}}\Big)$,
 $t \in {\rm Gal}(L/\Q)$,
are a priori unknown and must not be confused with 
$\Big[\Frac{F_*/L}{{\mathfrak q}_*}\Big]_{\rho,n}$;
they are conjugated, of order 1 or $p$, and the case of order 1
 is  very rare since it means that $q$ totally splits in $F_n/\Q$,
i.e., $\Big(\Frac{F_n/L}{{\mathfrak q}_{\xi^t}}\Big) = 1$
for all $t \in {\rm Gal}(L/\Q)$ (situation of   Theorem 2). 

\smallskip
The restriction of $\wt\varphi_\xi$ to $F_\xi$ gives by 
definition $\varphi_\xi^{}$.
Its restrictions to the other $F_{\xi^t}$ are the 
$\Big(\Frac{F_{\xi^t}/L}{{\mathfrak q}_\xi}\Big) =
\Big(\Frac{F_{\xi}/L}{{\mathfrak q}_{\xi^{t^{-1}}}}\Big)^t$.

\smallskip
In the previous sections, in the case $\kappa \not\equiv 0 \pmod {p}$
for $p>3$, we have used, as a contradiction for the existence
of a solution of Fermat${}'$s equation, the splitting of
${\mathfrak q}_{\xi}$ in $F_{\xi}$ for infinitely many values of $q$
(taking for instance $(u,v) = (x,y)$, $(y,x)$, $(z,y)$, or $(y,z)$).
Same remark for a solution $(u,v)$ of the SFLT equation under the 
condition $u-v \not\equiv 0 \pmod {p}$.

\smallskip
This property ``~${\mathfrak q}_{\xi} = (q, u\,\xi -v)$ splits in 
$F_\xi$\,'', independent of the choice of the representative
pair as Lemma 8 shows, will be called by analogy the
``\,$\rho$-splitting of $q \in Q_\rho$ for ${\mathcal F}_n$\,'',
$\rho := \frac{v}{u}$. 
It is equivalent to $\Big[\Frac{F_*/L}{{\mathfrak q}_*}\Big]_{\rho,n} = 1$.

\begin{rema} {\rm In a 
probabilistic point of view, the $\rho$-splitting of $q \in Q_\rho$
for ${\mathcal F}_n$ has a probability around $\frac{1}{p}$,
and we can hope a strong incompatibility for analytic reasons
since $Q_\rho$ is infinite.
If we ask that $q$ be totally split 
in $F_n$, this means that each ${\mathfrak q} \div q$ splits in
$F_\xi=F_{-\xi}$ (for any fixed $\xi$) and the probability is around
$\big(\frac{1}{p}\big)^{\frac{1}{2}\,\phi(n)}$ which tends to 0 
rapidly with $q \rightarrow\infty$.~\hfill\fin }
\end{rema}

Put:

\centerline{$\ Q_\rho^{\rm spl} := \big\{ q \in Q_\rho, \ q \ \hbox{has a 
$\rho$-splitting for ${\mathcal F}_n$} \big\}. $}

\medskip
With a counterexample $(u,v)$ to SFLT, we have, from a pair 
$(\xi,{\mathfrak q}_\xi)$, the following results proved in Theorem 1.
Put $\rho := \frac{v}{u}$; we may have
$u \equiv 0 \pmod {p}$ in which case $\rho$ is not defined modulo $p$, but
is always defined as a rational, so we preserve $u$ and $v$ in the 
congruences modulo $p$.

\smallskip
In the nonspecial cases
 (i.e.,  $v+u \not \equiv 0\pmod {p}$):
$$\Big(\frac{\eta_1}{{\mathfrak Q}}\Big)_{\!\!M} \  = \ \zeta^{\frac{1}{2}\, 
\frac{v-u}{v+u}\,\kappa},\ \, {\rm for\  all}\ {\mathfrak Q}
\div {\mathfrak q}_\xi, \ \, p\geq 3 ; $$

 In the special case (i.e., $v+u \equiv 0 \pmod {p}$):
  \begin{eqnarray*}
      \Big(\frac{\eta_1}{{\mathfrak Q}}\Big)_{\!\!M}  &=& 1,
 \ \, {\rm for\  all}\ {\mathfrak Q}
 \div {\mathfrak q}_\xi,  \ \,  p>3, \\
 \Big(\frac{\eta_1}{{\mathfrak Q}}\Big)_{\!\!M}  &=& 
 \zeta^{\frac{1}{2}\,\frac{v+u}{3\,v}\,\kappa},
 \ \, {\rm for\  all}\ {\mathfrak Q}
 \div {\mathfrak q}_\xi,  \ \, p=3.
 \end{eqnarray*}

Recall that for SFLT we cannot exclude the 
case $u-v \equiv 0 \pmod {p}$ contrary to  FLT
 for $(u,v) = (x,y)$, $(y,x)$, $(z,y)$, or $(y,z)$. This explain that for SFLT
(first case and $\kappa \not\equiv 0 \pmod {p}$)
we cannot use, as a general contradiction, the $\rho$-splitting
of $q$ for ${\mathcal F}_n$.

 \smallskip
 This does not matter since the existence of a nontrivial solution to SFLT 
 is equivalent to a precise law of $\rho$-decomposition of $q$
 for ${\mathcal F}_n$, i.e., a precise value of the symbol 
 $\Big[\Frac{F_*/L}{{\mathfrak q}_*}\Big]_{\rho,n}$ (which can be 
trivial even if $\kappa \not\equiv 0 \pmod {p}$
when  $u-v \equiv 0 \pmod {p}$).

\smallskip
 More precisely, we have the following lemma giving
 the action of the Frobenius, which determines explicitly the 
 law of $\rho$-decomposition (the case $p=3$ being immediate 
 from Theorem 1, we assume for simplicity $p>3$):
 
 \begin{lemm} We suppose given, for the prime $p> 3$, a relation of the form 
 $(u+v\,\zeta)\,\Z[\zeta] = {\mathfrak w}_1^p\ { or}\ 
 {\mathfrak p}\, {\mathfrak w}_1^p$, with {\rm g.c.d.}\,$(u,v) = 1$. 
 
 \smallskip\noindent
Let  $q$ be a prime number such that
 $q \notdiv u\,v$, and such that the order 
$n$ of  $\rho  := \frac{v}{u}$ modulo $q$ is  prime to $p$.
 Let ${\mathfrak Q} \div  {\mathfrak q}_\xi$ in $M$,
  where $(\xi,{\mathfrak q}_\xi)$ represents the class cor\-respon\-ding 
to $q$.
     
 \smallskip\noindent
  Let  $\Big(\Frac{M(\sqrt[p]{\eta_1}\,)/M}{{\mathfrak Q}}\Big)$
  be the Frobenius automorphism of ${\mathfrak Q}$ in 
  $M(\sqrt[p]{\eta_1}\,)/M$,  where $\eta_1 := (1 + \xi\,\zeta)^{e_\omega}\, 
 \zeta^{-\frac{1}{2}}$. We have:
   
\smallskip\smallskip
 (i) Nonspecial cases. If $v+u \not\equiv 0 \pmod {p}$,  then
$\ \Big(\Frac{M(\sqrt[p]{\eta_1}\,)/M}{{\mathfrak Q}}\Big)
\, .\,\sqrt[p]{\eta_1}
  = \zeta^{\frac{1}{2}\,\frac{v-u}{v+u}\,\kappa}
  \cdot\,\sqrt[p]{\eta_1}$. 
  
 \smallskip\smallskip
 (ii) Special case. If $v+u \equiv 0 \pmod {p}$, then
$\ \Big(\Frac{M(\sqrt[p]{\eta_1}\,)/M}{{\mathfrak Q}}\Big)
  \,\cdot\sqrt[p]{\eta_1}= \sqrt[p]{\eta_1}$.  
\end{lemm}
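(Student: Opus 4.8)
The plan is to reduce the two identities to the explicit values of the $p$th power residue symbol $\Big(\Frac{\eta_1}{{\mathfrak Q}}\Big)_{\!\!M}$ already obtained in Theorem 1, using the standard Kummer-theoretic identification of the Frobenius action on a radical with that symbol. First I would recall from Subsection 4.1 that, since $n>2$, the extension $M(\sqrt[p]{\eta_1}\,)/M$ is cyclic of degree $p$ and $p$-ramified; as $q\ne p$, the prime ${\mathfrak Q}$ is unramified in it and the Frobenius $\varphi := \Big(\Frac{M(\sqrt[p]{\eta_1}\,)/M}{{\mathfrak Q}}\Big)$ is a well-defined element of the Galois group.

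The core step is to prove the identity $\varphi\,(\sqrt[p]{\eta_1}\,) = \Big(\Frac{\eta_1}{{\mathfrak Q}}\Big)_{\!\!M}\cdot \sqrt[p]{\eta_1}$. Since $\varphi$ fixes $M$ and $\zeta\in K\subseteq M$, it carries $\sqrt[p]{\eta_1}$ to another $p$th root of $\eta_1$, so $\varphi\,(\sqrt[p]{\eta_1}\,)=\zeta^a\,\sqrt[p]{\eta_1}$ for a unique $a\in\Z/p\Z$. Writing $N{\mathfrak Q}=q^f$ for the cardinality of the residue field $\F_{q^f}$ (recall $q$ splits totally in $M/K$, see Definition 2, (ii)), the defining congruence $\varphi\,(x)\equiv x^{N{\mathfrak Q}}\pmod{{\mathfrak Q}'}$, applied to $x=\sqrt[p]{\eta_1}$ for a prime ${\mathfrak Q}'$ above ${\mathfrak Q}$ and divided through by the unit $\sqrt[p]{\eta_1}$, yields $\zeta^a\equiv \eta_1^{(q^f-1)/p}=\eta_1^{\kappa}\pmod{{\mathfrak Q}'}$. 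By Definition 2, (ii), $\eta_1^{\kappa}\equiv \zeta^{\mu}\pmod{{\mathfrak Q}}$ with $\zeta^{\mu}=\Big(\Frac{\eta_1}{{\mathfrak Q}}\Big)_{\!\!M}$; since $\ov\zeta$ has order $p$ in the residue field, $a\equiv\mu\pmod p$, which is the asserted identity.

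Finally I would substitute the values supplied by Theorem 1: in the nonspecial cases ($v+u\not\equiv 0\pmod p$, $p\ge 3$) one has $\Big(\Frac{\eta_1}{{\mathfrak Q}}\Big)_{\!\!M}=\zeta^{\frac{1}{2}\,\frac{v-u}{v+u}\,\kappa}$, yielding (i), while in the special case for $p>3$ one has $\Big(\Frac{\eta_1}{{\mathfrak Q}}\Big)_{\!\!M}=1$, yielding (ii). The only point requiring genuine care is the middle step: one must match the exponent $\kappa=(q^f-1)/p$ of Definition 2 with $N{\mathfrak Q}-1$, and confirm that the outcome is independent of the auxiliary choices of ${\mathfrak Q}\div{\mathfrak q}_\xi$ and of ${\mathfrak Q}'$ — both already ensured by the invariance of the symbol recorded after Definition 2 and in Theorem 1. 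Everything else is bookkeeping.
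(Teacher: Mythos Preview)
Your proof is correct and follows essentially the same route as the paper: both compute $\big(\sqrt[p]{\eta_1}\big)^{\sigma-1}$ via the defining congruence $\sigma(x)\equiv x^{q^f}\pmod{{\mathfrak Q}'}$, identify the resulting $\eta_1^{\kappa}$ with the residue symbol $\Big(\Frac{\eta_1}{{\mathfrak Q}}\Big)_{\!\!M}$, and then invoke Theorem~1 for its value in each case. Your version is slightly more explicit about unramification and the auxiliary prime ${\mathfrak Q}'$ above ${\mathfrak Q}$, but the argument is the same.
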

 
 \begin{proof} From the  defining congruence
 $\big(\sqrt[p]{\eta_1}\big)^{\sigma} \equiv 
\big(\sqrt[p]{\eta_1}\big) ^{\,q^{f}}
 \pmod {\mathfrak Q}$, for the Frobenius auto\-morphism
 $\sigma := \Big(\Frac{M(\sqrt[p]{\eta_1}\,)/M}{{\mathfrak Q}}\Big)$, 
 we get:
$$\big(\sqrt[p]{\eta_1}\big)^{\sigma - 1} \equiv 
\big(\sqrt[p]{\eta_1}\big)^{q^{f}-1} \equiv \eta_1^{\,\kappa} \equiv
\Big(\Frac{\eta_1}{{\mathfrak Q}}\Big)_{\!\!M} \pmod {\mathfrak Q}. $$
 Hence the result since $\Big(\Frac{\eta_1}{{\mathfrak 
Q}}\Big)_{\!\!M} = \zeta^{\frac{1}{2}\, \frac{v-u}{v+u}\,\kappa}$
(resp. $1$) in the nonspecial cases (resp. in the special case).
\end{proof}

We intend now, in the following theorem,
to translate this property into a property of the symbol
 $\Big[\Frac{F_*/L}{{\mathfrak q}_*}\Big]_{\rho,n}$ (see Definition 5), which will give the 
 main phenomenon about the existence of a nontrivial solution to the SFLT equation
 (see also Remark~9).

\begin{theo} Let $p$ be a prime number, $p>3$.
 We suppose given a solution of the SFLT equation
 $(u+v\,\zeta)\,\Z[\zeta] = {\mathfrak w}_1^p$ or
 ${\mathfrak p}\, {\mathfrak w}_1^p$, with ${\rm g.c.d.}\,(u,v) = 1$. 
 
 \smallskip\noindent
Let  $q$ be a prime number such that
 $q \notdiv u\,v$, and such that the order 
$n$ of  $\rho  := \frac{v}{u}$ modulo $q$ is  prime to $p$ and $>2$.

 \smallskip\noindent
Then the symbol $\Big[\Frac{F_*/\Q(\mu_n)}{{\mathfrak q}_*}\Big]_{\rho,n}$
\!\!only depends on $\rho$ and $n$ when $q$ varies
in  $Q_\rho  := \big\{ q , \hbox{ $q \notdiv u\,v\, (u^2-v^2)$}
\hbox{ and the order of $\rho$ modulo $q$ is  prime to $p$} \big\}$.
In other words, the law of $\rho$-decomposition
of $q \in Q_\rho$ for ${\mathcal F}_n$  only depends on  $\rho$ and $n$.
\end{theo}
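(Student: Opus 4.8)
The plan is to trace a single Frobenius element through the isomorphisms relating $F_\xi/L$ to the Kummer extension $M(\sqrt[p]{\eta_1})/M$, and then to verify that the normalization of Definition 5 cancels exactly the $q$-dependent quantities $\kappa$, $f$ and $\log(q)$, leaving an exponent attached only to $(u,v)$ and $n$.

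First I would fix a representative pair $(\xi,{\mathfrak q}_\xi)$ of the class attached to $q$ and a prime ${\mathfrak Q}\div {\mathfrak q}_\xi$ in $M$. Since $[M:L]=p-1$ is prime to $p$ and $F_\xi\cap M=L$, restriction gives an isomorphism ${\rm Gal}(M(\sqrt[p]{\eta_1})/M)\simeq {\rm Gal}(F_\xi/L)$. Let $\sigma_0$ be the generator of the left-hand group with $\sigma_0(\sqrt[p]{\eta_1})=\zeta\,\sqrt[p]{\eta_1}$ and let $\tau_0$ be its image in ${\rm Gal}(F_\xi/L)$; both are visibly independent of $q$, depending only on $\eta_1$, hence on $\xi$, i.e. on $\rho$ and $n$. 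By Lemma 9 the Frobenius $\sigma$ of ${\mathfrak Q}$ in $M(\sqrt[p]{\eta_1})/M$ satisfies $\sigma=\sigma_0^{\,m}$ with $m\equiv \frac12\frac{v-u}{v+u}\kappa \pmod p$ in the nonspecial cases, and $\sigma=1$ in the special case.

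The decisive step is the residue-degree bookkeeping. Because $q$ is totally split in $L$ but of residue degree $f$ in $K$, each ${\mathfrak Q}\div {\mathfrak q}_\xi$ has residue degree $f$ over ${\mathfrak q}_\xi$ (as recorded in Definition 2), so the restriction of the $q^f$-Frobenius $\sigma$ to $F_\xi$ is the $f$-th power of the $q$-Frobenius $\varphi_\xi=\Big(\Frac{F_\xi/L}{{\mathfrak q}_\xi}\Big)$; that is, $\varphi_\xi^{\,f}=\tau_0^{\,m}$, hence $\varphi_\xi=\tau_0^{\,m\,f^{-1}}$ in the cyclic group ${\rm Gal}(F_\xi/L)$ of order $p$. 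Noting that $\frac{p}{\log(q)}$ is a $p$-adic unit precisely when $\kappa\not\equiv 0$, and using the Fermat-quotient relations of Definition 2 — namely $\frac{p}{\log(q)}\equiv -\frac{1}{\ov\kappa}$ and $\ov\kappa\equiv \frac{p-1}{f}\kappa\pmod p$, whence $\kappa\cdot\frac{p}{\log(q)}\equiv f\pmod p$ — the factors $\kappa$, $f^{-1}$ and $f$ telescope:
$$\varphi_\xi^{\frac{p}{\log(q)}} = \tau_0^{\,m\,f^{-1}\frac{p}{\log(q)}} = \tau_0^{\frac12\frac{v-u}{v+u}},$$
an element no longer involving $q$. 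Finally Lemma 8 passes from $\xi$ to its $\Q$-conjugacy family: since $\varphi_{\xi^t}=\varphi_\xi^{\,t}$, the normalized symbol is the class $\big(t\,\tau_0^{\frac12\frac{v-u}{v+u}}\,t^{-1}\big)_{t\in {\rm Gal}(L/\Q)}$, depending only on $\rho$ and $n$. The special case ($u+v\equiv 0$) and the case $\kappa\equiv 0\pmod p$ are immediate, the symbol being trivial by Lemma 9(ii) and by the very definition, respectively.

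The main obstacle I anticipate is not conceptual but the careful tracking of exponents: one must correctly identify the restriction of the $q^f$-Frobenius as $\varphi_\xi^{\,f}$, and then check that this residual factor $f$ cancels against the factor $f$ hidden in the normalization $\frac{p}{\log(q)}$ via $\kappa\cdot\frac{p}{\log(q)}\equiv f\pmod p$. It is exactly this simultaneous cancellation of $\kappa$, $f$ and $\log(q)$ that converts a symbol a priori governed by the Fermat quotient of $q$ into one governed solely by the constant exponent $\frac12\frac{v-u}{v+u}$ determined by the solution $(u,v)$ and the order $n$.
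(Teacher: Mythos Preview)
Your proposal is correct and follows essentially the same approach as the paper: you relate $\varphi_\xi$ to the Frobenius in the Kummer extension via the residue-degree factor $f$ (the paper phrases this as the Artin symbol of $\No_{M/L}({\mathfrak Q})={\mathfrak q}_\xi^f$), invoke Lemma~9, and then use the Fermat-quotient identities of Definition~2 to see that the normalization $\frac{p}{\log(q)}$ cancels the $q$-dependent factors. Your write-up is in fact slightly more explicit than the paper's, since you exhibit the final exponent $\frac{1}{2}\frac{v-u}{v+u}$ rather than merely asserting independence of~$q$.
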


\begin{proof}
 Let ${\mathfrak Q} \div {\mathfrak q}_\xi$ in $M$,
  where $(\xi,{\mathfrak q}_\xi)$ represents the class cor\-respon\-ding 
to $q$.  The  Frobenius automorphism of ${\mathfrak q}_\xi$
in $F_\xi/L$ is given, by  restriction,  by the relation
$\Big(\Frac{F_\xi/L}{{\mathfrak q}_\xi}\Big)^{f} =
\Big(\Frac{M(\sqrt[p]{\eta_1}\,)/M}{{\mathfrak Q}}
\Big)_{\hbox{$\vert_{F_\xi}$}}$\!. Indeed,
in the projection ${\rm Gal}(M(\sqrt[p]{\eta_1}\,)/M)
 \too {\rm Gal}(F_\xi/L)$, the Frobenius of the prime ideal ${\mathfrak 
Q}$ gives the Artin symbol
of the norm in $M/L$ of ${\mathfrak Q}$, which is ${\mathfrak q}_\xi^f$; 
hence the result.

\smallskip\smallskip
If  $\kappa \not \equiv 0 \pmod {p}$, using 
the relation $f\,\kappa^{-1} \equiv - \ov\kappa^{-1} \pmod {p}$
(see Definition 2, (i)) we get from Lemma 9 that
 $\Big(\Frac{F_\xi/L}{{\mathfrak q}_\xi}\Big)^{-\ov 
\kappa^{-1}}\!\!\!\! = \Big(\Frac{M(\sqrt[p]{\eta_1}\,)/M}
{{\mathfrak Q}}\Big)^{\kappa^{-1}}_{\hbox{$\vert_{F_\xi}$}}$ 
only depends on $\rho$ and $n$ when $q$ varies.
 This proves the theorem in this case since $-\ov\kappa \equiv 
 \frac{1}{p}\,{\rm log}(q) \not\equiv 0 \pmod {p}$
 (see Definition 5).
 
 \smallskip
 If  $\kappa  \equiv 0 \pmod {p}$, we get
 $\Big(\Frac{F_\xi/L}{{\mathfrak q}_\xi}\Big)=1$ in any case.
\end{proof}

\begin{rema}{\rm We can justify the expression ``only depends on $\rho$
and $n$ when $q$ varies in $Q_\rho$'' in the following way.

\smallskip
    Let $\ov F_n := L_1F_n$, where $L_1K = M(\sqrt[p]\zeta\,)$,
    and let $\ov\varphi_\xi:= \Big(\Frac{\ov F_n/L}{{\mathfrak 
    q}_\xi}\Big)$; we know that $\ov\varphi_\xi$ projects on 
$\varphi_\xi$ in $F_\xi/L$ and on $\varphi_1:= 
\Big(\Frac{L_1/L}{{\mathfrak q}_\xi}\Big)$ in $L_1/L$.
We treat the case $\kappa  \not\equiv 0 \pmod {p}$,
i.e., $\varphi_1 \ne 1$.

\smallskip
In the same manner as in the proof of the theorem,
in the projection ${\rm Gal}(M(\sqrt[p]{\zeta}\,)/M)
\too {\rm Gal}(L_1/L)$,
we obtain that:
$$\Big(\Frac{L_1/L}{{\mathfrak q}_\xi}\Big)^{\frac{p}{{\rm log}(q)}} =
\Big(\Frac{M(\sqrt[p]{\zeta}\,)/M}{{\mathfrak Q}}
\Big)^{\kappa^{-1}}_{\hbox{$\vert_{L_1}$}}$$
is independent of $q$ because of the equality 
$\Big(\Frac{M(\sqrt[p]{\zeta}\,)/M}{{\mathfrak Q}}\Big)^{\kappa^{-1}}
\!\!\! .\,\sqrt[p]{\zeta}  = \zeta\, .\,\sqrt[p]{\zeta}$.

\smallskip
Moreover, this is independent of the choice of $\xi$ (of order $n$) 
since for all $t\in {\rm Gal}(L/\Q)$, $\ov\varphi_{\xi^t} =
t\,\ov\varphi_\xi\, t^{-1}$ projects, in $L_1/L$, on 
$\ov\varphi_{\xi^t}{}_{{ \vert}_{L_1}} = 
t\, \ov\varphi_\xi{}_{{ \vert}_{L_1}} t^{-1} = t\,\varphi_1\,t^{-1}
= \varphi_1$ since ${\rm Gal}(L_1/\Q)$ is abelian.

\smallskip
Which justifies the normalization and the fact that, in some sense,
under the existence of a nontrivial solution to the SFLT equation, the symbol
 $\Big[\Frac{F_*/L}{{\mathfrak q}_*}\Big]_{\rho,n}$ does not depend on $q$
 but only on $\rho$ and $n$ (of course $n$ depends on $q$, but not in 
 a deep arithmetical manner).}~\hfill\fin
\end{rema}

From Theorem 3, when the condition ${\mathfrak q}_{\xi}^{1-c} = 
{\mathfrak a}^p\,(1 + p\,\beta_\xi)$ is satisfied, for an ideal 
${\mathfrak a}$ of $L$ and $\beta_\xi$ $p$-integer of $L$,
then $\Big(\Frac{\eta_1}{{\mathfrak Q}}\Big)_{\!\!M} =
\zeta_{}^{\frac{1}{2}f\,{\rm Tr}_{L/\Q}\big(\frac{\beta_\xi}{\xi+1}\big)}$,
where ${\rm Tr}_{L/\Q}$ is the absolute trace in $L/\Q$.
So with  a counterexample to SFLT we must have:
$${\rm Tr}_{L/\Q}\big(\Frac{\beta_\xi}{\xi+1}\big) \equiv f^{-1}\, 
\Frac{v-u}{v+u} \,\kappa \equiv \Frac{v-u}{v+u}\,\Frac{{\rm log}(q)}{p}
\pmod {p}$$ (nonspecial cases, $p\geq 3$)
or 
$${\rm Tr}_{L/\Q}\big(\Frac{\beta_\xi}{\xi+1}\big)\equiv 0 \pmod {p}$$
(special case, $p>3$).

\smallskip
This means that, under a nontrivial counterexample to SFLT:
$$\Big(\Frac{F_\xi/L} { {\mathfrak q}_\xi}  \Big)^{\frac{p}{{\rm log}(q)} }
\ \ {\rm and}\ \  \Frac{p}{{\rm log}(q)}\,{\rm 
Tr}_{L/\Q}\Big(\Frac{\beta_\xi}{\xi+1}\Big), \ {\rm if}\  \kappa \not \equiv 0 \pmod {p},$$
$$\Big(\Frac{F_\xi/L} { {\mathfrak q}_\xi}  \Big)
\ \ {\rm and}\ \  {\rm Tr}_{L/\Q}\Big(\Frac{\beta_\xi}{\xi+1}\Big), 
 \ {\rm if}\  \kappa \equiv 0 \pmod {p},$$ 
both equivalent to the knowledge of $\Big[\Frac{F_*/L}{{\mathfrak q}_*} \Big]_{\rho,n}$,
only depend on $\rho$  and $n$ for prime numbers $q \in Q_\rho$.

\smallskip 
So we can hope that this fact, summarized in Theorem 4,
is incompatible  with the arithmetic of the cyclotomic 
fields $\Q(\mu_n)$ for $p>3$.

\begin{rema}{\rm In the context of Fermat${}'$s equation with 
$r=\frac{y}{x}$, $r'=\frac{y}{z}$, or $r''=\frac{x}{z}$
(supposed of orders $n$, $n'$, $n''$ modulo $q$, prime to $p$),
we have  the same conclusion as in Lemma 9
by using the units  $\eta_1$, $\eta'_1$, and $\eta''_1$; from the 
relation $x+y+z \equiv 0 \pmod {p}$, the values $r'$, 
$r''$ can be computed $\pmod {p}$ from $r$,\,\footnote{\,The notations
$r$, $r'$, and $r''$ correspond to $\rho = \frac{v}{u}$
 in the equation $(u+v\,\zeta)\,\Z[\zeta] = {\mathfrak w}_1^p$ or
 ${\mathfrak p}\,{\mathfrak w}_1^p$, for $(u,v) = (x,y)$, $(y,x)$, 
 $(z,y)$, or $(y,z)$, and $(u,v) = (x,z)$ or $(z,x)$ (nonspecial cases and special 
 case,  respectively); this explains the changes of notations in the Fermat 
 context. We obtain easily $r' \equiv \frac{-r}{r+1}$,
 $r'' \equiv \frac{-1}{r+1}$ modulo $p$.}
 and we get the following relations valid for $p\geq 3$ 
 since in Fermat${}'$s equation, the special case corresponds to $x+z 
 \equiv 0 \pmod {9}$.
 
 \smallskip
(i) If $\kappa \not\equiv 0 \pmod {p}$, then:
\begin{eqnarray*}
&&\Big(\Frac{M(\sqrt[p]{\eta_1}\,)/M}{{\mathfrak Q}}\Big)^{\!\kappa^{-1}}
\!\!\! \,\cdot\,\sqrt[p]{\eta_1}
  = \zeta^{\frac{1}{2}\frac{r-1}{r+1}} \,\cdot\,\sqrt[p]{\eta_1},  \\
&&\Big(\Frac{M(\sqrt[p]{\eta'_1}\,)/M}{{\mathfrak Q}'}\Big)^{\!\kappa^{-1}}
\!\!\! \,\cdot\,\hbox{$\sqrt[p]{\eta'_1}$}
  = \zeta^{-\frac{1}{2}-r} \,\cdot\,\hbox{$\sqrt[p]{\eta'_1}$},  \\
&&\Big(\Frac{M(\sqrt[p]{\eta''_1}\,)/M}{{\mathfrak Q}''}\Big)^{\!\kappa^{-1}}
\!\!\! \,\cdot\,\hbox{$\sqrt[p]{\eta''_1}$}
  = \zeta^{-\frac{1}{2}-\frac{1}{r}} \,\cdot\,\hbox{$\sqrt[p]{\eta''_1}$}, 
  \hbox{ if $r \not\equiv 0 \pmod {p}$, } \\
&&\Big(\Frac{M(\sqrt[p]{\eta''_1}\,)/M}{{\mathfrak Q}''}\Big)^{\!\kappa^{-1}}
\!\!\! \,\cdot\,\hbox{$\sqrt[p]{\eta''_1}$}
 = \hbox{$\sqrt[p]{\eta''_1}$},  \hbox{ if $r \equiv 0 \pmod {p}$. }
 \end{eqnarray*}
 
(ii)  If $\kappa \equiv 0 \pmod {p}$,  the three symbols 
 $\Big(\Frac{M(\sqrt[p]{\ \bullet\ }\,)/M}{\bullet}\Big)$
 are trivial.}~\hfill\fin
 \end{rema}
 
\subsection{ Law of $\rho$-decomposition relative to
the family $\wh{\mathcal F}_n$, for $n>2$}

We still suppose $p>3$.
We have, under a counterexample $(u,v)$ to SFLT,
the following interpretation of the equality:
$$\hbox{ $\Big(\Frac{\eta_1}{{\mathfrak Q}_\xi}\Big)_{\!\!M} =
\zeta^{\frac{1}{2}\,\frac{v-u}{v+u} \,\kappa }$ 
\Big(resp. $\Big(\Frac{\eta_1}{{\mathfrak Q}_\xi}\Big)_{\!\!M} = 
1$\Big)} $$
in the nonspecial cases $v+u \not\equiv 0$
(resp. the special case 
$v+u \equiv 0$) $\!\!\pmod {p}$, which is also valid in the 
cases $\kappa \equiv 0$ or $u-v \equiv 0 \pmod {p}$.
This will give also another formulation of Theorem 4.

\smallskip
Consider the unit:
$$\wh \eta_1 :=
\eta_1\,\zeta^{-\frac{1}{2}\,\frac{v-u}{v+u} } \ \, ({\rm resp.} \ \,
\wh \eta_1 :=\eta_1) $$
in the nonspecial cases (resp. in the special case).

\smallskip
(i) In the nonspecial cases we have: 
$$\wh \eta_1 = (1+\xi\,\zeta)^{e_\omega} 
\,\zeta^{-\frac{1}{2}-\frac{1}{2}\,\frac{v-u}{v+u}} =
(1+\xi\,\zeta)^{e_\omega}\, \zeta^{-\frac{v}{v+u}}, $$
which is by construction such that
$\Big(\Frac{\wh \eta_1}{{\mathfrak Q}_\xi}\Big)_{\!\!M} = 1$,
but the unit $\wh \eta_1$ is not anymore real; its definition 
from $\eta_1$ is independent of $q$ under a given solution  of the SFLT
equation.

\smallskip
(ii) In the special case we obtain
 $\wh \eta_1 :=\eta_1 = (1 + 
 \xi\,\zeta)^{e_\omega}\,\zeta^{-\frac{1}{2}}$,
 which is real and by construction such that
$\Big(\Frac{\wh \eta_1}{{\mathfrak Q}_\xi}\Big)_{\!\!M} = 1$.

\medskip
The extension $M(\sqrt[p]{\wh \eta_1\,})/M$ is splitted over $L$ by a 
$p$-cyclic $p$-ramified extension $\wh F_\xi$ similar to $F_\xi$ 
except that it is not diedral over $L^+$ in the nonspecial cases.

\smallskip
We note that the relation $\wh \eta_1 = 
\eta_1\,\zeta^{-\frac{1}{2}\,\frac{v-u}{v+u}}$ in the nonspecial 
cases shows  that $\wh F_\xi$ is a subfield of the compositum $F_\xi L_1$ obtained 
in an obvious systematic way ($\wh F_\xi/L$ is still
of degree $p$ and $p$-ramified since $n>2$).
But $\wh F_\xi$ is effective only if $\rho$
is known, which is not in general
the case in the nonspecial cases of the SFLT problem.

\smallskip
It is clear that $\wh F_\xi = F_\xi$ if and only if 
$u^2 - v^2\equiv 0 \pmod {p}$.

\smallskip
We still have $t\,\wh F_\xi = \wh F_{\xi^t}$.
We call  $\wh F_n$ the compositum of the $\wh F_{\xi^t}$,
$t \in {\rm Gal}(L/\Q)$. Hence  $F_n\,L_1 =\wh F_n\,L_1$.

\smallskip
We denote, as in Definition 5, by $\wh {\mathcal F}_n$
the family $(\wh F_{\xi'})_{\xi'\, {\rm of\, order\ } n}$.

\smallskip
Then under a nontrivial solution of the equation attached to SFLT, we must have the 
splitting of ${\mathfrak q}_\xi$ in $\wh F_\xi$ (i.e., a 
$\rho$-splitting for $\wh {\mathcal F}_n$).
In other words if we  define,
as in Definition 5, for $\kappa \not\equiv 0 \pmod {p}$, the symbol: 
$$\Big[\Frac{\wh F_*/L}{{\mathfrak q}_*}\Big]_{\rho,n} := 
\Big(\Big(\Frac{\wh F_{\xi^t}/L}{{\mathfrak q}_{\xi^t}}\Big)^{\frac{p}{{\rm log}(q)}}
\Big)_{t \in {\rm Gal}(L/\Q)}\, ,$$
the analog of Theorem 4 is
$\Big[\Frac{\wh F_*/L}{{\mathfrak q}_*}\Big]_{\rho,n} = 1$ for all
$q \in Q_\rho$, where:
$$Q_\rho:= \big\{ q , \hbox{ $q \notdiv u\,v\, (u^2-v^2)$}
\hbox{ and the order of $\rho$ modulo $q$ is  prime to $p$} 
\big\}. $$

A contradiction would be that there exist prime numbers $q$
such that $\Big[\Frac{\wh F_*/L}{{\mathfrak q}_*}\Big]_{\rho,n} \ne 1$
i.e., ${\mathfrak q}_\xi$ is inert in $\wh F_\xi$, which is 
independent of the representative pair $(\wh F_{\xi^t}, {\mathfrak q}_{\xi^t})$
and has a probability very near from 
$\frac{p-1}{p}$ since $p-1$ values of the symbol are possible.
About the class of pairs $(\wh F_{\xi^t}, {\mathfrak q}_{\xi^t})$,
when $\Big[\Frac{\wh F_*/L}{{\mathfrak q}_*}\Big]_{\rho,n} \ne 1$,
we can speak of ``\,$\rho$-inertia of $q$ for $\wh {\mathcal F}_n$\,''.

\smallskip 
In a similar way, in the context of Fermat${}'$s equation,
we deduce from the units $\eta_1$,
$\eta'_1$, and $\eta''_1$ (see Remark 10), the units, where
$r := \frac{y}{x} \not\equiv \pm 1 \pmod {p}$:

\vspace{-0.6cm} 

 \begin{eqnarray*}
&&\wh \eta_1 := (1 + \xi\,\zeta)^{e_\omega}\,\zeta^\frac{-r}{r+1},  \\
&&\wh \eta'_1 := (1 + \xi'\,\zeta)^{e_\omega}\,\zeta^r,  \\
&&\wh \eta''_1 := (1 + \xi''\,\zeta)^{e_\omega}\,\zeta^\frac{1}{r}, 
  \hbox{ if $r \not\equiv 0 \pmod {p}$, }\\
&&\wh \eta''_1 := (1 + \xi''\,\zeta)^{e_\omega}\,\zeta^{-\frac{1}{2}} ,
  \hbox{ if $r \equiv 0 \pmod {p}$, }
 \end{eqnarray*} 

\noindent
giving a trivial $p$th power residue symbol at ${\mathfrak Q}$,
${\mathfrak Q}'$, and ${\mathfrak Q}''$, respectively.
    
\smallskip 
We have the
same conclusion as above for the extensions $\wh F_{\xi}/L$,
$\wh F_{\xi'}/L'$,  $\wh F_{\xi''}/L''$
defined from $M \big(\sqrt[p]{\wh \eta_1}\,\big)\big/M$,
$M' \big(\sqrt[p]{\wh \eta'_1}\,\big)\big/M'$, 
$M''\big(\sqrt[p]{\wh \eta''_1}\,\big)\big/M''$.

\medskip
Returning to SFLT with a nontrivial solution $(u,v)$,
we put as above:
$$\wh Q_\rho^{\rm in} := \big\{q \in Q_\rho,\ q  \hbox{ has a 
$\rho$-inertia for  $\wh {\mathcal F}_n$} \big\}. $$

\begin{lemm} Suppose $p>3$ and $\kappa \not\equiv 0 \pmod {p}$.
If $u^2 - v^2 \not\equiv 0 \pmod {p}$
then we have  $Q_\rho^{\rm spl} \subseteq \wh Q_\rho^{\rm in}$.
If  $u^2 - v^2\equiv 0 \pmod {p}$ then
$Q_\rho^{\rm spl}\cap \wh Q_\rho^{\rm in} = \ev$.
\end{lemm}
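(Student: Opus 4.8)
The plan is to reduce both membership conditions to the triviality, or not, of a single $p$th power residue symbol, and then to compare the symbols attached to $\eta_1$ and to $\wh\eta_1$. First I would recall, exactly as in Subsection~4.2, that since $F_\xi/L$ and $\wh F_\xi/L$ are $p$-cyclic and ${\mathfrak q}_\xi$ is unramified in them (because $q\ne p$), the ideal ${\mathfrak q}_\xi$ either splits completely or is inert, with no intermediate possibility. By Hensel's Lemma and Kummer theory this gives, for any ${\mathfrak Q}_\xi\div{\mathfrak q}_\xi$ in $M$,
$$q\in Q_\rho^{\rm spl}\iff\Big(\frac{\eta_1}{{\mathfrak Q}_\xi}\Big)_{\!\!M}=1,\qquad q\in\wh Q_\rho^{\rm in}\iff\Big(\frac{\wh\eta_1}{{\mathfrak Q}_\xi}\Big)_{\!\!M}\ne 1.$$

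The engine of the proof is the multiplicative relation between the two symbols. In the nonspecial cases one has by definition $\wh\eta_1=\eta_1\,\zeta^{-\frac12\frac{v-u}{v+u}}$, so multiplicativity of the symbol together with $\big(\frac{\zeta}{{\mathfrak Q}_\xi}\big)_M=\zeta^\kappa$ (Definition~2) yields
$$\Big(\frac{\wh\eta_1}{{\mathfrak Q}_\xi}\Big)_{\!\!M}=\Big(\frac{\eta_1}{{\mathfrak Q}_\xi}\Big)_{\!\!M}\cdot\zeta^{-\frac12\frac{v-u}{v+u}\kappa}.$$
I would note that this identity is purely formal and requires no SFLT solution; moreover its right-hand side is manifestly independent of the choice of ${\mathfrak Q}_\xi\div{\mathfrak q}_\xi$, which reconciles the fact that $\wh\eta_1$ is not real here with the ${\mathfrak Q}$-independence needed above.

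For the first assertion I assume $u^2-v^2\not\equiv 0\pmod p$, so that $u+v\not\equiv0$ (we are in a nonspecial case) and $u-v\not\equiv0\pmod p$. If $q\in Q_\rho^{\rm spl}$ then $\big(\frac{\eta_1}{{\mathfrak Q}_\xi}\big)_M=1$, whence by the displayed identity $\big(\frac{\wh\eta_1}{{\mathfrak Q}_\xi}\big)_M=\zeta^{-\frac12\frac{v-u}{v+u}\kappa}$. Since $\kappa\not\equiv0$, $v+u\not\equiv0$ and $v-u\not\equiv0\pmod p$, the exponent is a nonzero element of $\F_p$, so the symbol is $\ne 1$ and $q\in\wh Q_\rho^{\rm in}$; this is exactly $Q_\rho^{\rm spl}\subseteq\wh Q_\rho^{\rm in}$.

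For the second assertion I assume $u^2-v^2\equiv0\pmod p$ and use that $\wh F_\xi=F_\xi$ in this case. Indeed, if $u+v\equiv0$ (special case) then $\wh\eta_1:=\eta_1$ by definition, while if $u-v\equiv0$ but $u+v\not\equiv0$ the exponent $-\frac12\frac{v-u}{v+u}$ vanishes modulo $p$ and again $\wh\eta_1=\eta_1$; so the two radicals, hence the two fields $F_\xi$ and $\wh F_\xi$, coincide. Then $\rho$-splitting of $q$ for ${\mathcal F}_n$ (splitting of ${\mathfrak q}_\xi$ in $F_\xi$) and $\rho$-inertia of $q$ for $\wh{\mathcal F}_n$ (inertia of ${\mathfrak q}_\xi$ in $\wh F_\xi=F_\xi$) are mutually exclusive, giving $Q_\rho^{\rm spl}\cap\wh Q_\rho^{\rm in}=\ev$. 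The only genuinely delicate point, which I would verify carefully, is that the Hensel--Kummer equivalence and its ${\mathfrak Q}$-independence survive for the non-real unit $\wh\eta_1$; everything else is the short symbol computation above.
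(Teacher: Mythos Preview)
Your proof is correct and follows essentially the same idea as the paper, just phrased in the Kummer-dual language of power residue symbols rather than Frobenius automorphisms. The paper argues inside ${\rm Gal}(L_1 F_\xi/L)\simeq(\Z/p\Z)^2$: if $q\in Q_\rho^{\rm spl}$ then the Frobenius of ${\mathfrak q}_\xi$ fixes $F_\xi$ but (since $\kappa\not\equiv 0$) acts nontrivially on $L_1$, hence fixes \emph{only} $F_\xi$ among the $p+1$ cyclic subextensions, and so is inert in $\wh F_\xi$ whenever $\wh F_\xi\ne F_\xi$; it then invokes the characterization $\wh F_\xi=F_\xi\iff u^2-v^2\equiv 0\pmod p$. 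Your argument is the symbol-side translation of exactly this: the multiplicative identity $\big(\frac{\wh\eta_1}{{\mathfrak Q}_\xi}\big)_M=\big(\frac{\eta_1}{{\mathfrak Q}_\xi}\big)_M\cdot\zeta^{-\frac12\frac{v-u}{v+u}\kappa}$ encodes the passage from $F_\xi$ to $\wh F_\xi$ inside $L_1F_\xi$, and the exponent vanishes precisely when $u^2-v^2\equiv 0\pmod p$.

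The point you flag about ${\mathfrak Q}$-independence for the non-real $\wh\eta_1$ is fine: since $\wh\eta_1^{s_k}\sim\wh\eta_1^{\,k}$ (both $\eta_1$ and $\zeta$ lie in the $\omega$-component), the symbol is indeed independent of the choice of ${\mathfrak Q}\div{\mathfrak q}_\xi$, and the Hensel--Kummer equivalence with splitting/inertia of ${\mathfrak q}_\xi$ in $\wh F_\xi/L$ goes through verbatim as in Subsection~4.2. The paper's Frobenius formulation is marginally slicker in that it never needs the explicit exponent $-\frac12\frac{v-u}{v+u}$, only the qualitative fact $\wh F_\xi\ne F_\xi$ (and $\wh F_\xi\ne L_1$, which holds since $n>2$); your version has the small advantage of making the dependence on $u,v,\kappa$ completely explicit.
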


\begin{proof} We know that $\wh F_{\xi}$ is contained in the 
compositum $L_1 F_{\xi}$ and is distinct from $L_1$
since $\xi \ne \pm 1$.

\smallskip
Suppose that $\wh F_{\xi}$ is distinct from $F_{\xi}$;
 if $q \in Q_\rho^{\rm spl}$, ${\mathfrak q}_\xi$ splits in
$F_{\xi}/L$ and the Frobenius of ${\mathfrak q}_\xi$
in $L_1F_{\xi}/L$ fixes $F_{\xi}$
 and  since this Frobenius must be nontrivial in $L_1/L$ ($\kappa \not\equiv
 0 \pmod {p}$) then projects to a nontrivial Frobenius in $\wh F_{\xi}/L$.
 When $\wh F_{\xi} = F_{\xi}$, the result is clear.
 The  lemma comes from the  characterization of the equality
 $\wh F_{\xi} = F_{\xi}$ (i.e., $u^2 - v^2 \equiv 0 \pmod {p}$).
\end{proof}

It will be interesting to examine the problem, for any $\rho$,
independently of any equation giving exceptional values of $\rho$.

\smallskip
The natural conjecture in this direction
would be the following, which implies SFLT (we still put 
$K = \Q(\mu_p)$, $L = \Q(\mu_n)$, $M=LK$, to simplify the notations):

\begin{conj}  Let $p$ be a prime number, $p>3$, and let 
$\rho = \frac{v}{u}$, with g.c.d.\,$(u,v) = 1$,
be a  rational distinct from $0$ and $\pm 1$.
Put:
$$Q_\rho := \big\{\hbox{$q$,  $q \notdiv u\,v\,(u^2-v^2)$
and the order of $\rho$ modulo $q$ is  prime to $p$} \big\}. $$

\noindent
For $q\in Q_\rho$,
let $n$ be the order of $\rho$ modulo $q$ and let $\wh {\mathcal F}_n$
be the family of  the  cyclic extensions $\wh F_{\xi}$ of $L$, for $\xi$ of order $n$,
 defined by the relations
$\wh F_\xi K = M \Big(\!\sqrt[p] {(1+\xi\,\zeta)^{e_\omega} 
\,\hbox{$\zeta^{-\frac{v}{v+u}}$}\,}\,\Big)$
\big(resp. $\wh F_\xi K = M \Big(\!\sqrt[p]
{(1+\xi\,\zeta)^{e_\omega}\, \zeta^{-\frac{1}{2}}\,}\,\Big)$
if $v+u \not\equiv 0$ (resp. $v+u \equiv 0$\big) $\!\!\pmod {p}$.
Say that $q$ has a $\rho$-inertia for $\wh {\mathcal F}_n$ if
$\Big[\Frac{\wh F_*/L}{{\mathfrak q}_*}\Big]_{\rho,n} \ne 1$,
i.e., ${\mathfrak q}_\xi := (q, u\,\xi-v)$ is inert in $\wh F_{\xi}/L$
(condition independent of the choice of $\xi$ of order $n$).

\smallskip\noindent
Then the set of primes $q\in Q_\rho$ having a $\rho$-inertia
for  $\wh {\mathcal F}_n$, is infinite.~\hfill\fin
\end{conj}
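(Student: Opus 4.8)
The plan is to recast ``$q$ has a $\rho$-inertia for $\wh{\mathcal F}_n$'' as a single Frobenius non-triviality and then to attack it by a \v Cebotarev-type equidistribution in a governing field that is allowed to grow with $q$. First I would record the reduction that makes the statement tractable. By Lemma 8 the property ``$\mathfrak q_\xi$ is inert in $\wh F_\xi/L$'' is independent of the choice of the primitive $n$th root $\xi$ and of the prime $\mathfrak q \div q$, so it is a well-defined property of the pair $(\rho,q)$. Since $\wh\eta_1 = \eta_1\,\zeta^{-\frac{1}{2}\frac{v-u}{v+u}}$ in the nonspecial cases and $\big(\frac{\zeta}{\mathfrak Q}\big)_{\!\!M}=\zeta^{\kappa}$ (Definition 2), one has $\big(\frac{\wh\eta_1}{\mathfrak Q}\big)_{\!\!M}=\big(\frac{\eta_1}{\mathfrak Q}\big)_{\!\!M}\,\zeta^{-\frac12\frac{v-u}{v+u}\kappa}$, so that, together with $\kappa\not\equiv 0\pmod p$, the $\rho$-inertia of $q$ is equivalent to
$$\Big(\frac{\eta_1}{\mathfrak Q}\Big)_{\!\!M}\ne \zeta^{\frac{1}{2}\frac{v-u}{v+u}\kappa}\quad(\hbox{nonspecial}),\qquad \Big(\frac{\eta_1}{\mathfrak Q}\Big)_{\!\!M}\ne 1\quad(\hbox{special}).$$
After the normalisation of Definition 5, the task is therefore exactly to show that the normalised symbol $\big(\frac{\eta_1}{\mathfrak Q}\big)_{\!\!M}^{\kappa^{-1}}\in\mu_p$, viewed as a function of $q\in Q_\rho$, is \emph{not} eventually equal to the single constant $\zeta^{\frac12\frac{v-u}{v+u}}$ that a hypothetical SFLT solution would impose via Theorem 4. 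Exhibiting one other value for infinitely many $q$ already yields the conjecture; I would in fact aim for the stronger assertion that this $\mu_p$-valued symbol equidistributes.

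Next I would build the governing field. Following the Introduction, set $\ov\Q^{\rm nr}=\Cup_{p\notdiv n}\Q(\mu_n)$, let $\mathcal M:=\Q(\mu_{p^2})\,\ov\Q^{\rm nr}$, and let $\mathcal R$ be obtained from $\mathcal M$ by adjoining all radicals $\sqrt[p]{\eta_1}$ with $\eta_1=(1+\xi\,\zeta)^{e_\omega}\,\zeta^{-\frac12}$, as $\xi$ runs over all roots of unity of order prime to $p$. Then $\mathcal R/\mathcal M$ is $p$-elementary and $p$-ramified, every field $M(\sqrt[p]{\eta_1})$ and $M(\sqrt[p]{\wh\eta_1})$ of the paper sits inside $\mathcal R$, and the symbol $\big(\frac{\eta_1}{\mathfrak Q}\big)_{\!\!M}$ is a component of the Frobenius of $q$ in $\mathcal R/\Q$. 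On the other hand the matching $\xi\equiv\rho\pmod{\mathfrak q}$ (which selects $n$ and the prime $\mathfrak q_\xi$) and the value of $\kappa$ modulo $p$ are components of the Frobenius of $q$ in the abelian layers $\Q(\mu_n)/\Q$ and $\Q(\mu_{p^2})/\Q$ respectively. The linear-disjointness facts already available, namely that $F_n$ is linearly disjoint from $L_\infty/L$ (Lemma 6) and that $\eta_1$ is genuinely of degree $p$ and not $p$-primary (Subsection 4.1), are the inputs showing that the radical component of the Frobenius is not forced by the cyclotomic one. \v Cebotarev's theorem applied to the finite layer $\wh F_n/\Q(\mu_n)$, \emph{for each fixed $n$}, then yields density $(p-1)/p$ for inert $\mathfrak q_\xi$; equidistribution of the normalised symbol would follow if one could let $n\to\infty$ with $q$ while preserving this density.

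The hard part will be precisely this passage to a varying field. For fixed $n$ one may only use the primes $q\div\Phi_n(u,v)$, a \emph{fixed} integer, so only finitely many admissible $q$ occur at each level and no single finite extension controls infinitely many $q$; by Corollary 5, $n\to\infty$ as $q\to\infty$, whence the target field $\wh F_\xi/\Q(\mu_n)$ has degree $p\,\phi(n)$ growing with $q$, and the radical $\wh\eta_1$ itself depends on the uneffective root $\xi\equiv\rho\pmod{\mathfrak q}$. The event ``$q$ has a $\rho$-inertia'' is thus not the splitting of $q$ in a fixed number field but a joint condition on the cyclotomic and the radical Frobenius in an extension whose degree grows with $q$. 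Controlling its frequency requires an \emph{effective} form of \v Cebotarev's theorem with explicit dependence on the conductor of $\wh F_n/\Q(\mu_n)$ (a divisor of $p^2$ by Lemma 5), for instance the Lagarias--Odlyzko bounds, presumably under GRH, or a large-sieve substitute. The genuine mathematical obstacle is to exclude a ``conspiracy'' in which, for all but finitely many $q\in Q_\rho$, the growing family of radicals $\wh\eta_1$ is systematically a local $p$th power at $\mathfrak q_\xi$ — exactly the configuration a nontrivial SFLT solution would produce. That such a conspiracy is incompatible with the independence of the arithmetic of the $\Q(\mu_n)$ from the SFLT data is the heuristic underlying the whole paper, but it lies beyond purely algebraic \v Cebotarev input; compare the density-$0$ phenomenon for total splitting proved by Lenstra in [Len, Cor.\,7.6].

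A more explicit avenue, which I would pursue in parallel, uses the reciprocity formula of Theorem 3. When the minus-part condition $\mathfrak q_\xi^{1-c}=\mathfrak a^p(1+p\,\beta_\xi)$ holds (automatic when the class of $\mathfrak q_\xi^{1-c}$ has order prime to $p$), the $\rho$-inertia of $q$ becomes the explicit congruence
$${\rm Tr}_{L/\Q}\Big(\frac{\beta_\xi}{\xi+1}\Big)\ \not\equiv\ \frac{v-u}{v+u}\,\frac{{\rm log}(q)}{p}\pmod p .$$
The conjecture would then follow from an equidistribution statement for the trace values ${\rm Tr}_{L/\Q}(\beta_\xi/(\xi+1))\in\F_p$ as $q$ ranges over $Q_\rho$: one must show that they are not pinned to the single arithmetic progression on the right. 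This converts the problem into a question about the distribution modulo $p$ of a naturally defined quantity attached to the splitting of $q$ in $\Q(\mu_n)$ — accessible, one hopes, via character sums over the residue rings $Z'_L/p\,Z'_L$ as indicated in Remark 8 — and it is in this analytic form that I would try to close the argument.
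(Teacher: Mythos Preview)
The statement you are trying to prove is Conjecture~3 in the paper, and the paper does \emph{not} prove it: it is presented precisely as an open conjecture whose truth would imply SFLT. Immediately after stating it the authors remark only that the arithmetical properties of $H_L{\st[p]}$ are ``a priori independent of any diophantine problem,'' and in the abstract and conclusion they stress that the paper is ``a basic tool for future researches (probably of analytic or geometric nature).'' So there is no proof in the paper against which to compare yours.

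Your proposal is not a proof either, and you seem to recognise this. The reduction you outline is correct and follows the paper closely (Lemma~8, Definition~5, Theorem~4, Lemma~6, Theorem~3, Remark~8), and you have accurately isolated the genuine obstruction: the target extension $\wh F_\xi/\Q(\mu_n)$ varies with $q$, only finitely many $q$ are available at each fixed level $n$, and ruling out the ``conspiracy'' in which $\wh\eta_1$ is a local $p$th power at $\mathfrak q_\xi$ for all but finitely many $q$ is exactly equivalent to the conjecture itself. Your two suggested attacks --- effective \v Cebotarev with conductor control (possibly under GRH) and equidistribution of the trace values ${\rm Tr}_{L/\Q}(\beta_\xi/(\xi+1))$ modulo $p$ via character sums --- are reasonable research directions and are in the spirit of what the authors call for, but neither is carried out, and you yourself write that the obstacle ``lies beyond purely algebraic \v Cebotarev input.'' As it stands, then, this is a well-informed research outline rather than a proof; the gap is not a technical slip but the absence of the analytic input that would turn the heuristic into a theorem.
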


The extension $\wh F_n$ (depending on $\rho$ contrary to $F_n$) is a subfield of
the maximal $p$-ramified $p$-elementary extension  $H_{L}{\st [p]}$
of $L$, and the arithmetical properties of
$H_{L}{\st [p]}$ and of its subextensions
are, a priori, independent of any diophantine problem
as Fermat${}'$s equation. 

\smallskip
Recall that to prove the first case of FLT for $p$,Ê
the existence of a single $q\in Q_\rho$ ($\rho=\frac{y}{x}$ for a
solution $(x,y,z)$) having a $\rho$-inertia for $\wh {\mathcal F}_n$ is sufficient,Ê
contrary to the second case  which needs infinitely many such primes.

\smallskip
In the first case, Ê$p\notdiv xy\,(x^2-y^2)$  (from Lemma 1)Ê
and so, if $\kappa\not\equiv 0 \pmod {p}$ then  $q\notdiv xy\,(x^2-y^2)$
from the two theorems of Furtw\"angler (see Corollaries 2, 3, and Remark 3).

\smallskip
Hence $q \in Q_\rho$ as soon as $\kappa\not\equiv 0 \pmod {p}$
and $q \not\equiv 1 \pmod {p}$.
These two conditions on $q$ are effective
and the first case of ÊFLT is easier than the second one
because it is generally possible to Êcheck Êthe conjecture
for Êsmall values of $q$. The second case supposes to find $q$ 
large enough; this shows that the first case is likely a
weaker conjecture.

\smallskip
If we examine, for logical reasons, the case $p=3$ for SFLT, we know 
that for any of the six 
families of solutions $(u,v)$ of the SFLT equation (see Remark 1), 
 we have (supposing $\kappa \not\equiv 0 \pmod {3}$ and defining $\wh \eta_1$ 
in an analogous way): 

\smallskip
(i) $\Big(\Frac{\eta_1}{{\mathfrak Q}_\xi}\Big)_{\!\!M} =
\zeta^{\frac{1}{2}\,\frac{v-u}{v+u}\,\kappa } = 1$,
in the first  case (i.e., $u\,v\,(u+v) \not \equiv 0 \pmod {3}$), which
implies $u-v \equiv 0  \pmod {3}$, hence
$\wh \eta_1 = \eta_1$ and  $\wh F_\xi =  F_\xi$;

\smallskip
(ii) $\Big(\Frac{\eta_1}{{\mathfrak Q}_\xi}\Big)_{\!\!M} =
\zeta^{\pm \frac{1}{2}\,\kappa}$ in the second
case (i.e., $u\,v \equiv 0 \pmod {3}$), thus 
$\wh \eta_1 = \eta_1\,\zeta^{\mp \frac{1}{2}}$ and  $\wh F_\xi \ne  F_\xi$;

\smallskip
(iii) $\Big(\Frac{\eta_1}{{\mathfrak Q}_\xi}\Big)_{\!\!M} =
\zeta^{\frac{1}{2}\,\frac{v+u}{3\,v}\,\kappa }$ in the special case
(i.e., $u+v \equiv 0 \pmod {3}$) for which 
$\wh \eta_1 = \eta_1\,\zeta^{-\frac{1}{2}\,\frac{v+u}{3\,v}}$ and
$\wh F_\xi =  F_\xi$ if and only if $v + u\equiv 0 \pmod {9}$.

\medskip
 If $v+u \equiv 0 \pmod {3}$ and
$v+u \not\equiv 0 \pmod {9}$ then,  for $\rho:=\frac{v}{u}$, we get
$Q_\rho^{\rm spl} \subseteq \wh Q_\rho^{\rm in}$;
if  $v+u \equiv 0 \pmod {9}$ or $u-v \equiv 0 \pmod {3}$
then $Q_\rho^{\rm spl}\cap \wh Q_\rho^{\rm in} = \ev$.

\smallskip
We see that 
$u-v \equiv 0  \pmod {3}$ in case (i), $u\,v\equiv 0\pmod {3}$
in case (ii); for (iii), we verify from Remark 1 that $\frac{v}{u}\in
\{-1, 2, 5  \}$ modulo 9, which gives $\frac{1}{2}\frac{v+u}{3\,v}\in
\{0, 1, 2 \}$ modulo 3. 
So, for $q$ fixed we can find solutions 
$(u_i,v_i)$ giving the same order $n$ of $\frac{v_i}{u_i}$
modulo $q$ and any of the above value of $\frac{v}{u}$ modulo 9.

\smallskip
See  Section 9 to go thoroughly into the exceptional
case $p=3$.

\subsection{Construction of universal defining polynomials}

The group $g$ operates canonically on the field $K(Y)$ of rational 
fractions, where $Y$ is an indeterminate.
Consider: 
$$F(Y):= (1+Y\,\zeta)^{e_\omega} \,\zeta^{-\frac{1}{2}} \in K(Y). $$
Then if $s = s_r$ is a generator of $g$ we have:
$$s . F(Y):= \big((1+Y\,\zeta^s) \,\zeta^{-\frac{1}{2}s}\big)^{e_\omega} =
\big((1+Y\,\zeta) \,\zeta^{-\frac{1}{2}}\big)^{s\,e_\omega}\!\! =
\big((1+Y\,\zeta) \,\zeta^{-\frac{1}{2}}\big)^{re_\omega+p\Lambda}, $$
since $s\,e_\omega \!\! = r\,e_\omega+p\,\Lambda$ for some $\Lambda 
\in \Z[g]$ (see Definition 1, (iii)). Then we obtain:
$$s . F(Y) = F(Y)^{r}\,\cdot\,
\big((1+Y\,\zeta) \,\zeta^{-\frac{1}{2}}\big)^{p\Lambda}. $$

\smallskip
Consider the Kummer extension $K(Y)(\sqrt[p]{F(Y)}\,)/K(Y)$; since 
this extension 
is abelian over $\Q(Y)$, the $K(Y)$-automorphism  of $K(Y)(\sqrt[p]{F(Y)}\,)$, still 
denoted $s$, defined by
$s\cdot\sqrt[p]{F(Y)} := (\sqrt[p]{F(Y)}\,)^r\,\cdot\,
\big((1+Y\,\zeta) \,\zeta^{-\frac{1}{2}}\big)^{\Lambda}$ is of order
$p-1$ and it is a classical result that the trace
$\Psi := \sm_{k=1}^{p-1}s^k \cdot\,\sqrt[p]{F(Y)}$ defines a primitive 
element
of the subextension cyclic of degree $p$ contained in 
$K(Y)(\sqrt[p]{F(Y)}\,)/\Q(Y)$.

\smallskip
An easy way to find ${\rm Irr}(\Psi,\Q(Y) )$ is to use the Newton 
formulas from the computations of the traces:
$${\rm Tr} \big((\sqrt[p]{F(Y)}\,)^i\big) := 
\sm_{k=1}^{p-1}s^k \cdot\,(\sqrt[p]{F(Y)}\,)^i,\ \,i = 1,\ldots,p-1. $$

For instance, for $p=3$, $e_\omega = s-1$, $s = s_2$,
$s\,e_\omega = 1-s = - e_\omega$ (thus $r=2$, $\Lambda =- e_\omega$),
 $F(Y) = (1+Y\,j)^{e_\omega} j =
((1+Y\,j) \,j)^{s-1}$; we have
$\Psi =\Big( \Frac{(1+Y\,j^2) \,j}{1+Y\,j}\Big)^\frac{1}{3}
 + \Big( \Frac{(1+Y\,j) \,j^2}{1+Y\,j^2}\Big)^\frac{1}{3}$,
for which we get $\Psi^3 =
\Frac{(1+Y\,j^2) \,j}{1+Y\,j}+
\Frac{(1+Y\,j) \,j^2}{1+Y\,j^2} + 3\,\Psi$,
giving the irreducible polynomial:
$${\rm Irr}(\Psi,\Q(Y)) = X^3 - 3\,X + \Frac{Y^2-4Y+1}{Y^2-Y+1}, $$
or the unitary polynomial $X^3 - 3\, (Y^2-Y+1)^2\,X + (Y^2-4Y+1)(Y^2-Y+1)$
taking the representative idempotent $e_\omega = s+2$.
 
\begin{defi} {\rm 
The general case of degree $p$ can be written:
$${\rm Irr}(\Psi,\Q(Y)) = A_p(Y) X^p+\cdots+ A_1(Y) X + A_0(Y), \ 
\,A_i(Y) \in \Z[Y], $$
and will be called a universal polynomial of degree $p$
for the SFLT problem.

\smallskip
For any given $n$th root of unity $\xi$, $n>2$, the polynomial:
$$A_p(\xi) X^p+\cdots+ A_1(\xi) X + A_0(\xi) \in L[X],\ \, L:= 
\Q(\mu_n), $$
is the irreducible polynomial of the primitive element:
$$\psi := {\rm Tr}_{M(\sqrt[p]{\eta_1\,})/F_\xi}(\sqrt[p]{\eta_1})$$
defining the extension $F_\xi$, with the usual notations.~\hfill\fin}
\end{defi}

\smallskip
We  have the following result where we recall that, for 
g.c.d.\,$(u,v)=1$, we have put
$\Phi_n(u,v) := \prd_{\xi'\,{\rm of\, order}\ n} (u\,\xi' - v)$.
 
\begin{theo}  Let $p$ be a prime number, $p>3$, and let 
$\rho = \frac{v}{u}$, with g.c.d.\,$(u,v) = 1$, be a fixed
rational distinct from $0$ and $\pm 1$; suppose
$u-v \not\equiv 0 \pmod {p}$.
 
 \smallskip
 (i) Nonspecial cases ($u+v \not\equiv 0 \pmod {p}$).
Let $n>2$ be prime to $p$, and
  let $q \notdiv n$ be a prime number such
  that $\kappa \not\equiv 0 \pmod {p}$
and  $q \div \Phi_n(u,v)$.

\smallskip\noindent
If the polynomial $A_p(\rho) X^p+\cdots+ A_1(\rho) X + A_0(\rho)$
is not irreducible modulo $q$, then
$(u,v)$ cannot be a solution  of the  equation
$(u+v \,\zeta)\,\Z[\zeta] = {\mathfrak w}_1^p$ attached to
the nonspecial cases of SFLT.

\smallskip
 (ii) Special case ($v+u \equiv 0 \pmod {p}$).
Let $n>2$ be prime to $p$, and
  let $q \notdiv n$ be a prime number such that
  $\kappa \not\equiv 0 \pmod {p}$
and $q \div \Phi_n(u,v)$.

  \smallskip\noindent
  If the polynomial $A_p(\rho) X^p+\cdots+ A_1(\rho) X + A_0(\rho)$
is  irreducible modulo~$q$, then
$(u,v)$ cannot be a solution  of the  equation
$(u+v \,\zeta)\,\Z[\zeta] ={\mathfrak p}\,{\mathfrak w}_1^p$ attached to
the special case of SFLT.

\smallskip
 (iii) Let $n>2$ be prime to $p$, and
  let $q \notdiv n$ be a prime number such that
$\kappa \equiv 0 \pmod {p}$ and  $q \div \Phi_n(u,v)$.

  \smallskip\noindent
  If the polynomial $A_p(\rho) X^p+\cdots+ A_1(\rho) X + A_0(\rho)$
is  irreducible modulo $q$, then
$(u,v)$ cannot be a solution  of the SFLT equation.
\end{theo}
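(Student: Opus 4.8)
The plan is to translate the factorization of the reduced polynomial modulo $q$ into the decomposition of ${\mathfrak q}_\xi$ in $F_\xi/L$, and then to read off that decomposition from the power residue symbol $\Big(\Frac{\eta_1}{{\mathfrak Q}}\Big)_{\!\!M}$ computed under a hypothetical solution. First I would fix a primitive $n$th root of unity $\xi$ and the ideal ${\mathfrak q}_\xi = (q,u\,\xi-v)$, so that $\xi \equiv \rho \pmod {{\mathfrak q}_\xi}$; since $q$ splits completely in $L/\Q$, the residue field at ${\mathfrak q}_\xi$ is $\F_q$, hence reducing the coefficients $A_i(\xi)\in\Z[\xi]\subseteq Z_L$ modulo ${\mathfrak q}_\xi$ turns the defining polynomial $A_p(\xi)X^p+\cdots+A_0(\xi)$ of $F_\xi/L$ (Definition 7) into exactly $A_p(\rho)X^p+\cdots+A_0(\rho)\bmod q$. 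Because $F_\xi/L$ is cyclic of prime degree $p$ and unramified at ${\mathfrak q}_\xi$ (as $q\ne p$ and $F_\xi/L$ is $p$-ramified), ${\mathfrak q}_\xi$ is either totally split or inert; by the Kummer--Dedekind correspondence (valid for $q$ prime to the index $[Z_{F_\xi}:Z_L[\psi]]$) the reduced polynomial is, correspondingly, a product of $p$ distinct linear factors or irreducible over $\F_q$.

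Next I would identify these two possibilities with the triviality of the symbol. The decomposition of ${\mathfrak q}_\xi$ in $F_\xi/L$ is governed by the Frobenius $\varphi_\xi = \Big(\Frac{F_\xi/L}{{\mathfrak q}_\xi}\Big)$, and the proof of Theorem 4 supplies the restriction identity $\varphi_\xi^{\,f} = \Big(\Frac{M(\sqrt[p]{\eta_1}\,)/M}{{\mathfrak Q}}\Big)_{|F_\xi}$ for ${\mathfrak Q}\div{\mathfrak q}_\xi$ in $M$. Since $|{\rm Gal}(F_\xi/L)|=p$ is prime to $f\div p-1$, we have $\varphi_\xi=1$ if and only if the right-hand Frobenius is trivial, that is (Lemma 9 together with Hensel's lemma) if and only if $\Big(\Frac{\eta_1}{{\mathfrak Q}}\Big)_{\!\!M}=1$. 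Thus ${\mathfrak q}_\xi$ splits, equivalently the reduced polynomial is reducible, exactly when the symbol is trivial, and is inert, equivalently the reduced polynomial is irreducible, exactly when the symbol is nontrivial.

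Finally I would evaluate the symbol under a hypothetical solution by means of Theorem 1. In the nonspecial cases $\Big(\Frac{\eta_1}{{\mathfrak Q}}\Big)_{\!\!M}=\zeta^{\frac{1}{2}\frac{v-u}{v+u}\kappa}$; under the standing hypotheses $u-v\not\equiv 0$, $u+v\not\equiv 0$ and $\kappa\not\equiv 0 \pmod{p}$ the exponent is nonzero modulo $p$, so the symbol is nontrivial, ${\mathfrak q}_\xi$ is inert, and the reduced polynomial is irreducible; contrapositively, if it fails to be irreducible then $(u,v)$ is no solution of $(u+v\,\zeta)\,\Z[\zeta]={\mathfrak w}_1^p$, giving (i). In the special case for $p>3$ the symbol equals $1$, so ${\mathfrak q}_\xi$ splits and the reduced polynomial is reducible; irreducibility modulo $q$ therefore excludes a solution of $(u+v\,\zeta)\,\Z[\zeta]={\mathfrak p}\,{\mathfrak w}_1^p$, giving (ii). When $\kappa\equiv 0 \pmod{p}$ the same proof of Theorem 4 yields $\varphi_\xi=1$ in every case, so ${\mathfrak q}_\xi$ again splits and irreducibility modulo $q$ contradicts any solution, giving (iii).

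The main obstacle is precisely the Kummer--Dedekind step: the clean dictionary ``reducible $\Leftrightarrow$ split, irreducible $\Leftrightarrow$ inert'' requires that $q$ not divide the index $[Z_{F_\xi}:Z_L[\psi]]$, equivalently that the reduction $\ov\psi$ of the primitive element still generate the residue extension $\F_{q^p}/\F_q$ in the inert case (and that the reduced polynomial stay separable of degree $p$, so that $A_p(\rho)\not\equiv 0 \pmod{q}$). For the finitely many $q$ dividing this index the factorization of $A_p(\rho)X^p+\cdots+A_0(\rho)$ need not reflect the decomposition of ${\mathfrak q}_\xi$, and such exceptional primes must be excluded or treated directly; everything else is the routine translation above, since the arithmetic input, namely the value of the symbol, is already furnished by Theorem 1 and Lemma 9.
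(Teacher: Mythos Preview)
Your proposal is correct and follows essentially the same route as the paper: reduce the defining polynomial of $F_\xi/L$ modulo ${\mathfrak q}_\xi$ via $\xi\equiv\rho$, translate reducibility/irreducibility into splitting/inertia of ${\mathfrak q}_\xi$ in $F_\xi/L$, and compare with the value of $\Big(\Frac{\eta_1}{{\mathfrak Q}}\Big)_{\!\!M}$ furnished by Theorem~1 under a hypothetical solution. Two minor remarks: the equivalence ``symbol trivial $\Leftrightarrow$ ${\mathfrak q}_\xi$ splits in $F_\xi/L$'' is established directly in Subsection~4.2 (via Hensel), so the detour through Theorem~4 and Lemma~9 is unnecessary; and your caveat about the Kummer--Dedekind index is well placed---the paper's own argument tacitly assumes that a root of the reduced polynomial over $\F_q$ witnesses splitting of ${\mathfrak q}_\xi$, which is the same implicit hypothesis.
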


\begin{proof}  From Lemma 2, $q \notdiv n$ and $q \div \Phi_n(u,v)$ 
 is equivalent to $q \notdiv u\,v$ and  $\rho$ is of order $n$
modulo $q$; then $\rho \equiv \xi \pmod {{\mathfrak q}_\xi}$, for any
choice of the $n$th root of unity $\xi$, and in case (i) there exists
a root  $\lambda \in \Z$   modulo $q$, of the polynomial, such that:
 \begin{eqnarray*}
A_p (\rho) \lambda^p+\cdots+ A_1 (\rho) \lambda + 
A_0 (\rho) &\equiv&  A_p(\xi) \lambda^p+\cdots+ A_1(\xi) \lambda + 
A_0(\xi)\\
&\equiv& 0 \pmod {{\mathfrak q}_\xi}, 
 \end{eqnarray*}
since $q$ divides the left member. This means that
${\rm Irr}(\psi,L)$ has the root $\lambda$ modulo ${\mathfrak q}_\xi$
and that ${\mathfrak q}_\xi$ splits in $F_\xi/L$
(i.e., $\Big[\Frac{F_*/L}{{\mathfrak q}_*}\Big]_{\rho,n} = 1$).

\smallskip
If we suppose that $(u,v)$ is a counterexample to SFLT,
 Theorem 1 in the nonspecial cases gives
$\Big(\Frac{\eta_1}{{\mathfrak 
Q}}\Big)_{\!\!M}=\zeta^{\frac{1}{2}\,\frac{v-u}{v+u}\,\kappa}
\ne 1$ by assumption, equivalent to the inertia of ${\mathfrak q}_\xi$ in $F_\xi/L$
(contradiction). 
 
 \smallskip
The proofs of cases (ii) and (iii) are similar but inverted (the hypothesis 
implies $\Big[\Frac{F_*/L}{{\mathfrak q}_*}\Big]_{\rho,n} \ne 1$ while
$\Big(\Frac{\eta_1}{{\mathfrak Q}}\Big)_{\!\!M}=1$ for a solution in 
these  cases).
\end{proof}
    
In other words, the corresponding conjecture giving a proof of SFLT
under the assumption $u-v \not\equiv 0 \pmod {p}$,
which implies the two cases of FLT, is the following.

\begin{conj}  Let $p$ be a prime number, $p>3$, and let 
$\rho = \frac{v}{u}$, with g.c.d.\,$(u,v) = 1$, be a 
rational distinct from $0$ and $\pm 1$; suppose
$u-v \not\equiv 0 \pmod {p}$.
    
  (i) Case $u+v \not\equiv 0 \pmod {p}$.
 There exist infinitely many prime numbers~$q$ with
$\kappa \not\equiv 0 \pmod {p}$ such that 
$A_p(\rho) X^p+\cdots+ A_1(\rho) X + A_0(\rho)$
is not  irreducible modulo $q$ and  $q\div\Phi_n(u,v)$ 
for suitable values of $n>2$ prime to~$p$.

 (ii) Case $v+u \equiv 0 \pmod {p}$.
 There exist infinitely many prime numbers $q$
  with $\kappa \not\equiv 0 \pmod {p}$
 such that $A_p(\rho) X^p+\cdots+ A_1(\rho) X + A_0(\rho)$
is irreducible modulo $q$ and  $q\div\Phi_n(u,v)$
for suitable values of $n>2$ prime to $p$.

\smallskip
 (iii)  There exist infinitely many prime numbers $q$ with $\kappa 
 \equiv 0 \pmod {p}$
 such that $A_p(\rho) X^p+\cdots+ A_1(\rho) X + A_0(\rho)$
is irreducible modulo $q$ and  $q\div\Phi_n(u,v)$
for suitable values of $n>2$ prime to $p$.~\hfill\fin
\end{conj}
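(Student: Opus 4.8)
\emph{The plan.} By Lemma 2 the two conditions $q\notdiv n$ and $q\div\Phi_n(u,v)$ just say that $\rho=\frac{v}{u}$ has exact order $n$ modulo $q$, so that $q$ carries a root $\xi$ and a degree-one prime ${\mathfrak q}_\xi=(q,u\,\xi-v)$ of $L=\Q(\mu_n)$ as in Definition 3. Since $F_\xi/L$ is cyclic of degree $p$ and ${\mathfrak q}_\xi$ has residue degree $1$ over $\Q$, the reduction modulo ${\mathfrak q}_\xi$ of the defining polynomial $A_p(\rho)X^p+\cdots+A_0(\rho)$ — which is ${\rm Irr}(\psi,L)$ by Definition 6, since $\rho\equiv\xi\pmod{{\mathfrak q}_\xi}$ — either splits into $p$ distinct linear factors (when ${\mathfrak q}_\xi$ splits in $F_\xi/L$) or stays irreducible (when ${\mathfrak q}_\xi$ is inert). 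Thus, exactly as in the proof of Theorem 5, each clause reduces to a pure statement of $\rho$-decomposition: that the symbol $\Big[\Frac{F_*/L}{{\mathfrak q}_*}\Big]_{\rho,n}$ is trivial (case (i)) resp.\ nontrivial (cases (ii), (iii)) for infinitely many $q\in Q_\rho$, with $\kappa$ prescribed. My first step is therefore to drop the polynomial entirely and prove that ${\mathfrak q}_\xi$ realises the prescribed Frobenius in $F_\xi/L$ for an infinite set of $q$.

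\emph{Reducing the Frobenius to an explicit congruence.} The second step is to make this splitting condition computable through Theorem 3. For the $q$ with ${\mathfrak q}_\xi^{1-c}={\mathfrak a}^p\,(1+p\,\beta)$ one has $\Big(\Frac{\eta_1}{{\mathfrak Q}}\Big)_{\!\!M}=\zeta^{\frac{1}{2}f\,{\rm Tr}_{L/\Q}(\beta/(\xi+1))}$, so ${\mathfrak q}_\xi$ splits in $F_\xi/L$ precisely when ${\rm Tr}_{L/\Q}\big(\beta/(\xi+1)\big)\equiv 0\pmod p$ and is inert otherwise. By Remark 7 the whole local datum is the image of $\beta=\beta_q$ in $Z'_L/p\,Z'_L$, and the conjecture becomes an equidistribution question: as $q$ runs over the primes dividing some $\Phi_n(u,v)$ with $\kappa\not\equiv 0$ (resp.\ $\kappa\equiv 0$) modulo $p$, does the class of $\beta_q$ — which records the principalization of ${\mathfrak q}_\xi^{1-c}$ in $L$ — meet the fibre ${\rm Tr}_{L/\Q}(\beta/(\xi+1))\equiv 0$ (resp.\ $\not\equiv 0$) infinitely often? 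To produce candidate $q$ I would apply \v Cebotarev in the compositum of the ray class field $L{\st (p^2)}$ (which contains $H_L^-{\st [p]}$, hence every $F_\xi$, by Lemma 5 and Subsection 5.2) with $K':=\Q(\mu_{p^2})$ (whose Frobenius detects $\kappa$, since $\kappa\not\equiv 0$ means $q$ is inert in $K'/K$): prescribing one Frobenius there fixes simultaneously $q\equiv 1\pmod n$, the value of $\kappa$ modulo $p$, and the Artin class of ${\mathfrak q}_\xi^{1-c}$, hence the fibre of $\beta_q$. One must moreover let $n\to\infty$ with $q$, as forced by Corollary 5 and modelled by the divisor function of Conjecture 2, so that for the \emph{fixed} $\rho$ there are $q$ with order exactly $n$.

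\emph{The main obstacle.} The genuine difficulty is that the selection $q\div\Phi_n(u,v)$ pins $\xi$ to the single congruence $\xi\equiv\rho\pmod{{\mathfrak q}_\xi}$, that is, to one Galois representative, whereas \v Cebotarev only controls the full conjugacy class $\big(\Frac{F_\xi/L}{{\mathfrak q}_{\xi^t}}\big)_{t\in{\rm Gal}(L/\Q)}$ averaged over ${\rm Gal}(L/\Q)$. Converting a density statement about that class into control of the distinguished Frobenius $\varphi_\xi$ requires proving that the condition ``$\rho$ has order $n$ modulo $q$'' (which lives in $L_\infty$ and $K'$) and the condition ``${\mathfrak q}_\xi^{1-c}$ lies in a prescribed ray class'' (which lives in the linearly disjoint $p$-part $H_L^-{\st [p]}$) are \emph{asymptotically independent} — while $q$ itself is cut out by the rational congruence $\rho\bmod q$ and the field $L=\Q(\mu_n)$ grows without bound. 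No purely algebraic argument supplies this independence (it is precisely the statistical philosophy behind Remark 8), and a uniform, effective equidistribution over this varying family seems to demand analytic input: a large sieve for Frobenii, automorphic or $L$-function estimates, or a geometric equidistribution of Deligne--Katz type for the family $\{F_\xi/L\}$ as $\xi$ and $q$ vary. Finally any workable argument must use $p>3$ in an essential way, since Section 9 shows that for $p=3$ the infinite family of parametric solutions of Remark 1 forces $\Big[\Frac{F_*/L}{{\mathfrak q}_*}\Big]_{\rho,n}$ to be \emph{constant}; locating exactly where the passage $p>3$ destroys this degeneracy is, to my mind, where the real work lies.
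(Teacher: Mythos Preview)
The statement you are attempting to prove is a \emph{conjecture} in the paper (Conjecture 4), not a theorem; the paper provides no proof. Immediately after stating it, the authors explicitly remark that ``without an independent approach (analytic or geometric), the problem has no longer solution,'' and they give the concrete obstruction: in case (i) the polynomial $A_p(\rho)X^p+\cdots+A_0(\rho)$ could define a primitive element of $L_1$, forcing every prime splitting it to have $\kappa\equiv 0\pmod p$; in cases (ii) and (iii) it could split over $\Q$. So there is nothing in the paper to compare your argument against.

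Your outline is a reasonable reconnaissance of the problem and is consistent with the paper's own diagnosis. You correctly reduce the polynomial reducibility to the Frobenius $\varphi_\xi$ via Definition~6 and Theorem~5, correctly invoke Theorem~3 to express that Frobenius through the trace congruence on $\beta$, and correctly identify the core difficulty: \v Cebotarev controls the conjugacy class of Frobenii $(\varphi_{\xi^t})_t$ in a fixed extension, but the constraint $q\div\Phi_n(u,v)$ singles out one representative ${\mathfrak q}_\xi$ tied to $\rho$, and moreover $n$ (hence $L$) must grow with $q$. Your acknowledgement that ``no purely algebraic argument supplies this independence'' is exactly the paper's point. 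What you have written is therefore not a proof but a restatement of why the conjecture is open --- which is honest, but should not be presented as a proof proposal. If you wish to make progress, the paper's Section~9 analysis of $p=3$ (Theorems~6--8) suggests that the nonexistence of the automorphism $T$ for $p>3$ is the algebraic input that must somehow be leveraged, but the paper gives no indication of how to convert that into the required density statement.
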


Of course, without an independent approach (analytic or geometric),
the problem has no longer solution since
the polynomial:
$$A_p(\rho) X^p+\cdots+ A_1(\rho) X + A_0(\rho) $$
can be in case (i) that of a primitive element of $L_1$,
in which case all the primes which split in $L_1/\Q$ are
such that $\kappa \equiv 0 \pmod {p}$, and in cases (ii) and (iii), the 
polynomial may be splitted over $\Q$.
Meanwhile the universal polynomial $A_p(Y) X^p+\cdots+ A_1(Y) X + A_0(Y)$
has the nontrivial property that for any 
primitive $n$th root of unity $\xi$, $n>2$,
$A_p(\xi) X^p+\cdots + A_1(\xi) X + A_0(\xi)$
is irreducible in $\Q(\mu_n)[X]$ and defines a $p$-ramified cyclic 
extension  of $\Q(\mu_n)$.

\section {Normic relations for cyclotomic units} 

In this section we give a relation between the units $\eta_1$
and $\eta'_1$ associated to the classes of two prime numbers $q$ and 
$q'$ for which the pairs $(\xi,{\mathfrak q}_{\xi})$, 
$(\xi',{\mathfrak q}_{\xi'})$ are such that the order $n'$ of
$\xi'$ divides the order $n$ of $\xi$, $p \notdiv n$.

\smallskip
Put $n = n' d$. We introduce the following notations:
 \begin{eqnarray*}
&& L = \Q(\mu_n), \  L' = \Q(\mu_{n'}),\\
&& M = L\,K, \  M' = L'\,K,\\
&& \eta_1 = (1 + \xi\,\zeta)^{e_\omega}\,\zeta^{-\frac{1}{2}},\ 
\eta'_1 = (1 + \xi'\,\zeta)^{e_\omega}\,\zeta^{-\frac{1}{2}};\ 
 \end{eqnarray*} 
 to fix the notations, we suppose that $\xi' = \xi^d$.

\smallskip
Since $\eta_1$ is a cyclotomic unit, the action of 
relative norms on this unit is well-known and we now recall the result in 
our particular context.

\begin{prop} Denote by $\No$ the relative norm $\No_{M/M'}$ and by $S$ 
the set of distinct prime numbers dividing $d$ and not dividing $n'$.

 \smallskip\noindent
Then we have $\No(\eta_1) = (\eta'_1)^{\Lambda'}$,  where
${\Lambda'} \equiv d \ .\, \prd_{\ell\in S}(1 - \ell^{-1}\,t'_\ell{}^{-1}) \pmod {p}$,
$t'_\ell \in {\rm Gal}(M'/K)$ being the Artin automorphism defined 
by $t'_\ell(\xi') := (\xi')^\ell$.
\end{prop}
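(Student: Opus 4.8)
The plan is to reduce the statement to the classical norm (distribution) relation for cyclotomic units, and then to absorb the two decorations $e_\omega$ and $\zeta^{-\frac12}$ of $\eta_1$ by means of the congruence $s_k\,e_\omega \equiv k\,e_\omega \pmod{p\,\Z[g]}$ of Definition 1\,(iii). First I would pull $e_\omega$ and the constant $\zeta^{-\frac12}$ out of the norm. Since ${\rm Gal}(M/M') \simeq {\rm Gal}(L/L')$ fixes $K$ (hence $\zeta$ and the element $e_\omega$) and commutes with $g$, one gets
$$\No_{M/M'}(\eta_1) = \big(\No_{M/M'}(1+\xi\,\zeta)\big)^{e_\omega}\cdot \zeta^{-\frac12\,[M:M']},$$
where $[M:M'] = [L:L'] = \phi(n)/\phi(n')$. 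Thus the whole problem reduces to the ``building block'' norm $\No_{M/M'}(1+\xi\,\zeta)=\prod_{\sigma}(1+\sigma(\xi)\,\zeta)$, the product over the $\sigma\in{\rm Gal}(L/\Q)$ fixing $L'$, i.e. $\sigma(\xi)=\xi^a$ with $a\equiv 1 \pmod{n'}$.

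Next, using transitivity of the norm along a tower $L=L_0\supset L_1\supset\cdots\supset L'$ in which each step removes a single prime $\ell$ from the conductor, it suffices to treat one elementary step $m\ell\to m$, the key tool being the identity $\prod_{j=0}^{\ell-1}(1+\xi\zeta\,w^j)=1-(-\xi\zeta)^\ell=1+\xi^\ell\zeta^\ell$ (for $\ell$ odd), with $w$ running over the $\ell$-th roots of unity, together with the fact that $\xi^{n'}$ is a primitive $d$-th root of unity. Two cases must be compared with the asserted formula. \emph{Ramified step} ($\ell\mid m$): the $\ell$ conjugates are exactly the $\xi\,w^j$, so the norm is $1+\xi'\zeta^\ell = s_\ell(1+\xi'\zeta)$; raising to $e_\omega$ and using $s_\ell e_\omega\equiv \ell\,e_\omega$ turns this into $(1+\xi'\zeta)^{\ell e_\omega}$ up to a $p$-th power, and after cancelling $\zeta$-powers it contributes the scalar $\ell$ to $\Lambda'$. \emph{Split step} ($\ell\in S$, $\ell\nmid m$): exactly one of the $\ell$ exponents $a\equiv 1 \pmod m$ is divisible by $\ell$ and must be dropped; that excluded root is $\xi^{a_0}=\xi'^{\ell^{-1}}=t'^{-1}_\ell(\xi')$, whence
$$\No_{M/M'}(1+\xi\zeta)=\frac{1+\xi'\zeta^\ell}{1+\xi'^{\,\ell^{-1}}\zeta}=(1+\xi'\zeta)^{\,s_\ell-t'^{-1}_\ell}.$$
Applying $e_\omega$ (using $s_\ell e_\omega\equiv \ell\,e_\omega$ and that $t'_\ell$ commutes with $e_\omega$ and fixes $\zeta$), re-expressing $(1+\xi'\zeta)^{e_\omega}=\eta'_1\,\zeta^{\frac12}$, and letting the $\zeta^{\frac12}$'s cancel against the outer $\zeta^{-\frac12[M:M']}$, this step contributes the factor $\ell-t'^{-1}_\ell=\ell\,(1-\ell^{-1}t'^{-1}_\ell)$ to $\Lambda'$, i.e. one Euler factor together with one power of $\ell$.

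Multiplying the contributions over the whole tower then yields $\Lambda'\equiv d\,\prod_{\ell\in S}(1-\ell^{-1}t'^{-1}_\ell)\pmod p$: a prime $\ell\in S$ produces its Euler factor only at its first (split) descent, every further descent through $\ell$ (ramified) producing only a scalar $\ell$, while a prime dividing both $d$ and $n'$ stays ramified throughout and contributes only scalars; all the scalars assemble to $\prod_{\ell\mid d}\ell^{v_\ell(d)}=d=n/n'$. The restriction maps ${\rm Gal}(L_i/K)\to{\rm Gal}(M'/K)$ carry the intermediate $t'_\ell$ to the $t'_\ell\in{\rm Gal}(M'/K)$ of the statement, so the group-ring exponents compose correctly.

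The main obstacle is bookkeeping rather than a single hard idea. One must check that (i) the $\zeta$-power contributions cancel \emph{exactly} at each stage, which is precisely why the normalization $\zeta^{-\frac12}$ in $\eta_1$ is essential, and where the reality of $\eta_1$, $\eta'_1$ (Definition 4) furnishes a useful consistency check, both sides lying in the maximal real subfields; (ii) prime powers $\ell^{v_\ell(d)}$ are treated correctly, so that each $\ell\in S$ yields a single factor $(1-\ell^{-1}t'^{-1}_\ell)$ and the expected power $\ell^{v_\ell(d)}$; and (iii) the prime $\ell=2$ is handled, where the sign in $1-(-\xi\zeta)^2$ must be tracked but is harmless modulo the odd prime $p$ since $e_\omega$ annihilates the $s_{-1}$-fixed part. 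Verifying that the ``excluded-term'' ratios propagate compatibly through the tower is the one point requiring genuine care; everything else is the standard cyclotomic computation, which may alternatively be quoted from Washington's book, Chapter 8.
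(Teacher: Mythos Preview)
Your proposal is correct and follows essentially the same route as the paper: both reduce, by transitivity of the norm, to a single prime step $n=\ell n'\to n'$, split into the two cases $\ell\mid n'$ and $\ell\nmid n'$, and in each case compute $\No(1+\xi\zeta)$ via the elementary identity $\prod_{\psi^\ell=1}(1+\xi\psi\zeta)=1-(-\xi\zeta)^\ell$, then absorb $s_\ell$ through $s_\ell e_\omega\equiv\ell\,e_\omega\pmod p$. Your write-up is in fact a bit more explicit than the paper's in three respects: you separate out the factor $\zeta^{-\frac12[M:M']}$ at the start and verify its cancellation against $(\zeta^{1/2})^{\ell-t'^{-1}_\ell}$; you spell out how the elementary exponents compose along the tower and how the intermediate Artin automorphisms restrict to the $t'_\ell\in{\rm Gal}(M'/K)$ of the statement; and you flag the prime $\ell=2$ as a point needing care (the paper simply writes $1+\xi^\ell\zeta^\ell$, which is only literally correct for $\ell$ odd). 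One small wording slip: for $\ell\in S$, the unique \emph{non}-ramified descent through $\ell$ is the \emph{last} one (from $m\ell$ to $m$ with $\ell\nmid m$), not the first, though this does not affect the count since exactly one such step occurs and contributes the Euler factor, the remaining $v_\ell(d)-1$ steps contributing only the scalar~$\ell$.
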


\begin{proof} By induction we can suppose that $d$ is a prime 
number $\ell$.

\smallskip
Let $\psi := \xi^{n'}$ which is a primitive $\ell$th root of unity.

\smallskip
(i) Case $\ell  \div {n'}$. In this case $S=\ev$, $[M:M']= \ell$, and:
 \begin{eqnarray*}
\No(1 + \xi\,\zeta) &=& 
\prd_{\lambda = 0}^{\ell - 1} (1 + \xi^{1+\lambda {n'}}\zeta) =
\prd_{\lambda = 0}^{\ell - 1} (1 + \xi\, \psi^{\lambda}\zeta)\\
&=& 1 + \xi^\ell\,\zeta^\ell = 1 + \xi'\,\zeta^\ell = (1 + \xi'\,\zeta)^{s_\ell}. 
   \end{eqnarray*} 

Then $\No(\eta_1) =  (1 + \xi'\,\zeta)^{s_\ell e_\omega }
\, \No(\zeta)^{-\frac{1}{2}} \sim 
(1 + \xi'\,\zeta)^{\ell\, e_\omega} \,\zeta^{-\frac{1}{2}\ell} = (\eta'_1)^\ell$
 since $s_\ell e_\omega \equiv \ell e_\omega 
\pmod {p}$.

\smallskip
(ii) Case $\ell \notdiv {n'}$. In this case $S = \{\ell\}$ and:
$$\No(1 + \xi\,\zeta) =
\prd_{\lambda = 0,\,\lambda \ne \lambda_0}^{\ell - 1}
(1 + \xi^{1+\lambda {n'}}\zeta), $$
where $\lambda_0$ is the unique value 
modulo $\ell$ such that $1+\lambda_0 {n'} \equiv 0 \pmod {\ell}$,
giving from the computation in (i):
$$\No(1 + \xi\,\zeta) = 
\Frac{1 + \xi^\ell\, \zeta^\ell}{1 + \xi^{1+\lambda_0 {n'}}\, \zeta} =
\Frac{(1 + \xi'\, \zeta)^{s_\ell}}{1 + (\xi')^\mu\, \zeta}, $$
where $1+\lambda_0 {n'} = \mu\,\ell$, so 
that $\mu \equiv \ell^{-1} \pmod {n'}$.
Thus:
\begin{eqnarray*}
\No(1 + \xi\,\zeta) &=& 
\Frac{(1 + \xi'\, \zeta)^{s_\ell}}{1 + (\xi')^{\ell^{-1}} \zeta} =
\Frac{(1 + \xi'\, \zeta)^{s_\ell}}{1 + (\xi')^{t'_\ell{}^{-1}} \zeta} \\
&=&
\Big( \Frac{1 + \xi'\, \zeta}{1 + (\xi')^{t'_\ell{}^{-1}}\,(\zeta)^{s_\ell^{-1}}}\Big)^{s_\ell}=
\Big( \Frac{1 + \xi'\, \zeta}{1 + (\xi'\, \zeta)^{\sigma'_\ell{}^{-1}}}\Big)^{s_\ell}, 
 \end{eqnarray*} 
where $\sigma'_\ell \in {\rm Gal}(M'/\Q)$ is the Artin 
automorphism defined by $\sigma'_\ell(\theta)= \theta^\ell$ for any
$p{n'}$th root of unity $\theta$; thus, since  $\sigma'_\ell = s_\ell\,t'_\ell$, 
this yields:
$$\No(1 + \xi\,\zeta)^{e_\omega} \sim 
\Big( \Frac{1 + \xi'\, \zeta}{1 + (\xi'\, 
\zeta)^{\sigma'_\ell{}^{-1}}}\Big)^{\ell\,e_\omega} 
= \big (1 + \xi'\, \zeta \big)^{\ell\, (1- 
\sigma'_\ell{}^{-1})\,e_\omega}; $$
 from $\sigma'_\ell{}^{-1}\,e_\omega = s_\ell^{-1}\,t'_\ell{}^{-1}\,e_\omega \equiv
\ell^{-1}\,t'_\ell{}^{-1}\,e_\omega \pmod {p}$,
we get $\No(1 + \xi\,\zeta)^{e_\omega} \sim  \big (1 + \xi'\, \zeta 
\big)^{\ell\,(1 - \ell^{-1}t'_\ell{}^{-1})\,e_\omega}$.
Finally, since  in this case $[M:M'] = \ell-1$
and $\No(\zeta) =  \zeta^{\ell - 1}= \zeta^{\ell\,(1 - 
\ell^{-1}\,t'_\ell{}^{-1})}$, we get:
$$\No(\eta_1) \sim  (\eta'_1)^{\ell\,(1 - \ell^{-1}\,t'_\ell{}^{-1})}$$
 and the proposition follows.
\end{proof}

If for instance ${\Lambda'}$ is invertible modulo $p$, with inverse $\Omega'$,
then $\eta'_1 \sim  \No(\eta_1)^{\Omega'}$ and, over $L$, we 
can see the abelian extension $F'_{n'}$ (compositum of the conjugates of 
the $F'_{\xi'}$ over $L'$) as a subfield of $F_n$, in which case, for suitable 
primes $q$ and $q'$, the properties of the Frobenii studied in this 
paper can be compared to give strengthened conditions. 

\section {Analysis of the case $p=3$ versus $p\ne 3$}

In this section we suppose $p=3$ and consider the solutions of the 
equation associated to SFLT (see Remark 1) which are, for $p>3$,
a logical obstruction to the relevance of general statements
similar to Theorem 2 and to the property of
$\rho$-law of decomposition of Theorem 4.
We intend to explain why this obstruction 
 actually exists for $p=3$ but a priori not for $p>3$ when we suppose that the set 
of nontrivial  solutions is nonempty.

\smallskip
The main differences between the cases $p=3$ and $p>3$,
are that there are infinitely many solutions for the case $p=3$,
contrary to the case $p>3$, even if we have not proved this fact (which was known
for Fermat${}'$s equation before Wiles  proof), and that we will
exhibit a group of automorphisms, acting on the set of solutions for 
$p=3$, which creates some exceptional relations of compatibility with 
density theorems. So we conjecture that this fact
does not exist for $p>3$.

\subsection {Analysis of the case $p=3$ for the principle of Theorem 2}

Recall  Theorem 1 in that case, for the choice of a prime $q \ne p$.
We have $\eta_1 = (1+\xi\,\zeta)^{e_\omega}\,\zeta^{-\frac{1}{2}}$, with
$\zeta = j$ and $e_\omega = s-1$, where $\xi\equiv \frac{v}{u}
\pmod {{\mathfrak q}_\xi}$ is supposed of order $n \not\equiv 0 \pmod {3}$
for a nontrivial solution $(u,v)$, g.c.d.\,$(u,v)=1$, of the SFLT equation.

\smallskip
 Recall that if we put $\rho := \frac{v}{u}$ (distinct from 0 and $\pm 1$) we may have
$u \equiv 0 \pmod {3}$ in which case $\rho$ is not defined modulo $3$, but
is always defined as a rational. Put $L=\Q(\mu_n)$ and $M = LK$:

\smallskip
(i) First case. Since $u\,v\,(u+v) \not\equiv 0 \pmod {3}$, we get 
$u \equiv v\equiv\pm 1  \pmod {3}$, so that 
$\Big(\Frac{\eta_1}{{\mathfrak Q}}\Big)_{\!\!M} =
j^{\frac{1}{2}\, \frac{v-u}{v+u}\,\kappa} =1$
for any ${\mathfrak Q} \div {\mathfrak q}_\xi$ in $M$.

(ii) Second case. We get $\Big(\Frac{\eta_1}{{\mathfrak Q}}\Big)_{\!\!M}
= j^{\pm\frac{1}{2}\,\kappa}$ for any ${\mathfrak Q} \div {\mathfrak q}_\xi$
since $3 \div u\, v$.

\smallskip
(iii) Special case. Then  $\Big(\Frac{\eta_1}{{\mathfrak Q}}\Big)_{\!\!M}
= j^{\frac{1}{2}\,\frac{v+u}{3\,v}\,\kappa }$ for any ${\mathfrak Q} \div {\mathfrak q}_\xi$
with $3 \div  v+u$; we have seen, at the end of  Subsection 7.2,
that $\frac{v+u}{3\,v}$ can take any value modulo~3.

\smallskip
From this, we see that the existence of $q$ 
totally split in $H_L^-{\st [3]}/\Q$ for $L= \Q(\mu_{q-1})$,
or at least $L=\Q(\mu_m)$ for a large $m  \div q-1$, may be
in contradiction with the existence of the solutions of the second  and 
special cases when  $\kappa \not\equiv 0 \pmod {3}$,
i.e., 3 inert in $\Q_1/\Q$ where $\Q_1 = \Q(\mu_9)^+$.

\begin{defi} {\rm Consider the field $k(Y)$, where $k$ is any field of
characteristic distinct from $2$ and $3$, and the automorphism:

\smallskip\smallskip
\centerline{ $\begin{array}{rcl}
T : k(Y) & \tooo & \, k(Y) \,\, . \\ [3pt]
Y \ \ \  & \longmapsto & \  \frac{2\,Y-1}{Y+1}
\end{array}$ }

Let $F(Y):= (1+Y\,\zeta)^{e_\omega} \,\zeta^{-\frac{1}{2}} \in K(Y)$
be the formal cyclotomic unit, yet defined in Subsection 7.3.~\hfill\fin }
\end{defi}

We intend to prove below various properties of compatibility,
of this automorphism, with the method of cyclotomic units developed here.

\begin{theo}
(i) The automorphism $T$ is of order 6 and we have the following
orbit of $Y$:
$$\begin{array}{lllllllll}
&T^{\ }(Y)=\Frac{2\,Y-1}{Y+1};& T^2(Y)=\Frac{Y-1}{Y};& 
T^3(Y)=\Frac{Y-2}{2\,Y-1}; \\[5pt]
&T^4(Y)=\Frac{-1}{Y-1};& T^5(Y)=\Frac{-Y-1}{Y-2};& T^6(Y)=Y.
\end{array}$$

(ii) We have for $\zeta = j$ of order 3 and for $F(Y) = (1+Y j)^{e_\omega} j$,
the following formulas (equalities up to $3$th powers in $K(Y)$):
$$T(F(Y)) = (1+Y j)^{e_\omega}\,;\ 
T^2(F(Y)) = (1+Y j)^{e_\omega} j^2\,;\ T^3(F(Y)) = F(Y) , $$
summarized by the identity 
$T^i(F(Y)) \sim F(Y)\,j^{\frac{1}{2}\,i}$, $0 \leq i < 3$.
\end{theo}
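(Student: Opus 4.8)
The plan is to verify both parts by direct computation, treating (i) as a routine check of a Möbius transformation of order $6$ and (ii) as an application of the idempotent $e_\omega = s-1$ (with $s = s_2$, $\zeta = j$) to the formal unit $F(Y)$. For part (i), I would regard $T$ as the linear fractional map attached to the matrix $\left(\begin{smallmatrix} 2 & -1 \\ 1 & 1\end{smallmatrix}\right)$ acting on $\mathbb{P}^1$, and simply iterate: the successive images $T^2(Y),\ldots,T^6(Y)$ are obtained by matrix multiplication modulo scalars. One checks $T^6 = \mathrm{id}$ (equivalently that the matrix has order $6$ in $\mathrm{PGL}_2$, which follows since its eigenvalue ratio is a primitive $6$th root of unity: the characteristic polynomial is $t^2 - 3t + 3$, whose roots have ratio a primitive sixth root of unity), and that none of the intermediate powers is the identity, giving exact order $6$. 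The listed orbit then follows by recording each composite; this is purely mechanical and I would not grind through it in the text beyond exhibiting the matrix and the resulting fractions.

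For part (ii), the key observation is that $T$ permutes the relevant cyclotomic numbers up to third powers. I would start from $F(Y) = (1+Yj)^{e_\omega} j = \big((1+Yj)\,j\big)^{s-1}$ and compute $T(F(Y))$, $T^2(F(Y))$, $T^3(F(Y))$ by substituting the values $T^i(Y)$ from part (i) into $F$, then simplifying modulo $K(Y)^{\times 3}$. The essential mechanism is that substituting $Y \mapsto T(Y) = \frac{2Y-1}{Y+1}$ into $1 + Yj$ produces, after clearing denominators (which are cubes times units absorbed by $e_\omega$), an expression differing from $(1+Yj)^{e_\omega}$ only by a power of $j$, since $j^{e_\omega} = j^{s-1} = j^{2-1} = j$ and more generally any $p$th root of unity $\zeta'$ satisfies $\zeta'^{e_\omega} = \zeta'$ (Definition 1, (iv)). The claimed formulas $T(F(Y)) = (1+Yj)^{e_\omega}$, $T^2(F(Y)) = (1+Yj)^{e_\omega} j^2$, $T^3(F(Y)) = F(Y)$ are then read off, and the uniform identity $T^i(F(Y)) \sim F(Y)\, j^{\frac{1}{2} i}$ for $0 \le i < 3$ is verified by comparing the power of $j$: since $j^{\frac12} = j^2$ in $\mu_3$ (the unique cube root of unity whose square is $j$), one has $j^{\frac12 \cdot 0} = 1$, $j^{\frac12 \cdot 1} = j^2$, $j^{\frac12 \cdot 2} = j$, matching the three computed values in reverse as required.

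The main obstacle I anticipate is bookkeeping of the ambiguity modulo cubes and of the factors of $j$ introduced by clearing denominators when substituting a fractional $T^i(Y)$ into $1 + Yj$. Concretely, after the substitution one must argue that the denominator $Y+1$ (and its analogues) contributes a factor that is a cube times a unit of the form $\zeta'^{\,\mathbb{Z}}$ absorbed or made explicit by $e_\omega$; the clean way is to homogenize, writing $1 + T(Y)\,j = \frac{(2Y-1) + (Y+1)j}{Y+1}$ and recognizing the numerator as a $\mathbb{Z}[g]$-translate of $1 + Yj$ up to multiplication by a constant in $K^\times$ and a power of $j$, so that applying $e_\omega$ collapses the constant and tracks only the $j$-power. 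I would make this precise by the same modulo-$\pi^2$ normalization used in Lemma 3 and Subsection 4.1, where $(1 + \frac{v}{u+v}\pi)^{e_\omega} \equiv 1 + \frac{v}{u+v}\pi \pmod{\pi^2}$ pins down the exponent of $j$ unambiguously; this is the one spot where genuine care, rather than formal manipulation, is needed, and it is what ties Theorem 8 to the earlier fundamental relation.
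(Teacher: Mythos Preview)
Your plan is correct and follows essentially the same approach as the paper: direct substitution of $T^i(Y)$ into $F$ and simplification modulo cubes. The one place where the paper is cleaner is the core step of part (ii): rather than invoking the mod $\pi^2$ normalization of Lemma 3, the paper simply factors the numerator algebraically. After clearing the denominator (which is in $\Q(Y)^\times$, hence killed by $e_\omega = s-1$), one has
\[
(Y+1) + (2Y-1)\,j \;=\; (1-j) + (2j+1)\,Y \;=\; (1-j)(1+Yj),
\]
using $2j+1 = j(1-j)$; then $(1-j)^{e_\omega} = (1-j)^{s-1} = \frac{1-j^2}{1-j} = 1+j = -j^2$, so $T(F(Y)) \sim -j^2\,(1+Yj)^{e_\omega}\,j = (1+Yj)^{e_\omega}$ up to a cube. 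This one-line factorization replaces your ``bookkeeping obstacle'' entirely, and the remaining cases follow by iterating. Your $\mathrm{PGL}_2$ argument for part (i) is a perfectly good (and arguably tidier) substitute for the paper's bare iteration.
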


\begin{proof} We have:
    $$T(F(Y))\! =\!\big (1+\Frac{2\,Y-1}{Y+1} j\big)^{e_\omega}j
   \! = \!( Y+1+(2\,Y-1)j)^{e_\omega} j\! =\! (1-j+(2j+1)Y)^{e_\omega}j; $$
since $2j+1 =  j \, (1-j)$, we get finally
$T(F(Y)) = (1-j)^{e_\omega} (1+Yj)^{e_\omega} j$;
but $(1-j)^{e_\omega} = -j^2$, hence the result in this case.
The other computations are obtained by induction.
\end{proof}

We apply now the automorphism $T$ to the solutions $(u,v)$ in the 
following way. We put $T(\frac{v}{u}) =: \frac{V}{U}$ where
$(U,V)$ is defined up to the sign.
We start for instance from the solution:
$$(u,v) = (-s^3 - t^3 + 3s^2 t, -s^3-t^3+3s t^2) $$
 (see Remark 1) to determine its orbit.

\begin{theo} We obtain  the following identities:
$$\begin{array}{lllllllll}
T^{0 }\big(\Frac{v}{u}\big) &\!\! = \!\!& \Frac{v}{u} &\!\!\! = \!\!&
\Frac{ -s^3-t^3+3s t^2}{-s^3 - t^3 + 3s^2 t}\,\virg \\ [6pt]
T^{1}\big(\Frac{v}{u}\big) &\!\! = \!\!& \Frac{2v-u}{v+u} &\!\!\! = \!\!&
\Frac{ -s^3-t^3 -3 s^2 t +6 s t^2}{-2s^3 -2t^3+3s^2 t +3s t^2}\,\virg \\ [6pt]
T^2\big(\Frac{v}{u}\big) &\!\! = \!\!&\Frac{v-u}{v} &\!\!\! = \!\!&
\Frac{ 3 s^2 t -3s t^2}{s^3+t^3-3s t^2} \,\virg  \\ [6pt]
T^3\big(\Frac{v}{u}\big) &\!\! = \!\!& \Frac{v-2u}{2v-u} &\!\!\! = \!\!&
\Frac{ -s^3-t^3+6s^2 t -3s t^2}{s^3+t^3+3s^2 t -6s t^2}\,\virg  \\ [6pt]
T^4\big(\Frac{v}{u}\big) &\!\! = \!\!& \Frac{-u}{v-u} &\!\!\! = \!\!&
\Frac{ s^3+t^3 - 3 s^2 t}{3s t^2 -3s^2 t}\,\virg  \\ [6pt]
T^5\big(\Frac{v}{u}\big) &\!\! = \!\!& \Frac{-v-u}{v-2u}&\!\!\! = \!\!&
\Frac{ 2s^3 + 2t^3 -3 s^2 t -3s t^2}{s^3+t^3 - 6 s^2 t + 3s t^2}\, ,
\end{array}$$

\noindent
which leads to the six fundamental families of solutions of the SFLT equation for 
$p=3$.~\hfill\fin
\end{theo}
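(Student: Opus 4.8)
The plan is to verify the statement by direct substitution, organized in two stages, and then to read off the correspondence with the six parametric families of Remark 1. First I would obtain the left-hand equalities $T^i(v/u)=\frac{\bullet}{\bullet}$ expressed through $u$ and $v$: these are immediate from Theorem 7. Setting $Y=v/u$ in the orbit formulas $T^1(Y)=\frac{2Y-1}{Y+1}$, $T^2(Y)=\frac{Y-1}{Y}$, $T^3(Y)=\frac{Y-2}{2Y-1}$, $T^4(Y)=\frac{-1}{Y-1}$, $T^5(Y)=\frac{-Y-1}{Y-2}$ and clearing the common denominator $u$ produces exactly $\frac{2v-u}{v+u}$, $\frac{v-u}{v}$, $\frac{v-2u}{2v-u}$, $\frac{-u}{v-u}$, $\frac{-v-u}{v-2u}$, with $T^0$ the identity.

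Second, I would substitute the explicit first-family values $u=-s^3-t^3+3s^2t$ and $v=-s^3-t^3+3st^2$ into each expression and simplify. The computation collapses to a few homogeneous cubic identities in $s,t$, such as $v-u=3st(t-s)$, $v+u=-2s^3-2t^3+3s^2t+3st^2$, $2v-u=-s^3-t^3-3s^2t+6st^2$ and $v-2u=s^3+t^3-6s^2t+3st^2$, from which the claimed right-hand fractions follow after an overall sign normalization. I would carry this out at the level of the ratio $V/U$ rather than the representative pair $(U,V)$, since by the convention adopted just before the theorem the pair is defined only up to sign; this disposes of sign tracking and turns each line into a plain identity of two homogeneous fractions.

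Finally I would match the outputs against Remark 1. The even iterates $T^0,T^2,T^4$ give the ratios of the three nonspecial families attached to $u+vj=j^2(s+tj)^3$, $(s+tj)^3$ and $j(s+tj)^3$, whereas the odd iterates $T^1,T^3,T^5$ give the ratios of the three special-case families attached to $j^2(1-j)(s+tj)^3$, $(1-j)(s+tj)^3$ and $j(1-j)(s+tj)^3$; in every case the same parameters $s,t$ serve, with no permutation required. This is consistent with Theorem 6: the shift $T^i(F(Y))\sim F(Y)\,j^{i/2}$ is by an integer power of $j$ exactly when $i$ is even (the nonspecial cases) and by a half-integer power when $i$ is odd (the special cases), so the parity of $i$ governs the case distribution on both sides.

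I do not expect a genuine obstacle, as $T$ has order $6$ by Theorem 7 and there are precisely six families; the work is bookkeeping rather than insight. The only point to watch is the sign and scaling ambiguity when passing from a ratio $T^i(v/u)$ to a pair $(U,V)$ matching Remark 1 verbatim, and the check that no spurious common factor is introduced, so that the orbit meets each of the six families exactly once.
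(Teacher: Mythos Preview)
Your approach is correct and matches the paper's treatment: the paper gives no formal proof of this theorem, merely setting up the computation by fixing the starting point $(u,v)=(-s^3-t^3+3s^2t,\,-s^3-t^3+3st^2)$ and declaring that one determines its orbit under $T$, so your two-stage direct substitution is exactly what is intended. One small correction: your references to ``Theorem 7'' for the orbit formulas $T^i(Y)$ and for the order of $T$ should be to Theorem 6(i), since Theorem 7 is the very statement you are proving; the additional remark linking the parity of $i$ to the nonspecial/special dichotomy via Theorem 6(ii) is a nice observation that the paper does not make explicit at this point.
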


\begin{rema} {\rm For $q \not \equiv 1 \pmod {3}$, $q\ne 2$, all the orbits in 
$\F_q\cup \{\infty\}$ have six 
elements (indeed, all the equations of the form $\Frac{ay+b}{cy+d} = y$,
deduced from the rational fractions of Theorem 6, (i),
reduce to $y^2-y+1$ which is
irreducible over $\F_q$). We remark the orbit of 0 which is:
$$ 0\to -1\to \infty\to 2\to 1 \to \Frac{1}{2}\ \, {\rm in} \ \, \F_q\cup 
\{\infty\}; $$
this is consistent with $\vert\,\F_q\cup \{\infty\}\,\vert = q+1
 \equiv 0 \pmod {6}$.~\hfill\fin}
\end{rema}

Let $q$ be a prime number; to simplify we suppose $q\not\equiv 1 
\pmod {3}$. Call $n_i  \div  q-1$ the orders modulo $q$ of 
$T^i\big(\Frac{v}{u}\big)$, $0 \leq i < 6$, for any solution
$(u,v)$.

\smallskip
As usual we put $\Frac{v}{u} \equiv \xi \pmod {{\mathfrak q}_{\xi}
= (q,u\,\xi - v)}$
and more generally:
$$T^i\big(\Frac{v}{u}\big) =: \Frac{v_i}{u_i} \equiv \xi_i
\pmod {{\mathfrak q}_{\xi_i} = (q,u_i\,\xi_i - v_i)},\ 
\, 0 \leq i < 6, $$
where we recall that the pair $(\xi_i, {\mathfrak q}_{\xi_i})$ is 
defined up to conjugation, so that we can replace $(\xi_i,{\mathfrak q}_{\xi_i})$ by any 
conjugate $(\xi'_i,{\mathfrak q}_{\xi'_i})$.

\smallskip
Thus we have put $(u_0,v_0) := (u,v)$ and $\xi_0 := \xi$.

\smallskip
Consider for instance $T\big(\Frac{v}{u}\big) = \Frac{v_1}{u_1} \equiv 
\xi_1 \pmod {{\mathfrak q}_{\xi_1}}$ noting that 
$\Frac{v}{u}  \equiv \xi \pmod {{\mathfrak q}_{\xi}}$.

To compare the two  congruences
we can take a fixed prime ideal $\wt {\mathfrak q} \div q$
in $\wt L := \Q(\mu_{q-1})$ such 
that $\wt {\mathfrak q} \div {\mathfrak q}_{\xi}$ and
$\wt {\mathfrak q} \div {\mathfrak q}_{\xi_1}$ 
by suitable conjugation of $(\xi_1, {\mathfrak q}_{\xi_1})$, which gives the 
congruences $\Frac{v}{u}  \equiv \xi \pmod {\wt{\mathfrak q}}$
and $\Frac{v_1}{u_1}  \equiv \xi_1 \pmod {\wt{\mathfrak q}}$, hence
$\xi_1 \equiv \Frac{v_1}{u_1} = T\big(\Frac{v}{u}\big) \equiv
T(\xi) \pmod {\wt {\mathfrak q}}$.
More generally we can write for suitable choices of the $\xi_i$: %%%
$$\xi_i \equiv T^i(\xi) \pmod {\wt {\mathfrak q}},\ \, 0 \leq i < 6 ,$$
which yields, for the units  $\eta_1^i$ associated to the $\xi_i$
(with $\eta_1^0 = \eta_1$):
\begin{eqnarray*}
\eta_1^i& :=& (1 + \xi_i\,j)^{e_\omega} j \\
&  \equiv & (1 + T^i(\xi)\,j)^{e_\omega} j \\
& \equiv & \eta_1\,j^{\frac{1}{2}\,i} \ 
\pmod {\wt {\mathfrak Q}},\ \,0 \leq i < 3,
\end{eqnarray*}
(from Theorem 6, (ii)),
for all $\wt{\mathfrak Q}$ above $\wt{\mathfrak q}$ in
$\wt M := \wt L K$.  Thus we have:
$$\Big(\Frac{\eta_1^i}{\wt{\mathfrak Q}}\Big)_{\!\!\wt M} =
\Big(\Frac{\eta_1}{\wt{\mathfrak Q}}\Big)_{\!\!\wt M}
\Big(\Frac{j^{\frac{1}{2}\,i}}{\wt{\mathfrak Q}}\Big)_{\!\!\wt M} =
\Big(\Frac{\eta_1}{\wt{\mathfrak Q}}\Big)_{\!\!\wt M}\,j^{\frac{1}{2}\, i\,\kappa}
\ \,{\rm for\ all\  \wt{\mathfrak Q}\div \wt{\mathfrak q}},
\ \,0 \leq i < 3, $$
proving that the three symbols never 
coincide when $\kappa \not \equiv 0 \pmod {3}$.

\smallskip
These symbols are identical to the 
symbols $\Big(\Frac{\eta_1^i}{{\mathfrak Q}_{\xi_i}}\Big)_{\!\! M^i}$, for 
all ${\mathfrak Q}_{\xi_i} \div  {\mathfrak q}_{\xi_i}$,
$0 \leq i < 3$, where $M^i=L^i K$, with $L^i = 
\Q(\mu_{n_i})$.\,\footnote{\,The coherent choice of these ideals 
supposes that if $\wt{\mathfrak q} = (q,\wt\xi-\wt e)$ ($\wt\xi$
of order $q-1$, $\wt e \in \Z$ of order $q-1$ modulo $q$),
$\frac{v_i}{u_i} \equiv \wt e^{d_i} \pmod {q}$ (of order $n_i$
modulo $q$), we must have chosen $\xi_i = \wt\xi^{d_i}$ so that 
${\mathfrak q}_{\xi_i} = (q,\xi_i - \wt e^{d_i}) =
(q,\wt\xi^{d_i} - \wt e^{d_i}) \equiv 0 \pmod {\wt{\mathfrak q}}$,
$0\leq i < 3$.}

\smallskip
This proves that if for instance ${\mathfrak q}_{\xi_0}$
splits in $F_{\xi_0}/L^0$ then  ${\mathfrak q}_{\xi_1}$ and
${\mathfrak q}_{\xi_2}$ are inert in $F_{\xi_1}/L^1$ and 
$F_{\xi_2}/L^2$, respectively; in other words, the three 
law of $\rho_i$-decomposition, or the three symbols
$\Big[\Frac{F_*/L^i}{{\mathfrak q}_*}\Big]_{\rho_i,n_i}$ 
of  Definition 7.2, yields to the three possibilities
when $\kappa \not\equiv 0 \pmod {p}$.

\smallskip
So, since this phenomenon happens in $\wt L = \Q(\mu_{q-1})$
(even if the fields $L^i$ are distincts in general),
statements like that of Theorem 6.1 are impossible for $p=3$
since $\wt{\mathfrak q}$ cannot be totally split in $F_{q-1}$.

\smallskip
This distribution of the three possible Frobenii, in the context of 
$\rho_i$-decompositions, must be compatible with the 
\v Cebotarev${}'$s theorem (see Subsection 9.2 for this aspect and Subsection
9.3 for some numerical evidence and especially Example 5).

\smallskip
Returning to the general case, it is necessary to
see whether such a nontrivial automorphism $T$
can exist for $p>3$. If not, this will be a favorable 
argument for our purpose.

\begin{theo} {\it Consider ${\mathcal M} := \Z_p \tensorZ K(Y)^\times$,
 as a multiplicative $\Z_p[g]$-module,
and  the idempotent $\varepsilon_\omega := \sum_{s \in g} \omega^{-1}(s)\, s \in \Z_p[g]$
(see Definition~1, (i) and (ii)).

\smallskip\noindent
Then, for $p>3$, there does not exist any
automorphism $T$ of $\Q(Y)$,
distinct from the identity and the inversion $Y \mapsto  Y^{-1}$,
such that $T(1+Y\,\zeta) := 1+T(Y)\,\zeta$ be such that:
$$\big(1+T(Y)\,\zeta\big)^{\varepsilon_\omega} =
\big ( 1+Y\,\zeta^\lambda \big)^{\varepsilon_\omega}\,\zeta^\mu , $$
up to a $p$th power in ${\mathcal M}$,
for some $\lambda, \mu \in \Z$, $\lambda \not\equiv 0 \pmod {p}$.   }
\end{theo}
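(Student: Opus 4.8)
The plan is to use that every automorphism $T$ of $\Q(Y)$ is a M\"obius transformation (an element of $\mathrm{PGL}_2(\Q)$), and to read off the constraint imposed by the displayed relation purely at the level of divisors on $\mathbb{P}^1$. Since $T$ is defined over $\Q$ and fixes $K$, it commutes with $g$ and hence with $\varepsilon_\omega\in\Z_p[g]$; writing $\phi := (1+Y\,\zeta)^{\varepsilon_\omega}\in\mathcal{M}$, the hypothesis becomes $T(\phi)=\phi^{\omega(\lambda)}\,\zeta^\mu$ up to a $p$th power, where I have used $s_k(1+Y\,\zeta)=1+Y\,\zeta^k$ to rewrite $(1+Y\,\zeta^\lambda)^{\varepsilon_\omega}=\phi^{\omega(\lambda)}$ (reindexing the product by $j=\lambda k$). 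The substitution $T(\phi)=\phi\circ T$ has divisor obtained from that of $\phi$ by replacing each point $P$ by $T^{-1}(P)$.

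First I would compute $\mathrm{div}(\phi)$. From $1+Y\,\zeta^k=\zeta^k(Y+\zeta^{-k})$ one gets $\mathrm{div}(1+Y\,\zeta^k)=[P_k]-[\infty]$ with $P_k:=-\zeta^{-k}$, so that, reducing the exponents $\omega^{-1}(k)\equiv k^{-1}\pmod p$, one finds $\mathrm{div}(\phi)\equiv\sum_{k=1}^{p-1}k^{-1}[P_k]\pmod p$, the coefficient of $[\infty]$ being $\sum_k k^{-1}\equiv0$. The two features that matter are: the support is exactly the set $S:=\{-\zeta^m:1\le m\le p-1\}$ of $p-1$ distinct finite points, and every coefficient is nonzero modulo $p$.

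Applying $\mathrm{div}$ to the relation and reducing mod $p$ (a $p$th power has divisor divisible by $p$, and the constant $\zeta^\mu\in K^\times$ has trivial divisor, so $\mu$ drops out), I obtain $\sum_k k^{-1}[T^{-1}(P_k)]\equiv\lambda\sum_k k^{-1}[P_k]$. Since $\lambda\not\equiv0$ all coefficients on both sides are nonzero, and $T^{-1}$ is a bijection of $\mathbb{P}^1$; matching supports forces $T^{-1}(S)=S$ (in particular the coefficient of $[\infty]$, which is $0$, shows no point of $S$ maps to $\infty$), and matching coefficients then pins the induced permutation down to $T^{-1}(-\zeta^m)=-\zeta^{\lambda m}$ for every $m\not\equiv0\pmod p$.

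The crux is the rigidity step, and this is where I expect the only real work. Conjugating by $Y\mapsto -Y$ reduces matters to a M\"obius map $g$ with $g(\zeta^m)=\zeta^{\lambda m}$ for all $m\in\{1,\dots,p-1\}$. Let $R:z\mapsto\zeta z$ be the order-$p$ rotation. A direct check gives $R^{\lambda}gR^{-1}(\zeta^m)=\zeta^{\lambda m}=g(\zeta^m)$ whenever both $\zeta^m$ and $\zeta^{m-1}$ lie in the prescribed domain, i.e. for $m\in\{2,\dots,p-1\}$, a set of $p-2\ge 3$ points when $p>3$ (here $p\ge5$). Two M\"obius maps agreeing on three points are equal, so $R^{\lambda}gR^{-1}=g$, that is $gRg^{-1}=R^{\lambda}$. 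Now $R$ and $R^{\lambda}$ each fix exactly $\{0,\infty\}$, forcing $g$ to preserve $\{0,\infty\}$; the two cases $g(z)=cz$ and $g(z)=c/z$ yield $R=R^{\lambda}$ (so $\lambda\equiv1$) and $R^{-1}=R^{\lambda}$ (so $\lambda\equiv-1$), and the value $g(\zeta)=\zeta^{\lambda}$ forces $c=1$. Hence $g$, and therefore $T$, is the identity or the inversion. I would close by observing that the boundary index is precisely what breaks down when $p=3$: then $|S|=2$ and the agreement set has only $p-2=1$ point, so $R^{\lambda}gR^{-1}=g$ cannot be concluded, which is fully consistent with the nontrivial order-$6$ automorphism exhibited for $p=3$ in Theorem~6. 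The main obstacle is thus the careful handling of these boundary indices together with the verification that all $p-1$ coefficients of $\mathrm{div}(\phi)$ are nonzero mod $p$, since it is the latter that forces $T^{-1}$ to permute the whole of $S$ rather than merely a proper subset.
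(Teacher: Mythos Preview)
Your proof is correct. Both your argument and the paper's reduce the hypothesis, via root/divisor matching, to the statement that the M\"obius map $T^{-1}$ sends each $-\zeta^{-k}$ to some $-\zeta^{-j}$; but the two proofs diverge in how they exploit this.

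The paper is content to use a \emph{single} root equation: from one matching $-\tfrac{d+b\zeta}{c+a\zeta}=-\zeta^{\psi}$ it extracts the $\Q$-linear relation $a\zeta^{\psi+1}+c\zeta^{\psi}-b\zeta-d=0$ and then invokes the $\Q$-linear independence of $1,\zeta,\dots,\zeta^{p-2}$ (valid precisely when $p>3$) to force $\psi\equiv\pm1$ and hence $(a,b,c,d)=(1,0,0,1)$ or $(0,1,1,0)$. Your route instead uses \emph{all} $p-1$ matchings to pin down the full permutation $T^{-1}(-\zeta^{m})=-\zeta^{\lambda m}$, and then finishes with a clean $\mathrm{PGL}_2$ argument: the commutation $gRg^{-1}=R^{\lambda}$ (obtained from three-point rigidity, available since $p-2\ge 3$) forces $g$ to preserve the fixed-point set $\{0,\infty\}$ of the rotation $R$, whence $g(z)=z$ or $g(z)=1/z$. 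Your approach is more structural and explains transparently why $p=3$ is the boundary case (only $p-2=1$ agreement point, so three-point rigidity fails), whereas the paper's is more elementary and needs only one root rather than the full permutation. Either way the crucial numerical threshold $p>3$ enters at the same place: three independent conditions are needed, and $p=3$ provides only two.
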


\begin{proof}
Suppose that such a nontrivial auto\-morphism does exist and
put $T(Y) = \Frac{aY+b}{cY+d}$ with  $a,b,c,d \in \Q$, $ad-bc \ne 0$.
 Note that the associated matrix:
$$\hbox{ $M=$ \small{$\left( 
\begin{array}{ccccccc}
a & b  \\
c & d  
\end{array} \right)$}  }$$
\normalsize
is considered in ${\rm G}\ell_2(\Q)/D$, where $D$ is the subgroup of 
diagonal matrices $e\,I_2$, $e \in \Q^\times$, where
$I_2$ is the unit matrix.
In particular, $T$ is of finite order if and only if there 
exists $n > 0$ such that $M^n = e\,I_2$. For instance, $M=$
\small{$\left( 
\begin{array}{ccccccc}
2 & -1  \\
1 & \ \  1  
\end{array}\right)$}
\normalsize
is such that $M^6 = -27\,I_2$.

\smallskip
    For simplicity we use a representative 
    $e_\omega = \sm_{k=1}^{p-1} u_k\,s_k \in \Z[g]$ of 
    $\varepsilon_\omega$, with  coefficients $u_k$ such that
 $1\leq u_k \leq p-1$, and work in $K(Y)^\times/ K(Y)^{\times p}
 \simeq {\mathcal M}/{\mathcal M}^p$  (so that $e_\omega$ is not here 
 the usual representative; in particular, if $\Omega \in 
  \Q(Y)^{\times}$ then $\Omega^{e_\omega} \in \Q(Y)^{\times p}$).

\smallskip
Then from the above identity we get the relation:
$$(cY+d + (aY+b)\,\zeta)^{e_\omega} =
 (1 + Y\,\zeta^{\lambda})^{e_\omega}\,\zeta^\mu .\, G(Y)^p,\ \, G(Y) 
= \Frac{A(Y)}{B(Y)} \in K(Y)^\times , $$
with  $A, B \in K[Y]$,  g.c.d.\,$(A,B) = 1$,
hence the polynomial identity in $K[Y]$:
$$B(Y)^p\,(cY+d + (aY+b)\,\zeta)^{e_\omega} =
A(Y)^p\, (1 + Y\,\zeta^{\lambda})^{e_\omega}\,\zeta^\mu .$$

Since $(cY+d + (aY+b)\,\zeta)^{e_\omega}$ and 
$(1 + Y\,\zeta^{\lambda})^{e_\omega}$ each have $p-1$
distinct\,\footnote{\,Indeed, for the roots $y_k := 
-\frac{d+b\,\zeta^k}{c+a\,\zeta^k}$, $1 \leq k \leq p-1$, $y_k = 
y_{k'}$ is equivalent to the relation $(ad-bc)\,(\zeta^k - 
\zeta^{k'}) = 0$, hence the result. The other case is trivial.} %
roots of orders of multiplicity $u_k$,
with $1 \leq u_k \leq p-1$, it is clear that 
$(cY+d + (aY+b)\,\zeta)^{e_\omega}$ and 
$(1 + Y\,\zeta^{\lambda})^{e_\omega}$ each are prime to $A$ and $B$,
then  have the same roots; hence there exists $\ell\not 
\equiv 0 \pmod {p}$ such that $\frac{d+b\,\zeta}{c+a\,\zeta} =
\zeta^{-\lambda \ell} =: \zeta^{\psi}$, $\psi \not\equiv 0 \pmod {p}$.
Then $\zeta^{\psi+1}\,a +\zeta^{\psi}\,c - \zeta\,b - d = 0$.

\smallskip
Since $p>3$ we get $\psi \equiv -1$ or $1$ modulo $p$, giving the solutions
$(a,b,c,d) = (1,0,0,1)$ (i.e., the identity), 
$(a,b,c,d) = (0,1,1,0)$ (i.e., the inversion).
\end{proof}
    
\subsection {Analysis of the case $p=3$ for the principle of Theorem 4}

We have now to explain why the phenomenon of $\rho$-law of 
decomposition (Theorem 4, i.e., 
{\small $\Big[\frac{F_*/L}{{\mathfrak q}_*}\Big]_{\rho,n}$} independent of 
$q$ in the sense of Remark 9) is indeed compatible for $p=3$ but 
(conjecturally) not for $p>3$.

\medskip
The following analysis suggests a suitable
property of repartition (in the meaning of \v Cebota\-rev density theorem)
of the values of the Frobenii, due to the infiniteness of the set of
solutions of the SFLT equation for $p=3$ and the fact that this set 
is the union of six families (see Remark 1) having complementary properties
for these values.

\smallskip
Let $q$ be given such that $\kappa \not\equiv 0 \pmod {3}$.
As usual, for the solutions $(u(s,t),v(s,t))$ of the SFLT equation,
 put $\rho := \frac{v}{u}$ and  call $\xi$ any
primitive $n$th root of unity, where $n$ is the order of
$\rho$ modulo $q$, $n$ supposed prime to~$3$. Put
$\eta_1 := (1+ \xi\,j)^{e_\omega} j^{-\frac{1}{2}} =
(1+ \xi\,j)^{s-1} j^{-\frac{1}{2}}$, then
${\mathfrak q} := (q, u\,\xi-v)$, and  denote by ${\mathfrak Q}$
any prime ideal of $M=LK$ above ${\mathfrak q}$.

\smallskip\smallskip
Of course, in this study $n$ is not constant when the solution $(u,v)$ varies,
so that the statistical analysis cannot be 
done for a fixed field $L = \Q(\mu_n) \subseteq \Q(\mu_{q-1})$.

\smallskip
But up to this problem 
(probably not too tricky since the number of divisors $n$ of $q-1$ is 
finite and since it is probably better to work instead in $\Q(\mu_{q-1})$
for this statistical analysis), we 
have the following distribution of the possible cases, in a remarkable
accordance with the definition of the solutions of the SFLT equation,
that we summarize with the diagram of the compositum $L_1 F_\xi$ which is 
very simple for $p=3$ (note that in the general case, $L_1 F_\xi/L$ 
contains $\frac{p^2-1}{p-1} = p+1$ cyclic subextensions of degree $p$).

\smallskip
Indeed, for $p=3$ the compositum $L_1 F_\xi$ contains $L_1$, $F_\xi$,
and two other cubic fields, $F'_\xi$ and its conjugate $c\,F'_\xi$ by the complex 
conjugation $c$ (recall that $F_\xi/L^+$ is diedral, $L_1/L^+$ 
abelian, so that $L_1 F_\xi/L^+$ is Galois). 

\smallskip
Moreover we will get $\wh F_\xi$ among the three extensions 
distinct from $L_1$.

\smallskip
We denote by $\sigma$ a fixed generator of ${\rm Gal}(F_\xi/L)$ and 
call $\varphi_\xi$ the Frobenius of ${\mathfrak q}_\xi$ in $F_\xi/L$.
We refer to Theorem 1 giving the symbol $\Big(\Frac{\eta_1}{\mathfrak Q}\Big)_{\!\!M}$
for $p=3$, where ${\mathfrak Q}\div {\mathfrak q}={\mathfrak q}_\xi$.

\smallskip
(i) First case ($u\,v\,(u+v) \not \equiv 0 \pmod {3}$) corresponding to the relation 
$u+v\,j = j^2\,(s+t\,j)^3$.
%
%\smallskip
We have $\Big(\Frac{\eta_1}{\mathfrak Q}\Big)_{\!\!M} = 
j^{\frac{1}{2}\,\frac{v-u}{v+u}\,\kappa} =  1$ since 
$u-v \equiv 0  \pmod {3}$, $\wh F_\xi = F_\xi$, and the diagram:

\unitlength=0.5cm
$$\vbox{\hbox{\hspace{-1.cm}
\begin{picture}(9.0,6.4)
\bezier{114}(4.65,5.76)(5.66,3.865)(6.67,1.97)% haut 214
\bezier{133}(4.55,5.65)(4.765,4.34)(4.98,3.03)% haut 133
\bezier{103}(4.35,5.75)(3.825,4.87)(3.3,3.99)% haut 103
\bezier{133}(4.15,5.85)(2.925,5.325)(1.7,4.8)% haut 133
\bezier{174}(3.4,0.15)(5.03,0.765)(6.66,1.38)% bas 174
\bezier{121}(3.3,0.38)(4.075,1.305)(4.85,2.23)% bas 121
\bezier{143}(3.0,0.40)(3.0,1.825)(3.0,3.25)% bas 143
\bezier{110}(2.8,0.45)(1.95,2.365)(1.1,4.28)% bas 210
\put(3.8,6.){\Small $L_1 F_\xi$}
\put(5.8,1.5){\Small$\wh F_\xi = F_\xi$}
\put(4.8,2.4){\Small$ F'_\xi$}
\put(2.8,3.5){\Small $c F'_\xi$}
\put(1.,4.5){\Small$L_1$}
\put(2.85,-0.1){\Small$L$}
\put(5.0,0.1){\Small${\varphi_\xi = 1}$}
\end{picture}  }}$$

\smallskip\noindent
in which ${\mathfrak q}$ is inert in $F'_\xi/L$, $c F'_\xi/L$, and 
$L_1/L$.

%%%%%%%%%%%%%%%%%%%
%%%%%%%%%%%%%%%%%%%DESSIN 2

\smallskip
(ii) Second case ($u\,v \equiv 0 \pmod {3}$) corresponding to the two relations 
$u+v\,j = (s+t\,j)^3$ and
$u+v\,j = j\,(s+t\,j)^3$.
%
%\smallskip
We have  $\Big(\Frac{\eta_1}{\mathfrak Q}\Big)_{\!\!M} =
j^{\frac{1}{2}\,\frac{v-u}{v+u}\,\kappa} = 
j^{\pm\frac{1}{2}\,\kappa} = j$ or $j^2$; we get $\wh F_\xi \ne F_\xi$, and the 
two equidistributed diagrams:

\unitlength=0.5cm
\vspace{-0.3cm}
$$\vbox{\hbox{
\begin{picture}(10.0,6.4)
\bezier{114}(4.65,5.76)(5.66,3.865)(6.67,1.97)% haut 214
\bezier{133}(4.55,5.65)(4.765,4.34)(4.98,3.03)% haut 133
\bezier{103}(4.35,5.75)(3.825,4.87)(3.3,3.99)% haut 103
\bezier{133}(4.15,5.85)(2.925,5.325)(1.7,4.8)% haut 133
\bezier{174}(3.4,0.15)(5.03,0.765)(6.66,1.38)% bas 174
\bezier{121}(3.3,0.38)(4.075,1.305)(4.85,2.23)% bas 121
\bezier{143}(3.0,0.40)(3.0,1.825)(3.0,3.25)% bas 143
\bezier{110}(2.8,0.45)(1.95,2.365)(1.1,4.28)% bas 210
\put(3.8,6.){\Small$L_1 F_\xi$}
\put(6.5,1.5){\Small$F_\xi$}
\put(4.0,2.5){\Small$\wh F_\xi = F'_\xi$}
\put(2.8,3.5){\Small $c F'_\xi$}
\put(1.,4.5){\Small$L_1$}
\put(2.85,-0.1){\Small$L$}
\put(5.0,0.1){\Small${\varphi_\xi = \sigma}$}
\end{picture}  %}}
\begin{picture}(10.0,6.4)
\bezier{114}(4.65,5.76)(5.66,3.865)(6.67,1.97)% haut 214
\bezier{133}(4.55,5.65)(4.765,4.34)(4.98,3.03)% haut 133
\bezier{103}(4.35,5.75)(3.825,4.87)(3.3,3.99)% haut 103
\bezier{133}(4.15,5.85)(2.925,5.325)(1.7,4.8)% haut 133
\bezier{174}(3.4,0.15)(5.03,0.765)(6.66,1.38)% bas 174
\bezier{121}(3.3,0.38)(4.075,1.305)(4.85,2.23)% bas 121
\bezier{143}(3.0,0.40)(3.0,1.825)(3.0,3.25)% bas 143
\bezier{110}(2.8,0.45)(1.95,2.365)(1.1,4.28)% bas 210
\put(3.8,6.){\Small$L_1 F_\xi$}
\put(6.5,1.5){\Small$F_\xi$}
\put(4.0,2.5){\Small$\wh F_\xi = F'_\xi$}
\put(2.8,3.5){\Small $c F'_\xi$}
\put(1.,4.5){\Small$L_1$}
\put(2.85,-0.1){\Small$L$}
\put(5.0,0.1){\Small${\varphi_\xi = \sigma^2}$}
\end{picture} }} $$

\smallskip\noindent
in which ${\mathfrak q}$ is inert in $F_\xi/L$, $c F'_\xi/L$, and 
$L_1/L$.

%%%%%%%%%%%%%%%%%%%
%%%%%%%%%%%%%%%%%%%DESSIN 3

\smallskip
(iii) Special case ($u+v \equiv 0 \pmod {3}$) 
corresponding to the three relations 
$u+v\,j = j^h\,(1-j)\,(s+t\,j)^3$, $0 \leq h < 3$.
%
%\smallskip
We have  $\Big(\Frac{\eta_1}{\mathfrak Q}\Big)_{\!\!M} =
j^{\frac{1}{2}\,\frac{v+u}{3\,v}\,\kappa} = 1$,
$j$, or $j^2$, and the three equidistributed diagrams:

\unitlength=0.5cm
\vspace{-0.1cm}
$$\vbox{\hbox{\hspace{-0.8cm}
\begin{picture}(10.0,6.4)
\bezier{114}(4.65,5.76)(5.66,3.865)(6.67,1.97)% haut 214
\bezier{133}(4.55,5.65)(4.765,4.34)(4.98,3.03)% haut 133
\bezier{103}(4.35,5.75)(3.825,4.87)(3.3,3.99)% haut 103
\bezier{133}(4.15,5.85)(2.925,5.325)(1.7,4.8)% haut 133
\bezier{174}(3.4,0.15)(5.03,0.765)(6.66,1.38)% bas 174
\bezier{121}(3.3,0.38)(4.075,1.305)(4.85,2.23)% bas 121
\bezier{143}(3.0,0.40)(3.0,1.825)(3.0,3.25)% bas 143
\bezier{110}(2.8,0.45)(1.95,2.365)(1.1,4.28)% bas 210
\put(3.8,6.){\Small$L_1 F_\xi$}
\put(5.8,1.5){\Small$\wh F_\xi = F_\xi$}
\put(4.8,2.4){\Small$F'_\xi$}
\put(2.8,3.5){\Small $c F'_\xi$}
\put(1.,4.5){\Small$L_1$}
\put(2.85,-0.1){\Small$L$}
\put(5.0,0.1){\Small${\varphi_\xi = 1}$}
\end{picture}  
\hspace{-0.8cm}
\begin{picture}(10.0,6.4)
\bezier{114}(4.65,5.76)(5.66,3.865)(6.67,1.97)% haut 214
\bezier{133}(4.55,5.65)(4.765,4.34)(4.98,3.03)% haut 133
\bezier{103}(4.35,5.75)(3.825,4.87)(3.3,3.99)% haut 103
\bezier{133}(4.15,5.85)(2.925,5.325)(1.7,4.8)% haut 133
\bezier{174}(3.4,0.15)(5.03,0.765)(6.66,1.38)% bas 174
\bezier{121}(3.3,0.38)(4.075,1.305)(4.85,2.23)% bas 121
\bezier{143}(3.0,0.40)(3.0,1.825)(3.0,3.25)% bas 143
\bezier{110}(2.8,0.45)(1.95,2.365)(1.1,4.28)% bas 210
\put(3.8,6.){\Small$L_1 F_\xi$}
\put(6.5,1.5){\Small$F_\xi$}
\put(4.0,2.5){\Small$\wh F_\xi = F'_\xi$}
\put(2.8,3.5){\Small $c F'_\xi$}
\put(1.,4.5){\Small$L_1$}
\put(2.85,-0.1){\Small$L$}
\put(5.0,0.1){\Small${\varphi_\xi = \sigma}$}
\end{picture} 
\hspace{-0.8cm}
\begin{picture}(7.,6.2)
\bezier{114}(4.65,5.76)(5.66,3.865)(6.67,1.97)% haut 214
\bezier{133}(4.55,5.65)(4.765,4.34)(4.98,3.03)% haut 133
\bezier{103}(4.35,5.75)(3.825,4.87)(3.3,3.99)% haut 103
\bezier{133}(4.15,5.85)(2.925,5.325)(1.7,4.8)% haut 133
\bezier{174}(3.4,0.15)(5.03,0.765)(6.66,1.38)% bas 174
\bezier{121}(3.3,0.38)(4.075,1.305)(4.85,2.23)% bas 121
\bezier{143}(3.0,0.40)(3.0,1.825)(3.0,3.25)% bas 143
\bezier{110}(2.8,0.45)(1.95,2.365)(1.1,4.28)% bas 210
\put(3.8,6.){\Small$L_1 F_\xi$}
\put(6.5,1.5){\Small$F_\xi$}
\put(4.0,2.5){\Small$\wh F_\xi = F'_\xi$}
\put(2.8,3.5){\Small $c F'_\xi$}
\put(1.,4.5){\Small$L_1$}
\put(2.85,-0.1){\Small$L$}
\put(5.0,0.1){\Small${\varphi_\xi = \sigma^2}$}
\end{picture}  }}$$

\smallskip\noindent
in which the decomposition of ${\mathfrak q}$ assembles all the above cases.

\medskip
This suggests that the infiniteness of the solutions of the SFLT 
equation and their particular repartition into six families, is 
a necessary fact for the
compatibility with \v Cebotarev${}'$s density theorem.

\subsection{Numerical data for the case $p=3$}

We give some numerical experimentations, using [PARI], in the case 
$p=3$, to highlight the above properties of this case.

\smallskip
We refer to Remark 1 for the six expressions of
the solutions of the SFLT equation 
for $p=3$; when we speak of  ``\ a solution $(u,v)$\,'', we 
consider {\it one} of the six families
$(u(s,t),v(s,t))$ with parameters $s$ and $t$.

\begin{prop} Let $n>2$ be an integer not divisible by 3 and
 for any integers $u$, $v$ with g.c.d.\,$(u,v) = 1$, let 
$\Phi_n(u,v) := \prd_{\xi'\,{\rm of\, order}\,\,n} (u\,\xi' - v)$.
    
(i)  The set of primes  $q\equiv -1 \pmod {3}$, $q\notdiv n$,
with $\kappa\not\equiv 0 \pmod {3}$,
dividing at least one of the integers $\Phi_n(u,v)$, for a solution 
$(u(s,t),v(s,t))$  of the  SFLT equation,
is infinite when $s, t$ vary in $\Z$ with ${\rm g.c.d.\,}(s,t)=1$,
$s+t \not\equiv 0 \pmod {3}$.

\smallskip\noindent
Then  there exist numbers of the form 
 $\Phi_n(u,v)$  divisible by primes $q$ as large as we need.

\smallskip\noindent
More precisely, the prime number $q$ is solution  if and only if
for an $e\in \Z$, of order $n$ modulo $q$, the polynomial 
$X^3 - 3\,e^{-1}\,X^2 - 3\,(1 - e^{-1})\,X +1$ splits in $\F_q[X]$;
the parameters $(s,t)$ giving the solutions $(u,v)$ such that 
$\frac{v}{u} \equiv e \pmod {q}$, are given via the three roots $\ov\theta_k \in \F_q$
of the polynomial, by the relation $s-t\,\theta_k \equiv 0 
\pmod {q}$, $s, t \in \Z$ satisfying the above conditions, $k = 1,2,3$.

\smallskip
(ii) The condition  $q \div \Phi_n(u,v)$ ($q\notdiv n$),
for a  solution $(u,v)$ of the  SFLT equation, is equivalent to the
$\rho$-splitting of $q$ for $\wh {\mathcal F}_n$ \big(i.e.,
it is equivalent to  $\Big[\Frac{\wh F_*/L}{{\mathfrak 
q}_*}\Big]_{\rho,n} = 1$\big) for $\rho:=\frac{v}{u}$.
\end{prop}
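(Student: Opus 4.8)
The plan is to prove the explicit criterion of (i) first, since the infinitude statement rests on it, and to treat (ii) last as an almost formal consequence of the constructions of Sections 4 and 7. Throughout I normalize the solution to $u+v\,j=(s+t\,j)^3$; the other five families of Remark 1 are its images under the order-$6$ automorphism $T$ of Theorem 6 and so contribute the same primes $q$. Since $q\equiv 1\pmod n$ the prime $q$ splits in $L=\Q(\mu_n)$, and since $q\equiv -1\pmod 3$ it is inert in $K=\Q(j)$, so the residue field of $M=LK$ at $\mathfrak Q\div\mathfrak q_\xi$ is $\F_{q^2}=\F_q(j)$, with $\xi\in\F_q$ and $j\notin\F_q$. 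Writing $\theta:=s/t\in\F_q$, the condition $\frac vu\equiv e\pmod q$ becomes the proportionality $(\theta+j)^3\in\F_q^\times\cdot(1+e\,j)$. Expanding
\[(\theta+j)^3=(\theta^3-3\theta+1)+3\,\theta(\theta-1)\,j\]
and comparing $1$- and $j$-parts gives $e=\frac{3\,\theta(\theta-1)}{\theta^3-3\theta+1}$, i.e.\ $\theta$ is a root of $X^3-3e^{-1}X^2-3(1-e^{-1})X+1$, and the three admissible lines $(s,t)$ are read off via $s\equiv t\,\theta_k$. To pass from ``a root'' to ``splits'' I would observe that this cubic is cyclic: its roots form one orbit under the order-$3$ M\"obius map $\theta\mapsto\frac1{1-\theta}$, which is $T^4$ in Theorem 6 and is defined over $\Q$, hence permutes the $\F_q$-roots; so one root in $\F_q$ forces all three.

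For the infinitude in (i), the criterion turns the claim into the existence of infinitely many primes $q$ that are inert in $K=\Q(j)$ (so $q\equiv-1\pmod3$), inert in $\Q(\mu_9)^+/\Q$ (equivalently $\kappa=\frac{q^2-1}3\not\equiv0\pmod3$), split in $L=\Q(\mu_n)$ (so $q\equiv1\pmod n$ and $\xi$ reduces to an $e$ of order $n$), and for which $P_\xi$ splits modulo $\mathfrak q_\xi$. The last is the total splitting of $\mathfrak q_\xi$ in the cyclic cubic extension $\mathcal E_\xi/L$ defined by $P_\xi$. I would then apply \v Cebotarev's theorem to the compositum of $\Q(\mu_9)$, $K$, $\Q(\mu_n)$ and the Galois closure of $\mathcal E_\xi$ over $\Q$, choosing a Frobenius class inert in $K/\Q$ and in $\Q(\mu_9)^+/\Q$, trivial on $\Q(\mu_n)$, and trivial on one conjugate of $\mathcal E_\xi$; as $\gcd(3,n)=1$ the residue conditions are compatible, so a positive density --- hence infinitely many --- such $q$ exist, and the integers $\Phi_n(u,v)$ they divide are then unbounded. \textbf{The main obstacle} is exactly the nonemptiness of this class: one must verify that the Galois closure of $\mathcal E_\xi$, the field $\Q(\mu_9)$ and $K$ are sufficiently linearly disjoint over $\Q(\mu_n)$ for the four prescribed local behaviours to coexist. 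Reconciling total splitting in $\mathcal E_\xi$ with inertia in $K$ and in $\Q(\mu_9)^+$ is where a rigorous argument must do genuine work; the surrounding subsection provides only numerical evidence for it.

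Finally (ii) is almost formal. By Lemma 2, ``$q\nmid n$ and $q\div\Phi_n(u,v)$'' is equivalent to ``$q\nmid uv$ and $\rho=\frac vu$ has order $n$ modulo $q$'', which is precisely what makes $\xi$, $\mathfrak q_\xi=(q,u\,\xi-v)$, $\wh\eta_1$ and $\wh F_\xi$ well defined. By the splitting criterion of Subsection 4.2 (Hensel's Lemma with Kummer theory), $\mathfrak q_\xi$ splits in $\wh F_\xi/L$ iff $\big(\frac{\wh\eta_1}{\mathfrak Q}\big)_M=1$. But $\wh\eta_1$ is built from $\eta_1$ exactly so that this symbol is trivial under a solution: Theorem 1 for $p=3$ gives $\big(\frac{\eta_1}{\mathfrak Q}\big)_M=j^{\frac12\frac{v-u}{v+u}\kappa}$ in the nonspecial cases and $j^{\frac12\frac{v+u}{3v}\kappa}$ in the special case, and the compensating power of $\zeta$ in $\wh\eta_1$ cancels it. Hence $q\div\Phi_n(u,v)$ forces $\big[\frac{\wh F_*/L}{\mathfrak q_*}\big]_{\rho,n}=1$, the $\rho$-splitting for $\wh{\mathcal F}_n$; the converse is immediate, since this symbol is only defined once $\rho$ has order $n$, i.e.\ once $q\div\Phi_n(u,v)$.
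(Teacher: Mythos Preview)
Your argument is essentially correct and follows the same architecture as the paper's proof: derive the cubic criterion from the parametrization, deduce cyclicity, then invoke \v Cebotarev. The one substantive step you are missing, which is precisely what dissolves your flagged ``main obstacle'', is that your auxiliary extension $\mathcal E_\xi$ is not a new object at all: it \emph{is} $\wh F_\xi$. The paper checks this by the substitution $X=\xi^{-1}(Y+1)$, which carries $X^3-3\xi^{-1}X^2-3(1-\xi^{-1})X+1$ to $Y^3-3(\xi^2-\xi+1)Y-(2-\xi)(\xi^2-\xi+1)$, a specialization of the universal polynomial of Subsection~7.3 built from $\wh\eta_1=(1+\xi j)^{e_\omega}$. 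Once you know $\mathcal E_\xi=\wh F_\xi$, the linear disjointness you worry about is already in hand: Subsection~7.2 records that $\wh F_\xi\subset L_1F_\xi$ is distinct from $L_1$ whenever $\xi\ne\pm1$, so $\wh F_\xi$ and $L_1$ are two distinct $3$-cyclic subextensions of $L_1F_\xi/L$, hence linearly disjoint over $L$. The \v Cebotarev argument then reduces to taking a nontrivial Frobenius in $L_1\wh F_\xi/\wh F_\xi$ (adjoining $M=LK$ as well to force $q\equiv-1\pmod 3$, which is harmless since $[M:L]=2$ is coprime to $3$).

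Two of your choices are genuinely different from the paper's and worth noting. First, your cyclicity argument via the order-$3$ M\"obius map $\theta\mapsto 1/(1-\theta)$ is more elementary than the paper's route through the universal polynomial; it shows directly that one $\F_q$-root drags the other two along. Second, for (ii) you invoke Theorem~1 to see that the residue symbol of $\wh\eta_1$ is trivial under a solution, whereas the paper gets the same conclusion from the identification $\mathcal E_\xi=\wh F_\xi$: splitting of the cubic modulo $\mathfrak q_\xi$ is then literally the splitting of $\mathfrak q_\xi$ in $\wh F_\xi/L$. Both routes are valid; the paper's has the advantage of making (i) and (ii) two faces of a single computation, while yours keeps the symbol calculus of Section~3 in the foreground.
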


\begin{proof} Let $\xi$ of order $n$ and let $L=\Q(\mu_n)$.
Since ${\rm g.c.d.\,}(s,t)=1$,
this yields immediately ${\rm g.c.d.\,}(u,v)=1$ for any solution,
thus $u$ and $v$ are not divisible by any prime $q$ dividing $\Phi_n(u,v)$
 homogeneous of the
form $u^{\phi(n)} \pm \cdots \pm v^{\phi(n)}$ in coprime integers $u$ and $v$.

\smallskip
From Lemma 2,  $q \notdiv n$ and $q \div \Phi_n(u,v)$ is equivalent to 
the fact that
$\frac{v}{u}$ is of order $n$ modulo $q$, hence it
is equivalent to the congruence
$u\,\xi - v \equiv 0 \pmod {\mathfrak q}$, for a suitable
and unique prime ideal ${\mathfrak q} \div q$ in $L$; 
then ${\mathfrak q} =
(q,u\,\xi-v)$, which depends on $(u,v)$ for $\xi$ fixed, 
is one of the $\phi(n)$ prime ideals above $q$ in $L$; in the 
previous sections it was denoted ${\mathfrak q}_\xi$ for given $u$, $v$.

\smallskip
We will prove that the condition $q\div \Phi_n(u,v)$ ($q\notdiv n$),
for a solution of the SFLT equation, can be tested
independently of the choice of the solution 
among the six possibilities, in the following sense.

\smallskip
Starting from
a parametric solution $(u,v)$ such that $u\,\xi - v \equiv 0
\pmod {\wt{\mathfrak q}}$ for some $\wt{\mathfrak q}\div 
{\mathfrak q}_\xi$ in $\wt L = \Q(\mu_{q-1})$, consider
the solution $(u',v')$ defined by:
$$\Frac{v'}{u'}: = T\big(\Frac{v}{u}\big) = \Frac{2v-u}{v+u}. $$

We have the congruence 
$u'\,\xi' - v' \equiv 0 \pmod {\wt{\mathfrak q}}$ where
 $\xi'$ is the unique $(q-1)$th root of unity congruent to 
$T(\xi) = \frac{2\xi-1}{\xi +1}$ modulo $\wt{\mathfrak q}$
(the order $n'$ of $\xi'$ divides $q-1$). Then we have:
\begin{eqnarray*}
u'\,\xi' - v' &\equiv& (v+u)\,\Frac{2\,\xi-1}{\xi +1} - (2v-u) \\
              &\equiv& \Frac{1}{\xi +1}\,\big((v+u)\,(2\xi-1)
	      - (2v-u)\,(\xi +1)\big)   \\
              &\equiv& \Frac{3}{\xi +1}\,(u\,\xi - v) \pmod {\wt{\mathfrak q}},
\end{eqnarray*}
proving the equivalence of the two congruences.
 Hence the result by induction on the powers of $T$.
From Theorem 7, the six families of 
solutions give the congruences
$u_i\,\xi_i - v_i \equiv 0 \pmod {\wt{\mathfrak q}}$
for which $\frac{v_i}{u_i}: = T^i\big(\frac{v}{u}\big)$,
$\xi_i \equiv T^i(\xi) \pmod {\wt{\mathfrak q}}$; each congruence
reduces to a congruence modulo  ${\mathfrak q}_{\xi_i}$ in
$L^i := \Q(\mu_{n_i})$, where  ${\mathfrak q}_{\xi_i} =
\wt{\mathfrak q} \cap Z_{L^i}$ and $n_i$ is the order of $\xi_i$
(prime to 3 since $q\equiv -1 \pmod {3}$).

\smallskip
Warning: the orders $n_i$ are  divisors  of $q-1$, 
not necessarily equal to $n$ (see Example 5). But the conditions
$q \notdiv n_i$ and $q \div \Phi_{n_i}(u,v)$, $0 \leq i <6$,
are equivalent to each other.

\smallskip
For instance, take the general solution of the second case $3\div v$; then we 
have to study the congruence
$(s^3+t^3 -3\,st^2)\,\xi - 3\,st(s-t) \equiv 0 \pmod {\mathfrak q}$.

Put $\theta := \Frac{s}{t}$, which yields to the congruence:
$$\theta^3 - 3 \xi^{-1}\,\theta^2 - 3 (1-\xi^{-1})\,\theta + 1
\equiv 0 \pmod {\mathfrak q}.$$

Recall that for $n$ fixed, the $\phi(n)$ ideals of $L$ above $q$
are the $(q, \xi - e)$, where $e\in \Z$, defined modulo $q$, is of 
order $n$ in $\F_q^\times$; so the congruence:
$$\theta^3 - 3 \xi^{-1}\,\theta^2 - 3 (1-\xi^{-1})\,\theta + 1
\equiv 0 \pmod {{\mathfrak q} = (q, \xi - e)} $$
is equivalent to:
$$\theta^3 - 3 e^{-1}\,\theta^2 - 3 (1-e^{-1})\,\theta + 1
\equiv 0 \pmod {q}$$
for the choice of $e \equiv \xi\pmod {\mathfrak q}$.
Since the pair $(\xi,{\mathfrak q})$ is defined up to
conjugation, we can select $e$ of order $n$, which 
implies suitable $\xi$ and ${\mathfrak q}$.

\smallskip
When  $q$ is solution, there exist infinitely many $(u,v)$
such that  $q \div \Phi_n(u,v)$: for a root $\ov \theta\in \F_q$,
$\theta \in\Z$, of the above congruence, the parameters $(s,t)$ are obtained 
from the congruence $s \equiv \theta \,t \pmod {q}$ (see  Example 8).
At this step we have proved (i) under the existence of $e$ such that 
the polynomial  $X^3 - 3 e^{-1} \,X^2 - 3 (1-e^{-1})\,X + 1$
splits in $\F_q[X]$.

\smallskip
The polynomial $X^3 - 3 \xi^{-1} \,X^2 - 3 (1-\xi^{-1})\,X + 1$
defines the cyclic extension 
$\wh F_\xi$: indeed, with $X=\xi^{-1}(Y+1)$ one obtains the polynomial:
$$Y^3 - 3(\xi^2-\xi+1)\,Y - (2-\xi)(\xi^2-\xi+1)$$
from the universal polynomial, irreducible of degree 3  over $L$ (see 
Subsection 7.3), obtained from the cubic root of $(1 + \xi \,j)^{s+2} = \wh \eta_1$
up to a $3$th power.

\smallskip
Thus, the condition  $q \div \Phi_n(u,v)$ ($q\notdiv n$) is equivalent to the
$\rho$-splitting of $q$ for $\wh {\mathcal F}_n$, where $\rho:=\frac{v}{u}$
(see  Subsection 7.2) or to the $\rho_i$-splitting of $q$ for
$\wh {\mathcal F}_{n_i}$ where $\rho_i:=\frac{v_i}{u_i} = T^i(\frac{v}{u})$,
and $n_i$ is the order modulo $q$ of $\rho_i$, 
$0\leq i < 6$. This proves (ii).

\smallskip
From the Dirichlet--\v Cebotarev theorem, we get a precise
result taking a nontrivial Frobenius in $L_1\wh F_\xi/\wh F_\xi$,
and we obtain  the prime ideal ${\mathfrak q}_\xi=
(q,u\,\xi - v)$ where the $(u,v)$ are obtained from the three roots 
$\ov \theta_1$, $\ov \theta_2$, $\ov \theta_3$ of
the polynomial as explained above.
We obtain infinitely many values
of $q$ with clearly a nonzero density.
In other words, for $\kappa \not\equiv 0 \pmod {3}$
these primes $q$ give again the splitting of 
${\mathfrak q}_\xi$ in $\wh F_\xi/L$, hence its inertia
in $L_1/L$, $F_\xi/L$, and in the fourth cubic subfield $\wh F'_\xi/L$ of the 
compositum $ L_1\wh F_\xi$ (note that $\wh F'_\xi = c\,\wh F_\xi$
and that $c\,F_\xi = F_\xi$ is diedral over $L^+$). This makes clear the 
point (i) of the proposition.
\end{proof}    

In the case where $(u,v)$ is for instance the general solution for the second case 
of the SFLT equation we get, with $\eta_1 =
(1+\xi \,j)^{e_\omega}\,j$, $\wh \eta_1 := \eta_1\,j^{\frac{1}{2}}
= (1+\xi \,j)^{e_\omega}$, and $c\,\wh \eta_1 := \eta_1 j$,
the following diagram:

\unitlength=0.5cm%0.74cm
$$\vbox{\hbox{\hspace{-4.8cm}
\begin{picture}(10.0,6.4)
\bezier{200}(11.65,5.76)(12.725,3.8)(13.8,1.84)% haut224
\bezier{100}(11.55,5.65)(11.765,4.34)(11.98,3.03)% haut133
\bezier{103}(11.35,5.75)(10.825,4.87)(10.3,3.99)% haut103
\bezier{133}(11.15,5.85)(9.925,5.325)(8.7,4.8)% haut133
\bezier{150}(10.8,0.3)(12.2,0.83)(13.6,1.36)% bas150
\bezier{121}(10.3,0.38)(11.075,1.305)(11.85,2.23)% bas 121
\bezier{143}(10.0,0.40)(10.0,1.825)(10.0,3.25)% bas 143
\bezier{210}(9.8,0.45)(8.95,2.365)(8.1,4.28)% bas 210
\put(11.,6.){\Small$FM$}
\put(12.6,1.5){\Small$M(\sqrt[3]{\wh \eta_1})$}
\put(10.7,2.5){\Small$M(\sqrt[3]{c\wh \eta_1})$}
\put(9.3,3.5){\Small $M(\sqrt[3]\eta_1)$}
\put(7.55,4.4){\Small$M(\sqrt[3]j)$}
\put(9.85,-0.1){\Small$M$}
\bezier{300}(3.7,0.)(7,0.)(9.5,0.)
\bezier{300}(5.2,6.1)(7,6.1)(10.5,6.1)
\bezier{214}(4.65,5.76)(5.66,3.865)(6.67,1.97)% haut 214
\bezier{133}(4.55,5.65)(4.765,4.34)(4.98,3.03)% haut 133
\bezier{103}(4.35,5.75)(3.825,4.87)(3.3,3.99)% haut 103
\bezier{133}(4.15,5.85)(2.925,5.325)(1.7,4.8)% haut 133
\bezier{174}(3.4,0.15)(5.03,0.765)(6.66,1.38)% bas 174
\bezier{121}(3.3,0.38)(4.075,1.305)(4.85,2.23)% bas 121
\bezier{143}(3.0,0.40)(3.0,1.825)(3.0,3.25)% bas 143
\bezier{210}(2.8,0.45)(1.95,2.365)(1.1,4.28)% bas 210
\put(3.8,6.){\Small$L_1\wh F_\xi$}
\put(6.5,1.4){\Small$\wh F_\xi$}
\put(4.8,2.4){\Small$c\,\wh F_\xi$}
\put(2.8,3.5){\Small $F_\xi$}
\put(1.,4.5){\Small$L_1$}
\put(2.85,-0.1){\Small$L$}
\put(5.6,0.5){\Small split}
\put(5.8,4.){\Small inert}
\end{picture}  }}$$

\medskip
There are six analogous diagrams over each field $L^i$.

\begin{rema} {\rm Let $q$ be a prime number such that $\kappa \not\equiv 
0 \pmod {3}$. Then for a divisor $m>2$ of $q-1$,
there is not necessarily a solution $(u,v) = (s^3+t^3 -3\,st^2,\,3\,st(s-t))$,
$s, t \in \Z$, ${\rm g.c.d.\,}(s,t)=1$, $s+t \not\equiv 0 \pmod {3}$,
such that the order $n$ of $\frac{v}{u}$ modulo $q$ is equal to $m$
(see  Example 6).

\smallskip
 The cases $m \leq 2$ correspond, for the above solution, to the congruences:
$$s^3+t^3 -3\,st^2 \pm 3\,st(s-t)) \equiv 0 \pmod {q}, $$ equivalent
to the splitting, modulo $q$, of $X^3+1 -3\,X \pm 3\,X(X-1))$.
One verifies that these  polynomials of $\Q[X]$
define the number field $\Q_1$; so, as by assumption $\kappa\not\equiv 0 
\pmod {3}$, we obtain that the orders 1 and 2 are never possible. The 
case $m >2$ is less trivial.~\hfill\fin }
\end{rema}

\begin{ex} {\rm We illustrate Proposition 5 with the prime $q=41$
    and the solution $(u,v) = (139193, 76626)$ obtained with the 
    parameters $(s,t) = (-11,43)$; we note that for
    $e = \ov {22} \in \Z/41\Z$  the polynomial:
    $$X^3 - 3 e^{-1}\,X^2 - 3 (1-e^{-1})\,X + 1$$
    splits  in $\Z/41\Z[X]$
    into $(X -\ov {38})\,(X-\ov {31})\,(X-\ov {15})$ and we have chosen
    $\ov\theta =\ov {15}$ for which $s - 15\,t \equiv 0 \pmod {41}$.
    Using the automorphism $T$, we obtain the six steps:{\small
\begin{eqnarray*}
 T^0(e)  =    e &=& \ov {22} \ \,{\rm of \ order\ } 40 \\
 T^0( \Frac{v}{u})= \Frac{v}{u} &=& \Frac{76626}{139193}\,\virg  \ \,{\rm 
 solution\ of\ the\  second\ case,} 
 \end{eqnarray*} 
 \begin{eqnarray*}
 T^{\ }(e)  =  e_1 &=& \ov {9} \ \,{\rm of \ order\ } 4 \\
 T^{\ }(\Frac{v}{u}) =  \Frac{v_1}{u_1} &=& \Frac{14059}{215819}\,\virg  \ \,{\rm 
 solution\ of\ the\  special\ case,} 
 \end{eqnarray*} 
 \begin{eqnarray*}
  T^2(e)  =   e_2 &=& \ov {14} \ \,{\rm of \ order\ } 8 \\
  T^2(\Frac{v}{u}) =    \Frac{v_2}{u_2} &=& \Frac{-62567}{76626}\,\virg  \ \,{\rm 
 solution\ of\ the\  second\ case,} 
 \end{eqnarray*} 
 \begin{eqnarray*}
   T^3(e)  =   e_3 &=& \ov {10} \ \,{\rm of \ order\ } 5 \\
   T^3(\Frac{v}{u}) =   \Frac{v_3}{u_3} &=& \Frac{-201760}{14059}\,\virg  \ \,{\rm 
 solution\ of\ the\  special\ case,} 
 \end{eqnarray*} 
 \begin{eqnarray*}
   T^4(e)  =   e_4 &=& \ov {39} \ \,{\rm of \ order\ } 20 \\
   T^4(\Frac{v}{u}) =  \Frac{v_4}{u_4} &=& \Frac{139193}{62567}\,\virg  \ \,{\rm 
 solution\ of\ the\  first\ case,} 
 \end{eqnarray*} 
 \begin{eqnarray*}
   T^5(e)  =    e_5 &=& \ov {5} \ \,{\rm of \ order\ } 20 \\
   T^5(\Frac{v}{u}) =   \Frac{v_5}{u_5} &=& \Frac{215819}{201760}\,\virg  \ \,{\rm 
 solution\ of\ the\  special\ case.} 
 \end{eqnarray*}  }
 
 As a consequence, we have:\footnotesize{
  \begin{eqnarray*}
 &&\Phi_{40}(139193,76626) \equiv \Phi_{4}(215819,14059) \equiv 
      \Phi_{8}(76626,-62567)\equiv  \\
 &&\Phi_{5}(14059,-201760) \equiv \Phi_{20}(62567,139193)\equiv 
 \Phi_{20}(201760,215819) \equiv  0  \pmod {41}.
  \end{eqnarray*} }
  \normalsize
{ We have obtained the set of orders $\{40, 4, 8, 5, 20 \}$. 
 For instance, this implies the inertia of ${\mathfrak q}_{\xi_{40}}$ in 
 $F_{\xi_{40}}/\Q(\mu_{40})$ and that of ${\mathfrak q}_{\xi_{4}}$ in 
 $F_{\xi_{4}}/\Q(\mu_{4})$, which illustrates the incompatibility 
 with statements like Theorem 2 for  $p=3$}.~\hfill\fin  }
\end{ex} 

\begin{ex} {\rm
 We have found the following numerical example to
 illustrate  Remark 12, with 
$m=5$ for which  $L=\Q(\mu_5)$ is principal.
Consider the prime $q=48738631$ for which 
$q-1 = 2\,\cdot\,3\,\cdot\,5\,\cdot\,163\,\cdot\,9967$
and $\kappa \not\equiv 0 \pmod {3}$.

Then ${\mathfrak q} = (\xi^2+\xi^3 - 3 - 
90\,(3\,\xi^2 + 5\,\xi + 3))\,\Z[\xi]$,
where $\xi$ is a primitive $5$th root of unity, is
a prime ideal above $q$.

\smallskip
Since $\xi^2+\xi^3 - 3 \in L^+$, this ideal 
satisfies the relation ${\mathfrak q}^{1-c} = (\alpha)\,\Z[\xi]$,
$\alpha \equiv 1 \pmod {9}$, which means that $q$
totally splits in $H_L^-{\st [3]}/\Q$. 

\smallskip
Concerning the solutions $(u,v) = (s^3+t^3 -3\,st^2,\,3\,st(s-t))$,
$s, t \in \Z$, ${\rm g.c.d.\,}(s,t)=1$, $s+t \not\equiv 0 \pmod {3}$,
such that $\Phi_5(u,v) \equiv 0 \pmod {q}$,
we try to find the smallest values of the order $n$ of $\frac{v}{u}$
modulo $q$.
It is clear that the value $n=5$ is by construction impossible.
There is also no solution for $n=10$ since $\Q(\mu_{10}) = \Q(\mu_5) =L$
with $q$ totally split in $H_L^-{\st [3]}/\Q$.

\smallskip
We find the values:
\footnotesize{
\begin{eqnarray*}
&&n = 6\ \ \ {\rm for} \ (s,t)=(357, 42643), \\      
&&n = 15\ \ {\rm for} \ (s,t)=(1531, 3232), \\
&&n = 163\ {\rm for} \ (s,t)=(143, 947), \\
&&n = 326\ {\rm for} \ (s,t)=(132, 883), \\
&&n = 489\ {\rm for} \ (s,t)=(79, 526), \\
&&n = 815\ {\rm for} \ (s,t)=(9, 971)\ldots
 \end{eqnarray*}
 \normalsize
 As we have seen, the orders $n=1$ and 2 are impossible.~\hfill\fin } }
\end{ex}

\begin{ex} {\rm In another point of view, in the following example
    we fix the solution $(u,v) = (19,18)$ corresponding to $(s,t) = 
    (3,1)$ and we give the order $n$ of $\frac{v}{u}$ modulo
    $q$ for primes $q< 3.10^6$ with $\kappa \not\equiv 0 \pmod {3}$,
    such that $n< q^{\frac{1}{3}}$ to limit the data.
\footnotesize{
$$\begin{array}{lllllllll}
q  &  n  &
\hspace{0.8cm} q  & n & 
\hspace{0.8cm} q &  n &
\hspace{0.8cm} q  &  n & \\ \vspace{-0.2cm} \\
79  &   3  &
\hspace{0.8cm} 137  & 4 & 
\hspace{0.8cm} 751 &  5 &
\hspace{0.8cm} 17341  &  17 & \\
46663  &  11 &
\hspace{0.8cm} 49999  &   13 & 
\hspace{0.8cm} 97373  &   44 &
\hspace{0.8cm} 225751  &   43 & \\
352771  &   55& 
\hspace{0.8cm} 419693  &   13 & 
\hspace{0.8cm} 464549  &   47 & 
\hspace{0.8cm} 536609  &   41 & \\
809359  &   22& 
\hspace{0.8cm} 816401  &  52 & 
\hspace{0.8cm} 1037471  &   35 & 
\hspace{0.8cm} 1115447  &   41 & \\
1167937  &   84& 
\hspace{0.8cm} 1252057  &   104 & 
\hspace{0.8cm} 1403627  &   14 & 
\hspace{0.8cm} 1529249  &   32 & \\
1995781  &   29& 
\hspace{0.8cm} 2040601 &    25 & 
\hspace{0.8cm} 2743501 &    59 & 
\hspace{0.8cm} 2912521  &   39 &
\end{array}$$  } }
\end{ex}

\begin{ex} {\rm  Let $q=113 = 1+ 2^4\,.\,7$.
    In the following example we fix $n$ and use a polynomial
$X^3 - 3 e^{-1} \,X^2 - 3 (1-e^{-1})\,X + 1$ which splits modulo 
$113$; for $e=83$, of order $n=14$ modulo 113, 
its roots are $\ov 5$,  $\ov {28}$, and $\ov {46}$ modulo $113$.

\smallskip
Recall that for $\xi$ of order $n$ and $e\in 
\Z$ defining the prime ideal ${\mathfrak q} = (q, \xi - e)$ above $q$, the solutions $(s,t)$ giving
$q \div  \Phi_n(u,v)$ for the corresponding solutions
$(u,v) = (s^3+t^3-3st^2, 3st(s-t))$, are defined for instance via the 
congruence $s-5 t \equiv 0 \pmod {113}$, g.c.d.\,$(s,t) = 1$,
and $s+t \not \equiv 0 \pmod {3}$.
\footnotesize{
$$\begin{array}{llll}
s    &   t  &     \Phi_n(u,v)   \\  \vspace{-0.2cm} \\
118  &   1  &     113 \,\cdot\, 3557 \,\cdot\, 3942401 \,\cdot\, 744072113 \,\cdot\, 16254128953756891   \\
231  &   1  &     113 \,\cdot\, 211 \,\cdot\, 239 \,\cdot\, 116929 \,\cdot\, 550757191489 \,\cdot\, 9432961248517529143   \\
457  &   1  &     113 \,\cdot\, 8821 \,\cdot\, 18484859 \,\cdot\, 4489993033 \,\cdot\, 9077382763538364383220967   \\
123  &   2  &     29 \,\cdot\, 43 \,\cdot\, 113 \,\cdot\, 3011 \,\cdot\, 11047 \,\cdot\, 1005000683 \,\cdot\, 8371388009051383   \\
128  &   3  &     113 \,\cdot\, 385897 \,\cdot\, 8800908691961 \,\cdot\, 205376563933889209  \\
241  &   3  &     29 \,\cdot\, 113 \,\cdot\, 3557 \,\cdot\, 26209 \,\cdot\, 136067 \,\cdot\, 2120693 \,\cdot\, 2348198329 \,\cdot\, 34945284137  \\
467  &   3  &     113 \,\cdot\, 1451130199 \,\cdot\, 6673578443419738169458023356294472959 % \\
\end{array}$$
$$\begin{array}{llll}
133  &   4  &     113 \,\cdot\, 421 \,\cdot\, 43270571265013 \,\cdot\,  74514155796456659333  \\
138  &   5  &     113 \,\cdot\, 2577267166287809480749101354040384043  \\
251  &   5  &     113 \,\cdot\, 547 \,\cdot\, 2381 \,\cdot\, 75688397 \,\cdot\, 318274119451 \,\cdot\, 4136563302302243   \\
477  &   5  &     29 \,\cdot\, 113 \,\cdot\, 5503 \,\cdot\, 26385694924317373 \,\cdot\, 3324436493654921921540503   \\
143  &   6  &     113 \,\cdot\, 1847609 \,\cdot\, 2588587173822250293234785701459   \\
\end{array}$$  }

\normalsize{
 We observe a unique case where $113^2$ divides $\Phi_n(u,v)$.}~\hfill\fin }
\end{ex}

\begin{ex} {\rm We consider the prime number $q=401 = 1 + 
2^4\,.\,5^2$ and for all possible values of $\rho:=\frac{v}{u}$ 
modulo $q$, for the general solution of the second case,
 we give the order of  $\rho$ modulo $q$.
The resolution of $\Frac{3st(s-t)}{s^3+t^3-3st^2} \equiv \rho \pmod {q}$
is of course equivalent to get the values $\rho$ such that the 
polynomial $X^3 - 3 \rho^{-1} \,X^2 - 3 (1-\rho^{-1})\,X + 1$ splits
modulo $q$.

\smallskip
There are $133 = 7\,.\,13$ distinct values of such $\rho$
with the following repartition of the orders $n$: 53 for order 400; 28 
for 200; 13 for 80; 12 for 100; 7 for 50 and 25; 4 for 40; 3 for 20; 2 
for 10; 1 for 16, 8, 5, and 4. As we know, orders 1 and 2 cannot exist.
These densities are in accordance with the expression $\frac{1}{3} 
\phi(n)$.~\hfill\fin  }
\end{ex}

The above computations for  $p=3$ suggest the following conjecture.

\begin{conj}  For all $m>0$ and for all prime numbers $q \equiv 1 
\pmod {m}$, $q\equiv 2 {\rm\ or\ } 5 \pmod {9}$, and $q$  totally split in 
$F_m/\Q$, there exists a solution $(u,v)$ of the SFLT 
equation for $p=3$, for which the order of $\frac{v}{u}$ modulo $q$ is 
$\geq m$.~\hfill\fin
\end{conj}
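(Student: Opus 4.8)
The plan is to recast the conjecture, through the splitting criterion of Proposition~5 (the statement on the families $\Phi_n(u,v)$) together with Lemma~2, as a question about which multiplicative orders modulo $q$ are \emph{realised} by the six parametric families of Remark~1, and then to show these orders cannot all stay below $m$. Concretely, a solution $(u,v)$ with $\frac{v}{u}$ of order exactly $n$ modulo $q$ (so $q\nmid n$, $q\mid\Phi_n(u,v)$) exists if and only if $\mathfrak q$ splits in $\wh F_\xi/L$ for $\xi$ of order $n$, equivalently the cubic $X^3-3e^{-1}X^2-3(1-e^{-1})X+1$ splits in $\F_q[X]$ for some $e$ of order $n$. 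Since each $\wh F_\xi/L$ is $\Z/3$-cyclic, for fixed $n$ the Frobenius of $\mathfrak q$ is either trivial (yielding $\frac{1}{3}\phi(n)$ admissible values $\frac{v}{u}$) or of order $3$ (yielding none). I would first record how the hypotheses enter: $q\equiv 2,5\pmod 9$ gives $\kappa\not\equiv 0\pmod 3$, so $\mathfrak q$ is inert in $L_1/L$; combined with $\wh F_\xi L_1=F_\xi L_1$ and total splitting of $q$ in $F_m$, Lemma~10 (in its $p=3$ form from the end of Subsection~7.2) forces $\mathfrak q$ to be inert in the second-case $\wh F_m$. Thus order exactly $m$ is \emph{excluded}, which is precisely why the conclusion must be phrased as order $\geq m$ and must draw on several families and several divisors $n\mid q-1$.

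Next I would run the quantitative heart of the argument. For fixed $q$ the number of residues $\rho\in\F_q$ realised as $\frac{v}{u}$ by, say, the second-case family is close to $\frac{1}{3}(q-1)$ by the cyclic-cubic count above (confirmed numerically for $q=401$ in Example~9), whereas the residues of order strictly less than $m$ number at most $\sum_{2<n<m}\phi(n)=O(m^2)$. Hence as soon as $q-1$ exceeds an absolute constant times $m^2$ there must be a realised $\rho$ of order $\geq m$, which settles the conjecture for every $q$ outside a range $m<q\leq C\,m^2$. To broaden the supply of realised orders I would systematically exploit the order-$6$ automorphism $T$ of Theorems~6 and~7: each solution generates an orbit of six solutions whose orders $n_0,\dots,n_5$ are generally distinct divisors of $q-1$, so a single small-order seed typically produces companions of far larger order (as in Example~5, where an order-$4$ solution sits in an orbit realising the orders $40,20,8,5$). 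This $T$-action, together with the partition of the solution set into six complementary families, is exactly the extra structure present for $p=3$ and provably absent for $p>3$ by Theorem~8; it is what makes the present conjecture credible while its $p>3$ analogue (the inertia Conjecture~3) points the opposite way.

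The hard part will be uniformity in the residual window $m<q\leq C\,m^2$, where $q$ is only moderately larger than $m$. There the crude counting is too weak, the divisors $n\mid q-1$ with $n\geq m$ may be few, and one genuinely needs that \emph{some} value $\frac{v}{u}$ coming from a solution has large multiplicative order modulo the individual prime $q$. This is an Artin-primitive-root-type assertion about the order of elements lying in the image of an explicit degree-$3$ rational map, and securing it for every qualifying $q$ (rather than for a positive density of $q$, which effective \v Cebotarev for the fields $F_nL_1$ would readily give) is the real obstacle. My strategy would be to bound, via character-sum or effective \v Cebotarev estimates, the number of realised $\rho$ confined to each cyclic subgroup of small order, and to use the total-splitting hypothesis in $F_m$ to pin the Frobenius of $\mathfrak q$ across all divisors $n\mid m$ simultaneously, so that a realisable order $\geq m$ is forced. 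I expect that a clean \emph{unconditional} conclusion will require GRH-type input, with the conjecture standing as the natural endpoint once these analytic estimates are made uniform in $q$.
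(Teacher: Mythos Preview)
The statement you are attempting to prove is presented in the paper as a \emph{Conjecture}; no proof is offered, only the one-line heuristic that ``in practice, the order of $\frac{v}{u}$ modulo $q$ is often $q-1$''. There is therefore no proof in the paper against which to compare your argument.

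Your proposal is a reasonable strategy sketch, more quantitative than the paper's remark, and you are honest that it is incomplete. The counting step --- roughly $\tfrac{1}{3}(q-1)$ residues realised by (say) the second-case family, versus at most $\sum_{n<m}\phi(n)=O(m^{2})$ residues of multiplicative order below $m$ --- does dispose of the range $q>Cm^{2}$. The residual window you flag is exactly the obstacle: for $q$ only moderately larger than $m$ one must show that the image of the degree-$3$ map $\theta\mapsto \frac{3\theta(\theta-1)}{\theta^{3}-3\theta+1}$ in $\F_q^{\times}$ is not contained in the set of elements of small order, and this is a pointwise statement about a fixed prime, not a density statement accessible to \v Cebotarev. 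The $T$-orbit mechanism trades one order $n$ for six orders $n_{0},\dots,n_{5}$, but nothing forces $\max_i n_i\geq m$; your own assessment that GRH-type input would be required looks accurate, and the conjecture remains open.

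One correction: your claim that ``order exactly $m$ is excluded'' is valid only for the second-case family (where indeed $\wh F_\xi\neq F_\xi$, so that total splitting in $F_m$ combined with inertia in $L_1$ forces inertia in $\wh F_\xi$). For the first-case family one has $\wh F_\xi=F_\xi$ (see case~(i) in Subsection~9.2, or the end of Subsection~7.2), so total splitting of $q$ in $F_m$ is precisely the $\rho$-splitting condition for $\wh{\mathcal F}_m$, and first-case solutions of order exactly $m$ are compatible with the hypothesis. This does not damage your counting argument, but it undercuts the rationale you give for the conclusion being phrased as ``$\geq m$'' rather than ``$=m$''.
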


\smallskip
This conjecture (to be compared with Conjecture 2 for $p>3$)  is 
very reasonable since, in practice, the order of $\frac{v}{u}$ modulo $q$ is
often $q-1$.

\section{Conclusion}

In Subsection 9.1,  we have given a justification of the fact that
 Theorem~2 (or any weak form) cannot exist for $p=3$.
 
\smallskip
For $p>3$, if the number of solutions $(u,v)$ of the SFLT equation is finite,
for any bound $N$ the number of primes $q$,
such that the $\frac{v}{u}$ are of order modulo $q$ less than $N$,
is finite.
So for  primes $q'$ for which we assume that all the prime ideals above $q'$
in $L' = \Q(\mu_{m'})$, $m' \div q'-1$ large enough, totally split in 
$H_{L'}^-{\st [p]}/L'$,
we get large values of $q'$, hence large values of the orders $n'$
of the $\frac{v}{u}$ modulo $q'$, say $n' \!>\!\!>\! N$.
So, contrary to the case $p=3$, the effectiveness of the statement
of a weak form of  Theorem 2 is more credible.
 
\smallskip
We have justified, in Subsection 9.2, why the 
case $p = 3$ is specific for the arithmetic of the fields 
$\Q(\mu_n)$ in relation with the abelian $3$-ramification;
which suggests that,  for $p>3$, a result like  Theorem 4, on the constraints
on the $\rho$-laws of decomposition of infinitely many primes~$q$,
gives a non trivial obstruction  and is likely to lead to a proof of SFLT.

\smallskip
In other words, we can hope that for $p>3$ any statistical
analysis of the decomposition laws is legitimate.

\smallskip
To summarize it is not excluded that the two main principles of approach of the SFLT 
problem that we have developped in this paper 
may be successful for $p>3$.

\smallskip
However, it should be noted that results like Theorem 2 are sufficient
diophantine conditions,
probably too strong, and that it would be better to return to the principle of laws
of $\rho$-decomposition of infinitely many primes $q$ for
the canonical families ${\mathcal F}_n$
(see Subsection 7.1, Theorem 4, and Conjecture 3); this aspect can be 
approached from an analytic point of view  with the aim 
to show that such a constraint is impossible for $p>3$.

\smallskip
In this direction, an interesting fact would be that the case $p=3$ 
would have, in some sense, a reciprocal statement, namely that the infiniteness of
the set of solutions of the SFLT equation and their particular repartition
into six families, is in fact necessary for the \v Cebotarev${}'$s
density theorem.
Thus for $p>3$, in the same spirit as  the case $p=3$, the set of nontrivial solutions
(if nonempty) would be necessarily  infinite with some structural properties in
order to be compatible with the above principle, which seems clearly impossible
for geometrical reasons (Theorem 8 for instance).

\medskip

\end{document}